\def\ds{\displaystyle}
\def\hM{\hat{M}}
\def\hL{\hat{L}}
\def\hA{\hat{A}}
\def\hG{\hat{M}}
\def\hH{\hat{H}}
\def\hgamma{\hat{\gamma}}
\def\nb{n_{\bullet}}
\def\nbw{n_{\bullet\circ}}
\def\nwb{n_{\circ\bullet}}
\def\nbb{n_{\bullet\bullet}}
\def\cA{\mathcal{A}}
\def\cL{\mathcal{L}}
\def\chL{\hat{\mathcal{L}}}
\def\cM{\mathcal{M}}
\def\cE{\mathcal{E}}
\def\cH{\mathcal{H}}
\def\cR{\mathcal{R}}
\def\cRg{\cR^{g}}
\def\cS{\mathcal{S}}
\def\cSgd{\cS^{g}_d}
\def\cRgd{\cR^{g}_d}
\def\cU{\mathcal{U}}
\def\cUgd{\cU^{g}_d}
\def\cT{\mathcal{T}}
\def\cTgd{\cT^{g}_d}
\def\cF{\mathcal{F}}
\def\chF{\hat{\mathcal{F}}}
\def\cO{\mathcal{O}}
\def\cV{\mathcal{V}}
\def\cOgd{\cO_d^{g}}
\def\cMgd{\cT_d^{g}}
\def\Minf{M^{\infty}}
\def\tG{\widetilde{G}}
\def\tH{\widetilde{H}}
\def\cHci{\cH^{\circ}}
\def\cHbs{\cH^{{\diamond}}}
\def\cHbu{\cH^{\bullet}}
\def\zZ{\mathbb{Z}}
\def\Pi{P^{(i)}}
\def\oC{\bar{C}}
\def\oX{\overline{X}}
\def\oZ{\overline{Z}}
\def\Nbb{N_{\bullet\bullet}}
\def\Nbw{N_{\bullet\circ}}
\def\Nwb{N_{\circ\bullet}}
\def\Nww{N_{\circ\circ}}
\def\NI{N_{\mathrm{I}}}
\def\NII{N_{\mathrm{II}}}
\def\NIi{N_{\mathrm{I}}^{(a)}}
\def\NIii{N_{\mathrm{I}}^{(b)}}
\def\NIiii{N_{\mathrm{I}}^{(c)}}
\def\Kbbi{K_{\bullet\bullet}^{(i)}}
\def\Kbwi{K_{\bullet\circ}^{(i)}}
\def\Kwwi{K_{\circ\circ}^{(i)}}
\def\eps{\epsilon}
\def\NN{\mathbb{N}}
\def\ZZ{\mathbb{Z}}
\def\nb{n_{\bullet}}
\def\nbw{n_{\bullet\circ}}
\def\nwb{n_{\circ\bullet}}
\def\nbb{n_{\bullet\bullet}}
\def\Vb{\mathcal{V}_{\bullet}}
\def\Vw{\mathcal{V}_{\circ}}
\def\nww{n_{\circ\circ}}
\def\nw{n_{\circ}}
\def\aM{M^{\star}}
\def\bb{\frac{b}{b-1}}
\def\ab{\frac{\alpha}{\beta}}
\def\N{\mathbb{N}}
\def\hG{\hat{G}}
\def\hH{\hat{H}}
\def\ha{\hat{\alpha}}
\def\hZ{\hat{Z}}
\def\hX{\hat{X}}
\def\oZ{\bar{Z}}
\def\oX{\bar{X}}
\newtheorem{theorem}{Theorem}
\newtheorem{proposition}[theorem]{Proposition}
\newtheorem{lemma}[theorem]{Lemma}
\newtheorem{claim}{Claim}
\newtheorem{corollary}[theorem]{Corollary}
\newcommand{\mb}[1]{\mathbb{#1}}
\newcommand{\ccw}{counterclockwise }
\newcommand{\cw}{clockwise }
\newcounter{sclaim}
\newcounter{ssclaim}
\newenvironment{proof}{\noindent \setcounter{ssclaim}{0}\emph{Proof.}\ }{\hfill
    $\Box$\vspace{1em}}
  \newenvironment{proofclaim}{\noindent \emph{Proof of the claim.}\ }{\hfill
    $\Diamond$\vspace{1em}}
\title{A bijection for essentially 3-connected toroidal maps
          \thanks{This work was
      supported by the ANR grant GATO
      ANR-16-CE40-0009-01.}}
\date{\vspace{-5ex}}
\title{Orientations and bijections for toroidal 
maps with prescribed face-degrees and essential girth\thanks{This work was
      supported by the grant EGOS ANR-12-JS02-002-01 and GATO
      ANR-16-CE40-0009-01.}}
\author{\'Eric Fusy\thanks{LIX UMR 7161, \'Ecole Polytechnique, 1 rue Honor\'e d'Estienne d'Orves 91120 Palaiseau, France. \texttt{fusy@lix.polytechnique.fr}} , Benjamin Lévêque\thanks{G-SCOP UMR 5272, Universit\'e Grenoble Alpes, 46 avenue F\'elix Viallet 
38031 Grenoble Cedex 1, France. \texttt{benjamin.leveque@cnrs.fr}}}
\begin{document}
\maketitle


\begin{abstract}
  We present unified bijections for maps on the torus with control on
  the face-degrees and essential girth (girth of the periodic planar
  representation).  A first step is to show that for $d\geq 3$ every
  toroidal $d$-angulation of essential girth $d$ can be endowed with a
  certain `canonical' orientation (formulated as a weight-assignment
  on the half-edges). Using an adaptation of a construction by
  Bernardi and Chapuy, we can then derive a bijection between
  face-rooted toroidal $d$-angulations of essential girth $d$ (with
  the condition that, apart from the root-face contour, no other closed walk of 
length $d$ encloses the root-face) and a family of decorated 
  unicellular maps. The orientations and bijections can then be
  generalized, for any $d\geq 1$, to toroidal face-rooted maps of essential girth $d$
 with a root-face of degree $d$ (and with the same root-face contour condition as 
for $d$-angulations), and they
  take a simpler form in the bipartite case, as a parity specialization. 
  On the enumerative side we obtain explicit algebraic expressions
  for the generating functions of rooted essentially simple
  triangulations and bipartite quadrangulations on the torus.  Our
  bijective constructions can be considered as toroidal counterparts
  of those obtained by Bernardi and the first author in the planar
  case, and they also build on ideas introduced by Despré,
  Gon\c{c}alves and the second author for essentially simple
  triangulations, of imposing a balancedness condition on the
  orientations in genus $1$.
\end{abstract}

\section{Introduction}
\label{sec:introduction}
The enumerative study of (rooted) maps 
 has been a very active research topic since Tutte's seminal results on the 
enumeration of planar maps~\cite{T62a,Tu63}, later extended to higher genus
by Bender and Canfield~\cite{BeCa86}. Tutte's approach is based on so-called loop-equations
for the associated generating functions with a catalytic variable for the root-face 
degree. Powerful methods have been developed to compute the solution of such equations (originally solved by guessing/checking),  
both in the planar case~\cite{GoJa83,BJ05a} and in higher genus~\cite{Eyn}. 

The striking simplicity of counting formulas discovered by Tutte (e.g., the number
of rooted planar simple triangulations with $n+3$ vertices is equal to 
$\frac{2}{n(n+1)}\binom{4n+1}{n-1}$) asked 
for bijective explanations. The first such constructions, bijections from 
maps to certain decorated trees, were introduced by Cori and Vauquelin~\cite{CoriVa}
and Arqu\`es~\cite{Ar86} and later further developed by Schaeffer~\cite{S-these}, who also introduced
with Marcus  
the first bijection (for bipartite quadrangulations) that extends to higher genus~\cite[Chap.6]{S-these}. 
The bijection has been adapted in~\cite{CMS09} to a form better suited for computing  the generating functions, and has been recently extended
 to non-orientable surfaces~\cite{chapuy2017bijection,bettinelli2015bijection}. 

In the planar case many natural  families of maps considered in the literature are 
given by restrictions on the face-degrees and on the girth (length of a shortest
cycle). For instance loopless triangulations are (planar) maps with all face-degrees equal
to $3$ and girth at least $2$. The bijections developed over the years for such families (in particular, simple quadrangulations~\cite[Sect.2.3.3]{S-these}, loopless triangulations~\cite[Sect.2.3.4]{S-these}, 
simple triangulations~\cite{PS03b}, irreducible quadrangulations~\cite{FuPoScL} and triangulations~\cite{Fu07b}) 
shared the feature that each map of the considered family can be endowed with a `canonical' orientation
that is usually specified by outdegree prescriptions
 (so-called $\alpha$-orientations~\cite{Fe03}), which is then exploited to associate
to the map a decorated tree structure. 
For instance simple triangulations with a distinguished outer face can be endowed
with an orientation where all outer vertices have outdegree $1$ and all inner vertices
have outdegree $3$, such orientations being closely related to Schnyder woods~\cite{S90}.  
In recent works~\cite{BF12,AlPo13} the methodology has been given a unified formalism, where
each such bijective construction can be obtained as a specialization of a `meta'-bijection
between certain oriented maps and certain decorated trees, 
which itself is an adaptation of a bijection developed in~\cite{Be05} (and extended
in~\cite{BC11} to higher genus) to count tree-rooted planar maps. A success of this strategy
has been to solve for the first time~\cite{BF12b} 
the problem of counting planar maps with control 
on the face-degrees and on the girth (this has been subsequently recovered in~\cite{BG15} and  extended to the so-called irreducible setting), and to  
 adapt the bijections to hypermaps~\cite{BeFu13} and maps with boundaries~\cite{bernardi2015bijections}.

 Up to now this general strategy based on canonical orientations has
 been mostly applied in the planar case, while the only bijections
 known to extend to any genus $g\geq 0$ deal with maps (or bipartite
 maps) with control on the face-degrees but not on the girth: 
 bijections to labeled mobiles~\cite{BoDiGu04,CMS09,Ch09} or to
 blossoming trees and unicellular maps~\cite{Sc97,Lep18}.  It has
 however recently appeared~\cite{DGL15} that in the case of genus $1$,
 a bijection based on canonical orientations can be designed for
 essentially simple triangulations\footnote{A map $M$ on the torus is
   said to have `essentially' property $P$ if the periodic planar
   representation $M^\infty$ of $M$ has property $P$; thus $M$
   is essentially simple means that $M^{\infty}$ is simple. Similarly
   the \emph{essential girth} of $M$ is defined as the girth of
   $M^{\infty}$.}.  The canonical orientations used in this
 construction are $3$-orientations (all vertices have outdegree $3$)
 with an additional `balancedness' property (every non-contractible
 cycle has the same number of outgoing edges to the left side as to
 the right side), see Figure~\ref{fig:examples_d_ori}(a) for examples.
 The existence of such orientations builds on an earlier work on
 toroidal Schnyder woods~\cite{GL14} (see also~\cite{LevHDR}), and the
   bijection thus obtained can be considered as a toroidal counterpart of the
 one in~\cite{PS03b}.  This strategy has also been recently applied to
 essentially 4-connected triangulations~\cite{BoLe18}, where the
 obtained bijection (based on certain `balanced' transversal
 structures) is now a toroidal counterpart of the one in~\cite{Fu07b}.

\begin{figure}
\begin{center}
\includegraphics[width=13cm]{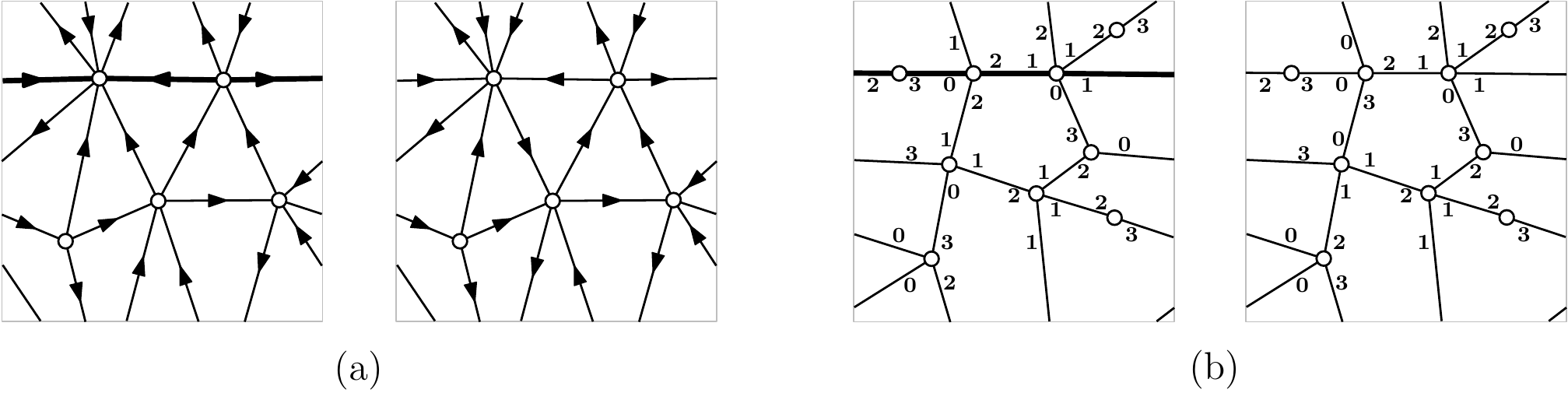}
\end{center}
\caption{(a) Examples of $3$-orientations for a toroidal essentially simple triangulation 
(the first example is not balanced as the bold cycle has outdegree $3$ on the upper side
and outdegree $1$ on the lower side, the second example is balanced). (b) Examples of $\frac{5}{3}$-orientations for a toroidal pentagulation of essential girth $5$ 
(the first example is not balanced as the bold cycle has total weight $4$ on the upper side 
and total weight $2$ on the lower side, the second example is balanced)}
\label{fig:examples_d_ori}
\end{figure} 


\subsubsection*{Main results and outline of the article.} 
In this article, we extend the strategy of~\cite{DGL15} to toroidal maps of prescribed essential  girth and face-degrees, 
thereby obtaining bijections with certain decorated unicellular maps. 
 Our bijections can be seen as toroidal counterparts of those given in~\cite{BF12} 
for planar toroidal 
$d$-angulations of essential girth $d\geq 3$, and in~\cite{BF12b} for 
planar maps with prescribed girth and face-degrees. 

Our first results deal with toroidal $d$-angulations of essential girth $d$, for $d\geq 3$. 
In the planar case it is known~\cite{BF12} that $d$-angulations of girth $d$, with a marked face
considered as the outer face, can be endowed
with certain `weighted biorientations' (given by assigning a weight in $\NN$ to every half-edge) called 
$\frac{d}{d-2}$-orientations, such that for every inner edge (resp. inner vertex) 
the sum of the weights of the incident half-edges is $d-2$ (resp. $d$). Moreover, each $d$-angulation of girth $d$
admits a `canonical' such orientation, called the minimal one. The meta-bijection given in~\cite{BF12}
can then be applied to the minimal  $\frac{d}{d-2}$-orientations, giving a correspondence with well-characterized
decorated trees.  

We will prove that a parallel strategy can be applied in genus
$1$. Precisely, we show in Section~\ref{sec:balanced} that every
toroidal $d$-angulation of essential girth $d\geq 3$ admits a
so-called \emph{balanced $\frac{d}{d-2}$-orientation}, where again
every half-edge is assigned a weight-value in $\NN$ such that the
total weight of each edge (resp. vertex) is $d-2$ (resp. $d$) and
`balanced' means that for every non-contractible cycle $C$, the total
weight of half-edges incident to each side of $C$ is the same, see
Figure~\ref{fig:examples_d_ori} for examples ($d=3$ on the left side, 
$d=5$ on the right side).  Similarly as in the
planar case, when the $d$-angulation has a distinguished face, the map
admits a `canonical' such orientation, called the minimal one.  An
extension of the `meta-bijection' to higher genus (described in
Section~\ref{sec:bijPhi+} and obtained by adapting the construction
of~\cite{BC11}) can then be applied to these orientations, yielding a
bijection, stated in Section~\ref{sec:bijdangul}, between face-rooted
toroidal $d$-angulations of essential girth $d$ (with the extra
condition that apart from the root-face contour, there is no other closed walk of length $d$ that 
encloses the root-face) and a family of well-characterized decorated
unicellular maps of genus $1$.

Similarly as in the planar case~\cite{BF12b}, the strategy can then be extended to face-rooted
toroidal maps of essential girth $d\geq 1$, with root-face degree $d$
(with the same root-face contour conditions as for $d$-angulations). The canonical orientations 
in that case have similar weight conditions, now allowing for half-edges of negative weights,  and
the obtained bijections, stated in Section~\ref{sec:bij_extended}, keep track of the distribution of the face-degrees,
and have a simpler form in the bipartite case (which can be seen as a parity specialization of the general
bijection, as in the planar case~\cite{BF12,BF12b}).

Regarding counting results, we show in Section~\ref{sec:counting} that in certain cases (essentially
simple triangulations and essentially simple bipartite quadrangulations), the generating function of the corresponding mobiles can be computed by a similar approach as in~\cite{CMS09}, and the expressions simplify nicely. 
Unfortunately, for general $d$, even if the corresponding unicellular decorated trees are well-characterized, we have not succeeded in deriving an explicit simple expression of the generating function of rooted toroidal $d$-angulations 
of essential girth $d$, as was done in the planar case~\cite{BF12,BF12b,BG15}.  


\subsubsection*{Higher genus extensions?}  
It is unclear to us if our results could be extended to higher genus. The nice property of the torus
is that the Euler characteristic is zero, which is compatible with orientations having homogeneous 
outdegrees (e.g. for triangulations on the torus there are exactly $3$ times more edges than vertices,
and the orientations exploited to derive a bijection are those with outdegree $3$ at each vertex). 

In higher genus it has been shown in~\cite{albar2016orienting} 
that every simple triangulation has an orientation where every 
vertex-outdegree is a nonzero multiple of $3$, hence all vertices have outdegree $3$ except for $O(g)$ 
special vertices whose outdegree is a multiple of $3$ larger than $3$ 
 (e.g. in genus $2$ all vertices have outdegree $3$ except
for either two vertices of outdegree $6$ or one vertex of outdegree $9$), and the presence of these special  
vertices makes it more difficult to come up with a natural canonical orientation amenable to a bijection.

\section{Preliminaries}
\label{sec:preliminaries}

\subsection{Maps and essential girth for toroidal maps} 
 A \emph{map} $M$ of genus $g$ is an embedding of a connected graph
 (possibly with loops and multiple edges) on
 the orientable surface $\Sigma$ of genus $g$, such that all
 components of $\Sigma\backslash M$ are homeomorphic to open disks; we
 will mostly consider maps of genus $1$, which we call toroidal maps.
 A map is called \emph{rooted} if it has a marked corner, and is
 called \emph{face-rooted} if it has a marked face.  The \emph{dual}
 $M^*$ of $M$ is the map obtained by inserting a vertex in each face
 of $M$, every edge $e\in M$ yielding a dual edge $e^*$ in $M^*$ that
 connects the vertices dual to the faces on each side of $e$.
 A \emph{walk} in $M$ is a (possibly infinite) sequence of edges traversed in a
   given direction, such that the head of an edge in the sequence
   coincides with the tail of the next edge in the sequence (possibly
   two successive edges in the sequence are the same edge traversed in
   opposite directions).  A
 \emph{path} in $M$ is a walk with no repeated vertices.  A
 \emph{closed walk} in $M$ is a finite walk  such
 that the head of the first edge in the sequence coincides with the
 tail of the last edge. We identify two closed walks if they differ by a cyclic shift of the sequence of edges. Hence
a closed walk can be seen as a cyclic sequence of edges such that the head of each edge coincides with the tail of the 
next edge in the sequence. 
A closed walk is called \emph{non-repetitive} if it does not pass twice by a same edge taken in the same direction. A \emph{cycle} is a closed walk with no repeated vertices.

 The \emph{girth} of a map $M$ is the length of a shortest cycle in
 $M$.  The \emph{essential girth} of a toroidal map $M$ is the girth
 of the universal cover $M^{\infty}$ (periodic planar representation). 
As we will see, the essential girth is at least the
 girth.  
A \emph{contractible closed walk} of $M$ (resp. of $M^{\infty}$) is defined
as a non-repetitive closed walk $W$ having a contractible region on its right, which is 
called the \emph{interior} of $W$.   

\begin{lemma}\label{lem:charac_ess}
Let $M$ be a toroidal map. Then the essential girth of $M$ coincides with the length of a shortest
contractible closed walk in $M$.
\end{lemma}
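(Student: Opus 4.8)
The plan is to prove the two inequalities separately, the bridge between $M$ and $M^{\infty}$ being the standard fact that a contractible (i.e.\ simply connected) region of a surface lifts homeomorphically through the universal covering projection $\pi\colon M^{\infty}\to M$. Write $g$ for the essential girth (the girth of $M^{\infty}$) and $\ell$ for the length of a shortest contractible closed walk of $M$; recall that $\pi_1(M)\cong\ZZ^2$ acts on $M^{\infty}$ by deck transformations, every nontrivial one being fixed-point-free and leaving no non-empty bounded set invariant.

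For $g\le\ell$: let $W$ be a shortest contractible closed walk of $M$, with interior the contractible region $\Delta$. Lifting $\Delta$ yields a disk region $\widetilde\Delta\subseteq M^{\infty}$ whose boundary walk $\widetilde W$ is a lift of $W$, so $\widetilde W$ is a contractible closed walk of $M^{\infty}$ of length $\ell$. In a planar map, a contractible closed walk bounding a disk contains a cycle of no greater length: if it has no repeated vertex it already is a cycle, and otherwise it visits some vertex twice, the disk it bounds is pinched there, and cutting at that vertex splits the walk into two strictly shorter contractible closed walks (after first deleting spikes $ee^{-1}$), on which one recurses. Hence $M^{\infty}$ has a cycle of length $\le\ell$, i.e.\ $g\le\ell$.

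For $\ell\le g$: pick a shortest cycle $C$ of $M^{\infty}$ (so $|C|=g$), bounding a disk region $D$, and, among all shortest cycles, chosen so that $D$ has the smallest possible number of faces. The key claim is that $\pi$ is injective on $D^{\circ}$. Granting this, $\pi|_{D^{\circ}}$ is an open continuous injection, hence a homeomorphism onto a simply connected region of $M$; moreover $\pi(C)$ is non-repetitive, since if it traversed some edge $e$ twice in the same direction then the two faces of $M^{\infty}$ on the interior side of the two corresponding lifts of $e$ would be distinct faces of $D$ in the same $\ZZ^2$-orbit, contradicting the claim. Orienting $C$ so that $D$ lies on its right, $\pi(C)$ is then a contractible closed walk of $M$ of length $g$, so $\ell\le g$, and together with the first inequality this proves the lemma.

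It remains to prove the claim, which is the technical heart. Suppose $\pi|_{D^{\circ}}$ is not injective; since a repeated vertex or edge of $D^{\circ}$ forces a repeated incident face, there are a face $F$ and a nontrivial deck transformation $\tau$ with $F$ and $\tau(F)$ both in $D$. Then $D$ and $D':=\tau^{-1}(D)$ are two shortest-cycle disks of minimal face-count that share a face, with $D\neq D'$; by minimality neither contains the other, so $C$ and $C':=\partial D'$ cross. Now I would perform a cut-and-paste: each connected component of $D\cap D'$ and of $D\setminus\overline{D'}$ is a topological disk whose boundary walk is a concatenation of arcs of $C$ and arcs of $C'$, so (being a contractible closed walk) has length $\ge g$; on the other hand the sum of all these boundary lengths equals $|C|$ plus twice the length of the portion of $C'$ interior to $D$, which is $<3g$. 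Hence there are at most two such components, which forces exactly one of each type, and then inspecting the (at most four) cycles into which $C$ and $C'$ cut each other forces the lens $D\cap D'$ to be bounded by a cycle of length exactly $g$; this cycle encloses strictly fewer faces than $D$, contradicting minimality. I expect the main obstacle to be making this cut-and-paste fully rigorous, in particular justifying that the components are disks with boundary walks of the stated form and handling the degenerate configurations where $C$ and $C'$ share edges or touch without crossing (the remaining cases, such as $M^{\infty}$ having a loop or no cycle at all, being trivial).
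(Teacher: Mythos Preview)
Your approach is essentially the same as the paper's: for the direction $\ell\le g$ you pick a shortest cycle of $M^{\infty}$ whose enclosed disk is minimal, and derive a contradiction from any overlap between this disk and one of its deck-translates. The paper executes the overlap step more cleanly by using the submodularity inequality
\[
|\partial R|+|\partial R'|\ \ge\ |\partial(R\cap R')|+|\partial(R\cup R')|,
\]
which immediately forces $|\partial(R\cap R')|=|\partial(R\cup R')|=g$ and hence yields a $g$-cycle strictly inside $R$. Your decomposition into components of $D\cap D'$ and $D\setminus\overline{D'}$ is heading toward the same contradiction, but as you note it leaves several cases (multiple components, tangential contact, shared edges) to be checked; the submodularity route bypasses all of that in one line. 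In the other direction ($g\le\ell$) your lifting-and-extracting-a-cycle argument matches the paper's exactly.
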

\begin{proof}
Let $d$ be the essential girth of $M$ and let $d'$ be the length of a shortest contractible closed walk in $M$. 
We first make a few observations. Any contractible closed walk $W$ of $\Minf$ yields a contractible closed walk
$w$ in $M$, called the \emph{projection} of $W$. Any contractible closed walk of $\Minf$ that projects to $w$ is called
a \emph{replication} of $W$ (in the periodic planar representation, a replication of $W$ 
is a translate of $W$ by an integer linear combination 
of two vectors spanning an elementary cell). A closed walk of $\Minf$ is called \emph{admissible} if its interior
does not overlap with the interior of any of its other replications. Clearly, for $W$ a 
contractible closed walk of $\Minf$, the projection of $W$ is a contractible closed walk of $M$ iff $W$ is admissible. 
This ensures that there is a contractible closed walk of length $d'$ in $\Minf$, from which a cycle can be extracted. 
Hence $d'\geq d$. It remains to show that $d\geq d'$. For this, 
we just have to find an admissible cycle of length $d$ in $\Minf$. Let $C$ be a cycle of length $d$ in $\Minf$, 
with the property that the interior of $C$ does not contain the interior of another cycle of length $d$. We are going
to show that $C$ is admissible.  
 Let $C'\neq C$ be a replication of $C$, and let $R,R'$ be the respective interiors of $C$ and $C'$. 
Assume by contradiction that $R\cap R'\neq\emptyset$. Note that we have
\[
|\partial R|+|\partial R'|\geq |\partial(R\cap R')|+|\partial(R\cup R')|,
\]
(indeed we have $\partial(R\cup R')\subseteq \partial R\cup \partial R'$, $\partial(R\cap R')\subseteq \partial R\cup \partial R'$, and $\partial(R\cup R')\cap\partial(R\cap R')\subseteq \partial R\cap \partial R'$, so that
for every edge $e$ of $M$, the contribution of $e$ to $|\partial R|+|\partial R'|$ is at least 
its contribution to $|\partial(R\cap R')|+|\partial(R\cup R')|$). Hence
\[
2d\geq |\partial(R\cap R')|+|\partial(R\cup R')|.
\]
Since $d$ is the minimal cycle-length in $\Minf$ we must have $|\partial(R\cap R')|=|\partial(R\cup R')|=d$.
Hence the contour of $R\cap R'$ is a cycle of length $d$, contradicting the initial hypothesis on $C$. 
\end{proof}

The characterization given in Lemma~\ref{lem:charac_ess} 
easily ensures that the girth of $M$ is at most its essential girth (indeed,
a cycle can be extracted from a shortest contractible closed walk).    
If $M$ has essential girth $d$, a \emph{$d$-angle} of $M$ is a contractible
closed walk of length $d$. 
It is called \emph{maximal} 
if its interior is not contained in the interior of another $d$-angle. 
A toroidal map $M$ is called \emph{essentially simple} if it has essential girth at least $3$
(it means that $M^{\infty}$ is simple, i.e., has no loop nor multiple edges).

For $d\geq 3$, a map is
called a $d$-angulation if all its faces have degree $d$. For $d=3,4,5$,
such maps are respectively called triangulations, quadrangulations,
pentagulations.   
Note that a toroidal $d$-angulation has  essential girth less than or equal
to $d$ (and it can be strictly less), since every face-contour is a $d$-angle.
A toroidal $d$-angulation of essential girth $d$ is called  a 
\emph{$d$-toroidal map}\footnote{The extra condition on the root-face contour mentioned in the abstract and introduction amounts to considering $d$-toroidal maps where the root-face contour is a maximal $d$-angle.}. 
Note that $3$-toroidal maps are exactly essentially simple toroidal
triangulations. By Euler's formula, one can check that, in a toroidal map 
with all face-degrees even, a contractible closed walk must have even length. 
In particular, $4$-toroidal maps are the same as essentially simple quadrangulations.

\subsection{Constrained orientations and weighted biorientations of maps}

For $M$ a map with vertex-set $V$ and edge-set $E$,   
and $\alpha:V\to\mathbb{N}$, an \emph{$\alpha$-orientation}~\cite{Fe03} of $M$ is an orientation of $M$ such that 
every vertex has outdegree $\alpha(v)$.
A \emph{biorientation} of $M$ is the assignment of a direction 
to every half-edge 
(half-edges can be either outgoing or ingoing at 
their incident vertex). 
The \emph{outdegree} of a vertex $v$ is the total number
of outgoing half-edges incident to $v$. 
An \emph{$\N$-biorientation} 
 of $M$ is a biorientation of $M$ where every 
half-edge is given a value in $\mathbb{N}$, which is 
in $\ZZ_{>0}$ if the half-edge is outgoing and equal to zero if the 
half-edge is ingoing. The \emph{weight of a vertex} 
is the total weight of its incident half-edges. The \emph{weight of an edge}  
is the total weight of its two half-edges. Note that 
an orientation can be identified with an $\N$-biorientation 
where every edge has weight $1$.
For $\alpha:V\to\mathbb{N}$ and $\beta:E\to\mathbb{N}$, an \emph{$\ab$-orientation} of $M$ is an $\N$-biorientation of $M$ such that 
every vertex $v$ 
has weight $\alpha(v)$ and every edge $e$ has weight $\beta(e)$. 
In all this paper, we assume that 
 $\beta$ takes only strictly positive values. By doing so we can define
the \emph{$\beta$-expansion} of $M$ as the map $H$ obtained
from $M$ after replacing every edge $e=\{u,v\}$ of $M$ by a group
of $\beta(e)$ parallel edges connecting $u$ and $v$. 
Note that every $\alpha$-orientation of $H$ yields
an $\ab$-orientation of $M$, see Figure~\ref{fig:alpha_ori_rules}.(a). Conversely every 
$\ab$-orientation $X$ of $M$ yields an $\alpha$-orientation 
of $H$, called the $\beta$-expansion of $X$, with the convention that the edge-directions 
in the group of parallel edges 
are chosen in the unique way consistent with the weights
and such that there is no clockwise cycle within
the group,
as shown in Figure~\ref{fig:alpha_ori_rules}(b).

\begin{figure}[!h]
\begin{center}
\includegraphics[width=12cm]{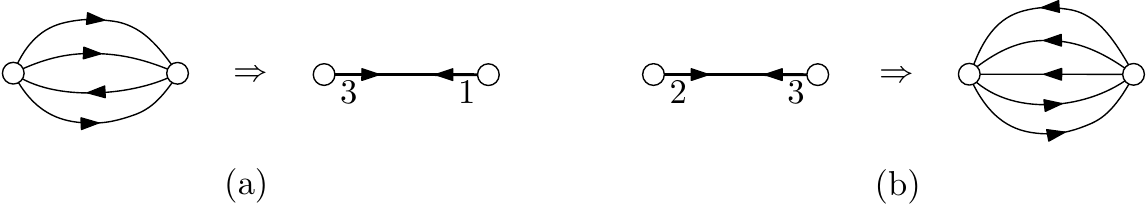}
\end{center}
\caption{(a)
  Rule to obtain an $\ab$-orientation of $M$
from an $\alpha$-orientation of $H$ (with $H$ the $\beta$-expansion of $M$). (b) Rule to obtain an $\alpha$-orientation of $H$
from an $\ab$-orientation of $M$.}
\label{fig:alpha_ori_rules}
\end{figure}

Assume $M$ is a face-rooted  map, with $f$ its marked face.  An orientation
of $M$ is called \emph{non-minimal} if there exists a non-empty set $S$ of faces
such that $f\notin S$ and every edge on the boundary of $S$ has a
face in $S$ on its right (and a face not in $S$ on its left).  It is
called \emph{minimal} otherwise. An $\ab$-orientation of $M$ is called
\emph{minimal} if its $\beta$-expansion $H$ is minimal (where the root-face
of $H$ is the one corresponding to $f$). 
Equivalently, 
 an $\ab$-orientation of $M$ is non-minimal if there exists a non-empty set $S$ of faces
such that $f\notin S$ and every edge on the boundary of $S$ either is simply directed with a face in $S$ on its right
or is bidirected. 

Consider an orientation of $M$ and
a non-contractible cycle $C^*$ of $M^*$
given with a traversal direction (i.e., a cyclic ordering $(h_0,\ldots,h_{2k-1})$ 
of the half-edges on the cycle such that any two successive half-edges $h_{2i},h_{2i+1}$ are opposite on the same edge,
and any two successive half-edges $h_{2i+1},h_{(2i+2)\ \mathrm{mod}\ 2k}$ are at the same vertex).     
Let $\delta_R(C^*)$ (resp. $\delta_L(C^*)$) 
be the number of edges of $M$ crossing $C^*$ from 
left to right (resp. from right to left). 
Then the \emph{$\delta$-score} 
of $C^*$ is defined as $\delta(C^*)=\delta_R(C^*)-\delta_L(C^*)$.
Two $\alpha$-orientations 
$X,X'$ are called \emph{$\delta$-equivalent} if
every non-contractible cycle of $M^*$ has the same $\delta$-score 
in $X$ and in $X'$. 
The following statement is easily deduced from the results and observations in~\cite{Pro93}
(in particular the fact that the set of contours of non-root faces plus two non-homotopic non-contractible cycles 
form a basis of the cycle-space):

\begin{theorem}[\cite{Pro93}]\label{theo:propp}
Let $M$ be a face-rooted 
map on the orientable surface of genus $g$  endowed with an $\alpha$-orientation $X$.  
Then $M$ has a unique $\alpha$-orientation
 $X_0$ that is minimal\footnote{It is actually proved in~\cite{Pro93} that the set of $\alpha$-orientations that are $\delta$-equivalent
to $X$ is a distributive lattice, of which $X_0$
is the minimum element.} and $\delta$-equivalent to $X$.

Moreover, suppose that $M$ is a toroidal map and $X,X'$
are two $\alpha$-orientations of $M$. If there exist two non-contractible
non-homotopic\footnote{Two closed 
curves on a surface are called \emph{homotopic} 
if one can be continuously deformed into the other.} cycles of $M$ that have the same $\delta$-score in $X$ and in
$X'$, then $X,X'$ are $\delta$-equivalent.
\end{theorem}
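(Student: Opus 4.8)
The plan is to translate the whole statement into elementary surface homology, using just two inputs. First, if $X$ and $X'$ are two $\alpha$-orientations of $M$, then their difference $\phi := X'-X$, recorded by setting $\phi(e)=\pm 1$ when $e$ is oriented oppositely in $X'$ and in $X$ (with a sign convention fixed once and for all, and $\phi(e)=0$ otherwise), is a $\mathbb{Z}$-circulation on $M$: it is conservative at each vertex precisely because $X$ and $X'$ have the same outdegree $\alpha(v)$ there. Second, I would invoke the observation from \cite{Pro93} quoted just before the statement: the cycle space $Z_1(M)$ of $\mathbb{Z}$-circulations decomposes as the span of the $F-1$ contours of non-root faces together with a system of $2g$ non-contractible cycles forming a homology basis (a dimension count via Euler's formula confirms this), so that the subspace of \emph{null-homologous} circulations is exactly the span of the non-root face contours. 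Alongside these I would record the bridge between $\delta$-scores and homology: for a non-contractible cycle $C^{*}$ of $M^{*}$ with a traversal direction, each edge of $M$ crossing $C^{*}$ is a transverse intersection point of $C^{*}$ with $\phi$, and bookkeeping the crossing signs gives $\delta_{X'}(C^{*})-\delta_{X}(C^{*})=\langle[\phi],[C^{*}]\rangle$, the algebraic intersection number in $\Sigma$; the same identity holds, mutatis mutandis, for a non-contractible cycle of $M$ itself.

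Granting this bridge, the $\delta$-score of any (dual or primal) non-contractible cycle is additive in $\phi$ and depends only on the class $[\phi]\in H_1(\Sigma;\mathbb{Z})$. Hence $X$ and $X'$ are $\delta$-equivalent iff $\langle[\phi],[C^{*}]\rangle=0$ for every non-contractible cycle $C^{*}$ of $M^{*}$; adjoining the (trivially satisfied) contractible ones, this says $\langle[\phi],\cdot\rangle$ vanishes on the image of the whole cycle space of $M^{*}$, which is all of $H_1(\Sigma;\mathbb{Q})$, and since the intersection form is non-degenerate this is equivalent to $[\phi]=0$. By the second input, $[\phi]=0$ exactly when $\phi$ is a $\mathbb{Z}$-combination of non-root face contours. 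In short: \emph{$X$ and $X'$ are $\delta$-equivalent iff one is obtained from the other by a sequence of reversals along non-root face contours.}

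For the first part I would then use Propp's lattice theorem as a black box. Reversing all edges of $\partial S$, for a nonempty set $S$ of faces with $f\notin S$ and $S$ on the right of each such edge, preserves every outdegree (at each vertex the incident half-edges of $\partial S$ split evenly into ``in'' and ``out'') and changes $X$ by $-\sum_{F\in S}\partial F$, hence stays in the $\delta$-equivalence class; ``minimal'' in the sense of the statement is exactly the property that no such ``down move'' is available. The theorem of \cite{Pro93} asserts that the $\delta$-equivalence class of $X$ (equal to its class under these face-contour reversals, by the previous paragraph) is a finite distributive lattice for the order generated by down moves, whose minimum is characterised as the unique element admitting no down move; this is the desired $X_0$. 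For an $\alpha/\beta$-orientation of $M$ one passes to the $\beta$-expansion $H$ and applies the above to $H$, which is what makes minimality for $\alpha/\beta$-orientations coherent. I expect the only real work here to be bookkeeping — checking that a down move is a legal $\alpha$-orientation move and that the paper's notion of ``minimal'' matches Propp's minimality hypothesis — after which the lattice theorem does everything.

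For the ``moreover'' part, specialise to $g=1$, so $H_1(\Sigma;\mathbb{Z})\cong\mathbb{Z}^2$ carries a unimodular intersection form. A non-contractible cycle of a toroidal map is an essential simple closed curve on the torus, hence represents a primitive class, and ``non-homotopic'' means the two given cycles $\gamma_1,\gamma_2$ have classes with $[\gamma_1]\neq\pm[\gamma_2]$; two non-proportional primitive vectors of $\mathbb{Z}^2$ span a finite-index subgroup, hence span $H_1(\Sigma;\mathbb{Q})$. By the bridge identity, the hypothesis that $\gamma_1$ and $\gamma_2$ have equal $\delta$-scores in $X$ and $X'$ reads $\langle[\phi],[\gamma_1]\rangle=\langle[\phi],[\gamma_2]\rangle=0$, so $[\phi]$ is orthogonal to a $\mathbb{Q}$-spanning family for a non-degenerate form; since $H_1$ is torsion-free this forces $[\phi]=0$, and by the equivalence established in the second paragraph $X$ and $X'$ are $\delta$-equivalent. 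The only slightly subtle point, passing from orthogonality to a finite-index subgroup to orthogonality to everything, is immediate from torsion-freeness, so I expect no genuine obstacle here beyond the homological bookkeeping already needed for the bridge identity.
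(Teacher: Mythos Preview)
The paper does not give its own proof of this statement; it is cited from \cite{Pro93} together with the parenthetical remark that the contours of non-root faces plus two non-homotopic non-contractible cycles form a basis of the cycle space. Your proposal is precisely a fleshing-out of that remark in homological language---identifying $X'-X$ with a circulation $\phi$, translating $\delta$-equivalence into $[\phi]=0$ via the intersection pairing, invoking Propp's lattice theorem as a black box for the first assertion, and for the toroidal ``moreover'' using that two non-homotopic non-contractible simple closed curves represent $\mathbb{Q}$-independent primitive classes in $H_1(\Sigma)\cong\mathbb{Z}^2$---and it is correct and matches the intended argument.
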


We now define the analogue of the function $\gamma$ 
introduced in~\cite{DGL15,GKL15} for Schnyder woods (see also~\cite{LevHDR} for a detailed presentation).

 If $M$ is endowed with an orientation, and $C$ is a non-contractible
 cycle of $M$ given with a traversal direction, we denote by
 $\gamma_R(C)$ (resp. $\gamma_{L}(C)$) the total number of edges going
 out of a vertex on $C$ on the right (resp. left) side of $C$, and
 define the \emph{$\gamma$-score} of $C$ as
 $\gamma(C)=\gamma_R(C)-\gamma_L(C)$. Two $\alpha$-orientations $X,X'$ 
of $M$ are called \emph{$\gamma$-equivalent} if every non-contractible cycle of $M$
has the same $\gamma$-score in $X$ as in $X'$. 
The following theorem is  an analog (and a consequence) of Theorem~\ref{theo:propp}; 
we only state it in genus $1$, to keep the proof simpler and as it is the focus of the article. 

\begin{corollary}\label{theo:gamma}
Let $M$ be a face-rooted toroidal 
map endowed with an $\alpha$-orientation $X$.  
Then $M$ has a unique $\alpha$-orientation
 $X_0$ that is minimal and $\gamma$-equivalent to $X$.

 Moreover, for two $\alpha$-orientations $X,X'$ of $M$ to be $\gamma$-equivalent, it 
is enough that two non-contractible
non-homotopic cycles of $M$ have the same $\gamma$-score in $X$ and in
$X'$.
\end{corollary}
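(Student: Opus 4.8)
The plan is to reduce Corollary~\ref{theo:gamma} to Theorem~\ref{theo:propp} by exhibiting a concrete dictionary between the $\gamma$-scores of non-contractible cycles of $M$ and the $\delta$-scores of non-contractible cycles of $M^*$. The key observation is that, for $M$ endowed with an $\alpha$-orientation, the quantity $\gamma(C)$ for a non-contractible cycle $C$ of $M$ can be expressed in terms of $\delta$-scores of dual cycles together with purely combinatorial (orientation-independent) data. Concretely, given a non-contractible cycle $C$ of $M$ with a traversal direction, let $R$ be one of the two sides of $C$ (an infinite cylinder in $M^\infty$, or a cylinder in $M$). Each edge incident to a vertex of $C$ and lying (strictly) on the right of $C$ is counted by $\gamma_R(C)$ iff it is outgoing at its endpoint on $C$; summing outdegrees along $C$ and using the $\alpha$-orientation constraint $\mathrm{outdeg}(v)=\alpha(v)$, one gets $\gamma_R(C)+\gamma_L(C)+(\text{edges of }C\text{ outgoing at their tail on }C)=\sum_{v\in C}\alpha(v)$, which is orientation-independent up to the last, easily controlled term. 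Thus $\gamma(C)=\gamma_R(C)-\gamma_L(C)$ differs from $2\gamma_R(C)$ by an orientation-independent constant, and it remains to relate $\gamma_R(C)$ to a $\delta$-score.

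The link between $\gamma_R(C)$ and a dual $\delta$-score is the standard one: push $C$ slightly to its right to obtain a dual cycle $C^*$ (the boundary of a narrow annular neighbourhood of $C$ on the right side); an edge $e$ of $M$ outgoing at a vertex of $C$ towards the right side crosses $C^*$ exactly once, from left to right with respect to an appropriately chosen traversal direction of $C^*$, and no other edges cross $C^*$. Hence $\gamma_R(C)=\delta_R(C^*)$ for that direction, and similarly $\gamma_L(C)$ (pushing to the left) equals a $\delta$-score of a dual cycle $C^{**}$ on the left side. Since $C^*$ and $C^{**}$ are both non-contractible (being isotopic to $C$, which is non-contractible) and since, by Theorem~\ref{theo:propp}, the set of $\delta$-scores over all non-contractible cycles of $M^*$ is unchanged between two $\delta$-equivalent $\alpha$-orientations, one deduces: if $X,X'$ are $\delta$-equivalent then they are $\gamma$-equivalent. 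For the first assertion I would then take $X_0$ to be the unique $\alpha$-orientation that is minimal and $\delta$-equivalent to $X$, whose existence and uniqueness are given by Theorem~\ref{theo:propp}; $\delta$-equivalence implies $\gamma$-equivalence by the above, so $X_0$ is minimal and $\gamma$-equivalent to $X$. Uniqueness is more delicate and is where the second ("moreover") part comes in.

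For the "moreover" part, I would argue as follows. Fix two non-contractible non-homotopic cycles $C_1,C_2$ of $M$; their right-pushoffs $C_1^*,C_2^*$ are non-contractible non-homotopic cycles of $M^*$. If $X,X'$ have the same $\gamma$-score on $C_1$ and $C_2$, then by the displayed identity above (the orientation-independent correction term depends only on $C_i$ and $\alpha$, not on the orientation) they have the same $\gamma_R$, hence the same $\delta_R(C_i^*)$, for $i=1,2$; but $\delta_R(C_i^*)$ and $\delta_L(C_i^*)$ sum to the orientation-independent total number of edges crossing $C_i^*$, so $X,X'$ have the same $\delta$-score on $C_1^*$ and on $C_2^*$. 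By the second part of Theorem~\ref{theo:propp}, $X$ and $X'$ are then $\delta$-equivalent, hence (by the first part of the present proof) $\gamma$-equivalent, which is exactly the claimed conclusion. Applying this with $X'=X_0$ and noting that $\gamma$-equivalence is an equivalence relation gives uniqueness of the minimal $\gamma$-equivalent representative: if $X_0'$ is also minimal and $\gamma$-equivalent to $X$, then $X_0'$ is $\gamma$-equivalent to $X_0$, hence $\delta$-equivalent to $X_0$ by the argument just given, and two minimal $\delta$-equivalent $\alpha$-orientations coincide by Theorem~\ref{theo:propp}.

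The main obstacle I anticipate is making the pushoff correspondence fully rigorous: one must choose the traversal direction of $C^*$ consistently so that "outgoing to the right of $C$" translates to "crossing $C^*$ from left to right", and one must verify that pushing $C$ to its right and to its left yields dual cycles that are both isotopic (hence homotopic) to $C$ and in particular non-contractible, so that Theorem~\ref{theo:propp} applies to them; there is also a small bookkeeping point in the identity relating $\gamma_R$, $\gamma_L$ and $\sum_{v\in C}\alpha(v)$, namely correctly accounting for the edges of $C$ itself and for edges with both endpoints on $C$ but not on $C$, which contribute to neither $\gamma_R$ nor $\gamma_L$. All of this is routine but must be stated carefully; once it is in place the corollary follows from Theorem~\ref{theo:propp} with no further lattice-theoretic input.
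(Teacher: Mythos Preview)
Your approach is essentially the same as the paper's: both reduce the corollary to Theorem~\ref{theo:propp} by translating the $\gamma$-score of a primal non-contractible cycle $C$ into the $\delta$-score of a dual cycle obtained by pushing $C$ slightly to its right, using exactly the identity $\gamma_R(C)=\delta_R(C^*)$ you describe, together with the observation that $\gamma_R(C)+\gamma_L(C)$ and $\delta_R(C^*)+\delta_L(C^*)$ are orientation-independent.

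The one substantive difference is in how the pushoff is made rigorous. You work directly in $M^*$, where the right-pushoff of $C$ is a closed walk through the faces of $M$ lying just to the right of $C$; this walk need not be a simple cycle (the same face of $M$ may border $C$ on the right at several places), so strictly speaking Theorem~\ref{theo:propp}, stated for cycles, does not apply without an extra word about extending $\delta$-scores to closed walks. The paper avoids this by passing to the \emph{completion map} $\hat{M}$ (the superposition of $M$ and $M^*$, with a degree-$4$ edge-vertex at each crossing) and an auxiliary $\hat{\alpha}$-orientation of $\hat{M}$ in bijection, and minimality-preserving, with the $\alpha$-orientations of $M$. The faces of $\hat{M}$ are precisely the corners of $M$, so the sequence of corners encountered when walking just to the right of $C$ is a sequence of vertices of $\hat{M}^*$; these corners are all distinct, so the pushoff is automatically a genuine simple non-contractible cycle of $\hat{M}^*$, and Theorem~\ref{theo:propp} applies to $\hat{M}$ with no further fuss. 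This is exactly the ``making the pushoff correspondence fully rigorous'' obstacle you anticipated, and the completion-map device is the clean way to resolve it.
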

\begin{proof}
The \emph{completion-map} of $M$ is the map $\hM$ obtained by 
superimposing $M$ and $M^*$. The vertices of $\hM$ are of 3 types: primal vertices
(those of $M$), dual vertices (those of $M^*$) and edge-vertices (those, of degree $4$, at the 
intersection of an edge $e\in M$ with its dual edge $e^*\in M^*$). Let $\ha$ 
be the function from the vertex-set of $\hM$ to $\mathbb{N}$ such that, 
if $v$ is a primal vertex of $\hM$ then $\ha(v)=\alpha(v)$, if $v$
is a dual vertex of $\hM$ then $\ha(v)=\mathrm{deg}(v)$, and if $v$
is an edge-vertex of $\hM$ then $\ha(v)=1$. Note that any $\alpha$-orientation
$Z$ of $M$ yields an $\ha$-orientation $\hZ$ of $\hM$: each edge of $\hM$
corresponding to a half-edge of an edge $e\in M$ is assigned the direction of $e$ in $Z$, 
and each edge of $\hM$ corresponding to a half-edge of an edge $e^*\in M^*$ 
is directed toward the incident edge-vertex. Clearly the mapping sending $Z$ to $\hZ$ 
is a bijection from the $\alpha$-orientations of $M$ to the $\ha$-orientations of $\hM$,
with the property that $Z$ is minimal if and only if $\hZ$ is minimal. 

Let $C$ be a non-contractible cycle of $M$ given with a traversal direction. 
Let $(c_1,\ldots,c_k)$ be the cyclic sequence of corners of $M$ that are encountered
when walking ``just to the right'' of $C$.  
Since every corner
of $M$ corresponds to a face of $\hM^*$, the cyclic sequence $(c_1,\ldots,c_k)$ 
identifies to a non-contractible cycle of $\hM^*$, 
which we denote by $C^*$, see Figure~\ref{fig:to_right_noncontract_cycle}
 (note that $C^*$ is clearly homotopic to $C$). 
It is then easy to see that for every $\alpha$-orientation 
$Z$ of $M$, we have
\[
\gamma_R^{Z}(C)=\delta_R^{\hZ}(C^*).
\]
Hence, for two $\alpha$-orientations $X,X'$ of $M$, and for $C$ a
non-contractible cycle of $M$ given with a traversal direction, we
have $\gamma^X(C)=\gamma^{X'}(C)$ iff $\gamma_R^X(C)=\gamma_R^{X'}(C)$
iff $\delta_R^{\hX}(C^*)=\delta_R^{\hX'}(C^*)$ iff
$\delta^{\hX}(C^*)=\delta^{\hX'}(C^*)$.  Hence $X,X'$ are
$\gamma$-equivalent if and only if $\hX,\hX'$ are $\delta$-equivalent,
where we use the second statement in Theorem~\ref{theo:propp} to have
the `only if' direction\footnote{While we do not need it here, we also
  mention that it is easy to prove by similar arguments that
  $\hX,\hX'$ are $\delta$-equivalent iff $X,X'$ are
  $\delta$-equivalent. Hence the $\gamma$-equivalence classes on
  $\alpha$-orientations are the same as the $\delta$-equivalence
  classes on $\alpha$-orientations (which are distributive
  lattices).}.

\begin{figure}[!h]
\begin{center}
\includegraphics[width=7cm]{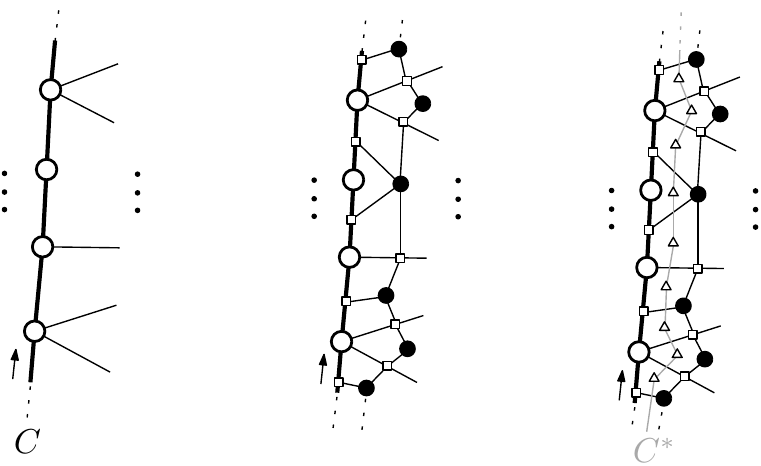}
\end{center}
\caption{Left: a non-contractible cycle $C$ of $M$. 
Middle: the situation in $\hM$ in the right neighborhood of $C$. 
Right: the corresponding non-contractible cycle $C^*$ of $\hM^*$
(which passes by vertices, represented as triangles, that are in the faces corresponding to the corners incident to $C$ on its right side).}
\label{fig:to_right_noncontract_cycle}
\end{figure}

It is then easy to prove the theorem. For $X$ an $\alpha$-orientation of $M$, 
Theorem~\ref{theo:propp} ensures that there exists an $\ha$-orientation  $\hX_0$
of $\hM$ that is minimal and $\delta$-equivalent to $\hX$. By what precedes, 
$X_0$ is $\gamma$-equivalent to $X$ (and is minimal), hence we have the existence part. 
Moreover, if there was another $\alpha$-orientation $X_1$ minimal and $\gamma$-equivalent
to $X$, then $\hX_1$ would be minimal, $\delta$-equivalent to $\hX$, and different from $\hX_0$, 
yielding a contradiction. This gives the uniqueness part.

We now prove the second statement of the theorem.
Let $X,X'$ be two  $\alpha$-orientations of $M$ that have the same
$\gamma$-score for two  non-contractible non-homotopic cycles
$C_1,C_2$. 
By what precedes, $C_1^*$ and $C_2^*$ have the same $\delta$-score in
$\hX$ and in $\hX'$. Hence, by Theorem~\ref{theo:propp}, $\hX$ and $\hX'$ 
are $\delta$-equivalent, so that $X$ and $X'$ are $\gamma$-equivalent. 
\end{proof}

 More generally if $M$ is
 endowed with an $\N$-biorientation
 and $C$ is a non-contractible
 cycle of $M$ given with a traversal direction, we denote by $\gamma_R(C)$
 (resp. $\gamma_L(C)$) the total weight of half-edges incident to a
 vertex on $C$ on the right (resp. left) side of $C$, and define the
 \emph{$\gamma$-score} of $C$ as $\gamma(C)=\gamma_R(C)-\gamma_L(C)$.

Two  $\ab$-orientations, 
$X,X'$ are called
\emph{$\gamma$-equivalent} if every non-contractible cycle
of $M$ has the same $\gamma$-score in $X$ and in $X'$. 
The following theorem is a generalization (and a consequence) 
of Corollary~\ref{theo:gamma} that will be useful for our purpose.

\begin{corollary}\label{theo:general_gamma}
Let $M$ be a face-rooted toroidal 
map endowed with an $\ab$-orientation $X$.  
Then $M$ has a unique $\ab$-orientation
 $X_0$ that is minimal and $\gamma$-equivalent to $X$.

 Moreover, for two $\ab$-orientations $X,X'$ of $M$ to be $\gamma$-equivalent, it 
is enough that two non-contractible
non-homotopic cycles of $M$ have the same $\gamma$-score in $X$ and in
$X'$.
\end{corollary}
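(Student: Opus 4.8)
The plan is to reduce Corollary~\ref{theo:general_gamma} to Corollary~\ref{theo:gamma} via the $\beta$-expansion, in exact analogy with how the $\ab$-orientation machinery was set up in Section~\ref{sec:preliminaries}. Let $H$ be the $\beta$-expansion of $M$, with root-face the one corresponding to the marked face $f$ of $M$, and let $\alpha':V(H)\to\NN$ be the outdegree function induced by $\alpha$ on $V(H)=V(M)$ (note $\beta$ only enters through the edge replacements, not through $\alpha'$). As recalled in the text, the $\beta$-expansion operation is a bijection between $\ab$-orientations of $M$ and $\alpha'$-orientations of $H$: the forward map sends an $\ab$-orientation $X$ to the $\alpha'$-orientation $\widehat{X}$ in which, within each bundle of $\beta(e)$ parallel edges replacing $e=\{u,v\}$, the directions are chosen as the unique choice consistent with the weights and containing no clockwise cycle inside the bundle; the inverse map just contracts each bundle back and reads off the weights. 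Moreover, by the definition of minimality for $\ab$-orientations, $X$ is minimal if and only if $\widehat{X}$ is minimal. So the first step is simply to record these facts and fix notation.

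The second step is to check that $\beta$-expansion is compatible with $\gamma$-scores, i.e. that a non-contractible cycle $C$ of $M$ has a natural counterpart $C_H$ in $H$ with $\gamma_R^{X}(C)=\gamma_R^{\widehat{X}}(C_H)$ for every $\ab$-orientation $X$, and conversely every non-contractible cycle of $H$ is homotopic to such a $C_H$. For a cycle $C$ of $M$ traversing edges $e_1,\dots,e_k$, one obtains $C_H$ by replacing each $e_i$ with one representative edge from its bundle (say the one that inherits the traversal side so as to keep the right side of $C_H$ matching the right side of $C$); the remaining $\beta(e_i)-1$ parallel copies of $e_i$ are then all strictly on one side of $C_H$, and it does not matter which representative is chosen because the total weight of the bundle on a given side is determined by the $\ab$-orientation $X$. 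The key observation is that the outgoing half-edges of the bundle replacing $e_i$ at a vertex $v$ on $C$ carry, in $\widehat{X}$, a combined out-contribution on each side of $C_H$ equal to the weight contribution of the corresponding half-edge of $e_i$ in $X$ on the corresponding side of $C$ — this is exactly the content of the expansion rule in Figure~\ref{fig:alpha_ori_rules}. Summing over all vertices on $C$ gives $\gamma_R^{X}(C)=\gamma_R^{\widehat{X}}(C_H)$, hence $\gamma^{X}(C)=\gamma^{\widehat{X}}(C_H)$. In the other direction, any non-contractible cycle $C'$ of $H$ either lies entirely inside bundles (impossible, as bundles are contractible) or, after rerouting through bundles, is homotopic to some $C_H$ with the same $\delta$-/$\gamma$-data; since by Corollary~\ref{theo:gamma} it suffices to control two non-homotopic cycles, this rerouting is all we need.

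Combining the two steps: $X,X'$ are $\gamma$-equivalent as $\ab$-orientations of $M$ if and only if $\widehat{X},\widehat{X'}$ are $\gamma$-equivalent as $\alpha'$-orientations of $H$. The existence statement then follows by applying Corollary~\ref{theo:gamma} to $H$: there is a unique $\alpha'$-orientation $\widehat{X}_0$ of $H$ that is minimal and $\gamma$-equivalent to $\widehat{X}$; pulling back through the bijection gives an $\ab$-orientation $X_0$ of $M$ that is minimal and $\gamma$-equivalent to $X$, and uniqueness transfers likewise (a second such $X_1$ would give a second such $\widehat{X}_1 \ne \widehat{X}_0$, contradiction). For the ``moreover'' part, if $X,X'$ have the same $\gamma$-score on two non-contractible non-homotopic cycles $C_1,C_2$ of $M$, then $\widehat{X},\widehat{X'}$ have the same $\gamma$-score on the non-homotopic cycles $(C_1)_H,(C_2)_H$ of $H$, so by the ``moreover'' part of Corollary~\ref{theo:gamma} they are $\gamma$-equivalent, hence so are $X,X'$.

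The routine parts are the first and last steps; the only genuinely delicate point — the one I would be most careful about — is the second step, namely verifying cleanly that the $\gamma$-score is preserved under $\beta$-expansion regardless of which representative edges of the bundles are used to build $C_H$, and that every non-contractible cycle of $H$ can be homotoped to such a $C_H$. The subtlety is purely local (what happens to half-edges of a single bundle at a vertex lying on the cycle, and the fact that bundle interiors are contractible disks), so it amounts to a careful reading of the expansion rule in Figure~\ref{fig:alpha_ori_rules} rather than to any new idea; once it is in place, everything else is a transfer along the bijection $X\mapsto\widehat{X}$.
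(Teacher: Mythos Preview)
Your overall strategy --- reduce to Corollary~\ref{theo:gamma} via the $\beta$-expansion $H$ --- is exactly the paper's, but two intermediate claims are false as stated, and the first leaves a genuine hole in the existence argument. The $\beta$-expansion $X\mapsto\widehat{X}$ is \emph{not} a bijection onto all $\alpha$-orientations of $H$; its image consists only of those with no clockwise $2$-cycle inside any bundle (your ``inverse map'', contracting bundles and reading off weights, is only a left inverse). This matters at the pull-back step: when Corollary~\ref{theo:gamma} produces the minimal $\alpha$-orientation $\widehat{X}_0$ of $H$, you cannot simply ``pull back through the bijection''; you must first argue that $\widehat{X}_0$ lies in the image before concluding that its contraction $X_0$ is minimal (recall that minimality of an $\ab$-orientation is \emph{defined} via its $\beta$-expansion, so you need $\widehat{X}_0$ to equal the $\beta$-expansion of $X_0$). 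The paper supplies precisely this missing sentence: minimality of $\widehat{X}_0$ forbids any clockwise cycle, in particular inside a bundle, hence $\widehat{X}_0$ is indeed the $\beta$-expansion of $X_0$.

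Secondly, your identity $\gamma^{X}(C)=\gamma^{\widehat{X}}(C_H)$ is false: for an edge $e$ on $C$, the $\beta(e)-1$ non-representative parallel copies all sit on one side of $C_H$ and each contributes $1$ to that side's $\gamma$ (both endpoints lie on $C_H$, and one half-edge is outgoing), whereas $e$ itself contributes nothing to either $\gamma_L^X(C)$ or $\gamma_R^X(C)$. What \emph{is} true, and what the paper proves, is $\gamma_R^{X}(C)=\gamma_R^{\widehat{X}}(\bar C)$ when $\bar C$ takes the \emph{rightmost} edge of each bundle (so all the extras lie on the left and contribute only to $\gamma_L$). Since $\gamma_L+\gamma_R$ is fixed by $\alpha,\beta$ and $C$ alone, this yields the equivalence $\gamma^{X}(C)=\gamma^{X'}(C)\Leftrightarrow\gamma_R^{X}(C)=\gamma_R^{X'}(C)\Leftrightarrow\gamma_R^{\widehat X}(\bar C)=\gamma_R^{\widehat{X'}}(\bar C)\Leftrightarrow\gamma^{\widehat X}(\bar C)=\gamma^{\widehat{X'}}(\bar C)$, which is all the argument needs; but the choice of representative does matter, contrary to what you assert.
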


\begin{proof}
Let $H$ be the $\beta$-expansion of $M$. For $Z$ an $\ab$-orientation of $M$, 
let $\oZ$ be the $\beta$-expansion of $Z$, i.e., the $\alpha$-orientation of $H$ 
obtained from $Z$ by applying the rule of Figure~\ref{fig:alpha_ori_rules}(b). 
For $C$ a non-contractible cycle of $M$ given with a traversal direction, 
let $\oC$ be the non-contractible cycle of $H$ that goes along $C$ 
in the ``rightmost'' way, i.e. for each edge $e$ of $C$, the cycle $\oC$ passes
by the rightmost edge in the group of $\beta(e)$ edges arising from~$e$. 
Clearly
\[
\gamma_R^Z(C)=\gamma_R^{\oZ}(\oC).
\]
Hence, for two $\ab$-orientations $X,X'$ of $M$ and for $C$ a
non-contractible cycle of $M$ given with a traversal direction, 
we have $\gamma^X(C)=\gamma^{X'}(C)$ iff $\gamma_R^X(C)=\gamma_R^{X'}(C)$
iff $\gamma_R^{\oX}(\oC)=\gamma_R^{\oX'}(\oC)$ iff $\gamma^{\oX}(\oC)=\gamma^{\oX'}(\oC)$.
 Hence  $X,X'$ 
are $\gamma$-equivalent if and only if $\oX,\oX'$ are $\gamma$-equivalent
(we use the second statement in Corollary~\ref{theo:gamma} to have the `only if' direction). 

For $X$ an $\ab$-orientation of $M$, Corollary~\ref{theo:gamma} ensures that 
$H$ has an $\alpha$-orientation $\oX_0$ that is minimal and $\gamma$-equivalent
to $\oX$. Let $X_0$ be the $\ab$-orientation obtained from $\oX_0$ 
by applying the rule of Figure~\ref{fig:alpha_ori_rules}(a). Since $\oX_0$ is minimal, there is 
no clockwise cycle inside any group of $\beta(e)$ edges associated to an edge $e\in M$. 
Hence $\oX_0$ is the $\beta$-expansion of $X_0$, so that (by definition) 
$X_0$ is minimal, and moreover it is $\gamma$-equivalent
 to $X$. This proves the existence part. If there was another $\ab$-orientation $X_1$
minimal and $\gamma$-equivalent to $X$, then $\oX_1$ would be minimal, $\gamma$-equivalent
to $\oX$, and different from $\oX_0$, contradicting Corollary~\ref{theo:gamma}. 
This proves the uniqueness part. 

Let us now prove the second statement of the theorem.
Let  $X,X'$ be two  $\alpha$-orientations of $M$ that have the same
$\gamma$-score for two  non-contractible non-homotopic cycles $C_1,C_2$.
By what precedes, $\oC_1$ and $\oC_2$ have the same $\gamma$-score in
$\oX$ and in $\oX'$. Hence, by Corollary~\ref{theo:gamma}, $\oX$ and $\oX'$ 
are $\gamma$-equivalent, so that $X$ and $X'$ are $\gamma$-equivalent. 
\end{proof}

\section{Balanced $\frac{d}{d-2}$-orientations on the torus}\label{sec:balanced}
Let $M$ be a toroidal map. 
We say that an $\N$-biorientation of $M$ is \emph{balanced} if the
 $\gamma$-score of any non-contractible cycle of $M$ is $0$. 
Note that Corollary~\ref{theo:general_gamma} implies that if  $M$ is face-rooted and 
admits a balanced $\ab$-orientation, then $M$ admits a unique balanced
$\ab$-orientation that is minimal. 
For a toroidal $d$-angulation $M$, we define a \emph{$\frac{d}{d-2}$-orientation} of $M$
as an $\NN$-biorientation of $M$ such that every vertex has weight $d$ and every edge
has weight $d-2$ (our bijections for toroidal $d$-angulations of essential girth $d$
 will crucially rely on minimal balanced \emph{$\frac{d}{d-2}$-orientation}). The purpose of this 
section is to show that a toroidal $d$-angulation admits a $\frac{d}{d-2}$-orientation iff it has essential girth $d$,
and that in that case it admits a balanced $\frac{d}{d-2}$-orientation.

\subsection{Necessary condition on the essential girth}
\label{sec:exist}
The following lemma gives a necessary condition for a  toroidal
$d$-angulation to admit a $\frac{d}{d-2}$-orientation.

\begin{lemma}
\label{th:existencefractional}
  If a toroidal $d$-angulation admits a $\frac{d}{d-2}$-orientation
  then it has essential girth~$d$ (i.e. it is a $d$-toroidal map).
\end{lemma}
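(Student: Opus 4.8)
The goal is to show that if a toroidal $d$-angulation $M$ admits a $\frac{d}{d-2}$-orientation, then its essential girth equals $d$. Since every face-contour of a $d$-angulation is a contractible closed walk of length $d$, Lemma~\ref{lem:charac_ess} already gives that the essential girth is at most $d$. So the whole content is the lower bound: every contractible closed walk of $M$ has length at least $d$, equivalently (again by Lemma~\ref{lem:charac_ess}) the essential girth is at least $d$. The natural approach is a counting argument on the interior of a contractible closed walk, using the weight conditions of the $\frac{d}{d-2}$-orientation.

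First I would let $W$ be a contractible closed walk of $M$, with interior region $R$ (a topological disk). Let $k = |W|$ be its length, and suppose for contradiction that $k < d$. Inside $R$ there are, say, $f$ faces, $E_{\mathrm{int}}$ edges strictly inside $R$, $E_{\partial}$ edges on the boundary walk $W$ (counted appropriately — care is needed because $W$ may traverse an edge twice), and some number of vertices. The plan is to sum the edge-weight condition "every edge has weight $d-2$" and the vertex-weight condition "every vertex has weight $d$" over the objects inside $R$, and compare with the total weight carried by half-edges in $R$. Concretely: summing weights over all vertices strictly interior to $R$ gives $d$ times their number; each interior edge contributes its full weight $d-2$ split between its two endpoints; each boundary edge contributes only the weight of its half-edges lying inside $R$. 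The key inequality is that the total half-edge weight assigned inside $R$ is at least $d \cdot (\text{number of interior vertices})$ but also the boundary contributes at most $(d-2)$ per boundary edge (in fact at most the weight of one side). Combining with Euler's formula for the disk $R$ (where $v - e + f = 1$, with each of the $f$ faces having degree $d$, so $\sum \deg = df = 2 E_{\mathrm{int}} + (\text{boundary edge-sides})$) should force $k \geq d$, a contradiction. This is the standard "discharging / counting in a disk" argument, directly analogous to the planar case in~\cite{BF12} where $d$-angulations of girth $d$ are characterized by admitting $\frac{d}{d-2}$-orientations.

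The step I expect to be the main obstacle is handling a contractible closed walk $W$ that is not a simple cycle: $W$ may repeat vertices or even traverse an edge twice (though being non-repetitive by definition, it cannot traverse the same edge twice in the same direction — it could in opposite directions, i.e. a "pinch"). One must be careful that such an edge lies on the boundary and contributes half-edge weight to the interior count from possibly both of its half-edges, and that the Euler-formula bookkeeping for $R$ treats it consistently. I would address this by first reducing to the case where $W$ is a maximal contractible closed walk of minimum length, or by extracting a cycle bounding the innermost disk; alternatively one notes that it suffices to prove the bound for a $d$-angle and argue directly on the cleanest possible $W$. A clean way is: take $R$ minimal (innermost) so that no proper sub-disk is bounded by a shorter contractible walk; then argue the boundary is "nice enough" that the counting goes through. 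The arithmetic itself — once the combinatorial setup is fixed — is just a linear manipulation of $v - e + f = 1$, $df = 2e_{\mathrm{int}} + (\text{sides on }W)$, the vertex-weight sum, and the edge-weight sum, yielding $k \geq d$.
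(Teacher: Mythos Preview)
Your approach is essentially the paper's: Euler's formula on the interior disk combined with the weight and degree conditions. The paper packages this as Claim~\ref{claim:epsilon}, which derives the \emph{exact} identity $\epsilon = k - d$, where $\epsilon$ is the total weight of half-edges lying in the interior $R$ but incident to a vertex on $W$; since weights are non-negative this gives $k \geq d$ immediately. Your plan reaches the same conclusion but phrases things in terms of inequalities (``the boundary contributes at most $(d-2)$ per boundary edge''), which is less sharp and makes the bookkeeping look harder than it is. The clean accounting is: with $n',m',f'$ the vertices, edges, faces strictly inside $R$, one has $df'=2m'+k$, $dn'+\epsilon=(d-2)m'$, and $n'-m'+f'=1$, and adding the first two gives $d(n'-m'+f')=k-\epsilon$.

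The non-simple boundary issue you flag as the main obstacle is not really one: the interior $R$ is by definition a topological disk, so one can unfold $W$ into a simple cycle of length $k$ bounding a planar map whose strictly-interior vertices, edges and faces are exactly those of $M$ inside $R$; Euler's formula and the counts above then apply verbatim. No minimality or innermost-disk reduction is needed.
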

To prove it, note that the essential girth is clearly at most $d$ since faces have degree~$d$. The fact that the essential girth is at least $d$ is actually a direct consequence of the following statement (which will also be useful in proofs later):
\begin{claim}\label{claim:epsilon}
Let $M$ be a toroidal $d$-angulation endowed with a $\frac{d}{d-2}$-orientation, and let $W$ be a contractible closed walk of length $k$, with $R$ the (contractible) enclosed region. Let $\epsilon$ be the sum of the weights of half-edges in $R$ that are incident to a vertex on $W$. Then $\epsilon=k-d$.  
\end{claim}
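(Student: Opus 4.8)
The plan is to reduce the statement to Euler's formula together with a double counting of the total half-edge weight carried by the edges lying inside $R$. First I would cut $M$ along $W$ to obtain a planar map $N$: since $R$ is a contractible region, $N$ is a map on a closed disk whose outer face has degree $k$ (the $k$ edge-steps of $W$ being resolved into a $k$-cycle by the cut), and whose internal faces are exactly the faces of $M$ lying inside $R$; in particular each internal face of $N$ has degree $d$. Let $f$ be the number of these internal faces, let $n^{\circ}$ be the number of vertices of $M$ lying strictly inside $R$, and let $E_{\mathrm{int}}$ be the number of edges of $M$ that lie strictly inside $R$, i.e.\ that are not traversed by $W$.

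From the planar structure of $N$ I would extract two identities. Euler's formula applied to $N$ gives $n^{\circ}-E_{\mathrm{int}}+f=1$, since $N$ has $n^{\circ}+k$ vertices, $E_{\mathrm{int}}+k$ edges and $f+1$ faces (an edge traversed twice by $W$ becomes two boundary edges of $N$ but also contributes $2$ to $k$, so these counts are correct). Summing the degrees of the faces of $N$ and using that each edge lies on two faces gives $df+k=2(E_{\mathrm{int}}+k)$, i.e.\ $df=2E_{\mathrm{int}}+k$ (again an edge traversed twice by $W$ is a bridge whose unique incident face lies inside $R$ and which is counted twice on the boundary of that face, so it contributes $2$ to each side). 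Finally I would double count the total weight of the half-edges belonging to edges strictly inside $R$: grouping by edges this total is $(d-2)E_{\mathrm{int}}$; grouping by vertices, every vertex strictly inside $R$ is incident only to edges strictly inside $R$ and contributes its full weight $d$, whereas a vertex on $W$ contributes precisely its share of $\epsilon$, so this total also equals $d\,n^{\circ}+\epsilon$. Hence $\epsilon=(d-2)E_{\mathrm{int}}-d\,n^{\circ}$.

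It then remains to combine the three identities: $df=2E_{\mathrm{int}}+k$ gives $E_{\mathrm{int}}=(df-k)/2$, Euler's formula gives $n^{\circ}=1+E_{\mathrm{int}}-f$, and substituting both into $\epsilon=(d-2)E_{\mathrm{int}}-d\,n^{\circ}$ a one-line computation yields $\epsilon=k-d$. I expect the only delicate point to be checking that the Euler-type identities $n^{\circ}-E_{\mathrm{int}}+f=1$ and $df=2E_{\mathrm{int}}+k$ are stated correctly when $W$ is not a simple cycle (some vertices or edges of $M$ being visited several times by $W$); this is exactly handled by the observation that a doubly-traversed edge must be a bridge bordered on both sides by a single face lying inside $R$, so that both sides of each identity receive matching contributions.
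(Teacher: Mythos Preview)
Your proof is correct and follows essentially the same approach as the paper: both arguments set up the three identities $n^{\circ}-E_{\mathrm{int}}+f=1$ (Euler), $df=2E_{\mathrm{int}}+k$ (face-degree count), and $dn^{\circ}+\epsilon=(d-2)E_{\mathrm{int}}$ (weight count), and combine them to obtain $\epsilon=k-d$. You give more explicit justification for the cutting construction and the non-simple case than the paper does, which is fine; one small quibble is that a doubly-traversed edge of $W$ need not have a \emph{unique} incident face in $R$ (both sides lie in $R$, but they may be distinct faces), though this does not affect your counts since you are ultimately working in the planar map $N$ where the identities hold on the nose.
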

\begin{proof}
Let $n',m',f'$ be respectively the numbers of vertices, edges and faces of $M$ that are (strictly) inside $R$. 
Since all faces of $M$ have degree $d$ we have (i) $df'=2m'+k$. Since the weight of every vertex (resp. edge) is $d$ (resp. $d-2$), we have (ii) $dn'+\epsilon=(d-2)m'$. Finally, since $R$ is contractible, the Euler relation ensures that (iii) $n'-m'+f'=1$. Taking (i)+(ii) gives $d(n'-m'+f')=k-\epsilon$, which together with (iii) gives $d= k-\epsilon$. 
\end{proof}

We will see in the Section~\ref{sec:existence} that, conversely, 
any $d$-toroidal map admits a $\frac{d}{d-2}$-orientation, 
and even more, it admits a balanced one.

\subsection{Sufficient condition for balancedness}
\label{sec:pdc}

The next lemma shows that $\gamma$ behaves well with respect to~homotopy
in  $\frac{d}{d-2}$-orientations:

\begin{lemma}
  \label{lem:gammahomology} 
  Let $M$ be a $d$-toroidal map endowed with 
a $\frac{d}{d-2}$-orientation, let $C$ be a non-contractible
  cycle of $M$ given with a traversal direction, and let $\{B_1,B_2\}$ be a
  basis for the homotopy of $M$, such that $B_1,B_2$ are
  non-contractible cycles whose intersection is a single vertex or a
  common path. Let $k_1,k_2\in \mathbb Z^2$, such that $C$ is homotopic
  to $k_1 B_1 + k_2B_{2}$. Then
  $\gamma(C)=k_1\,\gamma (B_1) + k_2\,\gamma (B_2)$.
\end{lemma}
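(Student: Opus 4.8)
The plan is to reduce the general homotopy relation to the additive behavior of $\gamma$ across a single vertex, using the interpretation of $\gamma$ in terms of a flow/cocycle on a suitable graph. First I would recall from the proof of Corollary~\ref{theo:gamma} (and its extension Corollary~\ref{theo:general_gamma}) that for any $\N$-biorientation of $M$, the quantity $\gamma_R(C)$ equals $\delta_R(C^*)$ where $C^*$ is the cycle of the dual of the completion (or the $\beta$-expanded) map running just to the right of $C$; thus $\gamma(C)$ depends only on the homotopy class of $C$ up to the usual arguments, \emph{provided} one knows the corresponding fact for $\delta$. The cleaner route, however, is direct: I would show that $\gamma$ defines a homomorphism from the first homology group $H_1(M)\cong \mathbb{Z}^2$ to $\mathbb{Z}$. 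Concretely, assign to each oriented edge $e$ a ``local contribution'' so that for a non-contractible cycle $C$ traversed in a given direction, $\gamma(C) = \gamma_R(C) - \gamma_L(C)$ is expressed as a sum over the edges/corners of $C$ of these local contributions. The key point, and the one doing the real work, is that the \emph{sum of all local contributions around a contractible cycle is zero}: this is exactly Claim~\ref{claim:epsilon} (with $k = d$, giving $\epsilon = 0$ — but more relevantly, applied to the difference of two walks bounding a region), i.e. the weight condition $dn' + \epsilon = (d-2)m'$ together with Euler's relation forces the net imbalance across any contractible closed walk to vanish.

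More precisely, here is the structure I would follow. (1) Observe that if $C$ and $C'$ are homotopic non-contractible cycles (or closed walks) given with compatible traversal directions, then $C \cup (-C')$ bounds; cutting along it and applying Claim~\ref{claim:epsilon} to each contractible region (treating the two sides separately, as in the $|\partial(R\cap R')|$ computation in Lemma~\ref{lem:charac_ess}) yields $\gamma(C) = \gamma(C')$. This gives homotopy-invariance of $\gamma$ on non-contractible closed walks. (2) Extend $\gamma$ additively to formal sums of closed walks: if $W_1, W_2$ are closed walks sharing a vertex $v$, then walking $W_1$ then $W_2$ (the concatenation $W_1 \cdot W_2$ at $v$) satisfies $\gamma(W_1 \cdot W_2) = \gamma(W_1) + \gamma(W_2)$, because the half-edges counted on the right of the concatenation are, up to the corners at $v$ which contribute a bounded correction that cancels, exactly the union of those on the right of $W_1$ and of $W_2$; the hypothesis that $B_1, B_2$ meet in a vertex or a common path is precisely what makes this bookkeeping at the meeting locus exact rather than approximate. (3) Combining (1) and (2): since $C$ is homotopic to $k_1 B_1 + k_2 B_2$, which is represented by a closed walk obtained by concatenating $k_1$ copies of $B_1$ and $k_2$ copies of $B_2$ (with signs; traversing a cycle backwards negates $\gamma$), we get $\gamma(C) = k_1 \gamma(B_1) + k_2 \gamma(B_2)$.

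The main obstacle I anticipate is step (2): making the concatenation/additivity argument genuinely rigorous at the vertex (or path) where $B_1$ and $B_2$ meet. When one concatenates the two cycles, a single long cycle $C'$ homotopic to $B_1 + B_2$ is formed, and the half-edges ``to the right'' of $C'$ near the junction must be matched bijectively with those to the right of $B_1$ plus those to the right of $B_2$; the corners at the junction vertex get distributed between left and right in a way that depends on the cyclic order of the four (or more) strands there, and one must check that the outgoing half-edges at that vertex are counted with the correct total multiplicity. This is where imposing that the intersection is a single vertex or a single common path is used essentially: it limits the junction to one controllable place. I would handle this by drawing the local picture at the junction (as in Figure~\ref{fig:to_right_noncontract_cycle}) and verifying the half-edge count case by case, or alternatively by passing to the completion map $\hM$ and using the identity $\gamma_R^Z(C) = \delta_R^{\hZ}(C^*)$ together with the fact that $\delta$ is manifestly a cocycle (a sum over edges crossing $C^*$), for which additivity under concatenation is immediate from the additivity of the dual cycle $C^*$ as a $1$-chain. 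The homotopy-invariance part (step (1)) is routine given Claim~\ref{claim:epsilon} and the region-splitting technique already used in Lemma~\ref{lem:charac_ess}.
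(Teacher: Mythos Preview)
Your plan is sound in outline and lands on the same crux as the paper --- the local cancellation at the junction of $B_1$ and $B_2$, which ultimately uses that every vertex has weight $d$ --- but your organization is more modular (homotopy invariance, then additivity) whereas the paper does both at once. The paper lifts everything to the universal cover, builds the bi-infinite path $B^\infty$ representing $k_1B_1+k_2B_2$, sandwiches a lift $C^\infty$ of $C$ between $B^\infty$ and a translate $B'^\infty$, and then applies Euler's formula on the two resulting \emph{cylinder} regions (Euler characteristic $0$) to get two linear relations that force $\gamma(C)=x-y$; the identity $x-y=k_1\gamma(B_1)+k_2\gamma(B_2)$ is then a direct case analysis at the junction (their Claim, argued with pictures), where the only nontrivial case produces the difference of the total weight at two vertices, which vanishes since both equal $d$. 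Your step~(2) is exactly this Claim, and your anticipation that the single-vertex-or-path hypothesis controls the bookkeeping is correct.

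Two places where your write-up needs tightening. First, in step~(1) you invoke Claim~\ref{claim:epsilon}, but that claim is for contractible closed walks bounding a \emph{disk}; two homotopic non-contractible cycles on the torus bound \emph{annular} regions, so you need the cylinder version (Euler relation $n'-m'+f'=0$), which is precisely what the paper derives. This is an easy fix but the citation as stated is wrong. Second, your proposed shortcut via the completion map and ``$\delta$ is manifestly a cocycle'' does not go through as written: while $C^*\mapsto\delta(C^*)$ is indeed linear in the $1$-chain $C^*$, the assignment $C\mapsto C^*$ (walking through the corners just to the right of $C$) is \emph{not} a homomorphism of $1$-chains --- at the junction vertex the corners visited by $(B_1\!\cdot\!B_2)^*$ differ from those of $B_1^*+B_2^*$, and the discrepancy is again a local term at $v$ that must be checked by hand. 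So this route does not avoid the case analysis; it just relocates it.
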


\begin{proof} 
  Let $v$ and $u$ be the two extremities of the path $B_1\cap B_2$ 
(possibly $v=u$, if $B_1\cap B_2$ is reduced to a single vertex). 
  Consider a drawing of ${M}^\infty$ obtained by replicating a flat
  representation of ${M}$ to tile the plane.  Let $v_0$ be a copy of
  $v$ in ${M}^\infty$.  Consider the walk $W$ starting from $v_0$ and
  following $k_1$ times the edges corresponding to $B_1$ and then
  $k_2$ times the edges corresponding to $B_2$ (we are going backward
  if $k_i$ is negative). This walk ends at a copy $v_1$ of $v$.  Since
  $C$ is non-contractible we have $k_1$ or $k_2$ not equal to $0$ and
  thus $v_1$ is distinct from $v_0$.  Let $W^\infty$ be the infinite
  walk obtained by replicating $W$ (forward and backward) from $v_0$.
  Note that their might be some repetition of vertices in $W^\infty$
  if the intersection of $B_1,B_2$ is a path. But in that case, by the
  choice of $B_1,B_2$, the walk $W^\infty$ is almost a path,
  except maybe at all the transitions from ``$k_1 {B_1}$'' to
  ``$k_2B_{2}$'', or (exclusive or) at all the transitions from ``$k_2 {B_2}$'' to
  ``$k_1B_{1}$'', where it can go back and forth a path
  corresponding to the intersection of $B_1$ and $B_2$. The existence
  or not of such ``back and forth'' parts depends on the signs of
  $k_1, k_2$ and the way $B_1,B_2$ are going through their common
  path. Figure~\ref{fig:replicating} gives an example of this
  construction with $(k_1,k_2)=(1,1)$ and $(k_1,k_2)=(1,-1)$ when
  $B_1,B_2$ intersect on a path and are oriented the same way along
  this path as in Figure~\ref{fig:replicating0}.

\begin{figure}[!h]
\center
\includegraphics[scale=0.3]{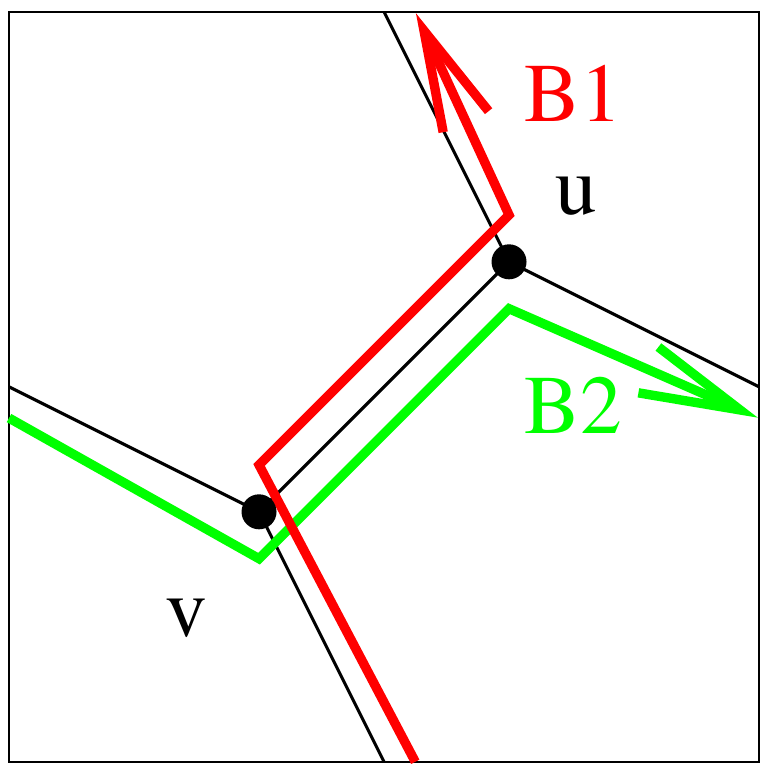}
\caption{Intersection of the basis.}
\label{fig:replicating0}
\end{figure}

\begin{figure}[!h]
\center
\begin{tabular}{cc}
\includegraphics[scale=0.3]{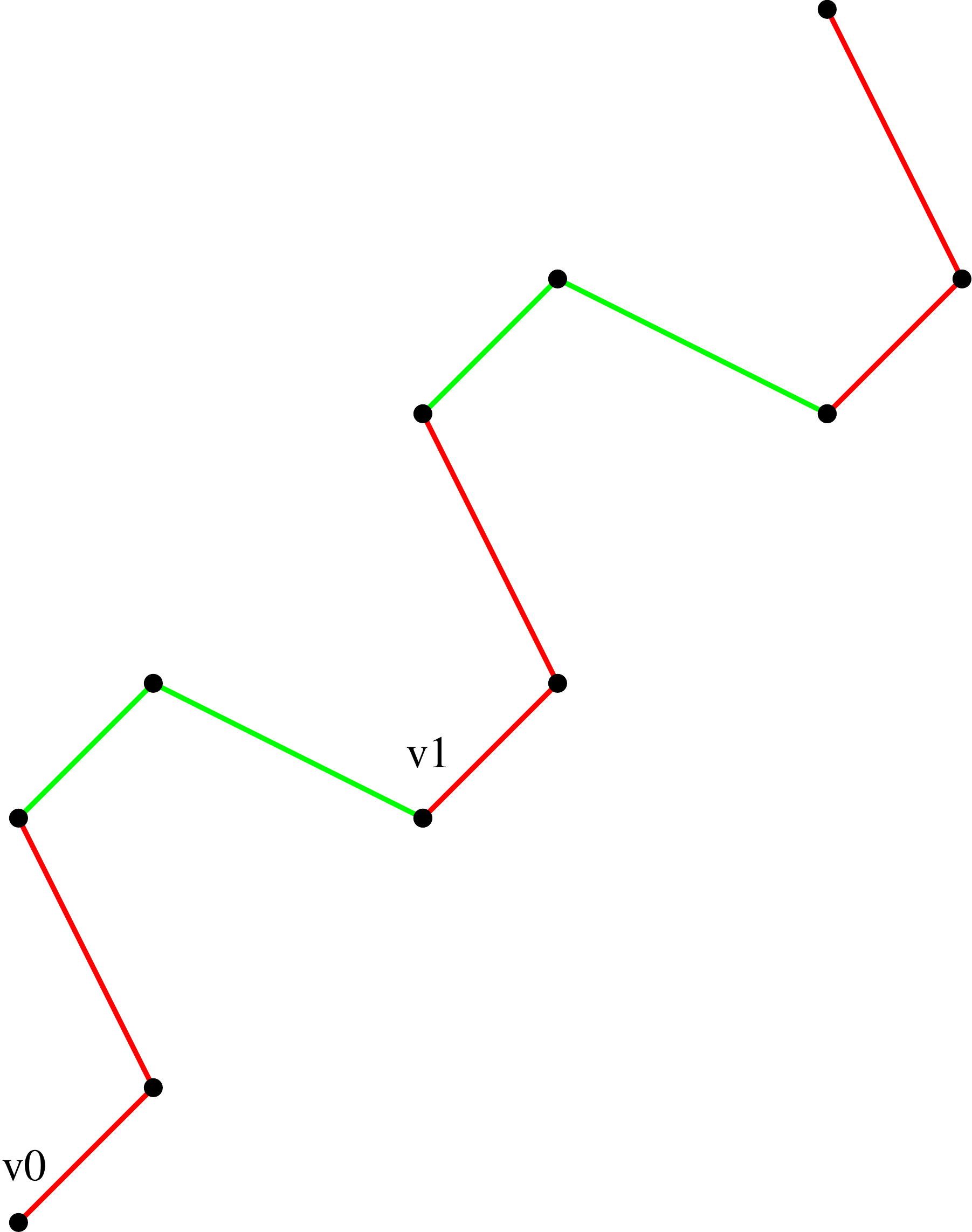} \ \ \ & \ \ \
\includegraphics[scale=0.3]{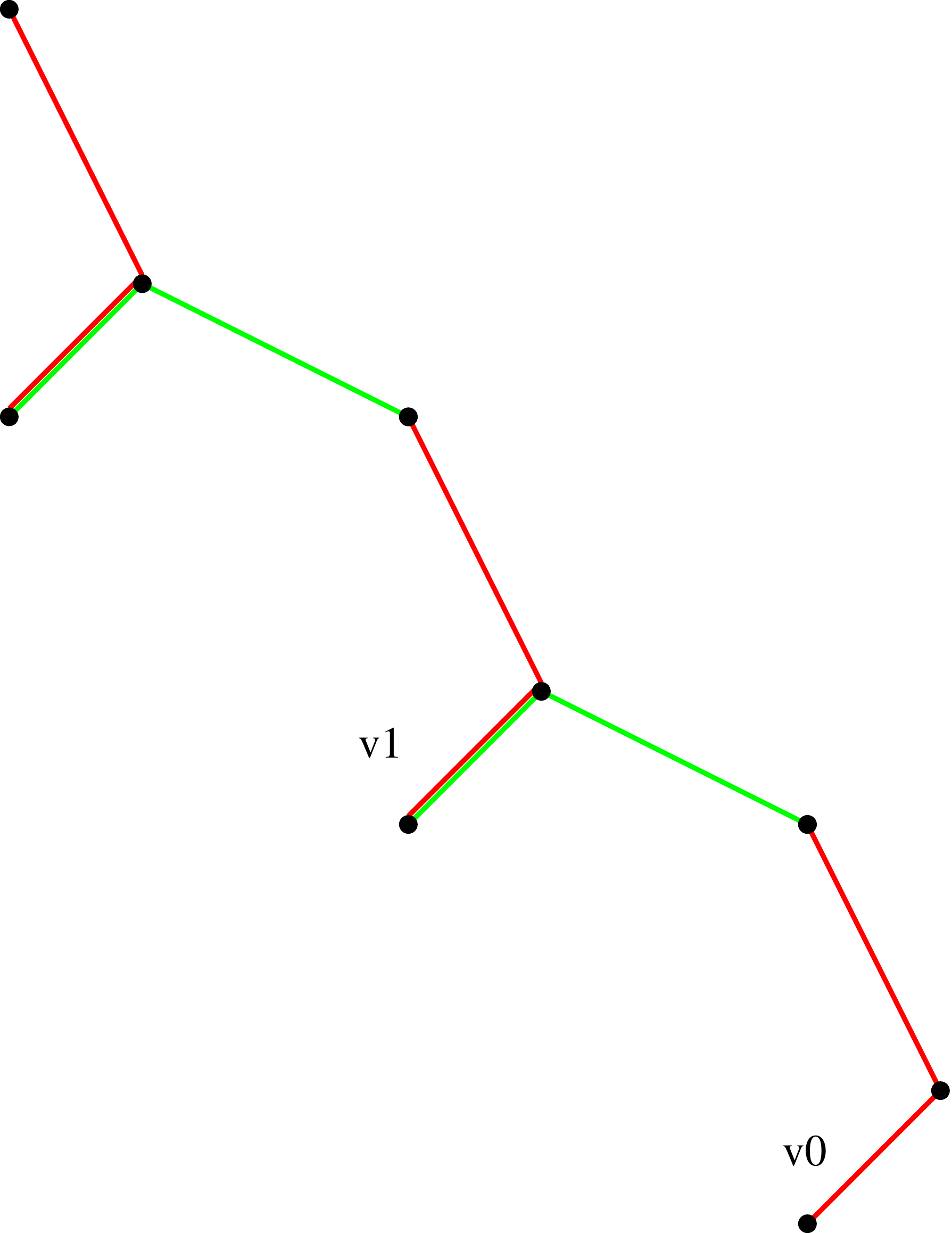} \\
$(k_1,k_2)=(1,1)$ \ \ \ & \ \ \ $(k_1,k_2)=(1,-1)$\\
\end{tabular}
\caption{Replicating ``$k_1 {B_1}$'' and ``$k_2B_{2}$'' in the universal cover.}
\label{fig:replicating}
\end{figure}

   We ``simplify'' $W^\infty$ by removing all the parts that consist 
   of going back and forth along a path (if any) and call $B^\infty$
   the obtained walk that is now without repetition of vertices. By
   the choice of $v$, the walk $B^\infty$ goes through copies of
   $v$. If $v_0,v_1$ are no more a vertex along $B^\infty$, because of
   a simplification at the transition from ``$k_2 {B_2}$'' to
   ``$k_1B_{1}$'', then we replace $v_0$ and $v_1$ by the next copies
   of $v$ along $W^\infty$, i.e., at the transition from
   ``$k_1 {B_1}$'' to ``$k_2B_{2}$''.

Since ${C}$ is homotopic to $k_1 {B_1} + k_{2}{B_{2}}$,
we can find an infinite path $C^\infty$, that corresponds to copies of
$C$ replicated, that does not intersect $B^\infty$ and situated on the
right side of $B^\infty$. Now we can find a copy $B'^\infty$ of
$B^\infty$, such that $C^\infty$ lies between $B^\infty$ and
$B'^\infty$ without intersecting them. We choose two copies $v'_0,v'_1$
of $v_0,v_1$ on $B'^\infty$ such that the vectors $v_0v_1$ and
$v'_0v'_1$  are equal.

Let $R_0$ be the region bounded by $B^\infty$ and $B'^\infty$.  Let $R_1$
(resp. $R_2$) be the subregion of $R_0$ delimited by $B^\infty$ and
$C^\infty$ (resp. by $C^\infty$ and $B'^\infty$).  We consider
$R_0,R_1,R_2$ as cylinders, where the lines $(v_0,v'_0),(v_1,v_1')$
(or part of them) are identified. Let $B,B',C'$ be the cycles of $R_0$
corresponding to $B^\infty, B'^\infty, C^\infty$ respectively. 

Let $x$ be the sum of the weights of the half-edges of ${M}$ incident
to $B$ and in the strict interior of $R_1$. Let $y$ be the sum of the
weights of the half-edges of ${M}$ incident to $B'$ and in the strict
interior of $R_2$.  Let $x'$ (resp. $y'$) be the sum of the weights of
the half-edges of ${M}$ incident to $C'$ and in the strict interior of
$R_2$ (resp. $R_1$). Note that $C'$ corresponds to exactly one copy of
$C$, so $\gamma(C)=x'-y'$.  Similarly, $B$ (and $B'$ as well) 
``almost'' corresponds to $k_1$ copies of $B_1$ followed by $k_2$ copies
of $B_2$, except for the fact that we may have removed a back and forth
part (if any).  In any case we have the following:

\begin{claimn}
\label{cl:computegamma}
  $k_1\,\gamma
(B_1) + k_2\,\gamma (B_2)=x-y$
\end{claimn}

 \begin{proofclaim}
   We prove the case where the common intersection of $B_1,B_2$ is a
   path (if the intersection is a single vertex, the proof is very
   similar and even simpler). We assume, by possibly 
   reversing one of $B_1$ or $B_2$, that $B_1,B_2$ are oriented the
   same way along their intersection, so we are in the situation of
    Figure~\ref{fig:replicating0}.  

Figure~\ref{fig:computegamma1} shows
   how to compute $k_1\,\gamma (B_1) + k_2\,\gamma (B_2)+y-x$ when
   $(k_1,k_2)=(1,1)$. Then, one can check that
   the weight of each half-edge of ${M}$ is counted exactly the same number
   of times positively and negatively. So everything compensates and we
   obtain $k_1\,\gamma (B_1) + k_2\,\gamma (B_2)+y-x=0$.

\begin{figure}[!h]
\center
\begin{tabular}{ccccc}
\includegraphics[scale=0.27]{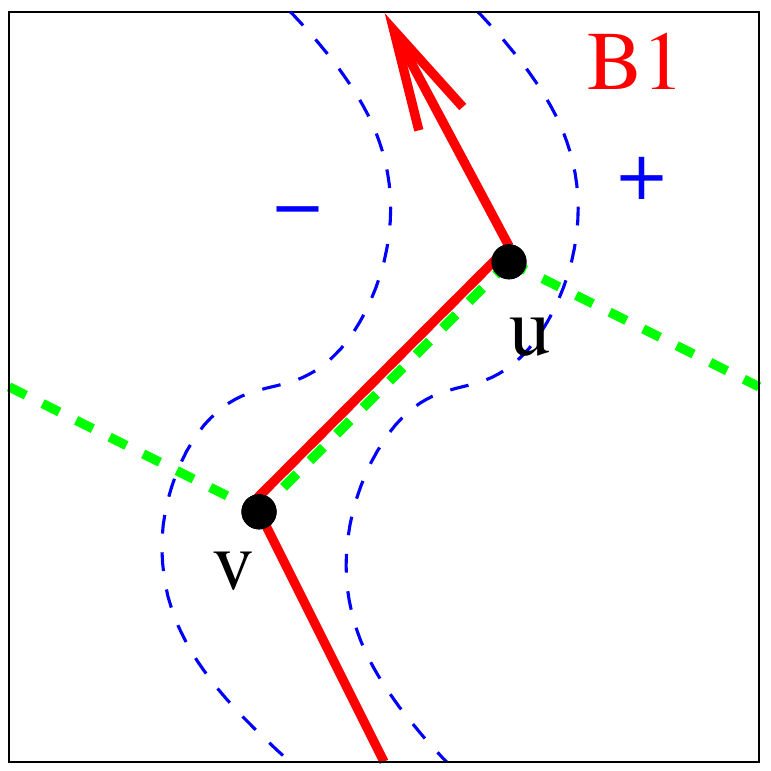}
&&\includegraphics[scale=0.27]{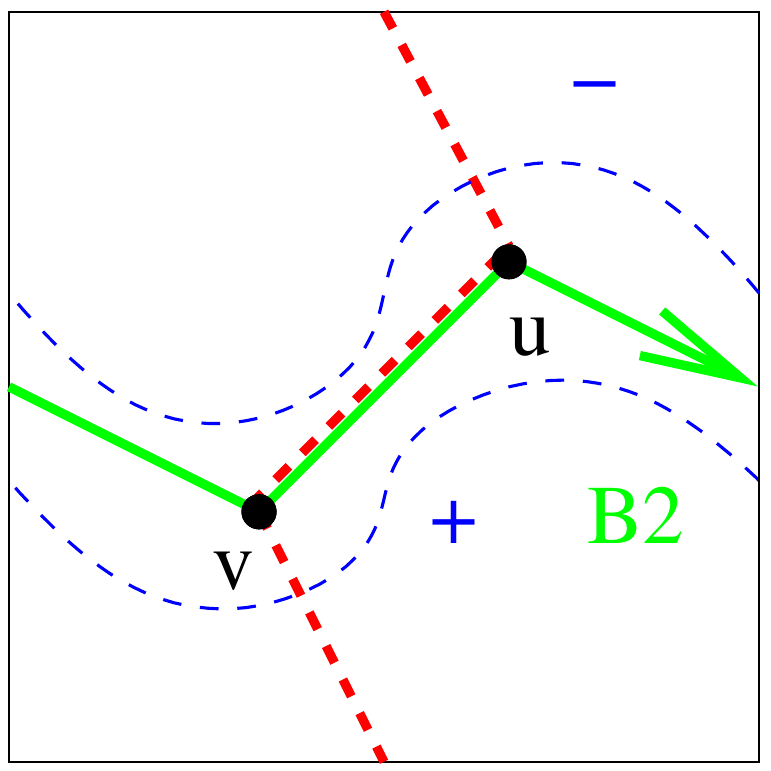}
&&\includegraphics[scale=0.27]{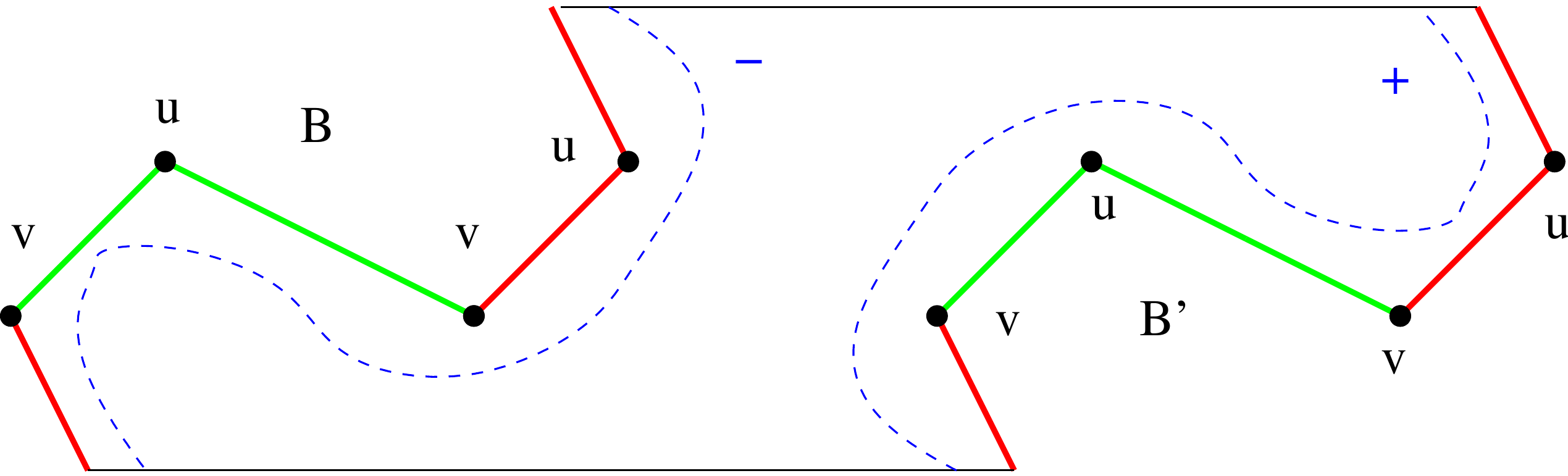} \\
$\gamma(B_1)$&$+$ & $\gamma (B_2)$ &$+$& $(y-x)$
\end{tabular}
\caption{Case $(k_1,k_2)=(1,1)$.}
\label{fig:computegamma1}
\end{figure}

Figure~\ref{fig:computegamma2} shows how to compute
$k_1\,\gamma (B_1) + k_2\,\gamma (B_2)+y-x$ when
$(k_1,k_2)=(1,-1)$. As above, most of the things compensate but, in
the end, we obtain $k_1\,\gamma (B_1) + k_2\,\gamma (B_2)+y-x$ equals
the sum of the weights of the half-edges incident to $u$ minus the sum
of the weights of the half-edges incident to $v$.  Since the sum of the
weights of the half-edges at each vertex is equal to $d$, we 
again conclude that $k_1\,\gamma (B_1) + k_2\,\gamma (B_2)+y-x=0$.

\begin{figure}[!h]
\center
\begin{tabular}{ccccc}
\includegraphics[scale=0.3]{gammabasis-5}
&&\includegraphics[scale=0.3]{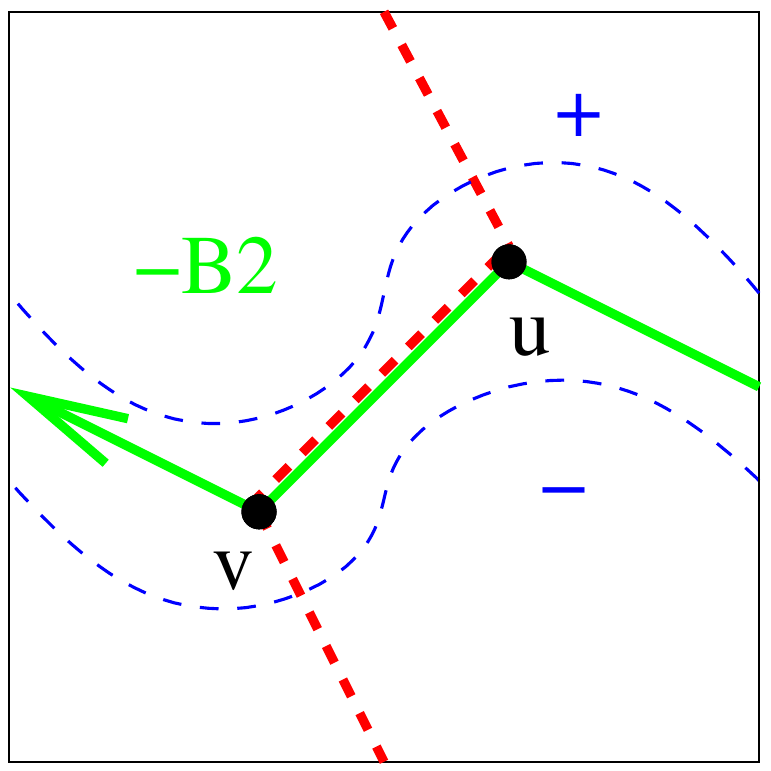}
&&\includegraphics[scale=0.3]{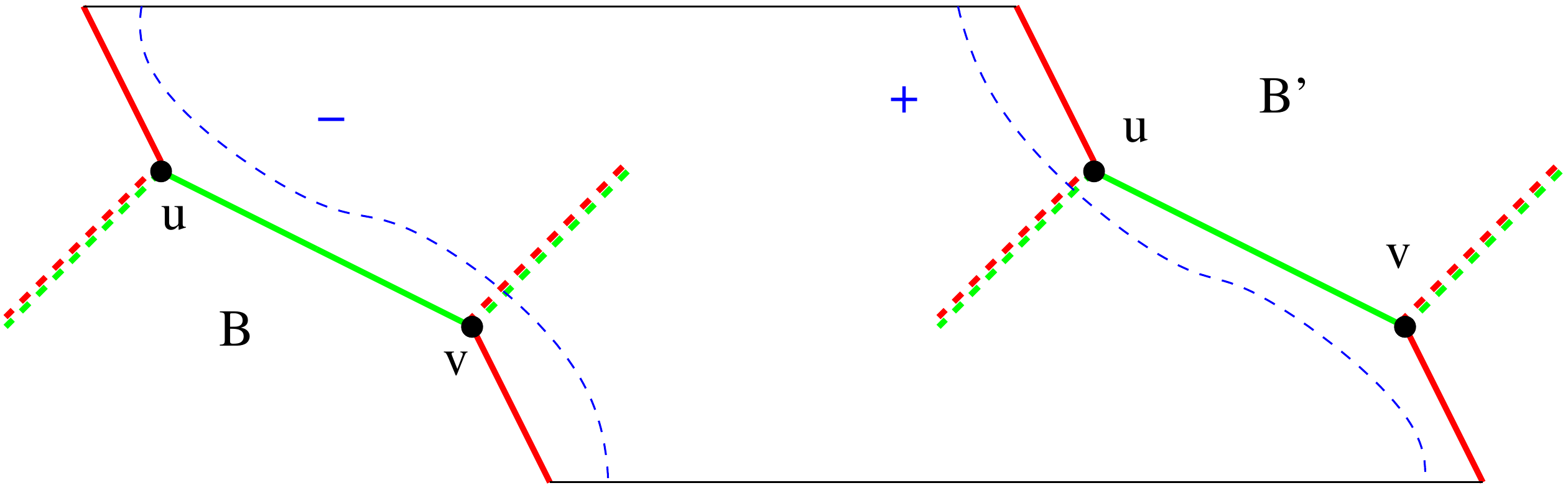}\\
$\gamma(B_1)$&$+$ & $(-\gamma (B_2))$ &$+$& $(y-x)$
\end{tabular}

\caption{Case $(k_1,k_2)=(1,-1)$.}
\label{fig:computegamma2}
\end{figure}

One can easily be
convinced that when $|k_1|\geq 1$ and $|k_2|\geq 1$ then the same arguments
 apply. The only difference is that the red or green part of the
figures in the universal cover would be
longer (with  repetitions of $B_1$ and $B_2$). These parts being 
``smooth'', they do not affect the way we compute the  equality.
Finally, if one of $k_1$ or $k_2$ is equal to zero, the analysis
is much  simpler and the conclusion holds.
 \end{proofclaim}

  For $i\in\{0,1,2\}$, let $G_i$ be the cylinder map made of all the
  vertices and edges of ${M}^\infty$ that are in the cylinder
  region $R_i$. 
  Let $k$ (resp. $k'$) be the length of $B$ (resp. $C'$).  Let
  $n_1,m_1,f_1$ be respectively the number of vertices, edges and
  faces of $G_1$.  Since $G_1$ is a $d$-angulation we have
  $2m_1=df_1+(k+k')$.  The total weight of the edges of $G_1$ is
  $(d-2)m_1=dn_1-(x'+y)$.  Combining these equalities with Euler's
  formula $n_1-m_1+f_1=0$, one obtains $k+k'=x'+y$. Similarly, by
  considering $G_2$, one obtains $k+k'=x+y'$.  Thus 
  $x'+y=x+y'$, which gives 
  $\gamma(C)=k_1\,\gamma (B_1) + k_2\,\gamma (B_2)$ using the claim.
\end{proof}

Lemma~\ref{lem:gammahomology} implies the following:

\begin{lemma}
\label{lem:gamma0all}
Let $M$ be a $d$-toroidal map endowed with a
$\frac{d}{d-2}$-orientation. If  the $\gamma$-score of two
  non-contractible non-homotopic cycles of
  $M$ is $0$, then the orientation is balanced.
\end{lemma}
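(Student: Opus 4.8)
The plan is to deduce Lemma~\ref{lem:gamma0all} directly from Lemma~\ref{lem:gammahomology}, using linearity of the $\gamma$-score over a homotopy basis. First I would fix a homotopy basis $\{B_1,B_2\}$ of $M$ of the special shape required by Lemma~\ref{lem:gammahomology}, namely two non-contractible cycles meeting in a single vertex or a common path. (Such a basis always exists on the torus: start from any cellularly embedded pair of simple closed curves forming a basis and, if necessary, homotope/reroute them so that they intersect minimally; on the torus two generators can always be realized meeting along a common arc.) Let $C_1,C_2$ be the two given non-contractible non-homotopic cycles with $\gamma(C_1)=\gamma(C_2)=0$.

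The key step is to express $B_1$ and $B_2$ in terms of $C_1,C_2$ in homotopy. Since $C_1,C_2$ are non-homotopic and non-contractible on the torus, their homotopy classes are linearly independent in $H_1(\mathbb{T})\cong\mathbb{Z}^2$, hence form a basis of $\mathbb{Q}^2$ (a finite-index sublattice of $\mathbb{Z}^2$). So there are rationals $a_{ij}$ with $[B_i]=a_{i1}[C_1]+a_{i2}[C_2]$; clearing denominators, there is a positive integer $N$ and integers $m_{ij}$ with $N[B_i]=m_{i1}[C_1]+m_{i2}[C_2]$. Now apply Lemma~\ref{lem:gammahomology} with the basis $\{B_1,B_2\}$ to the cycle $C_j$: writing $[C_j]=k_{j1}[B_1]+k_{j2}[B_2]$ we get $\gamma(C_j)=k_{j1}\gamma(B_1)+k_{j2}\gamma(B_2)$, and since $\gamma(C_j)=0$ for $j=1,2$ and the matrix $(k_{ji})$ is invertible over $\mathbb{Q}$ (it is the inverse of the change-of-basis matrix expressing $[B_i]$ in terms of $[C_j]$), this linear system forces $\gamma(B_1)=\gamma(B_2)=0$.

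Finally, to conclude that the orientation is balanced I take an arbitrary non-contractible cycle $C$ of $M$, with some traversal direction, and write $[C]=k_1[B_1]+k_2[B_2]$ with $k_1,k_2\in\mathbb{Z}$. Lemma~\ref{lem:gammahomology} then gives $\gamma(C)=k_1\gamma(B_1)+k_2\gamma(B_2)=0$. Since $C$ was arbitrary, every non-contractible cycle has $\gamma$-score $0$, i.e.\ the $\frac{d}{d-2}$-orientation is balanced.

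The main obstacle is the bookkeeping around homotopy classes versus homology classes and orientation/traversal directions: Lemma~\ref{lem:gammahomology} is phrased in terms of homotopy with a specific well-adapted basis, so one must be careful that the $C_j$ really do span the relevant lattice (this uses that on the torus $\pi_1$ is abelian, so homotopy and homology coincide here) and that reversing a traversal direction negates $\gamma$, so the linear-algebra argument is insensitive to the choices of orientations of $C_1,C_2,B_1,B_2$. None of this is deep, but it is the only place where care is needed; the rest is immediate linearity.
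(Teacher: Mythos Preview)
Your proposal is correct and takes essentially the same approach as the paper: apply Lemma~\ref{lem:gammahomology} to express $\gamma(C_j)$ linearly in $\gamma(B_1),\gamma(B_2)$ for a suitable basis, use that $[C_1],[C_2]$ are $\mathbb{Q}$-independent (primitivity of simple-closed-curve classes plus non-homotopy) to force $\gamma(B_1)=\gamma(B_2)=0$, and conclude for arbitrary $C$. The only cosmetic differences are that the paper builds $\{B_1,B_2\}$ explicitly from a spanning tree/cotree pair and does the linear-algebra step by contradiction rather than via matrix invertibility; your detour through expressing $[B_i]$ in terms of $[C_j]$ is unnecessary but harmless.
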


\begin{proof}
  Consider two non-contractible  non-homotopic cycles $C,C'$ of
  ${M}$, each with a chosen traversal direction, such that $\gamma(C)=\gamma(C')=0$.
  Consider a homotopy basis $\{B_1,B_2\}$ of
$M$, such that $B_1,B_2$ are non-contractible cycles whose
intersection is a single vertex or a path. Note that one can easily obtain such a basis
by  considering a spanning tree $T$ of $M$, and a spanning tree
$T^*$ of $M^*$ that contains no edges dual to $T$.  By Euler's
formula, there are exactly $2$ edges in $M$ that are not in $T$ nor
dual to edges of $T^*$. Each of these edges forms a unique cycle
with $T$. These two cycles, given with
any traversal direction, form the wanted basis.

Let $k_1,k_2,k'_1,k'_2\in \mathbb Z^4$, such that $C$ (resp. $C'$) is
homotopic to $k_1 B_1+k_2 B_2$ (resp.  $k'_1 B_1+k'_2 B_2$).  Since
$C$ is non-contractible we have $(k_1,k_2)\neq (0,0)$. By possibly 
exchanging $B_1,B_2$, we can assume, without loss of generality that $k_1\neq 0$.  By
Lemma~\ref{lem:gammahomology}, we have
$k_1 \gamma(B_1)+k_2 \gamma(B_2)=\gamma(C)=0=\gamma(C')=k'_1
\gamma(B_1)+k'_2 \gamma(B_2)$.  So
$\gamma(B_1)=(-k_2/k_1) \gamma(B_2)$ and thus
$(-k_2 k'_1/k_1 + k'_2)\gamma(B_2)=0$. So $k'_2=k_2 k'_1/k_1$ or
$\gamma(B_2)=0$.  Suppose by contradiction, that $\gamma(B_2)\neq 0$.
Then $(k'_1,k'_2)= \frac{k'_1}{k_1} (k_1,k_2)$, and $C'$ is homotopic
to $\frac{k'_1}{k_1} C$.  Since $C$ and $C'$ are both non-contractible
cycles, it is not possible that one is homotopic to a multiple of the
other, with a multiple different from $-1,1$. So $C, C'$ are
homotopic, a contradiction.  So $\gamma(B_2)= 0$ and thus
$\gamma(B_1)=0$.  Then by Lemma~\ref{lem:gammahomology} we have $\gamma(C)=0$
for  any
non-contractible cycle $C$ of $M$, and thus
the orientation is balanced. 
\end{proof}

\subsection{Existence of balanced toroidal $\frac{d}{d-2}$-orientations}
\label{sec:existence}

The main goal of this section is to prove the following existence result:

\begin{proposition}
\label{th:existencebalanced}
Any toroidal $d$-angulation with essential girth $d$ admits a
balanced $\frac{d}{d-2}$-orientation.
\end{proposition}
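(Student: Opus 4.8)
The plan is to reduce the existence of a \emph{balanced} $\frac{d}{d-2}$-orientation to the existence of \emph{some} $\frac{d}{d-2}$-orientation, and then invoke Corollary~\ref{theo:general_gamma} together with Lemma~\ref{lem:gamma0all}. First I would show that a $d$-toroidal map $M$ admits at least one $\frac{d}{d-2}$-orientation. The cleanest way is a flow/Euler argument: consider the $\beta$-expansion $H$ (with $\beta\equiv d-2$ on every edge), so that a $\frac{d}{d-2}$-orientation of $M$ corresponds to an $\alpha$-orientation of $H$ with $\alpha\equiv d$ on every vertex. By the classical criterion for the existence of $\alpha$-orientations (Hakimi / a max-flow argument, essentially the one underlying~\cite{Fe03}), such an orientation exists iff $\sum_v \alpha(v) = |E(H)|$ and for every subset $S$ of vertices, $\sum_{v\in S}\alpha(v) \ge |E(H[S])|$ (edges with both ends in $S$). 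The global count follows from Euler's formula on the torus ($dn = d\,f_{\mathrm{faces}}$-type counting: $2m = df$, $n - m + f = 0$ give $m = dn/(d-2)$, matching $|E(H)| = (d-2)m_M = d\,n$). The subset inequality is exactly Claim~\ref{claim:epsilon} in disguise: for $S$ a set of vertices inducing a region, the deficiency $\epsilon = k - d \ge 0$ precisely because the essential girth is $d$ (a contractible closed walk has length $\ge d$); more carefully one runs the same vertex/edge/face count as in Claim~\ref{claim:epsilon} on the submap induced by $S$, using that its boundary, being a union of contractible closed walks, has total length $\ge d$ times the number of ``holes''. This is the step I expect to be the main obstacle, because $S$ need not induce a single nicely-bounded disk — one must handle submaps with several boundary components and possibly non-contractible boundary, and argue the inequality componentwise via the essential-girth hypothesis.

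Once some $\frac{d}{d-2}$-orientation $X$ exists, I would fix a homotopy basis $\{B_1,B_2\}$ of $M$ as in Lemma~\ref{lem:gamma0all} (two non-contractible cycles meeting in a vertex or a common path, obtained from a spanning tree $T$ of $M$ and a cotree $T^*$ avoiding duals of $T$-edges), each given a traversal direction. The goal is to modify $X$ within its $\frac{d}{d-2}$-orientation class so that $\gamma(B_1) = \gamma(B_2) = 0$; then Lemma~\ref{lem:gamma0all} immediately gives balancedness. The mechanism for modification is the standard one for $\alpha$-orientations (equivalently $\frac{d}{d-2}$-orientations): pushing flow along a non-contractible cycle. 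Concretely, reversing all edges (or, in the weighted setting, transferring one unit of weight across each edge) along a non-contractible cycle $\Gamma$ homotopic to $B_i$ preserves all vertex-weights and edge-weights, hence yields another $\frac{d}{d-2}$-orientation, and it changes $\gamma(B_j)$ by a controlled amount — essentially by $\pm$ the intersection number of $\Gamma$ with $B_j$, which for the basis can be made $\pm 1$ for $j=i$ and $0$ otherwise by suitable choice of $\Gamma$. Iterating these moves, one drives $(\gamma(B_1),\gamma(B_2))$ to $(0,0)$, provided the orbit of $X$ under such moves realizes all integer pairs in the relevant coset; this last point I would justify using Corollary~\ref{theo:general_gamma}, which controls $\gamma$-equivalence classes by the pair of $\gamma$-scores on $B_1,B_2$, together with a parity/realizability observation that the set of achievable $(\gamma(B_1),\gamma(B_2))$ over all $\frac{d}{d-2}$-orientations is a full coset of $\mathbb{Z}^2$ (or at least contains $(0,0)$).

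An alternative, and perhaps smoother, route to balancedness avoids the explicit cycle-pushing: once existence of a $\frac{d}{d-2}$-orientation is in hand, one can work in $M^\infty$ and use an averaging/symmetrization argument. Because the weight constraints (weight $d$ at each vertex, $d-2$ at each edge) are preserved by the deck transformations of $M^\infty \to M$ and are linear, and because $\gamma$-scores transform linearly under the homotopy action (Lemma~\ref{lem:gammahomology}), one can look for a $\frac{d}{d-2}$-orientation that is invariant in an appropriate averaged sense; the fixed point of this averaging is forced to have $\gamma(B_1)=\gamma(B_2)=0$ by an antisymmetry of $\gamma$ under orientation-reversal of the cycle. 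However, this requires some care since orientations are integer (not real) objects, so in practice I would still fall back on the cycle-pushing argument above to get an honest integral balanced orientation.

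In summary, the proof has two independent pieces: (1) existence of \emph{a} $\frac{d}{d-2}$-orientation, via the $\beta$-expansion to an $\alpha$-orientation of $H$ and the Hakimi-type subset criterion, where the key inequality is the essential-girth bound appearing in Claim~\ref{claim:epsilon}; and (2) upgrading it to a \emph{balanced} one, via finitely many flow-pushes along non-contractible cycles of a homotopy basis to zero out the two $\gamma$-scores, then Lemma~\ref{lem:gamma0all}. The main obstacle is cleanly establishing (1) for arbitrary vertex-subsets whose induced submap may have complicated (multi-component, possibly non-contractible) boundary; the essential-girth hypothesis is exactly what makes the needed deficiency inequality hold, and a careful Euler-count per boundary component, mirroring Claim~\ref{claim:epsilon}, should close it.
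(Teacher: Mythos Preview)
Your proposal has a genuine gap in part (2), the upgrade from an arbitrary $\frac{d}{d-2}$-orientation to a balanced one. You assert that by ``pushing flow along a non-contractible cycle $\Gamma$'' you can move $(\gamma(B_1),\gamma(B_2))$ around in $\ZZ^2$ until it hits $(0,0)$. But such a push only yields another $\frac{d}{d-2}$-orientation (i.e., stays in $\NN$) if $\Gamma$ is a \emph{directed} cycle of the current orientation (equivalently, a directed cycle in the $\beta$-expansion $H$). There is no reason why a directed non-contractible cycle in a prescribed homology class should exist in a given $\alpha$-orientation of $H$; the set of achievable $\gamma$-pairs is not known a priori to be a full coset, and Corollary~\ref{theo:general_gamma} says nothing about which $\gamma$-equivalence classes are inhabited. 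So the step ``the orbit of $X$ under such moves realizes \ldots\ $(0,0)$'' is exactly the content of the proposition and remains unjustified.

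The paper's proof is quite different and, notably, closer to the averaging idea you raised and then discarded. It first builds \emph{biased} orientations: for each non-contractible cycle $C$ shortest in its homotopy class, it cuts $M$ open along $C$ to an annulus and applies the Hakimi-type criterion there (this is also how existence is obtained; the argument on the annulus avoids the non-contractible-boundary subtleties you anticipate on the torus). This yields four $\frac{d}{d-2}$-orientations with extreme $\gamma$-scores. A suitable \emph{non-negative integer} combination of their weight functions then has $\gamma(B_1)=\gamma(B_2)=0$, but is a $\frac{\sigma d}{\sigma(d-2)}$-orientation for some scaling factor $\sigma$. The key insight you are missing is the final step: pass to the unique \emph{minimal} balanced $\frac{\sigma d}{\sigma(d-2)}$-orientation (via Corollary~\ref{theo:general_gamma}), and show that minimality forces all weights to be divisible by $\sigma$. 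The argument is that the subgraph of edges whose weights are not multiples of $\sigma$ would have to be unicellular (by minimality) and hence contain a non-contractible cycle with a single ``bad'' edge on each side, contradicting balancedness. Dividing by $\sigma$ then gives an honest integral balanced $\frac{d}{d-2}$-orientation.
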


In the case of toroidal triangulations, essentially toroidal
3-connected maps, or essentially 4-connected toroidal triangulations,
the proof of existence of analogous ``balanced orientations'' can be
done by doing edge-contractions until reaching a map with few
vertices (see~\cite{LevHDR,BoLe18}).  We do not know if 
such a strategy could be applied for $d\geq 5$ (indeed the contraction of an 
edge in a $d$-toroidal map  results in some faces of size strictly
less than $d$). So we use a different technique in the current paper.

The method consists in defining orientations that are ``totally
unbalanced'' ---which we call biased orientations---   
then taking a linear combinations of these biased 
orientations to obtain a balanced orientation  but with rational
weights, and finally proving that the orientation that is minimal and
$\gamma$-equivalent to it is a balanced orientation with integer
weights.

\subsubsection{Biased orientations}

Consider a $d$-toroidal map $M$, and let $C$ be a non-contractible
cycle of $M$ of length $k$ given with a traversal direction.  A
\emph{biased orientation w.r.t.~$C$} is a $\frac{d}{d-2}$-orientation
of $M$ such that $\gamma(C)= 2k$. Note that in a
$\frac{d}{d-2}$-orientation of $M$, the sum of the weights of the
half-edges incident to vertices of $C$ is $dk$ and the sum of the
half-edges that are on $C$ is $(d-2)k$. So we have
$\gamma_L(C)+\gamma_R(C)=dk-(d-2)k= 2k$.  Thus a
$\frac{d}{d-2}$-orientation of $M$ is a {biased orientation w.r.t.~$C$
  if and only if all the half-edges incident to the left side of $C$
  have weight $0$.

The goal of this section is to prove the following lemma:
  
\begin{lemma}
\label{lem:biased}
Let $M$ be a $d$-toroidal map and $C$ a non-contractible cycle of $M$ that is shortest
 in its homotopy class and is given with a traversal direction. 
Then $M$ admits a biased orientation w.r.t.~$C$.
\end{lemma}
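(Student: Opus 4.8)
The plan is to cut the torus along the shortest non-contractible cycle $C$ (given with its traversal direction) to obtain a planar cylinder map, and then reduce the construction of a biased orientation to a known planar orientation result, namely the existence of $\frac{d}{d-2}$-orientations on planar $d$-angulations of girth $d$ (which underlies the work of Bernardi--Fusy~\cite{BF12} and can also be obtained from classical flow/Hall-type arguments for $\alpha/\beta$-orientations). Concretely, let $k$ be the length of $C$; cut along $C$ to get a map $M_C$ drawn in an annulus, with two boundary cycles $C_{\mathrm{left}}$ and $C_{\mathrm{right}}$, each of length $k$, arising from the two sides of $C$. Since $C$ is shortest in its homotopy class, $M_C$ has no ``shortcut'': no path in $M_C$ joining two vertices of (a copy of) $C$ is shorter than the arc of $C$ between them, and more to the point every non-contractible closed walk in the annulus (i.e.\ every closed walk separating the two boundaries) has length $\geq k$. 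This is the combinatorial input I will need to feed a planar existence argument.

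Next I would glue a single new vertex $v^\star$ into the outer face on the left boundary side, joined appropriately to the vertices of $C_{\mathrm{left}}$, and likewise treat the right boundary as the root-face contour; the target is to realize $M_C$ (with these modifications) as (a piece of) a $d$-angulation of girth $d$ with a marked face, so that the planar $\frac{d}{d-2}$-orientation theorem applies and produces a weighting in which every original vertex has weight $d$, every original edge has weight $d-2$, and the half-edges around the root-face absorb the appropriate total. The key point of the design is to force, in this planar orientation, that \emph{all} half-edges incident to $C_{\mathrm{left}}$ from the annulus side carry weight $0$: then re-gluing the two copies of $C$ back into the single cycle $C$ yields a $\frac{d}{d-2}$-orientation of $M$ (vertex- and edge-weight conditions are local and survive the gluing) in which $\gamma_L(C)=0$, hence by the identity $\gamma_L(C)+\gamma_R(C)=2k$ recalled just before the lemma we get $\gamma(C)=\gamma_R(C)-\gamma_L(C)=2k$, i.e.\ the orientation is biased w.r.t.\ $C$, as desired. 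Claim~\ref{claim:epsilon}, applied to contractible sub-walks, can be invoked to check that the weight bookkeeping across the cut is globally consistent.

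The main obstacle I expect is the ``all left half-edges get weight $0$'' requirement: a generic planar $\frac{d}{d-2}$-orientation of the cut map will not have this property, so I must either (a) choose the planar boundary conditions so that this is forced — e.g.\ by making $C_{\mathrm{left}}$ play the role of a face whose contour must be entirely ingoing, which amounts to checking a Hall-type feasibility condition for the resulting $\alpha/\beta$-orientation on the $\beta$-expansion, and this is exactly where the shortest-in-homotopy-class hypothesis on $C$ must be used to rule out the violating cuts; or (b) start from \emph{any} $\frac{d}{d-2}$-orientation of $M$ (whose existence on $d$-toroidal maps is what Section~\ref{sec:existence} is ultimately establishing, so to avoid circularity I would instead build it directly on the cut cylinder) and then push weight away from the left side of $C$ by a sequence of local moves (reversing clockwise cycles / transferring weight along the half-edges), terminating because each move strictly decreases a suitable potential. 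Route (a) is cleaner if the feasibility inequalities can be verified from the girth hypothesis; I anticipate that verifying those inequalities — translating ``no closed walk separating the two boundaries is shorter than $k$'' into ``for every sub-cylinder region $R'$, $d\cdot(\text{faces in }R') \le$ the available edge-weight budget'' via the same Euler-relation computation as in Claim~\ref{claim:epsilon} and Lemma~\ref{lem:gammahomology} — is the crux of the argument and the step most likely to require care with boundary terms.
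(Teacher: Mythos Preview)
Your route~(a) is exactly the paper's approach: cut along $C$ to obtain an annular map $A$, prescribe $\alpha/\beta$ so that the copy of $C$ on one side is forced to have all incident half-edges of weight $0$, then verify the Hall-type feasibility criterion (Lemma~\ref{lem:demandgeneral}) for $\alpha/\beta$-orientations, using that $C$ is shortest in its homotopy class to bound the length of the special inner face in any induced subgraph. The paper streamlines your setup slightly: rather than gluing in an auxiliary vertex $v^\star$ and trying to realize the cylinder as a planar $d$-angulation (which would require nontrivial surgery since the boundary cycles have length $k$, not $d$), it simply declares $\alpha=0$ on the outer copy of $C$ and $\beta=0$ on the outer edges, then checks the inequality $\sum_{e\in E[S]}\beta(e)\le\sum_{v\in S}\alpha(v)$ directly via Euler's formula on $A[S]$ --- exactly the computation you anticipate in your final paragraph. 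Your route~(b) is not pursued (and, as you note, would risk circularity with the very existence result being built in Section~\ref{sec:existence}).
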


To prove Lemma~\ref{lem:biased} we need to introduce some more
general terminology concerning $\alpha$-orientations.

If $S$ is a subset of vertices of a graph $M$, then $E[S]$ denotes the
set of edges of $M$ with both ends in $S$. We need the
following lemma\footnote{This lemma can be
seen as an application of Hall's theorem regarding the existence of a 
perfect matching in the bipartite graph obtain from $G$ by copying
$\beta(e)$ times each edge $e$, then subdividing once each edge of the
resulting graph, and finally copying $\alpha(v)$ times each initial
vertex of $G$.} from~\cite{BF12}: 

\begin{lemma}[\cite{BF12}]
\label{lem:demandgeneral}
A graph $G$ admits an $\frac{\alpha}{\beta}$-orientation if and only if
$\sum_{e\in E(G)}\beta(e)=\sum_{v\in V(G)}\alpha(v)$, and, for every
subset of vertices $S$ of $G$, we have
$\sum_{e\in E[S]}\beta(e)\leq \sum_{v\in S}\alpha(v)$.
\end{lemma}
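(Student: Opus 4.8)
The plan is to prove Lemma~\ref{lem:demandgeneral}, the Hall-type criterion for the existence of an $\frac{\alpha}{\beta}$-orientation of a graph $G$. As the footnote in the excerpt suggests, the cleanest route is to transform the problem into a question about the existence of a perfect matching (or more precisely a saturating matching) in an auxiliary bipartite graph, and then invoke Hall's theorem. The necessity of the two conditions is immediate and should be disposed of first; the real content is sufficiency.

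First I would set up the bipartite graph $B$ encoding the desired orientation. On one side place, for each edge $e$ of $G$, a collection of $\beta(e)$ ``edge-tokens''; on the other side place, for each vertex $v$ of $G$, a collection of $\alpha(v)$ ``vertex-slots''. An edge-token associated with $e=\{u,v\}$ is joined in $B$ to all vertex-slots associated with $u$ and to all vertex-slots associated with $v$. An $\frac{\alpha}{\beta}$-orientation of $G$ corresponds exactly to a matching of $B$ that saturates every edge-token: assigning an edge-token of $e$ to a vertex-slot of $v$ means routing one unit of weight of $e$ to $v$ (i.e.\ making that half-edge outgoing at $v$ with the appropriate count), and the saturation condition forces each edge to distribute its total weight $\beta(e)$ among its two endpoints, while the matching condition at the vertex side forces vertex $v$ to receive exactly $\alpha(v)$ units once the global count balances. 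The cardinality hypothesis $\sum_e\beta(e)=\sum_v\alpha(v)$ guarantees that the two sides of $B$ have equal size, so that a matching saturating the edge-token side is automatically a perfect matching and thus saturates the vertex-slot side as well, giving exactly weight $\alpha(v)$ at each $v$.

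I would then apply Hall's theorem to the edge-token side of $B$: a saturating matching exists if and only if every set of edge-tokens has a neighborhood at least as large as itself. It suffices to verify this for neighborhoods determined by a subset $S\subseteq V(G)$, since the neighborhood of a set of edge-tokens is a union of full groups of vertex-slots. Concretely, the worst case for a given target vertex-set $S$ is the set $T$ of all edge-tokens whose every incident vertex-slot lies in $S$; these are precisely the tokens of edges in $E[S]$, so $|T|=\sum_{e\in E[S]}\beta(e)$, while its neighborhood is contained in the vertex-slots of $S$, of total size $\sum_{v\in S}\alpha(v)$. Hall's condition for these worst-case sets reads $\sum_{e\in E[S]}\beta(e)\le\sum_{v\in S}\alpha(v)$, which is exactly the hypothesis. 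A short argument shows these are indeed the only sets that need checking: for an arbitrary set $T'$ of edge-tokens, let $S$ be the union of $G$-vertices incident to edges having a token in $T'$; then the $B$-neighborhood of $T'$ is exactly the vertex-slots of $S$, and since $T'$ is contained in the tokens of $E[S]$, establishing Hall's inequality for this $S$ suffices.

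The step I expect to require the most care is the precise matching-to-orientation dictionary, in particular checking that a perfect matching of $B$ genuinely yields a \emph{valid} $\N$-biorientation in the sense defined in the excerpt — that is, that each half-edge gets a nonnegative integer weight, that an edge of weight split as $(\beta(e),0)$ or $(0,\beta(e))$ or anything in between translates correctly into outgoing/ingoing half-edge data, and that the per-vertex total comes out to exactly $\alpha(v)$ rather than merely at most $\alpha(v)$. The equality of the two side-sizes is what upgrades ``saturates edge-tokens'' to ``perfect,'' and hence secures the exact vertex weights; I would make sure to state this reduction explicitly. Everything else (necessity, and the reduction of general Hall sets to vertex-subset-induced sets) is routine.
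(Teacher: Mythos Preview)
Your proposal is correct and follows exactly the approach the paper indicates: the paper does not give a proof of this lemma (it is imported from~\cite{BF12}), but the accompanying footnote sketches precisely the Hall's-theorem reduction you carry out, via the bipartite graph with $\beta(e)$ edge-tokens on one side and $\alpha(v)$ vertex-slots on the other. Your verification of Hall's condition and the matching-to-orientation dictionary are both sound.
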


Consider a non-contractible cycle $C$ of $M$ that is a shortest cycle
in its class of homotopy and given with a traversal direction.
Consider the annular map $A$ obtained from $M$ by cutting $M$ along $C$
and open it as a planar map where vertices of $C$ are duplicated to
form the outer face and a \emph{special} inner face of $A$. Without loss of generality,
we assume that $A$ is represented such that the special inner 
face is on the left side of $C$.  Let
$\alpha:V(A)\to \mb{N}$ be such that $\alpha(v)=0$ if $v$ is an
outer-vertex of $A$ and $\alpha(v)=d$ otherwise.  Let
$\beta:E(A)\to \mb{N}$ be such that $\beta(e)=0$ if $e$ is an
outer-edge of $A$ and $\beta(e)=(d-2)$ otherwise.  Then one can
transform any $\frac{\alpha}{\beta}$-orientation of $A$ to a biased
orientation of $M$ by gluing back the two copies of $C$ and giving to
the half-edges of $C$ the weight they have on the special face of $A$.
Indeed, it is clear by the definition of $A$ and the choice of
$\alpha,\beta$, that in the obtained $\frac{d}{d-2}$-orientation of
$M$ all the weights on half-edges incident to the left side of $C$ are
equal to $0$, and thus the orientation is biased w.r.t.~$C$ by the above
discussion.  So the existence of a biased orientation
(Lemma~\ref{lem:biased}), is reduced to the existence of an 
$\frac{\alpha}{\beta}$-orientation of $A$. 
It is proved in Theorem~24 of~\cite{BF12b} that $A$ admits an 
$\frac{\alpha}{\beta}$-orientation, where the proof is done first in the bipartite case
(case of even $d$) 
using Lemma~\ref{lem:demandgeneral}, and then the general case is derived
from the bipartite case using a subdivision argument.  
We reproduce here in the general case the arguments given in~\cite{BF12b} for the 
bipartite case, for the sake of completeness and 
 since this is one of the key ingredients to obtain a balanced orientation of $M$.

\begin{lemma}[Theorem~24 in~\cite{BF12b}]
\label{lem:annular}
  The annular map $A$ admits an $\frac{\alpha}{\beta}$-orientation.
\end{lemma}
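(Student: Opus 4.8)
The plan is to deduce this from the Hall-type criterion of Lemma~\ref{lem:demandgeneral}, applied to the graph $A$ with the weight functions $\alpha$ and $\beta$ defined above. Thus two things must be verified: the global balance $\sum_{e\in E(A)}\beta(e)=\sum_{v\in V(A)}\alpha(v)$, and the local inequality $\sum_{e\in E[S]}\beta(e)\leq\sum_{v\in S}\alpha(v)$ for every subset $S\subseteq V(A)$.

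For the global balance I would run a short Euler-formula computation. Let $n,m,f$ be the numbers of vertices, edges and faces of $M$; since $M$ is toroidal, $n-m+f=0$, and since all faces of $M$ have degree $d$, $2m=df$, hence $n=m-f$ and $m=df/2$. Cutting $M$ along the length-$k$ cycle $C$ duplicates its $k$ vertices and $k$ edges, so $A$ has $n+k$ vertices, $m+k$ edges, and $f+2$ faces (the $f$ faces of $M$, the outer face bounded by one copy of $C$, and the special inner face bounded by the other copy). Only the $n$ non-outer vertices carry weight $\alpha=d$ and only the $m$ non-outer edges carry weight $\beta=d-2$, so $\sum_v\alpha(v)=dn$ and $\sum_e\beta(e)=(d-2)m$, and these agree because $dn=d(m-f)=dm-2m=(d-2)m$.

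For the local inequality, which is the substantial step, first note that adding an outer vertex to $S$ never changes $\sum_{v\in S}\alpha(v)$ (outer vertices have weight $0$) and never decreases $\sum_{e\in E[S]}\beta(e)$, so it is enough to treat subsets $S$ that contain all outer vertices, say $S$ consisting of the outer copy of $C$ together with a set $T$ of non-outer vertices, for which $\sum_{v\in S}\alpha(v)=d|T|$. Draw inside the embedding of $A$ the graph $\Gamma$ formed by the $\beta$-weighted edges of $E[S]$, and let $m_S$ be its number of edges, so $\sum_{e\in E[S]}\beta(e)=(d-2)m_S$; the goal is then $(d-2)m_S\leq d|T|$. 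Since all vertices of the outer copy of $C$ lie on the outer face of $\Gamma$, one can cap that face by a new vertex joined to each of them; in the resulting plane graph every bounded face either is one of the small faces created around the new vertex, or maps (after gluing the two copies of $C$ back together to recover $M$) to a contractible region of $M$ and hence, by Lemma~\ref{lem:charac_ess} together with the hypothesis that $M$ has essential girth $d$, is bounded by a closed walk of length at least $d$ --- the single possible exception being a face bounded by a closed walk homotopic to $C$, whose length is at least $k$ because $C$ is a shortest cycle in its homotopy class. Double counting edge--face incidences and invoking Euler's formula then yields the required inequality.

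The step I expect to be the main obstacle is precisely this last piece of planar bookkeeping: one must carefully control the contribution of the cut locus $C$ to the face-degree sum --- this is where the minimality of $C$ in its homotopy class is essential, so that cutting along $C$ creates no spuriously short closed walks --- and make sure that every relevant bounded face of $\Gamma$ genuinely corresponds to a contractible region of $M$. This is exactly the case analysis of Theorem~24 in~\cite{BF12b}, carried out there first for even $d$ by means of Lemma~\ref{lem:demandgeneral} and then transferred to odd $d$ by a subdivision argument; here the same argument is simply run directly for all $d$.
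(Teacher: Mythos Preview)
Your approach is essentially the same as the paper's: both apply the Hall-type criterion of Lemma~\ref{lem:demandgeneral}, verify the global balance by an Euler-formula count, and verify the local inequality by another Euler-formula count together with the two key geometric inputs (faces of the induced subgraph have degree at least $d$ by the essential-girth hypothesis, and the face containing the special inner face has degree at least $k$ because $C$ is shortest in its homotopy class).

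A few remarks on packaging. Your reduction ``add all outer vertices to $S$'' is correct and in fact slightly streamlines the paper's presentation, which splits into the two cases ``$S$ meets the outer cycle'' versus ``$S$ avoids the outer cycle''. On the other hand, the paper works directly with the induced plane map $A[S']$ and reads off its face degrees, which is cleaner than your capping-plus-gluing detour: there is no need to glue back to $M$, since a bounded face of $A[S']$ not containing the special face already encloses a disk in $A$, hence projects to a contractible region of $M$, and that is all you need to invoke the essential-girth bound. Finally, the paper explicitly reduces to the case where $A[S]$ is connected and then sums over components; you should say a word about this, since Euler's formula as you use it requires connectedness.
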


\begin{proof}
  It is not difficult to check that by Euler formula that the first
  condition of Lemma~\ref{lem:demandgeneral} is satisfied. Let us now
  prove that the second condition of the lemma is also satisfied.

  Let $S$ be any subset of vertices of $A$. Suppose first that $A[S]$,
  the subgraph of $A$ induced by $S$, is connected. We consider two
  cases whether $S$ contains some outer vertices of $A$ or not.

 \begin{itemize}
 \item \emph{$S$ contains at least one outer vertex of $A$:} 

   Let $S'$ be the set of vertices obtained by adding to $S$ all the
   outer vertices of $A$. Since $\alpha$ equals to $0$ for outer
   vertices, we have
   $\sum_{v\in S}\alpha(v)= \sum_{v\in S'}\alpha(v)$. Moreover, $E[S]$
   is a subset of $E[S']$, so
   $\sum_{e\in E[S]}\beta(e)\leq \sum_{e\in E[S']}\beta(e)$.

   Let $n',m',f'$ be the number of vertices, edges and faces of
   $A'=A[S']$.  Euler's formula says that $n'-m'+f'=2$. The outer face
   of $A'$ has size $k$. Since $C$ is a
   shortest cycle in its class of homotopy, the inner face of $A'$
   containing the special face of $A$ has size at least $k$.  Moreover
   $M$ is a $d$-angulation, so all the other inner faces of $A$ have
   size at least $d$. So finally   $2m'\geq d\,(f'-2)+2k$.
   By combining the two (in)equalities, we obtain
   $d\,n'-(d-2)\,m'-2k\geq 0$. So
   $\sum_{v\in S}\alpha(v)-\sum_{e\in E[S]}\beta(e)\geq \sum_{v\in
     S'}\alpha(v)-\sum_{e\in E[S']}\beta(e)=d(n'-k)-(d-2)(m'-k)\geq
   0$.

 \item \emph{$S$ does not contain any outer vertices of $A$:} 

Let $n',m',f'$ be the
   number of vertices, edges and faces of $A'=A[S]$. Then Euler's
   formula says that $n'-m'+f'=2$. The planar map $A'$ has at most two faces
   that can be of size strictly less than $d$: its outer face, and the face
   of $A'$ containing the special face of $A$. Note that these two faces are not necessarily
   distinct and can  also be of size more than $d$. In any case
   we have  $2m'> d\,(f'-2)$.  By combining the two (in)equalities, we
   obtain $d\,n'-(d-2)\,m'> 0$. So
   $\sum_{v\in S}\alpha(v)-\sum_{e\in E[S]}\beta(e)=dn'-(d-2)m'>
   0$.
 \end{itemize}

 In both cases, the second condition of Lemma~\ref{lem:demandgeneral}
 is satisfied when $A[S]$ is connected.  If $A[S]$ is not connected,
 then we can sum over the different connected components to obtain the
 result.
\end{proof}

By the above remarks, Lemma~\ref{lem:annular} implies Lemma~\ref{lem:biased}. 

\subsubsection{Linear combinations of biased orientations}

Consider a $d$-toroidal map $M$ and $B_1,B_2$ two non-contractible
non-homotopic cycles of $M$ that are both shortest cycles in their
respective class of homotopy. Suppose that $B_1,B_2$ are given with a
traversal direction. Let $k_1$ (resp. $k_2$) be the length of $B_1$
(resp. $B_2$).

Consider $D_1, D_2, D_3, D_4$ the four $\frac{d}{d-2}$-orientations of
$M$ that are biased with respect to~$B_1, -B_1, B_2, -B_2$ respectively.  The
$\gamma$-score of $B_1,B_2$ in these four orientations are given in  Table~\ref{tab:gamma} where $a,b$ are
integers in $\{-2k_2,\ldots,2k_2\}$ and $c,d$ are integers in 
$\{-2k_1,\ldots,2k_1\}$.

\begin{table}[!h]
\center
\begin{tabular}{|c|c|c|c|c|}
\hline
  & $D_1$ & $D_2$ & $D_3$ & $D_4$ \\ \hline
$\gamma(B_1)$ & $2k_1$ & $-2k_1$ & $c$ & $d$ \\\hline
$\gamma(B_2)$ & $a$ & $b$ & $2k_2$ & $-2k_2$ \\ \hline
\end{tabular}
\caption{$\gamma$-score of the orientations $D_1, D_2, D_3, D_4$.}
\label{tab:gamma}
\end{table}

For $1\leq i\leq 4$, let ${ w_i}$ be the \emph{weight function} of
$D_i$. i.e.,~the function defined on the half-edges of $M$ such that
the weight of a half-edge $h$ is ${ w_i}(h)$ in the
$\frac{d}{d-2}$-orientation $D_i$. Let $k=2k_1k_2$. Let ${ w}$ be the
weight function defined on the set of half-edges of $M$  by the
following:

$${ w}=\left\lbrace
\begin{array}{ll}
(2k+bc)k_2\, \times\, { w_1}+(2k-ac)k_2\, \times\,  { w_2}-(a+b)k\, \times\,  { w_3} & \mbox{if $a+b<0$}\\
{ w_1}+{ w_2} & \mbox{if $a+b=0$}\\
(2k-bd)k_2\, \times\,  { w_1}+(2k+ad)k_2\, \times\,  { w_2}+(a+b)k\, \times\,  { w_4} & \mbox{if $a+b>0$}\\
\end{array}
\right.$$

Note that in all cases, with weight function $w$, the $\gamma$-score
of both $B_1,B_2$ is zero. Indeed, we have:
$$
\begin{bmatrix}
  (2k+bc)k_2 & (2k-ac)k_2 & -(a+b)k & 0\\
  1 & 1 & 0 & 0 \\
  (2k-bd)k_2 & (2k+ad)k_2 & 0 & (a+b)k \\
\end{bmatrix}
\times
\begin{bmatrix}
2k_1 & a \\
-2k_1 & b \\
c & 2k_2 \\
d & -2k_2
  \end{bmatrix}
  =
  \begin{bmatrix}
    0 & 0 \\
    0 & a+b \\
    0 & 0 \\
    \end{bmatrix}.
  $$

Note also that in all cases, for $1\leq i\leq 4$ the coefficient of $w_i$ is in $\mathbb{N}$,   
hence $w(h)\in\mathbb{N}$ for every half-edge $h$ of $M$.  
We denote by $\sigma$ the
sum of the coefficients, i.e.,
$$\sigma=\left\lbrace
\begin{array}{ll}
(2k+bc)k_2+(2k-ac)k_2-(a+b)k & \mbox{if $a+b<0$},\\
2 & \mbox{if $a+b=0$},\\
(2k-bd)k_2+(2k+ad)k_2+(a+b)k & \mbox{if $a+b>0$}.\\
\end{array}
\right.$$
Note that $\sigma\geq 1$ in all cases. 

Then the total $w$-weight at any vertex (resp. edge) of $M$ equals
$\sigma d$ (resp. $\sigma(d-2)$). Hence $w$ is the weight function of
a $\frac{\sigma d}{\sigma (d-2)}$-orientation $D^\sigma$ of $M$. In a
sense ${D^\sigma}/{\sigma}$, obtained from  $D^\sigma$ by dividing all
the weights by $\sigma$,
is a $\frac{d}{d-2}$-orientation of $M$
but with rational weights instead of integers. Note that the proof of
Lemma~\ref{lem:gamma0all} is not using the fact that the weights are
integers thus the conclusion holds with rational weights as well.

We have defined the linear combination of biased orientations 
in such a way that we precisely have $\gamma(B_1)=\gamma(B_2)=0$ for the
orientation $D^{\sigma}$.
A variant of
Lemma~\ref{lem:gamma0all} with rational weights
 implies that $D^{\sigma}$ is a balanced
$\frac{\sigma d}{\sigma (d-2)}$-orientations and
${D^\sigma}/{\sigma}$ can be viewed as a balanced
$\frac{d}{d-2}$-orientation of $M$ but with rational weights.  So we
almost have what we are looking for, except for the rational weights
that we would like to be integers.

\subsubsection{Integrality by minimality}

We use the same terminology as in the previous subsection.

Let $M$ be a $d$-toroidal map, with a distinguished face $f_0$.  By
Corollary~\ref{theo:general_gamma}, the map $M$ has a  unique minimal
$\frac{\sigma d}{\sigma(d-2)}$-orientation $D_{\min}^\sigma$ that is
$\gamma$-equivalent to $D^{\sigma}$, i.e. that is balanced. In the  next
lemma, we now prove that
the weights of  $D_{\min}^\sigma$   are multiple of $\sigma$. So
${D_{\min}^\sigma}/{\sigma}$, obtained from  $D_{\min}^\sigma$ by
dividing all the weights by $\sigma$, is a balanced $\frac{d}{d-2}$-orientation
of $M$ with integer weights and thus this proves Proposition~\ref{th:existencebalanced}.

\begin{lemma}
\label{lem:multiplesigma}
  All the weights of $D_{\min}^\sigma$ are multiples of $\sigma$.
\end{lemma}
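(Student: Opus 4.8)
The plan is to exploit the uniqueness of the minimal $\gamma$-equivalent orientation together with a symmetry argument modulo $\sigma$. Recall from Corollary~\ref{theo:general_gamma} that $D_{\min}^\sigma$ is the unique $\frac{\sigma d}{\sigma(d-2)}$-orientation of $M$ that is minimal and $\gamma$-equivalent to $D^\sigma$. The key observation is that the weight function $w$ of $D^\sigma$, being a linear combination with nonnegative integer coefficients $\lambda_i$ of the weight functions $w_i$ of the four biased orientations $D_i$, satisfies $w \equiv \big(\sum_i \lambda_i\big) w_{i_0} = \sigma\, w_{i_0} \pmod{\sigma}$ only in trivial cases; instead the right statement is that the four coefficients and $\sigma$ have a useful gcd structure. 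So the first step I would take is to reduce to showing that \emph{some} balanced $\frac{\sigma d}{\sigma(d-2)}$-orientation has all weights divisible by $\sigma$ --- and indeed $D^\sigma$ itself need not, so the real content is that \emph{passing to the minimal one in the $\gamma$-class forces divisibility}.

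The mechanism I would use for that is the following. Suppose for contradiction that $D_{\min}^\sigma$ has a half-edge whose weight is not a multiple of $\sigma$. Consider the "reduction modulo $\sigma$'' of $D_{\min}^\sigma$: concretely, work with the $\beta$-expansion $H^\sigma$ (a $\widehat\alpha$-type expansion where $\beta\equiv\sigma(d-2)$) and the associated $\alpha$-orientation $\overline{D_{\min}^\sigma}$ of $H^\sigma$, and compare it with the orientation obtained by "rotating'' weights within each group of $\sigma(d-2)$ parallel edges. More precisely, the natural object is the difference between $D_{\min}^\sigma$ and a $\sigma$-fold "blow-up'' of a hypothetical integral $\frac{d}{d-2}$-orientation: one shows that if the minimal orientation is not everywhere divisible by $\sigma$, then one can produce a second $\frac{\sigma d}{\sigma(d-2)}$-orientation, $\gamma$-equivalent to $D^\sigma$ and strictly smaller (i.e. obtained by reversing a clockwise circulation made of $\sigma$ copies of an integral circulation plus a genuinely non-divisible remainder), contradicting minimality. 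The cleanest way to organise this is: (a) the set of $\frac{\sigma d}{\sigma(d-2)}$-orientations $\gamma$-equivalent to $D^\sigma$ forms a distributive lattice under the circulation-reversal order (this is Propp's theorem, via the completion map and $\beta$-expansion, exactly as invoked in Corollary~\ref{theo:general_gamma}); (b) within this lattice, the "$\times\sigma$'' map from the lattice of integral $\frac{d}{d-2}$-orientations $\gamma$-equivalent to $D^\sigma/\sigma$ is a lattice embedding whose image is an interval; (c) the minimum of the big lattice lies in this image, because the minimum of the small lattice, blown up by $\sigma$, is minimal and $\gamma$-equivalent to $D^\sigma$ in the big lattice, hence equals $D_{\min}^\sigma$ by uniqueness. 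Step (c) is where divisibility by $\sigma$ drops out: $D_{\min}^\sigma = \sigma \cdot (\text{minimal integral orientation } \gamma\text{-equiv to } D^\sigma/\sigma)$.

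The main obstacle, and the step that will need care, is establishing that the minimal element of the big lattice is divisible by $\sigma$ \emph{without} already assuming the existence of an integral balanced $\frac{d}{d-2}$-orientation (which is what we are ultimately trying to prove). I would handle this by a direct local argument rather than the lattice-embedding shortcut: take $D_{\min}^\sigma$, and suppose some edge has a non-$\sigma$-divisible weight. Look at a counterclockwise-minimal configuration; consider the "mod-$\sigma$ part'' of the weight function, which is a nonnegative function on half-edges with total weight $\equiv 0 \pmod{\sigma}$ at every vertex and every edge (since $\sigma d$ and $\sigma(d-2)$ are divisible by $\sigma$). Such a function is a nonnegative integer combination of "unit circulations'' around faces in the $\beta$-expansion (an exact-sequence/flow argument on the planar-with-genus structure, using that the relevant cycle space is generated by non-root face boundaries plus two non-contractible cycles, as in the remark before Theorem~\ref{theo:propp}); its $\gamma$-scores on the two basis cycles are $0$ because both $D_{\min}^\sigma$ and the hypothetical $\sigma$-divisible part are balanced; hence by Corollary~\ref{theo:general_gamma} the mod-$\sigma$ part contains a clockwise circuit, and subtracting it (i.e. reversing that circulation in $D_{\min}^\sigma$) produces a strictly smaller $\gamma$-equivalent orientation --- contradiction. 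I expect the bookkeeping in this last paragraph (making "the mod-$\sigma$ part is a nonnegative circulation with zero $\gamma$-scores, hence has a clockwise circuit" precise on the toroidal completion map) to be the real work; everything else is assembly of Corollary~\ref{theo:general_gamma}, Lemma~\ref{lem:gamma0all}, and the construction of $D^\sigma$.
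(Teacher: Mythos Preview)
Your proposal has a genuine gap. The first lattice-embedding idea you correctly identify as circular. The fallback ``mod-$\sigma$'' argument is where the real problem lies: you assert that the residue function has zero $\gamma$-scores on the basis cycles ``because both $D_{\min}^\sigma$ and the hypothetical $\sigma$-divisible part are balanced'', but no $\sigma$-divisible part has been produced---that is exactly what we are trying to establish---so this step is circular as well. Moreover, the residue function (values in $\{0,\ldots,\sigma-1\}$ on half-edges, summing to $0$ or $\sigma$ on each edge and to a multiple of $\sigma$ at each vertex) is not a circulation in any standard sense, and even if you could extract a ``clockwise circuit'' from it, subtracting that circuit from $D_{\min}^\sigma$ could create negative half-edge weights, so you would not get a valid $\frac{\sigma d}{\sigma(d-2)}$-orientation.

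The paper's argument is quite different and avoids all flow bookkeeping. Let $Q$ be the set of edges whose half-edge weights are not multiples of $\sigma$ (since each edge has total weight $\sigma(d-2)$, either both half-edges are divisible by $\sigma$ or neither is). The induced embedded subgraph $M_Q$ has minimum degree at least $2$, because each vertex has total weight $\sigma d$. Now minimality is used, but only once and very crudely: if $M_Q$ had at least two faces, pick a face $f$ not containing the root-face; every edge on $\partial f$ lies in $Q$, hence has both half-edge weights nonzero, hence is bidirected, and the set of faces of the $\sigma(d-2)$-expansion inside $f$ then witnesses non-minimality. So $M_Q$ is unicellular with minimum degree $2$, which on the torus forces it to be one of three explicit shapes (a single non-contractible cycle, two such cycles meeting at a vertex, or a theta-graph). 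In each case one finds a non-contractible cycle $C$ of $M$ with exactly one $Q$-half-edge incident on each side. Then $\gamma_L(C)$ is a sum of half-edge weights all of which are multiples of $\sigma$ except one, so $\gamma_L(C)\not\equiv 0\pmod\sigma$. But balancedness gives $\gamma_L(C)=\gamma_R(C)$, and since the total weight at vertices of $C$ minus the total weight on edges of $C$ is $\sigma d\ell-\sigma(d-2)\ell=2\sigma\ell$, we get $\gamma_L(C)=\sigma\ell$, a multiple of $\sigma$---contradiction. The missing idea in your proposal is precisely this topological classification of $M_Q$ combined with the direct use of balancedness on a well-chosen cycle.
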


\begin{proof}
Since the total weight of an edge is a multiple of $\sigma$, 
for each edge $e\in M$ either its two half-edges are not multiple 
of $\sigma$ or they are both multiple of $\sigma$. We denote by $Q$
the set of edges with weights (on both half-edges) not multiple 
of $\sigma$, and let $M_Q$ be the embedded graph induced by edges in $Q$
and their incident vertices. Note that
  $M_Q$ is embedded on the torus but is not necessarily a
  map as some of its faces may not be homeomorphic to an open disk.
  Since the total weight at any vertex is a multiple of $\sigma$, a vertex of $M$ 
can not be incident to a single edge in $Q$, hence all the vertices of $M_Q$ 
 have degree at least $2$. 

 Suppose by contradiction, that $M_Q$ has at least two faces (the embedded subgraph $M_Q$ is not 
necessarily a map, a `face' refers here to a connected component of the torus cut by $M_Q$). 
  Let $f$ be a face of $M_Q$ not containing $f_0$.  Let $F$ be
  the set of edges on the border of $f$. The weights of the half-edges
  of $F$ are not multiple of $\sigma$. So none of their weights is
  equal to $0$. So in the underlying biorientation of $M$, all edges
  of $F$ are bioriented. Thus, in the $\sigma (d-2)$-expansion $H$ of
  $M$, the set $S$ of faces of $H$ within $f$ is such that every edge on the boundary of $S$ has a face in $S$ on its right, contradicting the minimality of $D_{\min}^\sigma$. So $M_Q$ has a unique face.

  Since the vertices of $M_Q$ have degree at least $2$ and $M_Q$ has a
  unique face, the embedded toroidal graph $M_Q$ has to be one of the graphs depicted
  in Figure~\ref{fig:threecases}, i.e. it is either a non-contractible
  cycle, or the union of two non-contractible cycles that are
  edge-disjoint and intersect at a unique vertex, or it is the union of
  three  edge-disjoint paths such that the union of any two of
  these paths forms a non-contractible cycle. 

  In any case, there exists a non-contractible cycle $C$ of $M$ such
  that on each side of $C$ there is a single incident half-edge in
  $Q$.  This implies that the sum of the weights of incident
  half-edges on the left (resp. right) side of $C$ is not a multiple
  of $\sigma$.

\begin{figure}
\center
\includegraphics[scale=0.3]{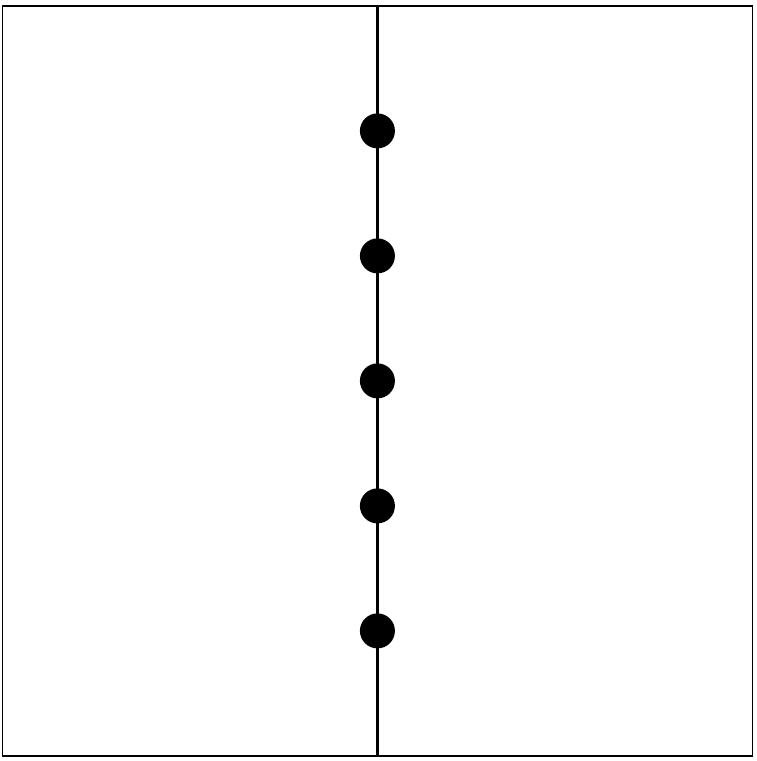} \
\ \includegraphics[scale=0.3]{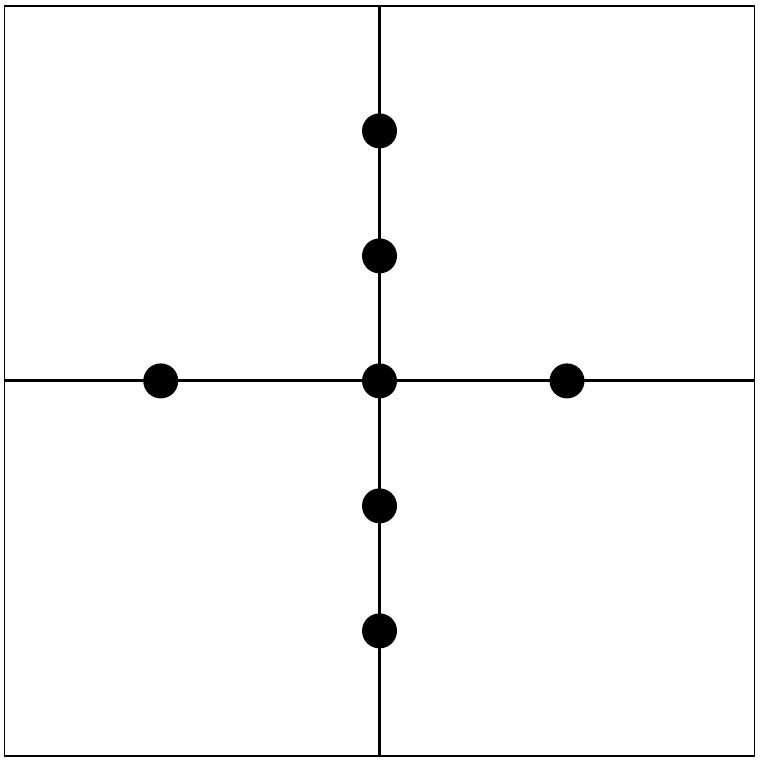} \
\ \includegraphics[scale=0.3]{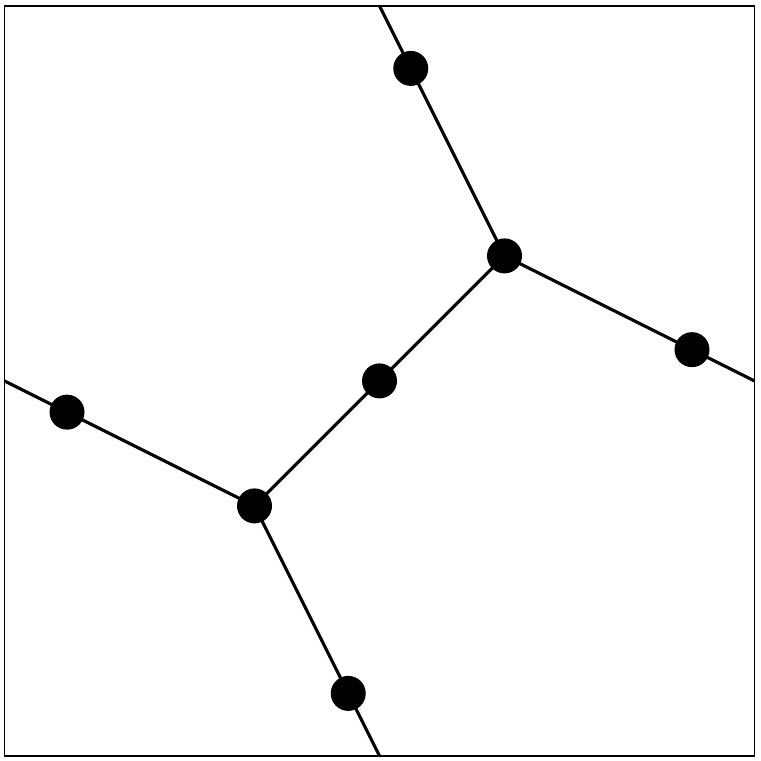}
\caption{The three possible cases for  $M_Q$.}
\label{fig:threecases}
\end{figure}

On the other hand, since $D_{\min}^\sigma$ is
  balanced we have $\gamma (C)=0$. Let $\ell$ be the length of $C$, so that 
  the sum of the weights of the half-edges of $M$ incident to each
  side of $C$ is equal to 
$\frac{1}{2}(\sigma d \ell-\sigma (d-2) \ell) =\sigma \ell$.
This is  a multiple of
   $\sigma$, giving a contradiction.
\end{proof}

\subsection{Bipartite case}
\label{sec:evencase}

  For the particular case where $d$ is even and the map is bipartite, we
can prove the existence of balanced orientations with even weights, as
discussed below.

Consider a $d$-toroidal map $M$ where $d$ is even, i.e. $d=2b$ with
$b\geq 2$. Note that $d/2=b$ and $(d-2)/2=b-1$.  So if the
weights of a $\frac{d}{d-2}$-orientation of $M$ are even, they can be
divided by two to obtain a \emph{$\frac{b}{b-1}$-orientation} of $M$, i.e., an
$\NN$-biorientation where every vertex has weight $b$ and every edge has weight $b-1$. 
Then one can ask the question of existence
of balanced $\frac{b}{b-1}$-orientations in that case.  The answer to
this question is given as follows.

\begin{proposition}
  \label{th:evencase}
  When $d=2b$ with $b\geq 2$, a toroidal $d$-angulation $M$ with
  essential girth $d$ admits a balanced $\frac{b}{b-1}$-orientation if
  and only if $M$ is bipartite.  In this case, for any choice of a
  distinguished face $f_0$ of $M$, the unique balanced
  $\frac{d}{d-2}$-orientation that is minimal has all its weights that
  are even (hence is a balanced $\frac{b}{b-1}$-orientation upon
  dividing the weights by $2$).
\end{proposition}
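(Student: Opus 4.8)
The plan is to prove the two implications separately, and to get the `if' direction together with the last assertion (about the minimal $\frac{d}{d-2}$-orientation). First, the \emph{necessity of bipartiteness}. Suppose $M$ admits a balanced $\frac{b}{b-1}$-orientation, and let $C$ be an arbitrary non-contractible cycle of $M$, of length $\ell$. The total weight of the half-edges incident to vertices of $C$ is $b\ell$, and the total weight carried by the $\ell$ edges of $C$ themselves is $(b-1)\ell$; subtracting gives $\gamma_L(C)+\gamma_R(C)=b\ell-(b-1)\ell=\ell$. Balancedness forces $\gamma_L(C)=\gamma_R(C)=\ell/2$, which is an integer, so $\ell$ is even. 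Hence every non-contractible cycle of $M$ has even length; and since $d=2b$ is even, every face has even degree, so by the remark in Section~\ref{sec:preliminaries} every contractible closed walk (in particular every contractible cycle) has even length as well. Thus all cycles of $M$ have even length, i.e.\ $M$ is bipartite.

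For the converse, assume $M$ is bipartite and fix a distinguished face $f_0$. By Proposition~\ref{th:existencebalanced}, $M$ admits a balanced $\frac{d}{d-2}$-orientation, and by Corollary~\ref{theo:general_gamma} there is a unique such orientation $D_0$ that is minimal. I claim it is enough to show that \emph{all weights of $D_0$ are even}: once this is known, dividing every half-edge weight by $2$ produces an $\N$-biorientation with every vertex of weight $d/2=b$ and every edge of weight $(d-2)/2=b-1$, i.e.\ a $\frac{b}{b-1}$-orientation, and it is still balanced since all $\gamma$-scores are divided by $2$. This settles the `if' direction and the second statement of the proposition simultaneously.

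The proof that $D_0$ has only even weights transposes the proof of Lemma~\ref{lem:multiplesigma}, taking ``$\sigma=2$''. Since every edge has the even weight $d-2$, the two half-edges of an edge have the same parity; let $Q$ be the set of edges both of whose half-edges have odd weight, and $M_Q$ the embedded subgraph of $M$ they induce. Since every vertex has the even weight $d$, each vertex is incident to an even number of odd-weight half-edges, so every vertex of $M_Q$ has degree at least $2$. If $M_Q$ had at least two faces, choose a face $f$ not containing $f_0$: its bounding edges lie in $Q$, hence have nonzero half-edge weights on both sides and are therefore bioriented, so in the $(d-2)$-expansion $H$ of $M$ the set $S$ of faces of $H$ inside $f$ has every boundary edge with a face of $S$ on its right, contradicting the minimality of $D_0$ (here $d-2\ge 2$, so the $\beta$-expansion is legitimate). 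Hence $M_Q$ has at most one face, and if it were non-empty it would be, exactly as in Lemma~\ref{lem:multiplesigma}, one of the three graphs of Figure~\ref{fig:threecases}; from it one extracts a non-contractible cycle $C$ of $M$ with exactly one incident half-edge of $Q$ on each side, so that $\gamma_L(C)$ and $\gamma_R(C)$ are both odd. But $\gamma_L(C)+\gamma_R(C)=d|C|-(d-2)|C|=2|C|$, and balancedness of $D_0$ then gives $\gamma_L(C)=\gamma_R(C)=|C|$, which is \emph{even} because $M$ is bipartite --- a contradiction. Therefore $Q=\emptyset$ and $D_0$ has even weights.

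The technical content of the third step is essentially identical to Lemma~\ref{lem:multiplesigma} (same case analysis on $M_Q$, same minimality argument via a $\beta$-expansion), so most of the work is already available. The one point that genuinely differs --- and is the place to be careful --- is the final contradiction: it no longer comes from ``$\gamma_L(C)$ not being a multiple of $\sigma$'' but from the parity of $|C|$, and that parity is available precisely because $M$ is bipartite. So the subtlety is just to recognise that bipartiteness is exactly what closes the loop here, and, dually, exactly the obstruction exhibited in the first step; I expect no other serious obstacle.
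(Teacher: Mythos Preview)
Your proof is correct and follows essentially the same approach as the paper's own proof: the necessity direction via the identity $\gamma_L(C)+\gamma_R(C)=\ell$ together with balancedness, and the sufficiency direction by showing the minimal balanced $\frac{d}{d-2}$-orientation has only even weights, using the same $M_Q$ argument as in Lemma~\ref{lem:multiplesigma} with the parity of $|C|$ (from bipartiteness) providing the final contradiction. The paper's proof is virtually identical in structure and detail.
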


\begin{proof}
  If $M$ admits a balanced $\frac{b}{b-1}$-orientation we want to show
  that $M$ is bipartite. Since the face-degrees of $M$ are even it is
  enough to check that every non-contractible cycle $C$ of $M$ has
  even length.  Recall that $\gamma_R(C)$ (resp. $\gamma_{L}(C)$) is
  the sum of the weights of the half-edges incident to the right
  (resp. left) side of $C$. Since the orientation is balanced, we have
  $\gamma_R(C)=\gamma_L(C)$. Denoting by $k$ the length of $C$, the
  sum of the weights of all the half-edges of $C$ is equal to
  $(b-1)k$. The sum of the weights of all the half-edges incident to
  vertices of $C$ is $bk$. Hence
  $bk=(b-1)k+\gamma_R(C)+\gamma_L(C)=(b-1)k+2\gamma_R(C)$.  So
  $k=2\gamma_R(C)$ and thus $k$ is even.

  Now suppose that $M$ is bipartite, and consider an
  arbitrary face $f_0$ of $M$. By Proposition~\ref{th:existencebalanced},
   $M$ admits a balanced $\frac{d}{d-2}$-orientation. By
  Corollary~\ref{theo:general_gamma} we can consider the unique minimal
  $\frac{d}{d-2}$-orientations $D$ that is balanced.  We have the following:

  \begin{claimn}
\label{cl:even}
The weights of $D$ are  even.    
  \end{claimn}

  \begin{proofclaim}
The proof follows the same arguments as the proof of
Lemma~\ref{lem:multiplesigma}. Since each edge has even total weight, either 
its two half-edges have both even weights, or they have both odd weights. We let $Q$ be 
the set of edges with odd weights, and assume for contradiction that $Q$ is not empty.
Let  $M_Q$ be the embedded graph induced by the edges in $Q$ 
and their incident vertices. Since every vertex has even total weight, it can not be incident
to a single edge in $Q$, hence all vertices of $M_Q$ have degree at least~$2$. 

  Suppose by contradiction that $M_Q$ has at least two faces.
  Let $f$ be a face of $M_Q$ not containing $f_0$.  Let $F$ be
  the set of edges on the border of $f$. The weights of the half-edges
  of $F$ are odd, hence non-zero. Hence, in the
  underlying biorientation of $M$, all edges of $F$ are
  bioriented. Thus, in the $(d-2)$-expansion $H$ of $M$, the set of
  faces $S$ of $H$ corresponding to $f$ is such that every edge on the
  boundary of $S$ has a face in $S$ on its right, contradicting the
  minimality of $D$.  So $M_Q$ has a unique face.

  Since the vertices of $M_Q$ have degree at least $2$ and $M_Q$ has a
  unique face, we again have the property that $M_Q$ is in one of the
  configurations shown in Figure~\ref{fig:threecases}. In any case,
  there exists a non-contractible cycle $C$ of $M$ that has a single
  edge in $Q$ on each side. Hence $\gamma_L(C)$ is odd. On the other
  hand, since each edge has weight $d-2$ and each vertex has weight
  $d$, we have $\gamma_L(C)+\gamma_R(C)=2\ell$, with $\ell$ the length
  of $C$.  Since the orientation is balanced, we have
  $\gamma_L(C)=\ell$; and since the map is bipartite $\ell$ is even,
  contradicting the fact that $\gamma_L(C)$ is odd.
\end{proofclaim}

The claim ensures that  
all the weights of $D$ are even. Thus, dividing all the weights of $D$
by $2$, one obtains a balanced $\frac{b}{(b-1)}$-orientation of $M$.
\end{proof}

\section{Bijective results}
In this section we state our main bijective results. Similarly as in the planar case~\cite{BF12,BF12b}, our starting point is 
a `meta-bijection' $\Phi_+$ in any genus $g$ between a family of oriented maps and a family of decorated unicellular maps. 
The families are defined in Section~\ref{sec:terminology} and $\Phi_+$ is presented in Section~\ref{sec:bijPhi+} in the oriented setting, and then extended in Section~\ref{sec:weightbi}
} to the weighted bioriented setting. In Section~\ref{sec:bijdangul} we then specialize $\Phi_+$ to the balanced $\frac{d}{d-2}$-orientations studied in Section~\ref{sec:balanced}, and obtain a bijection for toroidal $d$-angulations of essential girth $d$ (Theorem~\ref{theo:bij_dang}) which admits a parity specialization in the bipartite case (Corollary~\ref{theo:bij_bip_2bang}). Each of these two bijections can be further extended to a bijection for toroidal maps of fixed essential girth with a certain root-face condition (Theorem~\ref{theo:bij_maps_d}, and Theorem~\ref{theo:bij_maps_2b} in the bipartite case, both stated in Section~\ref{sec:bij_extended} without proofs, which are delayed to Section~\ref{sec:proofs_theorems}).

\subsection{Terminology for oriented maps and mobiles}\label{sec:terminology}

Consider a face-rooted map $M$ of genus $g\geq 0$. Suppose that $M$ is
given with an orientation of its edges such that every vertex has at
least one outgoing edge.  For an edge $e\in M$, the \emph{rightmost
  walk} starting from $e$, is the (necessarily unique and eventually
looping) walk starting from $e$ by following the orientation of $e$,
then taking at each step the rightmost outgoing edge, i.e., for any
pair $e',e''$ of consecutive edges along the walk, all edges between
$e'$ and $e''$ in \ccw order around their common vertex are ingoing.

An orientation  of $M$ is called a \emph{right orientation} if the
following conditions are satisfied:

\begin{itemize}
\item
every vertex has at least one outgoing edge,
\item
  for every edge $e$ of $M$, the rightmost walk starting from $e$ eventually loops
  on the contour of the root-face $f_0$ with $f_0$ on its right side.
\end{itemize}

For $d\geq 1$ and $g\geq 0$,  
we denote by $\cOgd$ the family of  right orientations
of face-rooted maps of genus $g$ whose root-face has degree $d$. 

Let us now define the unicellular maps 
to be set in bijective correspondence with $\cOgd$. 
A \emph{mobile of genus $g$} is defined as a 
unicellular map of genus $g$ that is bipartite (it has black vertices
and white vertices and every edge connects a black 
vertex to a white vertex) such that each corner at a black
vertex is allowed to carry additional dangling half-edges called
\emph{buds}, represented as outgoing arrows. 
The \emph{excess} of a mobile $T$ is the number of edges 
minus the number of buds in $T$, and the family
of mobiles of genus $g$ and excess $d$ is denoted by $\cMgd$.

\subsection{Bijection $\Phi_+$ between $\cOgd$ and $\cMgd$.}\label{sec:bijPhi+}

Let $d\geq 1$. Similarly as in the planar case developed in~\cite{BF12}
we adapt the bijection from~\cite{BC11} into a
bijection\footnote{It should also be possible, for any $d\leq
    0$,  
to adapt the bijection
from~\cite{BC11} into a bijection between
 the family of genus $g$ mobiles of 
 excess $d$
 and
a 
 well-characterized family of genus $g$ oriented map, but we will not need it here to get our bijections
for toroidal maps with prescribed essential girth.}   
between $\cOgd$ and $\cMgd$ (see Section~\ref{sec:proof_phi} for proof 
details). For $O\in\cOgd$ we denote by $\Phi_+(O)$
the embedded graph obtained by inserting a black vertex in each
face of $O$, then applying 
the local rule of Figure~\ref{fig:local_rule} to 
every edge of $O$ (thereby creating an edge and a bud), and finally
erasing the  isolated black vertex
in the root-face of $O$ (since the root-face contour is directed
clockwise, this black vertex is incident to $d$ buds and no edge). 
See Figure~\ref{fig:bij_ori} for an example. 

\begin{figure}[!h]
\begin{center}
\includegraphics[width=4cm]{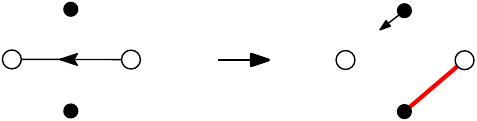}
\end{center}
\caption{The local rule applied by the bijection $\Phi_+$ 
 to each edge.}
\label{fig:local_rule}
\end{figure}
\begin{figure}[!h]
\begin{center}
\includegraphics[width=12cm]{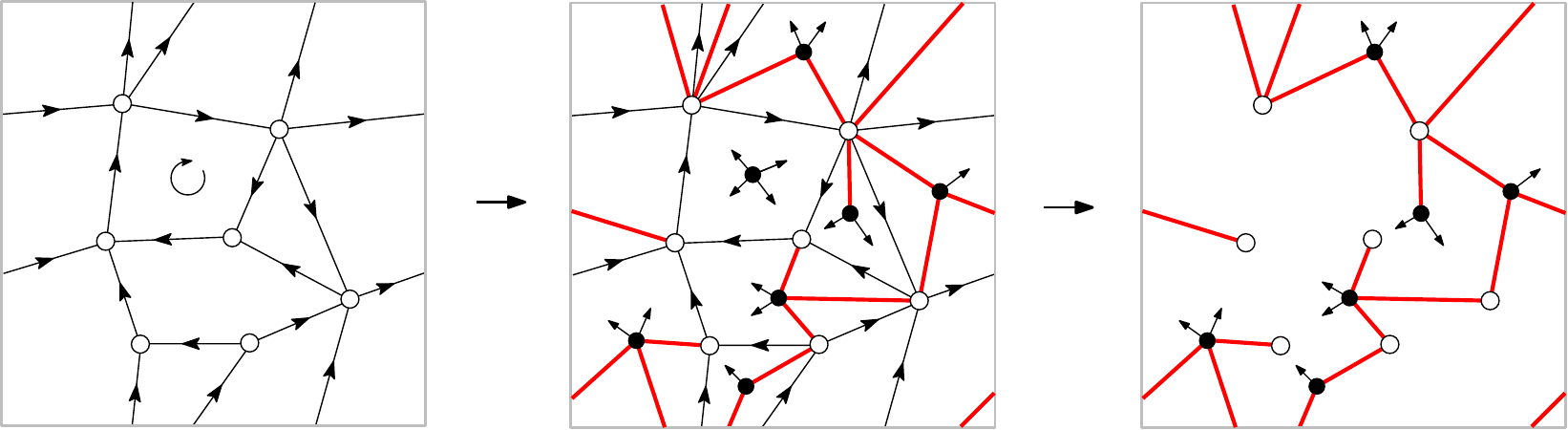}
\end{center}
\caption{The bijection $\Phi_+$  from a toroidal orientation
 in $\cO_4^{1}$
to a toroidal mobile of excess $4$ (the root-face is indicated by the small clockwise circular arrow).}
\label{fig:bij_ori}
\end{figure}

\begin{theorem}[Oriented case]\label{theo:phi_ori}
For $d\geq 1$ and $g\geq 0$, 
the mapping $\Phi_+$, 
with the local rule of Figure~\ref{fig:local_rule},
is a bijection between the family $\cO_d^{g}$ of 
oriented maps and the family $\cMgd$ of mobiles.
\end{theorem}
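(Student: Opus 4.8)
To prove Theorem~\ref{theo:phi_ori} I would follow the template of the planar construction in \cite{BF12} together with the genus-$g$ tour/closure machinery of \cite{BC11}, the point being to verify that the two defining conditions of a \emph{right orientation} are exactly what makes that machinery work in arbitrary genus, and that the excess $d$ bookkeeping matches the root-face of degree $d$.

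\emph{Step 1: the image of $\Phi_+$ lies in $\cMgd$.} Bipartiteness is immediate from the local rule of Figure~\ref{fig:local_rule}: every new edge joins an endpoint of an edge of $O$ (a white vertex) to a black vertex placed in an incident face, so $\Phi_+(O)$ carries the required black/white $2$-coloring. Each edge of $O$ produces exactly one edge and one bud; the $d$ edges on the root-face contour are precisely the ones whose buds sit at the root black vertex, so after erasing that vertex together with its $d$ buds the mobile has $|E(O)|$ edges and $|E(O)|-d$ buds, i.e.\ excess $d$, and $|V(O)|+|F(O)|-1$ vertices. The genuinely non-trivial point is that $\Phi_+(O)$ is connected and has a single face. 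I would prove this by showing that walking along the boundary of a face of $\Phi_+(O)$ translates, via the local rule, into following a concatenation of rightmost walks of $O$ (each bud is jumped over, each mobile-edge is crossed in the way dictated by the orientation of the underlying edge of $O$), and then invoking the second condition in the definition of a right orientation: since every rightmost walk of $O$ eventually loops on the root-face contour with the root-face on its right, all these boundary walks merge, forcing connectedness and a single face. Given this, the genus of $\Phi_+(O)$ is $g$ by the Euler relation: $|V(O)|-|E(O)|+|F(O)|=2-2g$ rearranges to $|E(O)|-(|V(O)|+|F(O)|-1)+1=2g$, the Euler formula for a connected unicellular map of genus $g$ with $|E(O)|$ edges.

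\emph{Step 2: an explicit inverse.} For $T\in\cMgd$ I would define $\Psi(T)$ by the closure (blossoming) operation performed along the unique face of $T$: traverse the contour of that face and match the buds, by a stack discipline (last-opened bud matched first), to the appropriate edge-sides encountered later in the tour, each matched pair closing up into an edge; the $d$ edge-sides left unmatched bound a new root-face of degree $d$; finally contract all black vertices, which yields a face-rooted map $\Psi(T)$ whose edges inherit a direction by reading the local rule of Figure~\ref{fig:local_rule} in reverse. Here one uses that $T$ has excess $d>0$ to see that exactly $d$ edge-sides are left over, and the genus-$g$ analysis of such closures in \cite{BC11} to see that $\Psi(T)$ is a genuine embedded map of genus $g$; then a direct check---every vertex of $\Psi(T)$ receives at least one outgoing edge, and every rightmost walk of $\Psi(T)$ ends on the new root-face because the matching was read off along the face tour---shows $\Psi(T)\in\cOgd$.

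\emph{Step 3: mutual inverses, and the main obstacle.} Finally I would verify $\Psi\circ\Phi_+=\mathrm{id}$ and $\Phi_+\circ\Psi=\mathrm{id}$; this is local bookkeeping, once one checks that the stack matching produced along the tour of $\Phi_+(O)$ is exactly the one re-gluing each bud to the endpoint of the original edge of $O$ it came from, the reverse computation giving the other composition. The part I expect to be the main obstacle is not the counting (routine, and largely importable from \cite{BF12,BC11}) but the surface-topological bookkeeping of Steps~1 and~2: that concatenating rightmost walks, respectively performing the stack-closure, produces a single face and does not drop the genus. I would therefore isolate as the key lemma the statement ``for $O\in\cOgd$ the boundary walk of $\Phi_+(O)$ visits every corner of $O$ exactly once'', proved from the rightmost-walk structure and the right-orientation hypothesis, together with its symmetric counterpart for $\Psi$; everything else then follows.
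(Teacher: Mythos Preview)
Your direct strategy---prove $\Phi_+$ lands in $\cMgd$, build a closure inverse, check both compositions---can be made to work, but the paper takes a different and shorter route: it reduces everything to the already-proved Bernardi--Chapuy bijection between mobiles of excess~$1$ and co-left orientations in $\cRg$. Concretely, the paper passes to the dual setting, attaches an auxiliary root-leaf (on the mobile side), respectively a root-vertex of outdegree~$d$ and indegree~$1$ (on the orientation side), so as to land in subfamilies $\cUgd\subset\cT_1^{g}$ and $\cRgd\subset\cRg$; it then checks that the BC bijection respects the equivalence relation that forgets where this auxiliary piece is attached, and recovers $\Phi_+$ as the induced bijection on the quotients $\cTgd\simeq\cUgd/\!\sim$ and $\cSgd\simeq\cRgd/\!\sim$, followed by the duality $\cSgd\simeq\cOgd$. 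All the hard topological facts (unicellularity, genus preservation, bijectivity of the closure) are thus imported wholesale from \cite{BC11}; your approach trades this economy for self-containment and would essentially reprove the relevant part of \cite{BC11}.

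One genuine slip in your Step~2: the final operation is not ``contract all black vertices''. In the paper's (and Bernardi--Chapuy's) closure, the inserted ingoing buds sit at black corners, so matched pairs produce edges \emph{between black vertices}; the map $X$ obtained after erasing the mobile-edges and the white vertices lives on the black vertices, and the element of $\cOgd$ is the \emph{dual} of $X$, with root-face dual to the new vertex $v_\infty$ absorbing the $d$ exposed buds. Contracting the black vertices does not recover $O$. If you prefer to avoid the dual, you need a different description of the closure in which the new edges are drawn between white vertices; either way this step needs to be rewritten.
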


The proof of Theorem~\ref{theo:phi_ori} is delayed to
Section~\ref{sec:proof_phi}.
 
The inverse mapping $\Psi_+$ is done as follows. Starting from a
mobile $T\in\cMgd$, we insert an \emph{ingoing bud} in every corner of
a black vertex $u$ that is just after an edge (not a bud) in
counterclockwise order around $u$.  Since $T$ has excess $d$, there
are $d$ more ingoing buds than outgoing buds. We then match the
outgoing and ingoing buds according to a walk (with the face on our
right) around the unique face of $T$, considering outgoing buds as
opening parentheses and ingoing buds as closing parentheses. Every
matched pair yields a directed edge, and we are left with $d$
unmatched ingoing buds (all in the same face
of the obtained figure), which we call the \emph{exposed buds}  
of $T$. For each such bud, the consecutive half-edge in clockwise order
around the incident black vertex is called an \emph{exposed half-edge} of $T$.  

We then join the exposed buds to a newly created vertex
$v_{\infty}$, see Figure~\ref{fig:closure} for an example.  Let $X$ be
the oriented map obtained after erasing the edges of $T$ and the white
vertices; and let $O$ be the dual map endowed with the face-rooted
dual orientation (that is, for every edge $e\in O$, with $e^*\in X$
the dual edge, we orient $e$ from the left side of $e^*$ to the right
side of $e^*$), where the root-face is taken to be the face dual to
$v_{\infty}$.  Then $\Psi_+$ is the mapping that maps $T$ to $O$ (it
is quite easy to check that $\Phi_+(\Psi_+(T))=T$ when superimposing
$O$, $X$ and $T$).

\begin{figure}[!h]
\begin{center}
\includegraphics[width=\linewidth]{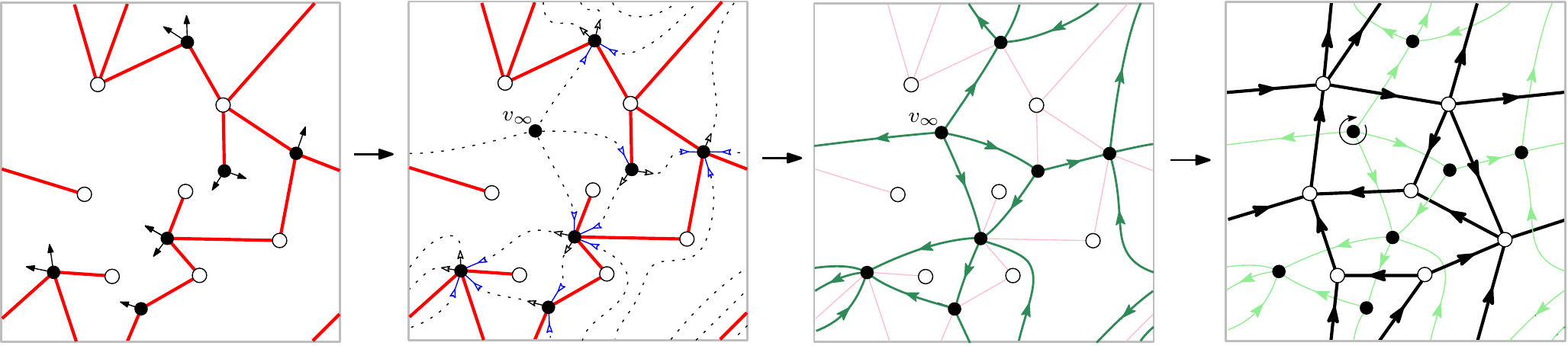}
\end{center}
\caption{The inverse mapping $\Psi_+$: from a mobile in $\cT_4^{1}$  
to an orientation in $\cO_4^{1}$.} 
\label{fig:closure}
\end{figure}

\subsection{Extension of $\Phi_+$ to the weighted bioriented setting}
\label{sec:weightbi}

Similarly as in~\cite{BF12} we may now extend this bijection to 
the context of biorientations, and then to weighted
biorientations. Recall from Section~\ref{sec:preliminaries}, that in a bioriented map $M$, 
every half-edge receives a direction (ingoing or outgoing).
For $i\in\{0,1,2\}$ an edge is said to be \emph{$i$-way} if it has
$i$ outgoing half-edges among its two incident half-edges.  
For $O$ a bioriented map, 
the \emph{induced} oriented map $O'=\mu(O)$ is obtained by 
replacing each $2$-way edge by a double edge (enclosing a face of degree $2$) directed counterclockwise, and inserting a vertex of (out)degree
 $2$ in the middle of each $0$-way edge, see the left column of 
Figure~\ref{fig:bij_biori} for an example.   
For $d\geq 1$ and $g\geq 0$ we can now extend 
the definition of the families $\cOgd$
to the bioriented setting:  a face-rooted bioriented
map  is said to belong to $\cOgd$ if  the induced
oriented face-rooted map is in $\cOgd$.

\begin{figure}[!t]
\begin{center}
\includegraphics[width=12cm]{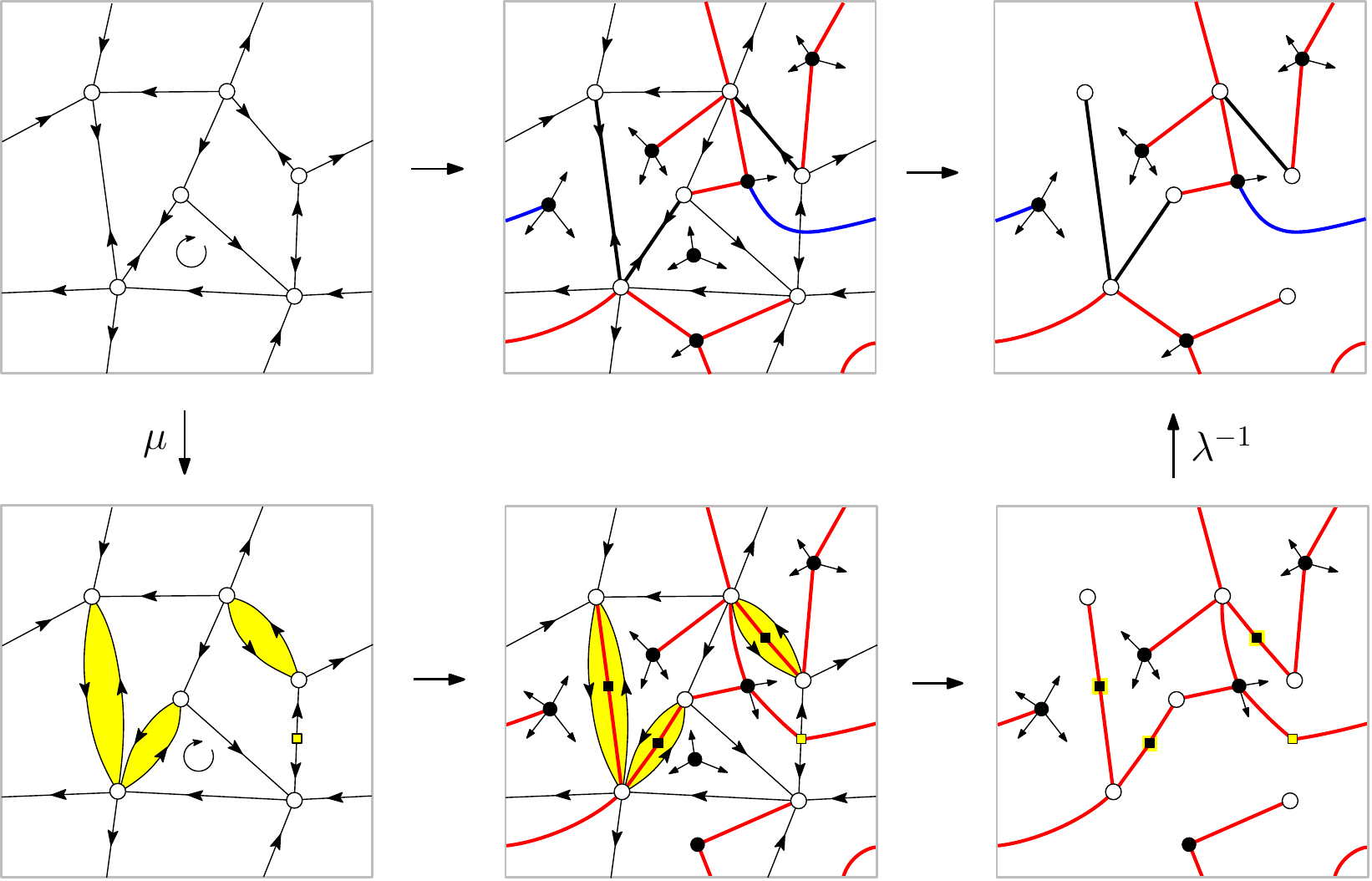}
\end{center}
\caption{Top-row: the bijection $\Phi_+$ from a biorientation
 in $\cO_3^{1}$
to a bimobile in $\cT_3^{1}$,  applying the 
local rules of Figure~\ref{fig:local_rule_biori}. The bottom-row shows 
that the construction amounts to applying the bijection $\Phi_+$ in the oriented setting, upon blowing each 2-way edge
into a \ccw 2-cycle and inserting a sink of degree $2$ in the middle of every 0-way edge.}
\label{fig:bij_biori}
\end{figure}

Let us now formulate 
rightmost walks directly on the biorientation to be a bit more explicit on the properties
that a biorientation needs to satisfy to be in $\cOgd$.  Consider a
face-rooted map $M$ of genus $g\geq 0$.  Suppose that $M$ is given
with a biorientation such that every vertex
has at least one outgoing half-edge.  For an outgoing half-edge $h$
of $M$, we define the \emph{rightmost walk} from $h$ as the (necessarily
unique and eventually looping) sequence
of half-edges starting from $h$, and at each step taking the 
 opposite half-edge and then the rightmost outgoing half-edge at the
 current vertex.
 
A biorientation  of $M$ is called a \emph{right biorientation} if the
following conditions are satisfied:

\begin{itemize}
  \item  every vertex
has at least one outgoing half-edge, 
\item
  for every outgoing half-edge $h$, the rightmost walk starting from $h$ loops
  on  the contour of the root-face $f_0$ with $f_0$ on its right
  side.
\end{itemize}

Thus with this definition, a face-rooted bioriented
map  belongs to $\cOgd$ if and only if
it is a right biorientation, it has genus $g$ and the degree of the root-face is $d$.

As illustrated in Figure~\ref{fig:bij_biori} (forgetting for now the second and third drawing of the top-row),
 $\Phi_+\circ\mu$ induces a bijection between bioriented maps
in $\cOgd$ and mobiles in $\cMgd$ where some vertices
of degree $2$ are marked as square vertices (square black 
vertices correspond to the 2-way edges, square white vertices
correspond to the 0-way edges).

We call \emph{bimobile} of genus $g$ a unicellular map of genus $g$
with two kinds of vertices, white or black (this time, black-black
edges and white-white edges are allowed), and such that each corner at
a black vertex might carry additional dangling half-edges called buds.
(Note that a mobile is a special case of bimobile, where all the edges
are black-white.)  The \emph{excess} of a bimobile is the number of
black-white edges plus twice the number of white-white edges, minus
the number of buds.  We now extend the definition of the family
$\cMgd$ to bimobiles: a bimobile of genus $g$ is said to belong to
$\cMgd$ if its excess is $d$.  For $T$ a bimobile, the \emph{induced
  mobile} $\lambda(T)$ is obtained by inserting in each white-white
edge a square black vertex of degree $2$, and inserting in each
black-black edge a square white vertex of degree $2$ together with two
buds at the incident edges, as shown in
Figure~\ref{fig:local_rule_lambda}. Clearly $\lambda(T)$ has the same
excess as $T$. As shown in Figure~\ref{fig:bij_biori} the mapping
$\lambda^{-1}\circ\Phi_+\circ\mu$ thus yields a bijection from
bioriented maps in $\cO_d^{g}$ to bimobiles in $\cMgd$ (it just
amounts to marking some \ccw faces of degree $2$ and some sinks of
degree $2$ in the bijection of Theorem~\ref{theo:phi_ori}). By a slight abuse of notation we 
refer to $\lambda^{-1}\circ\Phi_+\circ\mu$ as $\Phi_+$ (adapted to the bioriented setting). 
It is easy to see that the effect of $\lambda^{-1}$, of $\mu$, and of
 the local rules of Figure~\ref{fig:local_rule} can be
shortcut as the local rules shown in Figure~\ref{fig:local_rule_biori}
applied to the three types of edges (0-way, 1-way, or 2-way), so that,
given a biorientation $O$ in $\cO_d^{g}$, $\Phi_+(O)$ is obtained
after applying these rules to every edge of $O$, and then deleting the
isolated black vertex in the root-face. 

\begin{figure}[!h]
\begin{center}
\includegraphics[width=10cm]{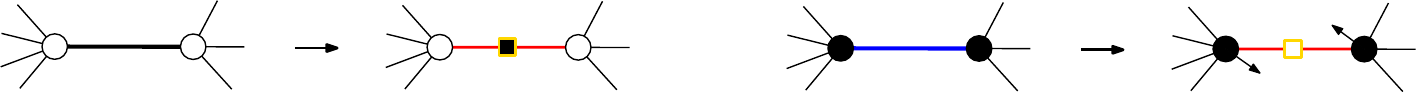}
\end{center}
\caption{The rules to obtain a mobile $\lambda(T)\in\cT_d^{g}$
 (where the new added vertices, of degree $2$, are distinguished
as square)  
from a bimobile $T\in\cT_d^{g}$.} 
\label{fig:local_rule_lambda}
\end{figure}

\begin{figure}[!h]
\begin{center}
\includegraphics[width=\linewidth]{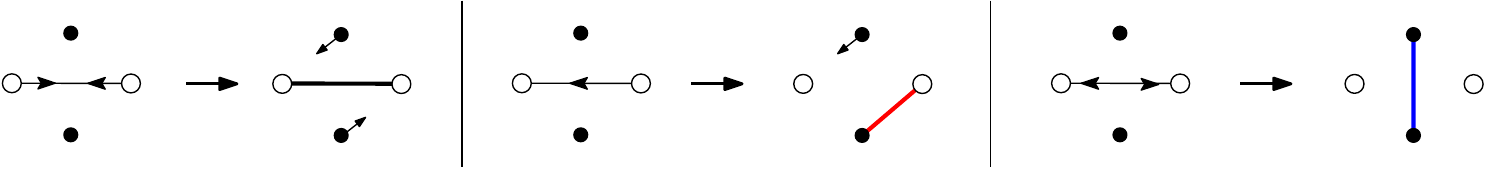}
\end{center}
\caption{The local rules applied to each edge by the bijection $\Phi_+$ 
in the bioriented setting.}
\label{fig:local_rule_biori}
\end{figure}

We obtain:

\begin{corollary}[Extension to the bioriented setting]
For $d\geq 1$ and $g\geq 0$, 
the mapping $\Phi_+$, 
with the local rules of Figure~\ref{fig:local_rule_biori},
is a bijection between the family $\cO_d^{g}$ of 
bioriented maps and the family $\cMgd$ of bimobiles.
\end{corollary}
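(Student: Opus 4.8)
The plan is to deduce the corollary from Theorem~\ref{theo:phi_ori} via the factorization already set up above, namely $\Phi_+=\lambda^{-1}\circ\Phi_+\circ\mu$ in the bioriented setting, by checking that each of the three maps $\mu$, $\Phi_+$ (oriented version) and $\lambda^{-1}$ is a bijection between the appropriate families, and that on decorated objects the middle map transports the decoration of an oriented map to the decoration of a mobile. Concretely, I would introduce the intermediate families of \emph{decorated oriented maps} in $\cO_d^{g}$ (an oriented map in $\cO_d^{g}$ together with a distinguished set of counterclockwise $2$-cycles bounding faces of degree $2$, and a distinguished set of degree-$2$ vertices sitting in the middle of a path of length $2$) and \emph{decorated mobiles} in $\cMgd$ (a mobile in $\cMgd$ with a distinguished set of degree-$2$ black vertices carrying no bud, and a distinguished set of degree-$2$ white vertices each flanked by two buds as in Figure~\ref{fig:local_rule_lambda}), and show that $\Phi_+$ in the bioriented setting is the composite of the three bijections: bioriented maps $\xrightarrow{\ \mu\ }$ decorated oriented maps $\xrightarrow{\ \Phi_+\ }$ decorated mobiles $\xrightarrow{\ \lambda^{-1}\ }$ bimobiles.

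First I would check that $\mu$ is a bijection from the bioriented maps in $\cO_d^{g}$ onto the decorated oriented maps in $\cO_d^{g}$: given a decorated oriented map, one recovers the biorientation by contracting each marked counterclockwise $2$-cycle back to a $2$-way edge and smoothing each marked degree-$2$ vertex back to a $0$-way edge, and this is inverse to $\mu$; membership in $\cO_d^{g}$ is, by definition, the condition that $\mu$ of the biorientation lies in $\cO_d^{g}$, and $\mu$ preserves genus and root-face degree. Symmetrically, $\lambda$ is a bijection from bimobiles in $\cMgd$ onto decorated mobiles in $\cMgd$, where the only nontrivial point is that $\lambda$ preserves the excess: in passing to the induced mobile a black-white edge contributes $1=1$, a white-white edge contributes $2=2$ (it becomes two edges through an inserted degree-$2$ black vertex), and a black-black edge contributes $0=2-2$ (it becomes two edges and two buds through an inserted degree-$2$ white vertex), so the weighted excess of the bimobile equals the plain quantity (number of edges) $-$ (number of buds) of $\lambda(T)$.

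Next I would verify that the oriented bijection $\Phi_+$ of Theorem~\ref{theo:phi_ori} carries decorations to decorations: applying the local rule of Figure~\ref{fig:local_rule} to the two edges of a marked counterclockwise $2$-cycle (after inserting the black vertex in the enclosed degree-$2$ face) produces exactly a degree-$2$ black vertex with no bud, while a marked degree-$2$ vertex becomes exactly a degree-$2$ white vertex flanked by two buds; hence $\Phi_+$ restricts to a bijection between decorated oriented maps in $\cO_d^{g}$ and decorated mobiles in $\cMgd$. Composing with $\mu$ and $\lambda^{-1}$ from the previous step yields that $\lambda^{-1}\circ\Phi_+\circ\mu$ is a bijection between bioriented maps in $\cO_d^{g}$ and bimobiles in $\cMgd$, and finally one checks — as indicated after Figure~\ref{fig:bij_biori} — that this composite is computed edge by edge exactly by the local rules of Figure~\ref{fig:local_rule_biori} according to the type ($0$-way, $1$-way, $2$-way) of the edge, followed by deletion of the isolated black vertex in the root-face; this identifies the composite with $\Phi_+$ as stated.

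The main obstacle is the bookkeeping in this last step: one must make sure that under $\Phi_+$ the blown-up $2$-way edges and the subdivided $0$-way edges correspond bijectively, and without interference, to the marked square vertices of the bimobile, so that the decorations can be consistently stripped off on both sides and so that no unmarked degree-$2$ black or white vertex is ever created by the construction in a way that would spoil the correspondence. Once this local correspondence is pinned down by inspecting Figures~\ref{fig:local_rule}, \ref{fig:local_rule_lambda} and \ref{fig:local_rule_biori}, the statement follows immediately from Theorem~\ref{theo:phi_ori}.
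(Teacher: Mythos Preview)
Your proposal is correct and follows exactly the approach the paper itself takes: the corollary is not given a separate proof in the paper but is deduced from Theorem~\ref{theo:phi_ori} via the factorization $\Phi_+=\lambda^{-1}\circ\Phi_+\circ\mu$ described in the paragraphs preceding the corollary, together with the observation that this ``just amounts to marking some \ccw faces of degree $2$ and some [sources] of degree $2$'' in the bijection of Theorem~\ref{theo:phi_ori}. Your write-up simply spells out this marking argument with more care than the paper does (in particular the excess-preservation check for $\lambda$), and your final ``obstacle'' is not really an obstacle at all once one views the decorations as pure extra data carried along by the oriented bijection.
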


Finally, similarly as in the planar case~\cite{BF12}, the bijection is
directly extended to the weighted setting.  A $\ZZ$-biorientation of a
map is a biorientation where every half-edge is given a value in
$\ZZ$, which is in $\ZZ_{>0}$ (strictly positive) if the half-edge is outgoing and in $\ZZ_{\leq 0}$ (negative or zero)  
if the half-edge is ingoing.  A $\ZZ$-bimobile is  a
bimobile where every non-bud half-edge is given a value in
$\ZZ$, which is in $\ZZ_{>0}$ if the half-edge is incident to a white
vertex and in $\ZZ_{\leq 0}$ if
 the (non-bud) half-edge is incident to a black vertex.
 
A $\ZZ$-bioriented face-rooted map is said to belong to $\cOgd$
if the underlying unweighted face-rooted bioriented map belongs
to $\cOgd$; and a $\ZZ$-bimobile $T$ is said to belong to 
$\cMgd$ if the underlying unweighted bimobile is in $\cMgd$.  

For a $\ZZ$-bioriented map, the \emph{weight} of a vertex $v$ 
is the sum of the weights of the outgoing half-edges at $v$,
and the \emph{weight} of a face $f$ is the sum of the weights
of the ingoing half-edges that have $f$ on their left (traversing
the half-edge toward its incident vertex); 
and the \emph{weight} of an edge $e$ is the sum of the 
weights of its two half-edges.  
For a $\ZZ$-bimobile, the \emph{weight} of a vertex $v$
is the sum of the weights of the incident half-edges,
and the \emph{weight} of an edge $e$ is the sum of the weights
of its two half-edges.  
We extend the bijection $\Phi_+$ to the weighted bioriented setting by
the rules of Figure~\ref{fig:local_rule_weighted_biori}.

\begin{figure}[!h]
\begin{center}
\includegraphics[width=\linewidth]{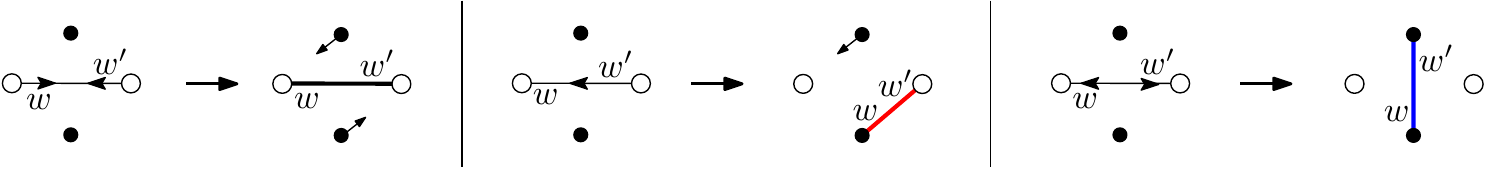}
\end{center}
\caption{The local rules applied to each edge by the bijection $\Phi_+$ 
in the weighted bioriented setting.}
\label{fig:local_rule_weighted_biori}
\end{figure}

Then we obtain the following:

\begin{corollary}[Extension to the weighted bioriented setting]\label{theo:bij_weighted_biori}
For $d\geq 1$ and $g\geq 0$, 
the mapping $\Phi_+$, 
with the local rules shown in Figure~\ref{fig:local_rule_weighted_biori},
is a bijection between the family $\cO_d^{g}$ of 
$\ZZ$-bioriented maps and the family $\cMgd$ of $\ZZ$-bimobiles.
\end{corollary}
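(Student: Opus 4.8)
The plan is to bootstrap from the already-established bijection in the unweighted bioriented setting (the previous Corollary), and show that the weight information can be carried along consistently. First I would observe that the underlying structures are already in bijection: by the previous Corollary, $\Phi_+$ (with the rules of Figure~\ref{fig:local_rule_biori}) is a bijection between unweighted $\cO_d^g$ and unweighted bimobiles of excess $d$. So the only thing left to verify is that, on top of each unweighted bioriented map $O$, the set of valid $\ZZ$-weightings of its half-edges is in bijection with the set of valid $\ZZ$-weightings of the half-edges of $\Phi_+(O)$, and that this bijection is exactly the one induced by the local rules of Figure~\ref{fig:local_rule_weighted_biori}.

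The key point is that each local rule in Figure~\ref{fig:local_rule_weighted_biori} acts edge-by-edge and, on each edge, simply transports the two weight labels (the outgoing-half-edge weight in $\ZZ_{>0}$ and the ingoing-half-edge weight in $\ZZ_{\leq 0}$) to the two half-edges of the image edge/bud configuration, preserving the sign conventions: a weight attached to an outgoing half-edge of $O$ becomes a weight on a half-edge incident to a white vertex of the bimobile (hence required to lie in $\ZZ_{>0}$), and a weight on an ingoing half-edge becomes a weight on a half-edge incident to a black vertex (hence in $\ZZ_{\leq 0}$); buds carry no weight. Thus I would check, case by case on the three edge-types (0-way, 1-way, 2-way), that the local rule is a weight-preserving bijection between the locally admissible labelings on the two sides — this is a routine verification that the sign conventions match up, with the only mild subtlety being the 0-way edge, where both half-edges are ingoing/negative and both map to half-edges incident to black vertices. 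Since the rules are local and the $\ZZ$-weights on distinct half-edges are independent (subject only to these per-half-edge sign constraints, with no global linear constraint imposed in the definition of $\ZZ$-bioriented map or $\ZZ$-bimobile), transporting them edge-by-edge is well-defined and invertible, with inverse given by reading the same local pictures from right to left.

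Finally I would assemble these observations: given a $\ZZ$-bioriented map in $\cO_d^g$, forgetting the weights gives an unweighted bioriented map in $\cO_d^g$, to which $\Phi_+$ associates a unique bimobile in $\cMgd$; re-decorating with the transported weights yields a $\ZZ$-bimobile whose underlying bimobile is that image, hence an element of $\cMgd$ in the weighted sense. The inverse is the analogous composition using $\Psi_+$ and the right-to-left local rules. The identities $\Phi_+(\Psi_+(T))=T$ and $\Psi_+(\Phi_+(O))=O$ at the weighted level follow from the unweighted versions together with the fact that the local weight-transport rules and their inverses compose to the identity on each half-edge.

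I do not expect a serious obstacle here: the content was already done in establishing the bioriented bijection, and the extension is bookkeeping. The one place to be careful is making the sign conventions in the definitions of \emph{weight of a vertex}, \emph{weight of a face}, and \emph{weight of an edge} match through the local rules — in particular checking that the face-weight of a $\ZZ$-bioriented map (sum of weights of ingoing half-edges with the face on their left) corresponds correctly to the vertex-weight of the white vertices of the $\ZZ$-bimobile — so that the bijection is not merely a bijection of decorated objects but also respects the weight statistics one will later want to track. This compatibility of statistics, while not strictly part of the bijectivity claim, is what makes the corollary usable, so I would state it explicitly as part of the proof.
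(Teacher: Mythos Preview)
Your approach is correct and matches the paper's treatment: the paper gives no separate proof of this corollary, treating it as the straightforward observation that the unweighted bijection (already established) can be decorated with weights carried edge-by-edge via the local rules of Figure~\ref{fig:local_rule_weighted_biori}, with the sign conventions lining up (outgoing $\leftrightarrow$ white, ingoing $\leftrightarrow$ black). Your case-by-case check and the observation that the weightings are independent per half-edge are exactly what is needed.

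One small slip in your final caveat paragraph: you have the color correspondence for faces backwards. In the paper's parameter-tracing (stated just after the corollary), each vertex $v$ of $O$ corresponds to a \emph{white} vertex of $T$ with the same weight, while each non-root face $f$ of $O$ corresponds to a \emph{black} vertex of $T$ with the same weight. So the face-weight (sum of ingoing half-edge weights with $f$ on their left) matches the black-vertex weight, not the white-vertex weight. This does not affect the bijectivity argument itself, which you correctly note, but since you single this out as the ``one place to be careful,'' it is worth getting the direction right.
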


An example is given in Figure~\ref{fig:bij_weighted_biori} (the weights are omitted in the middle drawing).

\begin{figure}[!h]
\begin{center}
\includegraphics[width=12cm]{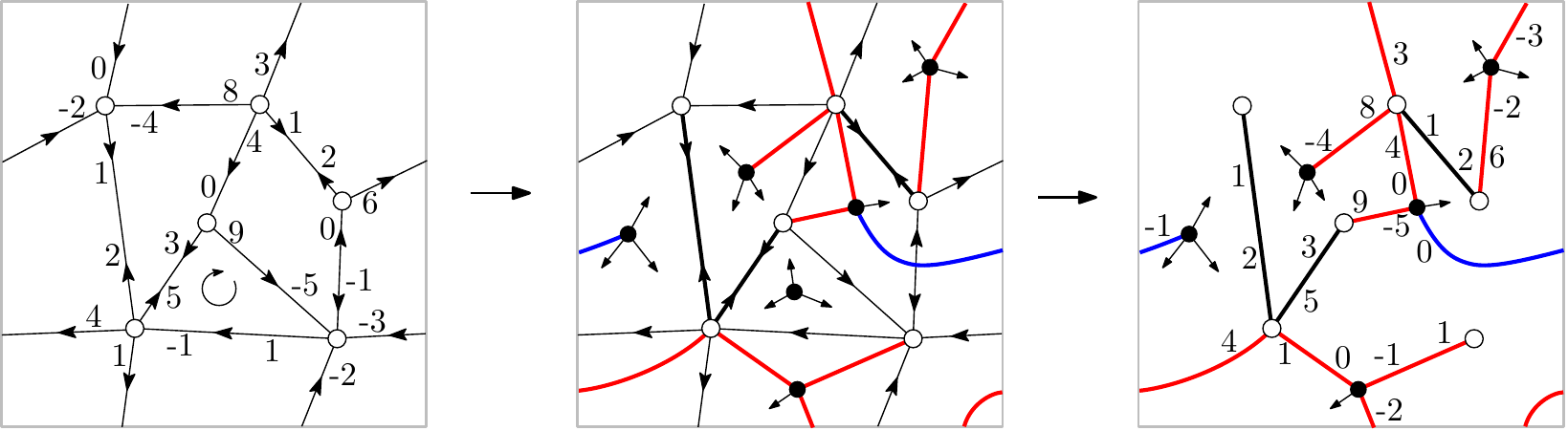}
\end{center}
\caption{Example of the bijection $\Phi_+$ from a $\ZZ$-biorientation
 in $\cO_3^{1}$
to a $\ZZ$-bimobile in $\cT_3^{1}$.}
\label{fig:bij_weighted_biori}
\end{figure}

As in the planar case~\cite{BF12}, for $O$ a $\ZZ$-bioriented map in
$\cOgd$ and $T=\Phi_+(O)$ the corresponding $\ZZ$-bimobile, several
parameters can be traced:
\begin{itemize}
\item
each vertex $v$ of $O$ corresponds to a white vertex $w$ of $T$:  
 the outdegree of $v$ corresponds to the degree of $w$ and the weight of $v$ is the same as the weight of $w$,
\item
each non-root face $f$ of $O$ corresponds to a black vertex $b$
of $T$ of the same degree and same weight,
\item
each edge $e$ of $O$ corresponds to an edge of $T$ of the same weight.
\end{itemize}

When all the weights of a $\ZZ$-biorientation are in $\ZZ_{\geq 0}$ then we
have an $\NN$-biorientation, as defined in
Section~\ref{sec:preliminaries}.  Note that an $\NN$-biorientation is
a $\ZZ$-biorientation where all the ingoing half-edges have weight
$0$.  The corresponding $\ZZ$-bimobiles are called
\emph{$\NN$-bimobiles}  (these are the $\ZZ$-bimobiles where the half-edges at black vertices have weight
$0$).

We will use specializations of the  weighted formulation of
$\Phi_+$ (Corollary~\ref{theo:bij_weighted_biori}) in order to obtain bijections for 
$d$-toroidal maps (relying on $\frac{d}{d-2}$-orientations, so we are in the 
$\NN$-bioriented setting), and 
more generally for toroidal maps of essential 
girth $d$ with a root-face of degree $d$ (relying on a generalization of $\frac{d}{d-2}$-orientations in the $\ZZ$-bioriented setting). 

\subsection{Necessary condition for $\ab$-orientations to be  right biorientations}

We prove here that minimality is a necessary condition for an $\ab$-orientation to be a right biorientation:

\begin{lemma}
  \label{lem:necmin}
  If a face-rooted $\ab$-oriented map belongs to $\cOgd$, then it is
  minimal.
\end{lemma}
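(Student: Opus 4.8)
The plan is to argue by contradiction. Suppose $O$ is a face-rooted $\frac{\alpha}{\beta}$-oriented map in $\cOgd$ that is \emph{not} minimal. By definition of minimality for $\frac{\alpha}{\beta}$-orientations (via the $\beta$-expansion), and by the equivalent reformulation given just after the definition of minimality in Section~\ref{sec:preliminaries}, non-minimality means there is a non-empty set $S$ of faces of $O$ with $f_0\notin S$ such that every edge $e$ on the boundary of $S$ either is $1$-way with a face of $S$ on its right or is $2$-way (bidirected). Here ``boundary of $S$'' means the set of edges having exactly one incident face in $S$; and ``$S$ on its right'' is taken with respect to traversing $e$ in the direction of its outgoing half-edge.

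First I would observe that this condition says precisely that, in the underlying biorientation, there is no edge directed \emph{out} of $S$ across its boundary: every boundary half-edge pointing from a vertex into $\overline S$ must be ingoing (the edge is $1$-way into $S$ or $2$-way). Now I would pick any face $g \in S$ and any edge $e$ on its contour incident to $g$; such an edge has an outgoing half-edge $h$ (if $e$ is interior to $S$, choose either orientation; if $e$ is on the boundary of $S$, $e$ is $1$-way with $S$ on the right, i.e.\ $g$ on the right, or $2$-way, so in either case there is an outgoing half-edge whose rightmost successor stays with a face of $S$ on the right). Then I would follow the rightmost walk from $h$. The key claim is that this walk can never cross the boundary of $S$ to leave $S$: at each step the walk takes the current half-edge's opposite and then the rightmost outgoing half-edge at the vertex reached. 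I would argue that the walk always keeps a face of $S$ immediately on its right: if the walk is currently traversing an edge with a face of $S$ on its right, then the opposite half-edge is at a vertex $v$, and the face of $S$ sits in the corner to the right of the incoming direction; the rightmost outgoing half-edge at $v$ is the first outgoing half-edge encountered going clockwise (i.e.\ in the $S$-side) from the arrival half-edge, so it still has a face of $S$ on its right — \emph{unless} we would have to exit $S$, which is impossible because every boundary edge with a face of $S$ on one side has no outgoing half-edge leading out of $S$. Hence the rightmost walk from $h$ stays forever among faces with a face of $S$ always on its right, so it eventually loops inside $S$ (or on its boundary), never reaching the root-face $f_0$ (which is not in $S$, and the walk cannot cross into $\overline S$). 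This contradicts the defining property of a right biorientation, namely that the rightmost walk from every outgoing half-edge loops on the contour of $f_0$ with $f_0$ on its right.

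The main obstacle I anticipate is making precise and correct the bookkeeping of ``rightmost'' versus the side on which $S$ lies, especially at $2$-way edges and at vertices incident to several boundary edges, so that one really does show the rightmost walk is trapped in (the closure of) $S$. A clean way to handle this is to pass to the $\beta$-expansion $H$ (where the orientation is an honest $\alpha$-orientation and $S$ lifts to a set of faces of $H$ with the same boundary property, using that $\beta>0$ and that within each parallel bundle the expansion introduces no clockwise cycle), and to recall that right biorientations of $O$ correspond to right orientations of $H$ — or more simply, to work with the induced oriented map $\mu(O)$ of Section~\ref{sec:weightbi}, where $2$-way edges become \ccw $2$-cycles (both lying inside $S$) and $0$-way edges get a degree-$2$ sink; the condition on the boundary of $S$ then becomes literally ``every boundary edge of $S$ is directed with $S$ on its right,'' which is the standard non-minimality condition for orientations, and the trapping argument for rightmost walks is the classical one. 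The contradiction with the right-orientation property of $\mu(O)$ then finishes the proof.
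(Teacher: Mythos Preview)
Your proposal is correct and follows essentially the same route as the paper: argue by contradiction from the non-minimality witness $S$, and show that a rightmost walk launched from an outgoing half-edge on the boundary of $S$ is trapped in $S\cup\partial S$ with $S$ always on its right, so it can never loop around $f_0$. The only cosmetic difference is that the paper picks its starting half-edge directly on the contour of $S$ (the half-edge just after a vertex when walking \cw around $\partial S$, which is necessarily outgoing by the boundary condition), which sidesteps the initial bookkeeping you flag as an obstacle; your alternative of passing to $\mu(O)$ or the $\beta$-expansion also works but is not needed.
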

\begin{proof}
  Suppose by contradiction that a face-rooted map $M$
  has an $\ab$-orientation $X$ in $\cO_d^{g}$ that is non-minimal.  Let
  $f_0$ be the root face of $M$.  
By  definition of non-minimality, there exists a non-empty set $S$ of faces of $M$, not containing
  $f_0$, such that every edge on the boundary of $S$ is either simply directed with a face in
  $S$ on its right or is bidirected. 
Hence,  while walking \cw on the contour of $S$, each half-edge that is
  encountered just after a vertex is outgoing.  Consider such a half-edge
  $h$ and let $W$ be the rightmost walk starting from $h$.  Then $W$
  necessarily stays in $S$ union its contour (it can not escape), and moreover if it loops on
  the contour of $S$, then it does so with $S$ on its right side.
Since $S$ does not contain $f_0$, 
 $W$ can not eventually loop on the contour of $f_0$ with $f_0$
  on the right side, a contradiction.
\end{proof}

\subsection{Bijection for toroidal $d$-angulations of essential
  girth~$d$}
\label{sec:bijdangul}

Let $d\geq 3$.   We define a toroidal $\frac{d}{d-2}$-mobile as an 
$\NN$-bimobile of genus $1$, where every white vertex has weight $d$,
every edge has weight $d-2$ and every black vertex has degree~$d$. 
We denote by $\mathcal U_d$ the family of these $\NN$-bimobiles.  
(Note that there is no black-black edges in an element of  $\mathcal U_d$.)
A simple counting argument gives:

\begin{lemma}\label{claim:excess}
Every $\NN$-bimobile in $\mathcal U_d$ has excess $d$.
\end{lemma}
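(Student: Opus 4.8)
The plan is to mimic the Euler-characteristic counting used in Claim~\ref{claim:epsilon} and in Lemma~\ref{lem:gammahomology}, but now applied to the unicellular map underlying the $\NN$-bimobile. Let $T\in\mathcal U_d$, and denote by $n_\circ$ (resp. $n_\bullet$) the number of white (resp. black) vertices of $T$, by $m$ its number of edges (recall all edges are black-white), and by $b$ its number of buds; then the excess is by definition $m-b$. Since $T$ is a unicellular map of genus $1$, the Euler relation gives $n_\circ+n_\bullet-m+1=0$, i.e. $m=n_\circ+n_\bullet+1$. So it suffices to show $b=n_\circ+n_\bullet+1-d$.

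The key idea is to compute the total weight of $T$ in two ways. First I would count by vertices: since each white vertex has weight $d$ and (in an $\NN$-bimobile) the half-edges at black vertices have weight $0$, the total weight carried by all non-bud half-edges is $d\,n_\circ$. Second I would count by edges: each edge has weight $d-2$, so the same total equals $(d-2)m$. Hence $d\,n_\circ=(d-2)m$, which combined with $m=n_\circ+n_\bullet+1$ yields a relation; but this alone is not quite enough to isolate $b$. The missing ingredient is a count of the buds via the black vertices: I would use the fact that each black vertex of $T$ has degree $d$ (where, as is standard for mobiles, "degree" counts only the non-bud edges, or alternatively counts buds too — I must be careful which convention is in force). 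Reading the definition of $\mathcal U_d$ together with the definition of excess, the relevant relation should be $2m = d\,n_\bullet - b$ or a close variant: summing the degree $d$ over black vertices counts each black-white edge once from the black side plus each bud, giving $d\,n_\bullet = (\text{edges at black vertices}) + b$; and "edges at black vertices" $= m$ since every edge is black-white. Wait — that gives $d\,n_\bullet = m + b$ only if the stated black-degree $d$ includes the buds; if it excludes buds then $d\,n_\bullet=m$. I would check Figure references / the planar analogue in \cite{BF12} to fix the convention, then state it explicitly.

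Assuming the convention that the black-vertex degree $d$ does \emph{not} count buds, we get $m=d\,n_\bullet$ together with $d\,n_\circ=(d-2)m$; substituting, $d\,n_\circ=(d-2)d\,n_\bullet$, so $n_\circ=(d-2)n_\bullet$ and $m=d\,n_\bullet$, whence $m=n_\circ+2n_\bullet$. Comparing with the Euler relation $m=n_\circ+n_\bullet+1$ forces $n_\bullet=1$ — which is too rigid, so this convention is wrong, and the black-degree $d$ \emph{must} include buds. With that convention, summing degrees over black vertices gives $d\,n_\bullet=m+b$ (each black-white edge contributes $1$ from its black endpoint, each bud contributes $1$). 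Now combine the three relations $m=n_\circ+n_\bullet+1$, $d\,n_\circ=(d-2)m$, and $d\,n_\bullet=m+b$: from the second, $n_\circ=(d-2)m/d$; from the third, $n_\bullet=(m+b)/d$; plugging into the first, $m=\frac{(d-2)m}{d}+\frac{m+b}{d}+1$, i.e. $dm=(d-2)m+m+b+d$, i.e. $dm=(d-1)m+b+d$, i.e. $m=b+d$, that is $m-b=d$. That is exactly the claim.

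The main obstacle, then, is not any deep argument but pinning down the degree/bud conventions for black vertices in $\mathcal U_d$ (and making sure the weight conventions for $\NN$-bimobiles — half-edges at black vertices have weight $0$, white vertices have weight $d$, edges have weight $d-2$ — are being applied consistently with the parameter-tracking list at the end of Section~\ref{sec:weightbi}). Once those are fixed, the proof is a three-line linear-algebra manipulation of the Euler relation together with the two weight counts (by white vertices and by edges) and the degree count at black vertices. I would write it up by first recalling that excess $=m-b$, then displaying the Euler relation, then the two total-weight identities and the black-degree identity, and concluding $m-b=d$ by elimination.
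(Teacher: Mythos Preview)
Your approach is essentially the same as the paper's --- Euler relation, plus the total-weight identity $d\,n_\circ=(d-2)m$, plus the black-vertex degree identity --- and your resolution of the bud/degree convention is correct (the degree of a black vertex in $\mathcal U_d$ counts both the incident edges and the buds). However, there is a genuine gap: you assert that ``all edges are black-white'', which is not true in general. The paper remarks just before the lemma that there are no \emph{black-black} edges in an element of $\mathcal U_d$ (since such an edge would have weight $0+0=0\neq d-2$), but for $d\geq 4$ white-white edges are perfectly allowed (two positive half-edge weights summing to $d-2$). This breaks your setup in two places: the excess of a bimobile is $\nbw+2\nww-k$, not $m-k$; and the black-degree identity reads $d\,n_\bullet=\nbw+k$, not $d\,n_\bullet=m+k$, since white-white edges do not touch black vertices.

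The fix is the one the paper carries out: keep $\nww$ in the bookkeeping. With $e=\nbw+\nww$ one still has $d\,n_\circ=(d-2)e$ and Euler gives $e=n_\circ+n_\bullet+1$. Adding $d\,n_\circ=(d-2)e$ to $d\,n_\bullet=\nbw+k$ yields $d(n_\circ+n_\bullet)=(d-2)e+\nbw+k$, while Euler gives $d(n_\circ+n_\bullet)=de-d$. Equating and simplifying: $2e=\nbw+k+d$, i.e.\ $\nbw+2\nww-k=d$, which is the excess. So once you allow for white-white edges the same three-line elimination goes through; only the two identities involving $m$ need to be restated in terms of $\nbw$ and $\nww$ separately.
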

\begin{proof}
For $T\in \mathcal U_d$, let $\nbw$ be the number of black-white edges, $\nww$ the number
of white-white edges, $e=\nbw+\nww$ the total number of edges, 
$\nb$ the number of black vertices, $\nw$ the number of white vertices, and $k$ the number of buds. 
By definition the excess
of $T$ is $\nbw+2\nww-k$, so we want to prove that this quantity equals $d$. 
Since $T$ is unicellular, Euler's formula gives $e=\nb+\nw+1$. Since every 
white vertex has weight $d$, every black vertex has weight $0$, and every edge has weight $d-2$
we have $d\nw=(d-2)e$. Since every black vertex has degree $d$ we have $d\nb=\nbw+k$. 
Hence we have at the same time $d(\nw+\nb)=(d-2)e+\nbw+k$ and $d(\nw+\nb)=de-d$, 
so that $2e=k+\nbw+d$, and thus $\nbw+2\nww-k=d$.  
\end{proof}

Clearly the bijection $\Phi_+$ specializes as a bijection between
face-rooted toroidal $d$-angulations endowed with a
$\frac{d}{d-2}$-orientation in $\cO_d^{1}$, and the family
$\mathcal U_d$. 

\begin{figure}
\begin{center}
\includegraphics[width=\linewidth]{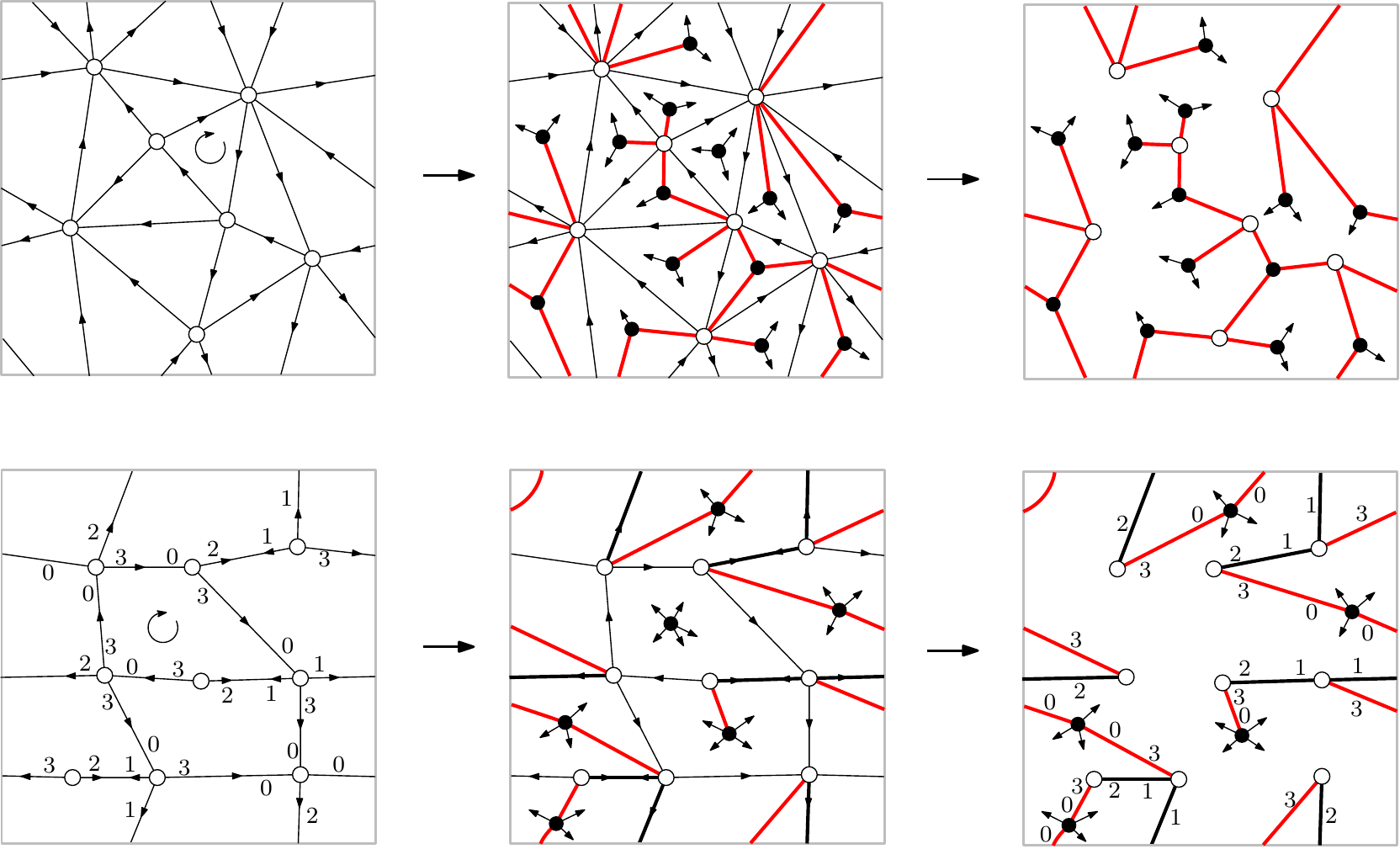}
\end{center}
\caption{
Left: 
a toroidal $d$-angulation in $\cF_d$
 endowed with its unique balanced $\frac{d}{d-2}$-orientation in $\cO_{d}^{1}$ 
($d=3$ for the top example, $d=5$ for the
 bottom example).  
Right: the associated  $\NN$-bimobile.} 
\label{fig:bij_dangulations}
\end{figure}

Consider a toroidal $\frac{d}{d-2}$-mobile $T$ and a
cycle $C$ of $T$ with a traversal direction. Let $w_{L}(C)$
(resp. $w_R(C)$) be the total weight of half-edges incident to a white
vertex of $C$ on the left (resp. right) side of $C$; and let
$s_{L}(C)$ (resp. $s_R(C)$) be the number of half-edges, including
buds, incident to a black vertex of $C$ on the left (resp. right)
side of $C$.  We define $\gamma_{L}(C)=w_{L}(C)+s_{L}(C)$,
$\gamma_R(C)=w_R(C)+s_R(C)$, and define the $\gamma$-score of $C$ as 
$\gamma(C)=\gamma_{R}(C)-\gamma_{L}(C)$.  Then $T$ is called
\emph{balanced} if the $\gamma$-score of any non-contractible cycle of
$T$ is $0$. We denote by $\mathcal U_d^{Bal}$ the subset of elements
of $\mathcal U_d$ that are balanced.
We will show (Lemma~\ref{lem:phispebal} in Section~\ref{sec:proofbijdangspe}) that $\Phi_+$
specializes into a ``balanced version'' of the bijection, i.e., a
bijection between face-rooted toroidal $d$-angulations endowed with a
balanced $\frac{d}{d-2}$-orientation in $\cO_d^{1}$,  and the
family $\mathcal U_d^{Bal}$.

We denote by $\cF_d$ the family of face-rooted $d$-toroidal maps 
such that the only $d$-angle enclosing the root-face is its contour. We will show 
(Lemma~\ref{lem:canonical_dori} in Section~\ref{sec:proofbijdangspe})  that a face-rooted
toroidal $d$-angulation $M$ has a balanced $\frac{d}{d-2}$-orientation
in $\cO_d^{1}$ if and only if $M\in\cF_d$, and in that case $M$ has a
unique balanced $\frac{d}{d-2}$-orientation in $\cO_d^{1}$, which is
the minimal one (by Lemma~\ref{lem:necmin}). 
Thus we obtain the
following bijection:

\begin{theorem}[Toroidal $d$-angulations of essential
  girth~$d$]\label{theo:bij_dang}
  For $d\geq 3$, there is a bijection between the map family $\cF_d$
 and the $\NN$-bimobile family $\mathcal U_d^{Bal}$. Every non-root face of the map corresponds to a black vertex in the associated $\NN$-bimobile.
\end{theorem}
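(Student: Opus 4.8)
The plan is to assemble Theorem~\ref{theo:bij_dang} from the ingredients already set up in the excerpt, so that the only new work is bookkeeping and the two lemmas that are explicitly deferred. First I would recall the chain of bijections: Corollary~\ref{theo:bij_weighted_biori} gives a bijection $\Phi_+$ between $\ZZ$-bioriented maps in $\cO_d^{1}$ and $\ZZ$-bimobiles in $\cMgd$, which restricts (all weights nonnegative, ingoing half-edges of weight $0$) to a bijection between $\NN$-bioriented maps in $\cO_d^{1}$ and $\NN$-bimobiles in $\cMgd$. Under this bijection the parameter dictionary stated just before Section~4.4 matches, for each vertex $v$ of $O$, the outdegree and weight of $v$ with the degree and weight of the corresponding white vertex; for each non-root face $f$ of $O$, the degree and weight of $f$ with those of the corresponding black vertex; and edge-weights with edge-weights. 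Hence $\Phi_+$ sends a face-rooted toroidal $d$-angulation equipped with a $\frac{d}{d-2}$-orientation lying in $\cO_d^{1}$ (all vertices of weight $d$, all edges of weight $d-2$, all non-root faces of degree $d$) exactly to the $\NN$-bimobiles with all white vertices of weight $d$, all edges of weight $d-2$, all black vertices of degree $d$, i.e. to $\mathcal U_d$ (using Lemma~\ref{claim:excess} to see that such bimobiles automatically have excess $d$, so that they really do lie in $\cMgd$ and conversely every element of $\cMgd$ with these local weights is in $\mathcal U_d$). This is the ``unweighted'' specialization already asserted right before Figure~\ref{fig:bij_dangulations}.

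Next I would cut this bijection down to the balanced sublevels on both sides. The key point is that $\Phi_+$ is \emph{compatible with the $\gamma$-score}: one checks, cycle by cycle, that for a non-contractible cycle $C$ of the map $O$ the corresponding cycle in the bimobile $T=\Phi_+(O)$ has the same $\gamma$-score, where on the bimobile side $\gamma$ is defined via $w_L,w_R$ (weights at white vertices) and $s_L,s_R$ (half-edge counts, buds included, at black vertices) as in the paragraph preceding the theorem. This is precisely what Lemma~\ref{lem:phispebal} (deferred to Section~\ref{sec:proofbijdangspe}) will establish. Granting it, $\Phi_+$ restricts to a bijection between face-rooted toroidal $d$-angulations endowed with a balanced $\frac{d}{d-2}$-orientation in $\cO_d^{1}$ and the balanced $\NN$-bimobile family $\mathcal U_d^{Bal}$.

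It then remains to identify the source side of this last bijection with $\cF_d$. For this I would invoke Lemma~\ref{lem:canonical_dori} (also deferred): a face-rooted toroidal $d$-angulation $M$ admits a balanced $\frac{d}{d-2}$-orientation lying in $\cO_d^{1}$ if and only if $M\in\cF_d$, and in that case such an orientation is unique, namely the minimal one. The ``only if'' and uniqueness ingredients rest on pieces already in hand: Lemma~\ref{th:existencefractional} (existence of a $\frac{d}{d-2}$-orientation forces essential girth $d$), Proposition~\ref{th:existencebalanced} (a $d$-toroidal map admits a balanced $\frac{d}{d-2}$-orientation), Corollary~\ref{theo:general_gamma} (uniqueness of the minimal balanced one once $M$ is face-rooted), and Lemma~\ref{lem:necmin} (a right $\ab$-biorientation must be minimal, so the one in $\cO_d^{1}$, if it exists, is the minimal balanced one); the content specific to Lemma~\ref{lem:canonical_dori} is the equivalence with the root-face contour condition defining $\cF_d$, i.e. that the minimal balanced orientation is a right biorientation precisely when no $d$-angle other than the root-face contour encloses the root-face — this uses Claim~\ref{claim:epsilon} to control the weight ``budget'' of a $d$-angle. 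Composing the bijection $M\mapsto$ (its unique balanced $\frac{d}{d-2}$-orientation in $\cO_d^{1}$), which is a bijection from $\cF_d$ onto that family of oriented maps, with the restricted $\Phi_+$, yields the desired bijection from $\cF_d$ to $\mathcal U_d^{Bal}$; and the face-to-black-vertex correspondence is read off from the parameter dictionary. The main obstacle is entirely packaged into the two deferred lemmas: checking that $\Phi_+$ preserves $\gamma$-scores (Lemma~\ref{lem:phispebal}) and, more delicately, showing that balancedness-plus-rightness of the orientation is equivalent to the maximal-$d$-angle condition on the root-face (Lemma~\ref{lem:canonical_dori}); once those are granted, the theorem is just the composition of bijections recorded above.
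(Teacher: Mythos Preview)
Your proposal is correct and follows essentially the same route as the paper: the theorem is deduced by composing the specialization of $\Phi_+$ to $\frac{d}{d-2}$-orientations with the two deferred lemmas, Lemma~\ref{lem:phispebal} (balancedness is preserved) and Lemma~\ref{lem:canonical_dori} (existence and uniqueness of a balanced right $\frac{d}{d-2}$-orientation characterizes $\cF_d$). One minor imprecision worth flagging: the cycles on which the $\gamma$-scores are compared are non-contractible cycles of the bimobile $T$, which live in the star-completion $\aM$ rather than in $M$ itself, and the paper's proof of Lemma~\ref{lem:phispebal} goes through Lemma~\ref{lem:augment} with an offset of $(d-2)\nb(C)$ on each side rather than a literal equality of $\gamma$-scores on~$M$.
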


Two examples are given in Figure~\ref{fig:bij_dangulations} for $d=3$ and $d=5$.

We now give the statement for bipartite maps. Let $b\geq 2$, and
$d=2b$. We denote by $\chF_{2b}$  the subfamily of maps in $\cF_{2b}$ that
are bipartite.  Proposition~\ref{th:evencase} ensures that a face-rooted
$d$-toroidal map $M$ is bipartite if and only if all the weights of
the unique minimal balanced $\frac{d}{d-2}$-orientation of $M$ are even. 
Hence, in the bijection of Theorem~\ref{theo:bij_dang},
$M\in\cF_{2b}$ is bipartite if and only if all half-edge weights in
the associated  $\NN$-bimobile are even. We formalize this
simplification as follows.

We define a \emph{$\frac{b}{b-1}$-mobile} as an $\NN$-bimobile of genus
$1$, where every white vertex has weight $b$, every edge has weight
$b-1$ and every black vertex has degree $2b$. The family of $\frac{b}{b-1}$-mobiles
is denoted by $\hat{\mathcal U}_b$. 
 Note that for $T\in\hat{\mathcal U}_b$, the $\NN$-bimobile $T'$ obtained from $T$ 
 by doubling every half-edge weight is an element
of $\mathcal U_{2b}$ (in particular, $T$ must have excess $2b$). 
We say that $T$ is \emph{balanced} if $T'$ is
balanced and we denote by $\hat{\mathcal U}_{b}^{Bal}$ the subset of elements
of $\hat{\mathcal U}_b$ that are balanced.  Thus we obtain the following
bijection:

\begin{figure}
\begin{center}
\includegraphics[width=\linewidth]{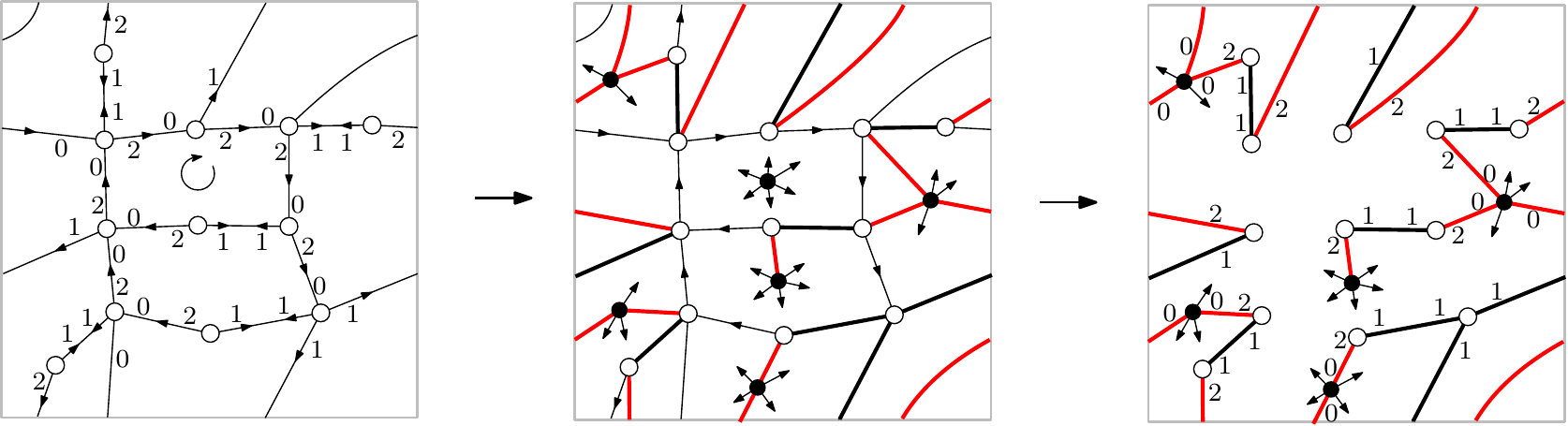}
\end{center}
\caption{Left: a bipartite toroidal face-rooted $2b$-angulation 
in $\chF_{2b}$ ($b=3$ in the example), endowed with its unique balanced $\frac{b}{b-1}$-orientation in $\cO_{2b}^{1}$. Right: the associated  $\NN$-bimobile.} 
\label{fig:bij_2bangulations}
\end{figure}

\begin{corollary}[Bipartite toroidal $2b$-angulations of essential
  girth~$2b$]\label{theo:bij_bip_2bang}
  For $b\geq 2$, there is a bijection between
the map family $\chF_{2b}$ and the $\NN$-bimobile family $\hat{\mathcal U}_b^{Bal}$. 
Every non-root face of the map corresponds to a
  black vertex in the associated $\NN$-bimobile.
\end{corollary}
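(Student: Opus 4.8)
The plan is to obtain Corollary~\ref{theo:bij_bip_2bang} as a direct parity-specialization of Theorem~\ref{theo:bij_dang}, exactly as $\hat{\mathcal U}_b$ is defined to be the ``halved'' version of the even elements of $\mathcal U_d$. First I would set up the correspondence on the combinatorial-object side: the map $T\mapsto T'$ that doubles every non-bud half-edge weight is a bijection from $\hat{\mathcal U}_b$ onto the subfamily of $\mathcal U_{2b}$ consisting of $\NN$-bimobiles all of whose half-edge weights are even (this is immediate from the weight conditions: white-vertex weight $b\leftrightarrow 2b$, edge weight $b-1\leftrightarrow 2b-2$, black-vertex degree $2b$ unchanged; and by Lemma~\ref{claim:excess} the excess is automatically $2b$). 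Moreover the $\gamma$-score of any cycle $C$ of $T$ is exactly half the $\gamma$-score of $C$ in $T'$, because $\gamma$ is $\ZZ$-linear in the half-edge weights and buds are unaffected; hence $T$ is balanced in the sense of the corollary if and only if $T'\in\mathcal U_{2b}^{Bal}$. So $T\mapsto T'$ restricts to a bijection $\hat{\mathcal U}_b^{Bal}\to\{T'\in\mathcal U_{2b}^{Bal}: \text{all half-edge weights even}\}$.

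Next I would handle the map side. By Proposition~\ref{th:evencase}, a face-rooted $2b$-toroidal map $M$ with distinguished face $f_0$ is bipartite if and only if the unique minimal balanced $\frac{2b}{2b-2}$-orientation of $M$ has all its half-edge weights even. Combined with Lemma~\ref{lem:canonical_dori} (which tells us that for $M\in\cF_{2b}$ this minimal balanced orientation is precisely the one lying in $\cO_{2b}^{1}$ used by Theorem~\ref{theo:bij_dang}), this says: $M\in\cF_{2b}$ is bipartite, i.e.\ $M\in\chF_{2b}$, if and only if the $\frac{2b}{2b-2}$-orientation of $M$ selected by the bijection of Theorem~\ref{theo:bij_dang} has all half-edge weights even.

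Then I would combine the two halves. In the bijection $M\leftrightarrow T_M$ of Theorem~\ref{theo:bij_dang} between $\cF_{2b}$ and $\mathcal U_{2b}^{Bal}$, the half-edge weights of the chosen orientation of $M$ correspond exactly to the half-edge weights of $T_M$ (by the weight-tracking properties listed after Corollary~\ref{theo:bij_weighted_biori}: each edge of the oriented map corresponds to an edge of the bimobile of the same weight, and the local rules of Figure~\ref{fig:local_rule_weighted_biori} preserve half-edge weights). Hence ``all half-edge weights of the orientation are even'' is equivalent to ``all half-edge weights of $T_M$ are even''. Therefore Theorem~\ref{theo:bij_dang} restricts to a bijection between $\chF_{2b}$ and $\{T\in\mathcal U_{2b}^{Bal}: \text{all half-edge weights even}\}$, and composing with the inverse of the doubling bijection $T\mapsto T'$ gives the desired bijection between $\chF_{2b}$ and $\hat{\mathcal U}_b^{Bal}$. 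The statement ``every non-root face corresponds to a black vertex'' is inherited verbatim from Theorem~\ref{theo:bij_dang}, since neither the parity restriction nor the halving of weights alters the black vertices or their correspondence with faces.

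I do not expect a serious obstacle here: the only points requiring care are (i) checking that halving/doubling is compatible with the balancedness condition, which reduces to the $\ZZ$-linearity of $\gamma$ in the half-edge weights and the fact that buds are unchanged, and (ii) invoking the weight-tracking part of the $\Phi_+$ bijection so that parity of the orientation's weights transfers to parity of the bimobile's weights. Both are routine given the results already established; the genuine content (existence and uniqueness of the balanced orientation, the characterization of $\cF_{2b}$ and $\chF_{2b}$) is entirely in Proposition~\ref{th:evencase}, Lemma~\ref{lem:canonical_dori}, and Theorem~\ref{theo:bij_dang}, which we are allowed to assume.
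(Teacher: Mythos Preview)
Your proposal is correct and follows essentially the same argument as the paper: the corollary is obtained as the parity specialization of Theorem~\ref{theo:bij_dang}, using Proposition~\ref{th:evencase} to characterize bipartiteness by evenness of the weights in the minimal balanced orientation, and the doubling map $T\mapsto T'$ to identify $\hat{\mathcal U}_b^{Bal}$ with the even-weight elements of $\mathcal U_{2b}^{Bal}$. Your invocation of Lemma~\ref{lem:canonical_dori} to identify the minimal balanced orientation with the one used in Theorem~\ref{theo:bij_dang} is a helpful explicit step that the paper leaves implicit.
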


An example is shown in Figure~\ref{fig:bij_2bangulations} for $b=3$.

\subsection{Extension to toroidal maps of essential girth $d$
with a root-face of degree $d$}\label{sec:bij_extended}
We state here a generalization of  
Theorem~\ref{theo:bij_dang} (resp. Corollary~\ref{theo:bij_bip_2bang}) 
to toroidal face-rooted maps 
of girth $d$ (resp. bipartite maps of girth $2b$)
with root-face degree $d$ (resp. $2b$). Note that we allow here all
 faces, except the root face, to have degree larger than $d$.
These results can be seen as toroidal counterparts of the 
bijections obtained in \cite{BF12b} for planar maps 
of girth $d$ with a root-face of degree $d$. 

Let $d\geq 1$. We denote by $\cL_d$ the family of face-rooted toroidal maps of essential
  girth~$d$, such that the root-face contour is a maximal $d$-angle.

  We now define the mobiles that will be set in bijection with maps in
  $\cL_d$. Recall that a $\ZZ$-bimobile is a bimobile with integer
  weights at the non-bud half-edges, which are in $\ZZ_{>0}$ (resp. in $\ZZ_{\leq 0}$)
  when the incident vertex is white (resp. black).  We define a
  \emph{toroidal $\frac{d}{d-2}$-$\ZZ$-mobile} as a $\ZZ$-bimobile
  of genus $1$ with weights in $\{-2,\ldots,d\}$ such that every white
  vertex has weight $d$, every edge has weight $d-2$ and every black
  vertex of degree $i$ has weight $-i+d$ (hence $i\geq d$). We denote
  by $\mathcal V_d$ the family of these $\ZZ$-bimobiles.  (Note that
  for $d\leq 3$, an element of $\mathcal V_d$ has no white-white edge,
  while for $d\geq 3$, it has no black-black edge.)  A counting
  argument similar to the one for proving Lemma~\ref{claim:excess}
  ensures that every $T\in\mathcal V_d$ has excess $d$.  Consider $T$
  in $\mathcal V_d$ and a cycle $C$ of $T$ with a traversal
  direction. Let $w_L(C)$ (resp. $w_R(C)$) be the total weight of
  half-edges incident to vertices (black or white) of $C$ on the left
  (resp. right) side of $C$. Let $s_L(C)$ (resp.  $s_R(C)$) be the
  total number of half-edges, including buds, incident to black
  vertices of $C$ on the left (resp. right) side of $C$.  We define
  $\gamma_L(C)=w_L(C)+s_L(C)$, $\gamma_R(C)=w_R(C)+s_R(C)$, and the
  $\gamma$-score of $C$ by $\gamma(C)=\gamma_R(C)-\gamma_L(C)$. Then
  $T$ is called \emph{balanced} if the $\gamma$-score of any
  non-contractible cycle of $T$ is $0$.  We denote by
  $\mathcal V_d^{Bal}$ the subset of elements of $\mathcal V_d$ that
  are balanced (see the left-part of
  Figure~\ref{fig:mobiles_larger_faces} for an example).

\begin{figure}[!h]
\begin{center}
\includegraphics[width=12cm]{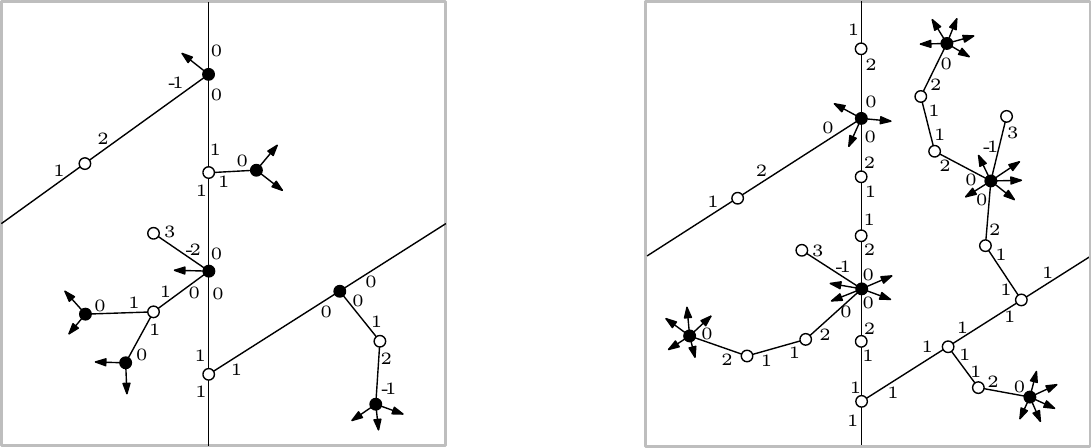}
\end{center}
\caption{Left: a $\ZZ$-bimobile in $\mathcal V_3^{Bal}$;
apart from the root-face (which has degree $3$), 
the corresponding toroidal map 
has $4$ faces of degree $3$, $2$ faces of degree $4$ and one face of degree $5$.
 Right: a $\ZZ$-bimobile in $\hat{\mathcal V}_3^{Bal}$;  
apart from the root-face (which has degree $6$), 
the corresponding toroidal bipartite map 
has $4$ faces of degree $6$ and $2$ 
faces of degree $8$.
 }
\label{fig:mobiles_larger_faces}
\end{figure}

\begin{theorem}[toroidal maps]\label{theo:bij_maps_d}
  For $d\geq 1$, there is a bijection between the map 
family $\cL_d$ and the $\ZZ$-bimobile family $\mathcal V_d^{Bal}$.  
Every non-root face in the map corresponds to a 
  black vertex of the same degree in the associated $\ZZ$-bimobile.
\end{theorem}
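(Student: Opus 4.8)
The plan is to mirror, in the $\ZZ$-bioriented setting, the chain of reductions that produced Theorem~\ref{theo:bij_dang}, the only genuinely new ingredient being the existence and uniqueness of an appropriate canonical orientation for maps in $\cL_d$. First I would introduce, for a face-rooted toroidal map $M$ of essential girth $d$ with a root-face $f_0$ of degree $d$, the notion of a \emph{$\frac{d}{d-2}$-$\ZZ$-orientation}: a $\ZZ$-biorientation in which every vertex has weight $d$, every edge has weight $d-2$, and every non-root face of degree $i$ has weight $-i+d$ (so that only the root-face is forced to have weight $0$, as in the planar case~\cite{BF12b}). The weighted bioriented meta-bijection $\Phi_+$ of Corollary~\ref{theo:bij_weighted_biori}, applied in genus $1$ with $d$ the root-face degree, sends a right $\frac{d}{d-2}$-$\ZZ$-oriented map in $\cO_d^{1}$ to a $\ZZ$-bimobile in $\cMgd[1]$ with the same vertex/face/edge weights; the parameter-tracking bullet list already in the excerpt shows that such a bimobile is exactly an element of $\mathcal V_d$, and the counting argument (parallel to Lemma~\ref{claim:excess}, using $d\,n_\circ=(d-2)e$, $\sum_f(\mathrm{deg}(f))=2e$ for non-root faces, $d\,n_\bullet=\sum_b\mathrm{deg}(b)+k$ and $\sum_b\mathrm{weight}(b)=\sum_b(-\mathrm{deg}(b)+d)$) confirms the excess is $d$. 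So $\Phi_+$ restricts to a bijection between face-rooted toroidal maps with root-face degree $d$ endowed with a right $\frac{d}{d-2}$-$\ZZ$-orientation and the family $\mathcal V_d$.

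The second step is to upgrade this to a \emph{balanced} version. Here I would prove an analogue of Lemma~\ref{lem:phispebal}: under $\Phi_+$, a right $\frac{d}{d-2}$-$\ZZ$-orientation of $M$ is balanced (meaning every non-contractible cycle of $M$ has $\gamma$-score $0$, with the $\gamma$-score now defined using half-edge weights possibly negative) if and only if the associated $\ZZ$-bimobile is balanced in the sense of the $\gamma$-score defined in Section~\ref{sec:bij_extended}. The point is that $\Phi_+$ transports a non-contractible cycle of $M$ to a non-contractible cycle of the bimobile and the left/right half-edge-weight contributions are preserved in the correspondence (outgoing half-edges at a vertex of $M$ correspond to half-edges at the corresponding white vertex of the bimobile, ingoing half-edges with the relevant face on their side correspond to half-edges at the corresponding black vertex, counted with the same weight, plus the buds which correspond to the ``$-i+d$'' deficit). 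This is essentially the same local bookkeeping as in the unweighted/$\NN$-weighted cases, so I would treat it as a routine extension of the argument for Theorem~\ref{theo:bij_dang}, or delay it to Section~\ref{sec:proofs_theorems} as the excerpt announces.

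The main obstacle, and the step that requires real work, is the \emph{existence and uniqueness of a canonical balanced right $\frac{d}{d-2}$-$\ZZ$-orientation} for exactly the maps $M\in\cL_d$, i.e.\ those whose root-face contour is a maximal $d$-angle. For existence one would first show that $M$ admits \emph{some} $\frac{d}{d-2}$-$\ZZ$-orientation iff $M$ has essential girth $d$ --- the ``only if'' direction being a Euler-characteristic count as in Claim~\ref{claim:epsilon}, and the ``if'' direction, together with a \emph{balanced} such orientation, reducing to Proposition~\ref{th:existencebalanced} by a standard surgery: cut each non-root face $f$ of degree $i>d$ along $i-d$ new ``quasi-edges'' (or, cleanly, use a suitable truncation/star operation) to produce an auxiliary $d$-toroidal map, invoke Proposition~\ref{th:existencebalanced} there, and push the resulting balanced $\frac{d}{d-2}$-orientation back, the $i-d$ collapsed edges producing negative half-edge weights at $f$ with total $-i+d$, exactly as in the planar construction of~\cite{BF12b}. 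Then, by Corollary~\ref{theo:general_gamma} applied to the $\frac{d}{d-2}$-$\ZZ$-orientation viewed as an $\ab$-orientation (with $\beta\equiv d-2$ and $\alpha\equiv d$; negative half-edge weights are handled by a shift, again as in~\cite{BF12b}), $M$ has a unique minimal orientation in its balanced $\gamma$-class, and by Lemma~\ref{lem:necmin} this minimal one is the only candidate to lie in $\cO_d^{1}$. The delicate point is the converse: showing that the minimal balanced $\frac{d}{d-2}$-$\ZZ$-orientation is a right biorientation \emph{precisely when} the root-face contour is a maximal $d$-angle. Here one would argue that a rightmost walk from an outgoing half-edge cannot loop on any closed walk other than the root-face contour: by Claim~\ref{claim:epsilon}-type accounting, a rightmost walk that loops with some region on its right encloses a contractible region whose boundary has total half-edge-weight deficit $k-d$, forcing $k=d$ and the loop to be a $d$-angle; minimality then forces this $d$-angle to enclose the root-face, and maximality of the root-face contour forces it to \emph{be} the root-face contour. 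Conversely, if the root-face contour is not maximal, some strictly larger $d$-angle encloses $f_0$ and one produces a rightmost walk looping on it (or an outgoing half-edge with nowhere to go), violating the right-biorientation condition; this is the toroidal counterpart of the corresponding planar lemma in~\cite{BF12b}, and I expect it to be the technical heart of the proof. Combining the three steps --- $\Phi_+$ restricts to $\mathcal V_d$ on right-oriented maps, the balanced restriction matches $\mathcal V_d^{Bal}$, and $\cL_d$ is exactly the set of maps carrying a (necessarily unique) right balanced $\frac{d}{d-2}$-$\ZZ$-orientation --- yields the bijection, and the face-degree correspondence is read off directly from the parameter-tracking already recorded for $\Phi_+$, since a non-root face of degree $i$ maps to a black vertex of degree $i$.
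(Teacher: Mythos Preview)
Your three-step outline --- specialize $\Phi_+$, match balancedness, then show that $\cL_d$ is exactly the set of maps carrying a unique canonical right balanced $\frac{d}{d-2}$-$\ZZ$-orientation --- is the right skeleton, and the parameter-tracking for the face-degree correspondence is correct. But the implementation you sketch diverges from the paper at a point where your version has a real gap.

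The gap is your appeal to Corollary~\ref{theo:general_gamma} ``applied to the $\frac{d}{d-2}$-$\ZZ$-orientation viewed as an $\ab$-orientation \dots\ negative half-edge weights handled by a shift''. This does not work: Corollary~\ref{theo:general_gamma} (and the underlying $\beta$-expansion and minimality notion) are formulated for $\NN$-biorientations, and a uniform additive shift on half-edge weights makes the vertex-weight $\alpha(v)$ depend on $\mathrm{deg}(v)$, so you no longer have an $\ab$-orientation. There is no cheap shift that simultaneously makes all half-edge weights nonnegative and keeps $\alpha$ and $\beta$ constant. Without a valid lattice/minimality framework for the $\ZZ$-bioriented setting, you cannot isolate a \emph{unique} canonical balanced orientation, and the bijection collapses.

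The paper's route is precisely a device to avoid this. Instead of working with $\ZZ$-biorientations on $M$, it passes (for $d\geq 2$) to the edge-subdivided map $M_2$ and then to \emph{$d$-regular orientations} of the star-completion $M_2^\star$: these are genuine $\NN$-biorientations (star-edges carry weight $1$, $M$-edges carry weight $d-1$), so Corollary~\ref{theo:general_gamma} applies directly. The transfer map $\sigma$ (Figure~\ref{fig:rule_sigma}) and the subdivision map $\iota$ (Figure~\ref{fig:rule_M2}) give a bijection between $\frac{d}{d-2}$-$\ZZ$-orientations of $M$ and \emph{transferable} $d$-regular orientations of $M_2^\star$, preserving membership in $\cO_d^1$ (Lemma~\ref{lem:bij_transferable}). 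Balancedness is then handled not by the naive $\gamma$-score on $M$ but by a modified score $\hgamma$ on cycles of $M_2^\star$ (Lemmas~\ref{lem:gammahgamma}--\ref{lem:hbaltbald}); this is necessary because mobile-cycles pass through black vertices (face-centers), and for non-$d$-angular faces the simple equivalence of Lemma~\ref{lem:augment} is no longer available. Existence of a $\hgamma$-balanced $d$-regular orientation (Lemma~\ref{lem:exists_bregular}) is obtained via the $2d$-angular lift (inserting a planar girth-$2d$ patch in each face, which is the precise version of your ``face surgery''). Finally, the case $d=1$ needs a further subdivision to $M_4$ (Section~6.3.4), a wrinkle your outline does not anticipate.

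In short: your strategy is morally correct but the ``shift'' step is not; the paper's $b$-regular orientation framework on $\aM_2$ (resp.\ $\aM_4$) is exactly what replaces it.
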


The proof of Theorem~\ref{theo:bij_maps_d} is delayed to Section~\ref{sec:proofs_theorems}. 

We now give the statement for bipartite maps.
Let $b\geq 1$. We denote by $\chL_{2b}$
 the subfamily of maps in $\cL_{2b}$ that are
bipartite. We define a \emph{toroidal $\frac{b}{b-1}$-$\ZZ$-mobile} 
as   a 
 $\ZZ$-bimobile of genus $1$  with weights in
$\{-1,\ldots,b\}$, all black vertices of even degree, such that
every white vertex has weight $b$, every edge has weight $b-1$ and
every black vertex of degree $2i$ has weight $-i+b$ (hence $i\geq
b$). The family of these $\ZZ$-bimobiles is denoted by $\hat{\mathcal V}_b$. 
(Note that for $b\leq 2$,
an element of $\hat{\mathcal V}_b$
 has no white-white edge,
while for $b\geq 2$, it has no black-black edge.)  
  Note also that the
$\ZZ$-bimobile $T'$ obtained from an element $T$ in  $\hat{\mathcal V}_b$
by doubling every half-edge weights is an element of ${\mathcal V}_{2b}$
(in particular, $T$ must have excess $2b$). 
We say that $T$ is 
\emph{balanced} if $T'$ is balanced and denote by $\hat{\mathcal V}_{b}^{Bal}$
 the subset of elements
of $\hat{\mathcal V}_b$ that are balanced
(see the right-part of Figure~\ref{fig:mobiles_larger_faces} 
 for an example).

\begin{theorem}[bipartite toroidal maps]\label{theo:bij_maps_2b}
For $b\geq 1$, there is a bijection between the map family  $\chL_{2b}$ 
and the $\ZZ$-bimobile family $\hat{\mathcal V}_b^{Bal}$.  
Every non-root face in the map corresponds to a
  black vertex of the same degree in the associated $\ZZ$-bimobile.
\end{theorem}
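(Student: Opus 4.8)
The plan is to deduce this bipartite statement as a parity specialization of Theorem~\ref{theo:bij_maps_d}, exactly as Corollary~\ref{theo:bij_bip_2bang} was deduced from Theorem~\ref{theo:bij_dang}. First I would observe that, given $T\in\hat{\mathcal V}_b$, the $\ZZ$-bimobile $T'$ obtained by doubling every half-edge weight lies in $\mathcal V_{2b}$ (white vertices get weight $2b$, edges get weight $2b-2$, and a black vertex of degree $2i$ gets weight $-2i+2b$), and the doubling map is an injection from $\hat{\mathcal V}_b$ into $\mathcal V_{2b}$ whose image is precisely the set of elements of $\mathcal V_{2b}$ all of whose half-edge weights are even and all of whose black vertices have even degree; moreover, since $\gamma$-scores are linear in the weights and the edge/bud counts, $T$ is balanced iff $T'$ is balanced, so the doubling map restricts to a bijection $\hat{\mathcal V}_b^{Bal}\to \{T'\in\mathcal V_{2b}^{Bal}: \text{all half-edge weights and all black-vertex degrees even}\}$. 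Combining this with the bijection of Theorem~\ref{theo:bij_maps_d} between $\cL_{2b}$ and $\mathcal V_{2b}^{Bal}$, it suffices to show that a face-rooted map $M\in\cL_{2b}$ is bipartite if and only if its associated $\ZZ$-bimobile $T\in\mathcal V_{2b}^{Bal}$ has all half-edge weights even and all black vertices of even degree.

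The ``black-vertex degree'' half of that equivalence is immediate from the bijection's face-tracking property: non-root faces of $M$ correspond to black vertices of $T$ of the same degree, and the root-face has degree $2b$, so all faces of $M$ are even iff all black vertices of $T$ have even degree (the converse direction ``$M$ bipartite $\Rightarrow$ all faces even'' being trivial, and ``all faces even'' is one of the two conditions defining bipartiteness of a toroidal map, the other being that non-contractible cycles have even length). So the crux is the half-edge-weight parity: I would show that, for $M\in\cL_{2b}$ with all faces of even degree, the canonical (minimal balanced) $\frac{d}{d-2}$-$\ZZ$-orientation $X$ of $M$ has all half-edge weights even if and only if $M$ is bipartite, and then transport this through $\Phi_+$ (which by the local rules of Figure~\ref{fig:local_rule_weighted_biori} sends each edge of $X$ to an edge of $T$ of the same weight, and preserves vertex/face weights, hence preserves half-edge-weight parities). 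The argument for $X$ mirrors the proof of Claim~\ref{cl:even} inside Proposition~\ref{th:evencase}: if all weights of $X$ are even, then for any non-contractible cycle $C$ the balancedness identity $\gamma_L(C)=\gamma_R(C)$ together with the weight relations forces the length of $C$ to be even; conversely, if $M$ is bipartite one defines $Q$ as the set of edges whose (equal-parity pair of) half-edge weights are odd, notes every vertex has even total weight so $M_Q$ has all degrees $\geq 2$, uses minimality of $X$ to rule out $M_Q$ having a face not containing the root-face (bidirected boundary edges), concludes $M_Q$ is one of the three configurations of Figure~\ref{fig:threecases}, extracts a non-contractible cycle $C$ with exactly one $Q$-edge on each side so that $\gamma_L(C)$ is odd, and contradicts $\gamma_L(C)=\ell$ with $\ell$ even.

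The main obstacle, and the only place requiring genuine care beyond transcribing the earlier arguments, is verifying that the doubling map really does have the claimed image and that ``all half-edge weights even'' is the exact condition characterizing the bipartite sub-family — in particular making sure the weight-range constraint is handled correctly (an element of $\hat{\mathcal V}_b$ has weights in $\{-1,\ldots,b\}$, which doubles to weights in $\{-2,\ldots,2b\}$, matching the range $\{-2,\ldots,2b\}$ required of $\mathcal V_{2b}$, so no element is lost), and that the two conditions ``half-edge weights even'' and ``black-vertex degrees even'' together cut out exactly the doubled family rather than something slightly larger. Once that is pinned down, the theorem follows by composing bijections, with the face-degree correspondence inherited directly from Theorem~\ref{theo:bij_maps_d}.
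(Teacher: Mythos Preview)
Your high-level strategy—deriving the bipartite bijection as the ``even-weight'' specialization of Theorem~\ref{theo:bij_maps_d}—is exactly how the paper presents the relationship at the end of Section~\ref{sec:bij_extended}, and it is precisely how the paper handles the case $b=1$ (via Lemma~\ref{lem:bip_2}). The paper's logical order, however, runs the other way for $b\geq 2$: it first proves the bipartite case directly by developing $b$-regular orientations on the star-completion $\aM$ and a notion of $\hgamma$-balancedness there (Proposition~\ref{prop:unique_b_at_least_2}), and only \emph{then} derives Theorem~\ref{theo:bij_maps_d} for $d\geq 2$ from that machinery via the subdivision $M\mapsto M_2$. So your route is a genuine alternative—more economical if one is willing to take Theorem~\ref{theo:bij_maps_d} as given.

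There is, however, a real gap in the step where you transplant the proof of Claim~\ref{cl:even} to the canonical $\frac{2b}{2b-2}$-$\ZZ$-orientation $X$. Two things break. First, the paper never defines ``minimal'' or ``balanced'' for $\ZZ$-biorientations (the whole lattice machinery in Section~\ref{sec:preliminaries} is for $\NN$-biorientations), so the phrase ``canonical minimal balanced $\frac{d}{d-2}$-$\ZZ$-orientation'' has no meaning without further work. Second, and more concretely, the heart of Claim~\ref{cl:even} is that an edge with odd half-edge weights is bidirected (both half-edges outgoing), because in an $\NN$-biorientation odd implies positive. In a $\ZZ$-biorientation an odd half-edge weight can be $-1$, which is \emph{ingoing}; an edge with weights $(-1,2b-1)$ has both half-edges odd but is $1$-way. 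So the step ``edges on the boundary of $S$ are bidirected, contradicting minimality'' does not fire, and the argument that $M_Q$ has all degrees $\geq 2$ is also in doubt, since in the $\ZZ$ setting the vertex weight $2b$ is the sum of \emph{outgoing} half-edge weights only, which does not directly control the parity of the total number of odd half-edges at a vertex.

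The fix is to run the parity argument on the mobile side rather than the orientation side, exactly as the paper does in Lemma~\ref{lem:bip_2}. In $T\in\mathcal V_{2b}^{Bal}$, once all black-vertex degrees are even, every vertex (black or white) has even weight, so the subgraph $\Gamma$ of odd-weight edges is Eulerian in $T$; the toroidal-unicellular structure (square or hexagonal) then yields a non-contractible cycle $C$ of $T$ with exactly one $\Gamma$-edge on each side, and one derives a contradiction from the balancedness of $T$ together with the bipartiteness of $M$ (via the ``left walk'' of $M$ along $C$ having even length). This is what the paper does for $b=1$ and should extend to general $b$ with the same ingredients; it avoids entirely the need to speak of minimality or balancedness at the level of $\ZZ$-biorientations.
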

The proof is again delayed to Section~\ref{sec:proofs_theorems}.

Similarly as for $d$-angulations, in the bijection
of Theorem~\ref{theo:bij_maps_d} 
the map $M$ is bipartite if and only if the half-edge
weights in the corresponding $\ZZ$-bimobile $T$ are even, and 
upon dividing the weights by $2$ the $\ZZ$-bimobile one obtains is the one
associated to $M$ by the bijection of Theorem~\ref{theo:bij_maps_2b} 
(which can thus be seen as a parity specialization of the bijection
of Theorem~\ref{theo:bij_maps_d}).

\section{Counting results}\label{sec:counting}
For $d\geq 1$, let $\cM_d'$ (resp. $\cM_d$) be the family of rooted  (resp. face-rooted)
toroidal maps of essential girth $d$ with a root-face of degree $d$.  
In Section~\ref{sec:gen_expression} we express  the generating function of $\cM_d'$ (with control on the face-degrees) in terms of generating functions of balanced toroidal $\frac{d}{d-2}$-$\ZZ$ mobiles. To do this, we rely on the bijections obtained so far (Theorems~\ref{theo:bij_maps_d} and~\ref{theo:bij_maps_2b}) and on a decomposition of maps in $\cM_d'$ into a toroidal part and a planar part by cutting along a certain $d$-angle (the `maximal' one) enclosing the root-face. Then, in Sections~\ref{sec:bij_deriv_triang} and~\ref{sec:bij_deriv_quadrang} we show that the generating function
of $\frac{d}{d-2}$-$\ZZ$ mobiles can be expressed in certain specific cases (we show the approach on essentially 
simple triangulations and bipartite quadrangulations). 

\subsection{A general expression in terms 
of balanced mobiles}\label{sec:gen_expression}

For $M\in\cM_d$, recall that a $d$-angle of $M$
 is a contractible closed walk of length $d$, and it is called maximal if its enclosed
area is not contained in the enclosed area of another $d$-angle. 

\begin{lemma}
  \label{lem:maxdisjoint}
   Two distinct maximal $d$-angles of a map $M\in\cM_d$ always
  have disjoint interiors.
    \end{lemma}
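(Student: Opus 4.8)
The statement to prove is that two distinct maximal $d$-angles $W_1, W_2$ of a map $M\in\cM_d$ have disjoint interiors. Let $R_1, R_2$ denote their respective interiors (contractible regions). Suppose for contradiction that $R_1\cap R_2\neq\emptyset$. The natural approach is a submodularity argument on the contour lengths, exactly mirroring the one used in the proof of Lemma~\ref{lem:charac_ess}. First I would record the submodular inequality
\[
|\partial R_1|+|\partial R_2|\geq |\partial(R_1\cap R_2)|+|\partial(R_1\cup R_2)|,
\]
which is justified edge-by-edge as in Lemma~\ref{lem:charac_ess}: for each edge $e$ of $M$, its contribution to the left side is at least its contribution to the right side, since $\partial(R_1\cup R_2)\subseteq\partial R_1\cup\partial R_2$, $\partial(R_1\cap R_2)\subseteq\partial R_1\cup\partial R_2$, and $\partial(R_1\cup R_2)\cap\partial(R_1\cap R_2)\subseteq\partial R_1\cap\partial R_2$.

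Since $W_1$ and $W_2$ are $d$-angles, $|\partial R_1|\le d$ and $|\partial R_2|\le d$ (the boundary walk of each $R_i$ is a contractible closed walk, of length at most the length $d$ of $W_i$; here one should be slightly careful, because the interior of a $d$-angle has boundary equal to the $d$-angle itself when the $d$-angle is non-repetitive, so $|\partial R_i| = d$). Hence the right-hand side is at most $2d$. Now I would argue that both $R_1\cap R_2$ and $R_1\cup R_2$ are nonempty contractible regions whose boundaries are contractible closed walks: the key point here is that $R_1\cap R_2$ being nonempty and a subset of two contractible disks is itself a disjoint union of contractible regions, and $R_1\cup R_2$ likewise is contractible provided $R_1\cap R_2$ is connected — so the main technical care goes into handling the case where $R_1\cap R_2$ has several connected components, in which case one picks one component and notes the inequality still goes through after summing boundary contributions over components (or restricts attention to a single component of the intersection). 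By Lemma~\ref{lem:charac_ess}, the essential girth being $d$ means every contractible closed walk has length at least $d$, so $|\partial(R_1\cap R_2)|\geq d$ and $|\partial(R_1\cup R_2)|\geq d$. Combined with the upper bound $2d$, we get $|\partial(R_1\cap R_2)|=|\partial(R_1\cup R_2)|=d$.

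It then remains to derive the contradiction with maximality. Since $|\partial(R_1\cup R_2)|=d$, the boundary of $R_1\cup R_2$ is a $d$-angle $W$ whose interior $R_1\cup R_2$ strictly contains $R_1$ (because $R_2\not\subseteq R_1$, as $W_1\neq W_2$ are distinct maximal $d$-angles and hence have incomparable interiors — or if $R_2\subseteq R_1$ then $W_2$ would not be maximal unless $R_1=R_2$, contradicting distinctness). This contradicts the maximality of $W_1$. The main obstacle I anticipate is the careful bookkeeping when $R_1\cap R_2$ is disconnected and the verification that "$d$-angle with interior $R$" behaves well under these set operations (i.e.\ that $\partial$ of a union/intersection of interiors of $d$-angles is again realized by a genuine closed walk, not something degenerate); this is the same subtlety that appears in Lemma~\ref{lem:charac_ess}, so I would handle it by the same edge-contribution argument and by extracting, if necessary, a single connected component of $R_1\cap R_2$ on which to run the inequality. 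Everything else is a routine application of submodularity plus Lemma~\ref{lem:charac_ess}.
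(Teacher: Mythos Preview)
Your plan is essentially the paper's proof: a submodular boundary-length bound, the essential-girth lower bound on contractible closed walks, and a maximality contradiction via $R_1\cup R_2$. The paper's execution differs only in that it first sets up a ``region'' formalism (a region is $R=V'\cup E'\cup F'$ with the obvious incidence closures, and $\ell(R)$ counts boundary edge-sides), under which the submodular relation is an exact \emph{equality} $\ell(R_1)+\ell(R_2)=\ell(R_1\cup R_2)+\ell(R_1\cap R_2)$. This resolves the disconnected-intersection subtlety you flagged, and in the right logical order: writing $R_1\cap R_2=D_1\sqcup\cdots\sqcup D_k$ gives $2d=\ell(R_1\cup R_2)+\sum_i\ell(D_i)\ge 1+kd$, which forces $k=1$ \emph{before} one needs to know that $R_1\cup R_2$ is a disk (and hence before one can apply the girth bound $\ell(R_1\cup R_2)\ge d$ to it). Your alternative of ``restricting attention to a single component of the intersection'' does not work as stated, since submodularity relates $R_1,R_2$ to the \emph{whole} of $R_1\cap R_2$; summing over components, which you also suggest, is the correct route and is exactly what the paper does.
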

\begin{proof}
Let us first reformulate the definition of  a $d$-angle. We define a \emph{region} of $M$
as given by $R=V'\cup E'\cup F'$ where $V',E',F'$ are subsets of the vertex-set, edge-set and face-set of $M$,  
 such that if $v\in V'$ then the edges incident to $v$ are in $E'$, and if $e\in E'$ then 
the faces incident to $e$ are in $F'$. Note that the union (resp. intersection) of two regions is also a region. 
A \emph{boundary-edge-side} of $R$ is an incidence face/edge of $M$ such that the face is in $F'$ and the edge
is not in $E'$. The \emph{boundary-length} of $R$, denoted by $\ell(R)$, is the number of boundary-edge-sides of $R$.  
A \emph{disk-region} is a region $R$ homeomorphic to an open disk. A $d$-angle thus corresponds
to the (cyclic sequence of) boundary-edge-sides of a disk-region $R$ such that $\ell(R)=d$; and it is \emph{maximal}
if there is no other disk-region $\bar{R}$ of boundary-length $d$ such that $R\subset \bar{R}$. 
 
We thus have to show that for two distinct disk-regions $R_1,R_2$ both 
enclosed by maximal $d$-angles, we have $R_1\cap R_2=\emptyset$.
It is easy to see that for any two regions $S_1,S_2$ we have $\ell(S_1)+\ell(S_2)=\ell(S_1\cup S_2)+\ell(S_1\cap S_2)$
(any incidence face/edge of $M$ has the same contribution to $\ell(S_1)+\ell(S_2)$ as to 
$\ell(S_1\cup S_2)+\ell(S_1\cap S_2)$).
Assume $R_1\cap R_2\neq \emptyset$.  
Since $R_1$ and $R_2$ are disk-regions, $R_1\cap R_2$ 
is a disjoint union of disk-regions $D_1,\ldots,D_k$, and we have
\[
2d=\ell(R_1)+\ell(R_2)=\ell(R_1\cup R_2)+\sum_{i=1}^k\ell(D_i).
\]
Since $M$ has essential girth $d$, we have $\ell(D_i)\geq d$ for each $1\leq i\leq k$. 
Hence we must have $k=1$ (we use $\ell(R_1\cup R_2)\geq 1$ to exclude the case $k=2$). 
Since $R_1\cap R_2$ is a disk-region, the union $R_1\cup R_2$ must also be 
a disk-region, hence $\ell(R_1\cup R_2)\geq d$. But  $\ell(R_1\cup R_2)=2d-\ell(D_1)\leq d$, hence
$\ell(R_1\cup R_2)=d$. Thus $R_1\cup  R_2$ is enclosed by a $d$-angle, contradicting the fact 
that $R_1$ and $R_2$ are enclosed by maximal $d$-angles. 
\end{proof}

\begin{figure}[h!]
\begin{center}
\includegraphics[width=10cm]{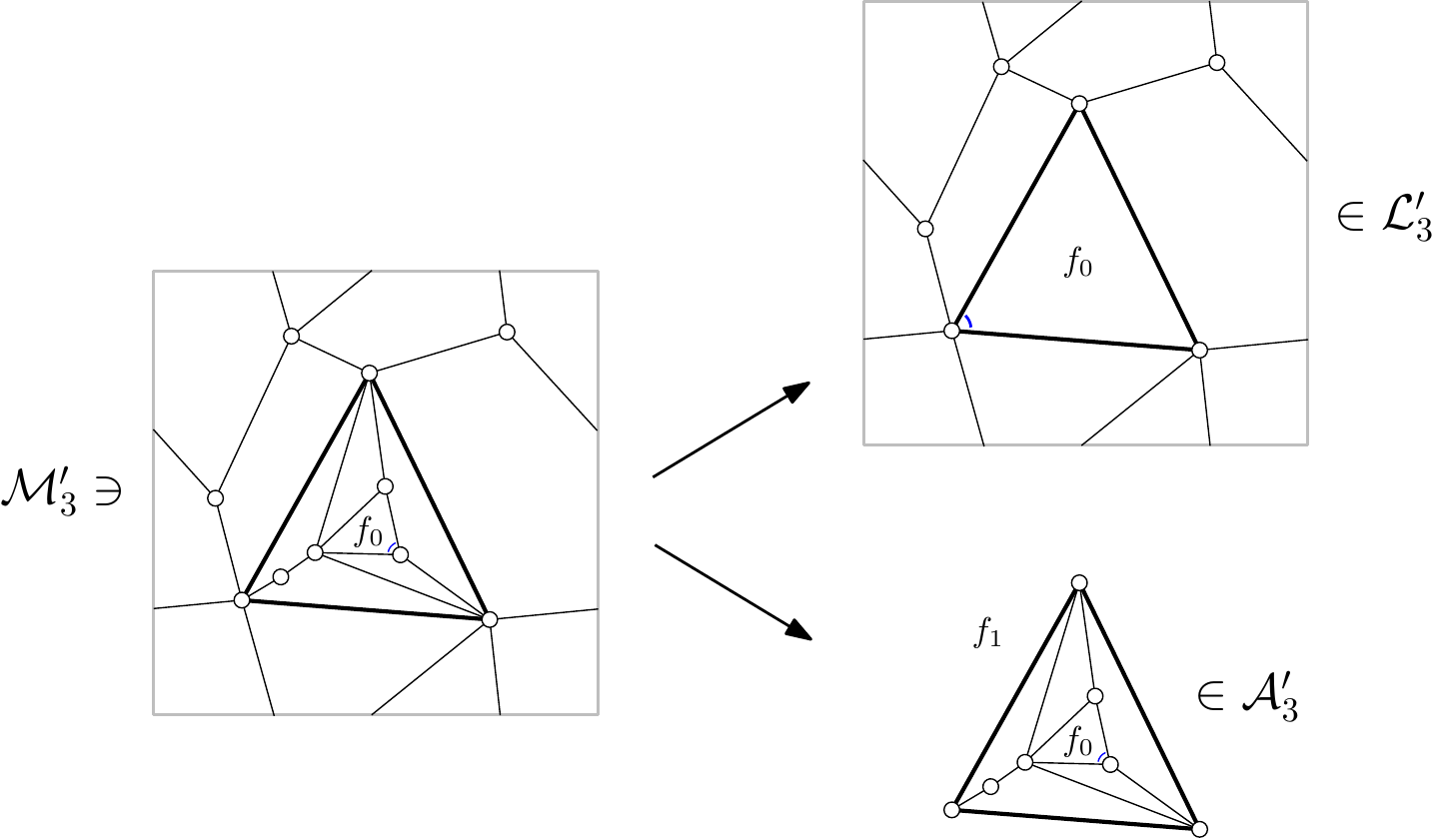}
\end{center}
\caption{Left: a map $M\in\cM_3'$. Cutting along the root-$3$-angle of $M$, one obtains a map $L\in\cL_3'$ and a map 
$A\in\cA_3'$. One of the $3$ vertices on the root-$3$-angle can be canonically chosen (i.e., the first of the $3$ vertices that is reached in a left-to-right depth-first-search starting from the root-corner), and this vertex is taken as the one incident to the root-corner of~$L$. The correspondence thus obtained is bijective.}
\label{fig:decomp_ML}
\end{figure}

Every $M\in\cM_d$ is rooted in a face $f_0$ of degree $d$, 
so $f_0$ is included in a maximal $d$-angle, and 
Lemma~\ref{lem:maxdisjoint} ensures that $M$ has a \emph{unique} maximal
$d$-angle enclosing the root-face. This $d$-angle is called the \emph{root-$d$-angle}. 
Consider the operation of cutting 
along the root-$d$-angle $C$ of $M$.  This operation yields two maps (one on each
side of $C$): a toroidal map $L$ with a marked face of degree $d$ and
a planar map $A$ with two marked faces $f_0,f_1$ each of degree~$d$. 

Recall that $\cL_d$ is the subfamily of $\cM_d$ where the root-face contour is a maximal $d$-angle; we denote by $\cL_d'$ the family of rooted toroidal maps such that the underlying face-rooted map is in $\cL_d$.  
 Moreover we let $\cA_d'$ be the family
of planar maps of girth $d$ with two marked faces $f_0,f_1$ of degree
$d$, and a marked corner in $f_0$ (we consider $f_1$ as the outer face). Then the previous decomposition at
the root-$d$-angle yields (see Figure~\ref{fig:decomp_ML})
\begin{equation}\label{eq:MdPrime}
\cM_d'\simeq \cL_d'\times\cA_d'.
\end{equation}

Let $M_d\equiv M_d(z;x_d,x_{d+1},\ldots)$ 
(resp. $L_d\equiv L_d(z;x_d,x_{d+1},\ldots)$) be the generating
function of maps in $\cM_d'$ 
(resp. in $\cL_d'$),  
with $z$ dual to the number of vertices and $x_i$ 
dual to the number of non-root faces of degree $i$.
And let $A_d\equiv A_d(z;x_d,x_{d+1},\ldots)$
 be the generating
function of maps in $\cA_d'$, with $z$ dual
to the number of vertices not incident to $f_1$,
and $x_i$ dual to the number of non-marked 
faces of degree $i$.  
Then by~\eqref{eq:MdPrime}
we have
\begin{equation}
M_d=L_d\cdot A_d. 
\end{equation}

The generating function $A_d$ has already 
been computed bijectively in~\cite{BF12b}, it reads:
$$
A_d=(1+W_0)^d,
$$
where $W_0$ is part of a finite set $W_{-1},W_0,\ldots,W_{d-1}$
of series (in the variables $z,x_d,x_{d+1},\ldots$)  that are specified by the
 system\footnote{We use the usual bracket notation: 
if $P=\sum_ka_ku^k$, then $[u^k]P=a_k$.}:

\begin{equation}\label{eq:syst}
\left\{
\begin{array}{ll}\ds
W_j=z\ \!h_{j+2}(W_1,\ldots,W_{d-1}) & \textrm{for all } j \textrm{ in }[-1\,..\,d-3],\\[.1cm]
\ds W_j=[u^{j+2}]\sum_{i\geq d}x_iu^i(1+W_0+u^{-1}W_{-1}+u^{-2})^{i-1} & \textrm{for all } j \textrm{ in }\{d-2,d-1\},\\
\end{array}
\right.
\end{equation}
where $h_j$ denotes the multivariate polynomial in the variables $w_1,w_2,\ldots$ defined by:
\begin{equation}\label{eq:def_hj}
h_j(w_1,w_2,\ldots)=[t^j]\frac{1}{1-\sum_{i>0}t^i w_i}=\sum_{r\geq 0}\sum_{\substack{i_1,\ldots,i_r>0\\ i_1+\cdots+i_r=j}}w_{i_1}\cdots w_{i_r}.
\end{equation}

Regarding $L_d$, let $\cT_d\equiv \mathcal V_d^{Bal}$ be the 
family of balanced toroidal
 $\frac{d}{d-2}$-$\ZZ$-mobiles and let $\cT_d'$ be the
family of objects in $\cT_d$ where 
one of the $d$ exposed half-edges is marked 
(see Section~\ref{sec:bijPhi+} for the definition of exposed half-edges).  
Then the bijection of Theorem~\ref{theo:bij_maps_d}    
directly yields a bijection between $\cL_d'$ 
and $\cT_d'$ (indeed the bijection of Theorem~\ref{theo:bij_maps_d}
relies on the general bijection given in Corollary~\ref{theo:bij_weighted_biori}, for which 
there is a natural 
1-to-1 correspondence
between the $d$ corners in the root-face and 
the $d$ exposed half-edges). Hence 
$L_d$ is also the generating function of balanced toroidal 
$\frac{d}{d-2}$-$\ZZ$-mobiles with a marked exposed half-edge, with $z$ dual to the 
number of white vertices and $x_i$ dual to 
 the number of black vertices of degree $i$. 

For a unicellular map $M$ of positive genus, 
the \emph{core} $C$ of $M$ is obtained from $M$
by successively deleting leaves, until there is 
no leaf (so $C$ has all its vertices of degree at
least $2$; the deleted edges form trees attached
at vertices of $C$). In $C$ we call \emph{maximal chain}
a path $P$ whose extremities have degree larger than $2$
and all non-extremal vertices of $P$ have degree $2$. 
Then the \emph{kernel} $K$ of $M$ is obtained
from $C$ by replacing every maximal chain by an edge.
In genus $1$ it is known that the kernel of a unicellular
map is either made of one vertex with two loops (double loop) 
or is made of $2$ vertices and $3$ edges joining them
(triple edge). 

Hence there are two types of toroidal mobiles, those 
where the associated kernel is the triple edge, called of 
type I, and those where the associated kernel 
is the double loop, called of type II.  
Let $G_d\equiv G_d(z;x_d,x_{d+1},\ldots)$ 
(resp. $H_d\equiv H_d(z;x_d,x_{d+1},\ldots)$ 
be the generating function of elements of type I (resp. type II) in $\cT_d$ 
and with a marked half-edge in the associated kernel. And let 
$\tG_d\equiv \tG_d(z;x_d,x_{d+1},\ldots)$ 
(resp. $\tH_d\equiv \tH_d(z;x_d,x_{d+1},\ldots)$ 
be the generating function of elements of type I (resp. type II) in $\cT_d$ 
and with a marked exposed half-edge.  
In all these generating functions, $z$
is dual to the number of white vertices and
 $x_i$ is dual to the 
number of black vertices of degree $i$.  
We have $L_d=(\tG_d+\tH_d)$, so
by what precedes $M_d=A_d\cdot(\tG_d+\tH_d)$; 
and by a classical double-counting
argument we have $\tG_d=\frac{d}{6}G_d$ and $\tH_d=\frac{d}{4}H_d$. 
Hence we obtain the following expression of $M_d$
in terms of generating functions of balanced toroidal $\frac{d}{d-2}$-$\ZZ$-mobiles:
\begin{proposition}\label{prop:series_exp_Md}
For $d\geq 1$, the generating function $M_d$ is given by
$$
M_d=d\cdot A_d\cdot(\tfrac{1}{6}G_d+\tfrac{1}{4}H_d).
$$
\end{proposition}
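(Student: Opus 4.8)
The plan is to assemble Proposition~\ref{prop:series_exp_Md} from three ingredients already established in the excerpt: the bijective decomposition \eqref{eq:MdPrime} at the root-$d$-angle, the known planar formula $A_d=(1+W_0)^d$, and the correspondence between $\cL_d'$ and marked balanced $\frac{d}{d-2}$-$\ZZ$-mobiles coming from Theorem~\ref{theo:bij_maps_d}. Combining the first two gives $M_d=L_d\cdot A_d$, so the whole content of the proposition is the identity $L_d=\tG_d+\tH_d$ together with the two ratios $\tG_d=\frac d6 G_d$ and $\tH_d=\frac d4 H_d$. First I would recall that the bijection of Theorem~\ref{theo:bij_maps_d} is built on top of Corollary~\ref{theo:bij_weighted_biori}, whose inverse $\Psi_+$ produces exactly $d$ exposed half-edges from a mobile of excess $d$; these are in natural one-to-one correspondence with the $d$ corners of the root-face. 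Hence rooting a map of $\cL_d$ at a corner of its root-face (to get an element of $\cL_d'$) corresponds precisely to marking one of the $d$ exposed half-edges of the associated balanced toroidal $\frac{d}{d-2}$-$\ZZ$-mobile, i.e. to an element of $\cT_d'$. This gives $L_d=\tG_d+\tH_d$ once we split $\cT_d'$ according to whether the kernel of the underlying unicellular map is the triple edge (type~I) or the double loop (type~II), which is exhaustive and disjoint by the classical structure theory of genus-$1$ unicellular maps recalled just above the statement.

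Next I would establish the two double-counting ratios. A toroidal $\frac{d}{d-2}$-$\ZZ$-mobile of type~I has kernel a triple edge, whose underlying graph has $3$ edges, hence $6$ half-edges, and the triple edge has no nontrivial automorphism fixing the embedding on the torus, so summing over all choices of a marked exposed half-edge versus all choices of a marked kernel half-edge and dividing out, one gets $\tG_d=\frac{d}{6}G_d$: more precisely, each mobile counted in $\cT_d$ of type~I contributes $d$ to $\tG_d$ (one per exposed half-edge) and $6$ to $G_d$ (one per kernel half-edge), so $6\,\tG_d=d\,G_d$. Similarly a type~II mobile has kernel a double loop, with $2$ edges and $4$ half-edges, giving $4\,\tH_d=d\,H_d$, i.e. $\tH_d=\frac d4 H_d$. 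Substituting into $M_d=A_d\cdot L_d=A_d\cdot(\tG_d+\tH_d)$ yields $M_d=d\cdot A_d\cdot(\frac16 G_d+\frac14 H_d)$, which is the claimed formula.

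The main point requiring care — and the step I expect to be the genuine obstacle — is the double-counting argument for $\tG_d=\frac d6 G_d$ and $\tH_d=\frac d4 H_d$, because it implicitly relies on the absence of symmetries: one must check that marking a kernel half-edge genuinely rigidifies the object, i.e. that a toroidal unicellular map with a marked kernel half-edge has no nontrivial automorphism, so that the counts $6\,\tG_d=d\,G_d$ and $4\,\tH_d=d\,H_d$ hold \emph{with multiplicity one} rather than up to a stabilizer. This is standard for rooted/marked maps (a map equipped with a marked oriented edge has trivial automorphism group), but since the mobiles here carry extra decoration (weights, buds, bicolouring) one should note that the marking of a single half-edge already kills all automorphisms of the underlying embedded graph, hence a fortiori of the decorated object; and conversely every half-edge of the kernel can legitimately be chosen as the marked one, the kernel being a triple edge (resp. double loop) which is vertex- and edge-transitive enough that the $6$ (resp.~$4$) choices are all realized. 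Once this rigidity observation is in place, the identities $L_d=\tG_d+\tH_d$, $6\tG_d=dG_d$, $4\tH_d=dH_d$ and $M_d=A_dL_d$ combine immediately to give the proposition.
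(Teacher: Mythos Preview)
Your proposal is correct and follows exactly the same approach as the paper: the paper's ``proof'' is in fact the discussion immediately preceding the proposition, which establishes $M_d=L_d\cdot A_d$, identifies $L_d=\tG_d+\tH_d$ via the bijection of Theorem~\ref{theo:bij_maps_d}, and then invokes the double-counting identities $\tG_d=\frac{d}{6}G_d$ and $\tH_d=\frac{d}{4}H_d$.

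One small correction: your aside that ``the triple edge has no nontrivial automorphism fixing the embedding on the torus'' is false (the toroidal triple-edge map has automorphisms, e.g.\ swapping the two vertices), and your intermediate formulation ``each mobile in $\cT_d$ of type~I contributes $d$ to $\tG_d$ and $6$ to $G_d$'' is not literally correct for mobiles with nontrivial automorphisms. The clean version of the double-counting, which you essentially give in your final paragraph, is to count pairs (mobile, marked kernel half-edge, marked exposed half-edge): since either single marking already rigidifies the map, this set has size $d\cdot G_d$ (add an exposed mark to a kernel-marked mobile) and also size $6\cdot\tG_d$ (add a kernel mark to an exposed-marked mobile). This avoids any claim about automorphisms of the bare kernel.
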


Very similarly we can obtain a general expression in the  bipartite case. For $b\geq 1$, let $\hM_{2b}$, $\hL_{2b}$
and $\hA_{2b}$ be the generating functions gathering
(respectively) the terms of $M_{2b}$, $L_{2b}$ and $A_{2b}$
given by bipartite maps.

Then, specializing~\eqref{eq:MdPrime} to bipartite maps 
yields
$$
\hM_{2b}=\hL_{2b}\cdot\hA_{2b}.
$$
In addition the generating function $\hA_{2b}$ has been given
an explicit expression in~\cite{BF12b},  
it reads: $$\hA_{2b}=(1+V_0)^{2b}$$ where $V_0$ is part
of a finite set $\{V_0,\ldots,V_{b-1}\}$ of generating functions 
specified by the system:

\begin{equation}\label{eq:syst_bip}
\left\{
\begin{array}{ll}\ds
V_j=z\ \!h_{j+1}(V_1,\ldots,V_{b-1}) & \textrm{for all } j \textrm{ in }[0\,..\,b-2],\\[.1cm]
  \ds V_{b-1}=\sum_{i\geq b}x_{2i}\binom{2i-1}{i-b}(1+V_0)^{b+i-1}  &\\
                                                                      
\end{array}
\right.
\end{equation}

Let $\hG_{2b}\equiv \hG_{2b}(z;x_2,x_4,\ldots)$ (resp.
$\hH_{2b}\equiv \hH_{2b}(z;x_2,x_4,\ldots)$) be the generating
function of balanced toroidal $\frac{b}{b-1}$-$\ZZ$-mobiles of type
I (resp. type II) with a marked half-edge in the associated kernel,
with $z$ dual to the number of white vertices and $x_{2i}$ dual to the
number of black vertices of degree $2i$.  By the very same arguments
as to prove Proposition~\ref{prop:series_exp_Md}, we can express
$\hM_{2b}$ in terms of generating functions of balanced toroidal
$\frac{b}{b-1}$-$\ZZ$-mobiles:
\begin{proposition}\label{prop:series_exp_Mdprime}
For $b\geq 1$, the generating function $\hM_{2b}$ is given by
$$
\hM_{2b}=2b\cdot \hA_{2b}\cdot(\tfrac{1}{6}\hG_{2b}+\tfrac{1}{4}\hH_{2b}).
$$
\end{proposition}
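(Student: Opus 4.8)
The plan is to follow the proof of Proposition~\ref{prop:series_exp_Md} essentially verbatim, the single genuinely new ingredient being that the root-$2b$-angle decomposition is compatible with bipartiteness. First I would specialize the size-preserving bijection~\eqref{eq:MdPrime}, namely $\cM_d'\simeq\cL_d'\times\cA_d'$ (obtained by cutting along the root-$d$-angle, which exists and is unique by Lemma~\ref{lem:maxdisjoint}), to $d=2b$ and restrict it to bipartite maps. So the point to check is: for $M\in\cM_{2b}'$ cut into $L\in\cL_{2b}'$ and $A\in\cA_{2b}'$, the map $M$ is bipartite if and only if $L$ and $A$ are both bipartite. The forward implication is immediate, since a proper $2$-colouring of the vertices of $M$ restricts to proper $2$-colourings of $L$ and of $A$ (a vertex of $M$ lying on the root-$2b$-angle is split into several copies when cutting, each of which inherits its colour). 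For the converse I would glue proper $2$-colourings of $L$ and of $A$: the vertices common to the two pieces are exactly those encountered along the root-face contour of $L$, which is also the contour of the distinguished face $f_1$ of $A$, a closed walk of \emph{even} length $2b$; along such a walk the two colourings are both forced to alternate, hence, after possibly exchanging the two colours in one of the pieces, they coincide on all shared vertices and glue to a proper $2$-colouring of $M$. This yields $\hM_{2b}=\hL_{2b}\cdot\hA_{2b}$.

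Next I would identify $\hL_{2b}$ as a mobile generating function. By Theorem~\ref{theo:bij_maps_2b}, $\chL_{2b}$ is in bijection with $\hat{\mathcal V}_b^{Bal}$, and — exactly as in the non-bipartite case, since the bijection rests on the weighted-bioriented form of $\Phi_+$ (Corollary~\ref{theo:bij_weighted_biori}), which matches the corners of the root-face with the exposed half-edges — this extends to a bijection between the rooted family $\cL_{2b}'$ and the family of balanced toroidal $\frac{b}{b-1}$-$\ZZ$-mobiles carrying a marked exposed half-edge, under the canonical $1$-to-$1$ correspondence between the $2b$ root-face corners and the $2b$ exposed half-edges. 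Hence $\hL_{2b}$ is the generating function of balanced toroidal $\frac{b}{b-1}$-$\ZZ$-mobiles with a marked exposed half-edge, with $z$ marking white vertices and $x_{2i}$ marking black vertices of degree $2i$ (each black vertex corresponding to a non-root face of the same degree). At this point I would also recall the known expression $\hA_{2b}=(1+V_0)^{2b}$ with the $V_j$ given by~\eqref{eq:syst_bip}, from~\cite{BF12b}.

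Finally I would split these mobiles by kernel type in genus~$1$ — triple edge (type~I) and double loop (type~II) — so that $\hL_{2b}$ is the sum of the type-I and type-II contributions. Running the same double-counting as in the proof of Proposition~\ref{prop:series_exp_Md}, between the $2b$ exposed half-edges (there are $2b$ of them, since every element of $\hat{\mathcal V}_b$ has excess $2b$) and the half-edges of the kernel (six for the triple edge, four for the double loop), the type-I contribution to $\hL_{2b}$ equals $\tfrac{2b}{6}\hG_{2b}$ and the type-II contribution equals $\tfrac{2b}{4}\hH_{2b}$, so that $\hL_{2b}=\tfrac{b}{3}\hG_{2b}+\tfrac{b}{2}\hH_{2b}$. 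Combining with $\hM_{2b}=\hA_{2b}\cdot\hL_{2b}$ gives
\[
\hM_{2b}=2b\cdot\hA_{2b}\cdot\bigl(\tfrac{1}{6}\hG_{2b}+\tfrac{1}{4}\hH_{2b}\bigr),
\]
as claimed. The only step needing genuine, though mild, attention — everything else being a literal transcription of the non-bipartite argument — is the compatibility of the root-$2b$-angle decomposition with bipartiteness, and in particular the gluing of the $2$-colourings when the root-$2b$-angle visits a vertex more than once; the colouring argument of the first paragraph handles this case uniformly, since the colour along the root-$2b$-angle depends only on the position modulo $2$ and the walk has even length.
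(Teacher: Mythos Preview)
Your proposal is correct and follows essentially the same approach as the paper, which simply states that specializing~\eqref{eq:MdPrime} to bipartite maps yields $\hM_{2b}=\hL_{2b}\cdot\hA_{2b}$ and then invokes ``the very same arguments as to prove Proposition~\ref{prop:series_exp_Md}''. Your write-up is in fact more explicit than the paper's: you supply the $2$-colouring gluing argument for the compatibility of the root-$2b$-angle decomposition with bipartiteness, a step the paper takes entirely for granted.
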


Propositions~\ref{prop:series_exp_Md} and~\ref{prop:series_exp_Mdprime} ensure that the enumeration 
of rooted toroidal maps (resp. bipartite toroidal maps) of essential
girth $d$ (resp. $2b$) and root-face degree $d$ (resp. $2b$),  
with control  on the face-degrees,  
amounts to counting balanced toroidal $\frac{d}{d-2}$-$\ZZ$-mobiles
(resp. $\frac{b}{b-1}$-$\ZZ$-mobiles) with control on the degrees of the 
black vertices. We show in the next two sections that 
this can be carried out for essentially simple triangulations
and for essentially simple bipartite quadrangulations,
yielding the two simple generating function expressions stated next:

\begin{proposition}[essentially simple triangulations]\label{prop:triang}
Let $t_n$ be the number of essentially simple rooted toroidal triangulations with $n$ vertices. 
Then
$$
\sum_{n\geq 1}t_nz^n=\frac{r}{(1-3r)^2},
$$
where $r\equiv r(z)$ is given by $r=z(1+r)^4$.
\end{proposition}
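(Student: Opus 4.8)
The plan is to apply Proposition~\ref{prop:series_exp_Md} (or more precisely its specialization to triangulations, $d=3$, for which $x_3=z'$ is the only face-variable) and reduce the count of essentially simple rooted toroidal triangulations to the enumeration of balanced toroidal $\frac{3}{1}$-$\ZZ$-mobiles, i.e.\ the balanced $3$-orientation mobiles of genus $1$. For $d=3$ the weighted biorientations are genuine $3$-orientations (all edge-weights equal $d-2=1$), so the $\ZZ$-bimobiles in $\mathcal V_3^{Bal}$ are ordinary mobiles: every white vertex has degree (= weight) $3$, every black vertex has degree $3$, every edge is black-white. First I would set up the generating function $A_3$ from the system~\eqref{eq:syst}: with only $x_3$ nonzero the series $W_{-1},W_0,W_1,W_2$ collapse, and one checks that $1+W_0$ becomes exactly the series $(1+r)$ with $r=z(1+r)^4$ (this is the classical planar simple-triangulation series, cf.~\cite{BF12b}), so that $A_3=(1+r)^3$; this is a routine specialization I would not grind through.

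The core of the argument is to compute $G_3$ and $H_3$, the generating functions of the type~I (triple-edge kernel) and type~II (double-loop kernel) balanced mobiles with a marked kernel half-edge. Here I would decompose a toroidal mobile along its kernel: the kernel is either a triple edge (two vertices, three parallel edges) or a double loop (one vertex, two loops), and each kernel edge is a maximal chain, i.e.\ a path in the core each of whose internal vertices has degree $2$; hanging off every core vertex and every internal chain-vertex are planar mobile-trees. The planar mobile-trees attached are exactly the decorated trees counted in the planar bijection for $3$-angulations, whose generating function is governed by $r$; the white/black alternation and degree-$3$ conditions force the local structure of a chain. The essential new ingredient over the planar case is the \emph{balancedness} constraint: the $\gamma$-score of the two non-contractible cycles carried by the kernel must vanish. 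I would translate balancedness into a linear condition on the ``increments'' along the three chains (type~I) or two chains (type~II) of the kernel — each chain contributes a signed count of half-edges/buds to the $\gamma$-score of the cycles through it — and then the counting becomes: sum over chain-decorations weighted by $r$, subject to two linear constraints. This is where the ``balanced = minimal'' uniqueness (Corollary~\ref{theo:general_gamma}, Lemma~\ref{lem:necmin}) guarantees we are counting each triangulation exactly once.

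Concretely, after this decomposition I expect each of $G_3$, $H_3$ to come out as a rational function of $r$: the contribution of a single chain, once the balancedness bookkeeping is done, should be a geometric-type series in $r$, and combining three chains meeting at two trivalent core vertices (type~I) or two chains at one core vertex of degree $4$ (type~II) with the constraint $\gamma=0$ on both homology generators gives closed forms. Plugging into $M_3=3\,A_3\,(\tfrac16 G_3+\tfrac14 H_3)$ from Proposition~\ref{prop:series_exp_Md}, with $A_3=(1+r)^3$ and using $r=z(1+r)^4$ to re-express $z$-derivatives, I would expect massive cancellation leaving $M_3=\dfrac{r}{(1-3r)^2}$. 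Finally I note $M_3$ counts rooted (corner-rooted) maps in $\cM_3'=\cL_3'\times\cA_3'$, but since an essentially simple triangulation \emph{is} a $3$-toroidal map and the root-$3$-angle decomposition is a bijection (Lemma~\ref{lem:maxdisjoint} and~\eqref{eq:MdPrime}), $M_3$ with $x_3$ tracking non-root faces, evaluated so that each triangulation of $n$ vertices is weighted $z^n$ once per root-corner, is precisely $\sum_n t_n z^n$ after the standard root-normalization; one checks the number of corners equals the number of faces times $3$ equals... and the claimed formula follows.

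The main obstacle I anticipate is the balancedness bookkeeping in the chain-decomposition: getting the signs and the ``which side of the cycle'' counting right for the buds and the degree-$2$ internal vertices along each kernel chain, and verifying that imposing $\gamma=0$ on the two homology generators is equivalent (via Lemma~\ref{lem:gamma0all}) to the mobile being balanced, so that the constrained sum genuinely enumerates $\mathcal V_3^{Bal}$ and not a larger or smaller set. Once that linear-algebra-over-generating-functions step is pinned down, the remaining simplification to $\tfrac{r}{(1-3r)^2}$ is algebra.
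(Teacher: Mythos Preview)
Your approach is essentially the same as the paper's: apply Proposition~\ref{prop:series_exp_Md} with $d=3$, identify the mobiles as $3$-regular (black and white vertices all of degree~$3$), decompose along the kernel into bi-rooted planar pieces whose attached trees are governed by $r$, translate balancedness into the condition that the three branches have equal ``balance'' (encoded by $\pm 1$-step paths), and sum. One point you missed that the paper exploits immediately: since every vertex has degree exactly~$3$, a type~II mobile is impossible (the unique kernel vertex of a double loop has degree~$\geq 4$), so $H_3=0$ and only the triple-edge kernel contributes; this simplifies $M_3=3A_3\cdot\tfrac16 G_3=\tfrac12(1+r)^3 G_3$. Your worry about root-normalization in the last paragraph is unnecessary: $M_3$ as defined is already $\sum_n t_n z^n$, since for triangulations the root-face automatically has degree~$3$ and essential girth~$3$ is exactly ``essentially simple''.
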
  

\begin{proposition}[essentially simple quadrangulations]\label{prop:quadrang}
Let $q_n$ be the number of rooted toroidal quadrangulations
with $n$ vertices (and also $n$ faces) that are 
essentially simple and bipartite. Then
$$
\sum_{n\geq 1}q_nz^n=\frac{r^2}{(1+2r)(1-2r)^2},
$$
where $r\equiv r(z)$ is given by $r=z(1+r)^3$. 
\end{proposition}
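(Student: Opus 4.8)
The plan is to apply the bijective machinery developed in the paper, specialized to the case $d=2b$ with $b=2$, i.e., to essentially simple bipartite toroidal quadrangulations. By Corollary~\ref{theo:bij_bip_2bang} (with $b=2$), the family $\chF_4$ of face-rooted essentially simple bipartite quadrangulations whose root-face contour is a maximal $4$-angle is in bijection with the family $\hat{\mathcal U}_2^{Bal}$ of balanced $\frac{2}{1}$-mobiles of genus $1$: unicellular bipartite maps where every white vertex has weight $2$, every edge has weight $1$ (so every edge is $1$-way, outgoing at the white end), every black vertex has degree $4$, and the $\gamma$-score of every non-contractible cycle is $0$. Since all weights are forced to be $1$, such a mobile is simply a genus-$1$ unicellular bipartite map with all white vertices of degree $2$ and all black vertices of degree $4$, carrying buds, with the balancedness constraint; contracting each white vertex of degree $2$ turns it into a genus-$1$ unicellular map with all vertices of degree $4$ and some buds at corners. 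First I would set up this combinatorial reformulation carefully and identify the buds (excess $2b=4$, so $4$ more ingoing than outgoing in the closure; here there are $k$ buds and the edge/vertex count forces the excess identity from Lemma~\ref{claim:excess} adapted to $\hat{\mathcal U}_b$).

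The core of the argument is to count these decorated unicellular maps of genus $1$ via the core/kernel decomposition introduced at the end of Section~\ref{sec:gen_expression}. As in Proposition~\ref{prop:series_exp_Mdprime}, write the generating function with $z$ marking white vertices (equivalently, after contraction, black vertices of the quadrangulation's mobile are the faces) and specialize all face-degree variables $x_{2i}$ to $0$ except $x_4=1$ (only degree-$4$ black vertices, i.e., the quadrangulation case). Then $\hA_4=(1+V_0)^4$ where the system~\eqref{eq:syst_bip} with $b=2$ collapses: $V_0=zh_1(V_1)=zV_1$ and $V_1=V_{b-1}=\sum_{i\geq 2}x_{2i}\binom{2i-1}{i-2}(1+V_0)^{i+1}$, which at $x_4=1$ (all other $x$ zero) gives $V_1=\binom{3}{0}(1+V_0)^{3}=(1+V_0)^3$, hence $V_0=z(1+V_0)^3$; so $r:=V_0$ satisfies exactly $r=z(1+r)^3$, matching the statement, and $\hA_4=(1+r)^4$. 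It then remains to compute $\hG_4$ and $\hH_4$, the generating functions of balanced $\frac{2}{1}$-$\ZZ$-mobiles of type I (kernel = triple edge) and type II (kernel = double loop) with a marked kernel half-edge, specialized to all black vertices of degree $4$. Each maximal chain of the kernel is a path in the contracted unicellular map, decorated by trees hanging off, and the trees are themselves plane structures whose generating function is governed by $V_0,V_1$ (i.e., by $r$); the buds account for the excess. The balancedness condition must be imposed on the two non-contractible cycles formed by pairs of kernel edges, which by Lemma~\ref{lem:gamma0all} / the analogue for mobiles is equivalent to a $\gamma$-score-zero condition on just two independent cycles — this cuts the naive count down and is what produces the rational (rather than algebraic-of-higher-degree) answer. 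After assembling $\hM_4=2b\cdot\hA_4\cdot(\tfrac16\hG_4+\tfrac14\hH_4)=4(1+r)^4(\tfrac16\hG_4+\tfrac14\hH_4)$ and then passing from face-rooted to rooted (dividing by the $2b=4$ corners of the root-face, or rather using that $\cM_{2b}'\simeq\cL_{2b}'\times\cA_{2b}'$ handles the root-face planar part $\hA_4$ already), one extracts the coefficient and simplifies. The target identity $\sum q_nz^n=\frac{r^2}{(1+2r)(1-2r)^2}$ with $r=z(1+r)^3$ should fall out after the algebra, using standard Lagrange-inversion-style simplifications and the known count of unicellular maps of genus $1$ (the factor $(1-2r)^{-2}$ is reminiscent of the genus-$1$ term $\frac{1}{(1-t)^2}$-type expressions, and $(1+2r)^{-1}$ reflects the balancedness constraint on the kernel cycles).

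The main obstacle I anticipate is the computation of $\hG_4$ and $\hH_4$ with the balancedness constraint correctly imposed — in particular, bookkeeping the $\gamma$-contributions of the tree-parts and buds hanging along the kernel chains, and checking that requiring $\gamma=0$ on two non-homotopic cycles (rather than all of them) both suffices (Lemma~\ref{lem:gamma0all}) and can be encoded cleanly in the generating function, e.g.\ via a `diagonal' or resultant-type extraction on the chain lengths. A secondary subtlety is correctly handling the marked exposed half-edge vs.\ marked kernel half-edge (the factors $\tfrac16$ and $\tfrac14$ from the automorphism-counting double-count argument of Proposition~\ref{prop:series_exp_Mdprime}) and the relation between face-rooted and rooted objects. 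Everything else — the bijection, the collapse of the system~\eqref{eq:syst_bip}, and the final algebraic simplification — should be routine given the results already established; I would present those briskly and concentrate the detailed work on the balanced-kernel enumeration, likely by computing a single master series for a balanced chain-decorated path and plugging it into the two kernel shapes.
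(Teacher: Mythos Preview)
Your plan is correct and follows essentially the same route as the paper: specialize Proposition~\ref{prop:series_exp_Mdprime} at $b=2$, $x_{2i}=\delta_{i=2}$, identify $r=V_0$ with $r=z(1+r)^3$ and $\hA_4=(1+r)^4$, then count balanced $(4,2)$-regular toroidal mobiles via the kernel decomposition into type I and type II.

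Two points where your plan is vaguer than it needs to be. First, the ``diagonal or resultant-type extraction'' is not what is needed: the right bookkeeping is to decompose the mobile along the kernel into \emph{bi-rooted} planar $(4,2)$-regular mobiles (one per kernel edge), record for each its \emph{balance} (left minus right incidences along the spine), observe that at each black spine vertex the balance increment lies in $\{-1,0,+1\}$ so that the spine is encoded by a Motzkin path, and then translate balancedness of the toroidal mobile into equalities among the balances of the pieces --- for type II both pieces have balance $0$, for type I the three pieces have balances $(i,i,i)$ or $(i,i,i-1)$ depending on the sub-case --- and sum explicitly over $i\in\ZZ$. These sums are geometric in $U(t)$ (with $t=zR^2$) and collapse to expressions rational in $U+U^{-1}$, hence rational in $R=1+r$. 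Second, for type I you must not forget that the two kernel vertices are black of degree $4$, so each carries one extra attached $R$-mobile beyond the three kernel half-edges; the relative cyclic positions of these two attachments give three sub-cases $(a),(b),(c)$ (two of them symmetric), not a single master series. Your suggestion to contract the degree-$2$ white vertices is harmless but unnecessary and is not used in the paper.
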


Similar calculations could be carried out for bipartite quadrangulations and for  
essentially loopless triangulations. 
The expression for the series of rooted toroidal bipartite quadrangulations (counted by vertices) is
 $F(z)=\frac{r^2(1+3r)}{(1+r)(1-3r)^2}$ where $r\equiv r(z)$ is given by $r=z(1+3r)^2$.  
Bijective derivations of this formula have been given in~\cite{CMS09,Lep18}.  
And the expression for the series of rooted toroidal essentially loopless triangulations (counted by vertices) 
is $G(z)=\frac{r(1+2r)}{(1-4r)^2}$ where $r\equiv r(z)$ is given by $r=z(1+2r)^3$. By a classical substitution approach~\cite[Sec.2.9]{GoJa83} the series $F(z)$ can be related to the series of Proposition~\ref{prop:quadrang} (and similarly the series $G(z)$ can be related to the series of Proposition~\ref{prop:triang}), so that one expression can be deduced from the other one (however via some algebraic manipulations, so a bijective derivation of one expression does not yield
a bijective derivation of the other expression via this approach).

Calculations for toroidal $d$-angulations
of essential girth $d\geq 5$ seem much more technical. In principle the line of approach we follow in the next two subsections is doable (see~\cite{Ch09} where it is carried out for constellations and hypermaps of arbitrarily large face-degrees) and should at least yield an algebraic expression, but likely a complicated one, whereas it is to be expected that the final expression should be simple\footnote{Indeed, combining the substitution approach
in~\cite{BG15} to deal with girth constraints, together with the expressions obtained from the topological
recursion approach for toroidal maps
 with no girth constraint~\cite{Eyn,CF16}, 
it should be possible to show that when the face-degrees
are bounded (i.e., for some fixed $N$, 
the face-degree variables $x_{2i}$ are 
taken to be $0$ for $i>N$), the generating function $\hM_{2b}$ has
a rational expression in terms of the series $V_{0},\ldots,V_{b}$
and the variables $x_{2b},\ldots,x_{2N}$, and 
a similar rationality property should hold for $M_d$.}. 
In this perspective it would 
be helpful to have a better combinatorial explanation
of the simplicity of the generating function expressions obtained in 
Propositions~\ref{prop:triang} and~\ref{prop:quadrang}.

\subsection{Bijective derivation of Proposition~\ref{prop:triang}}\label{sec:bij_deriv_triang}

In this section we compute the generating function $T(z)$
of rooted toroidal triangulations that have essential girth $3$
(or equivalently, that are essentially simple), with $z$ dual to the number
 of vertices.   
Note that, for $d=3$, a toroidal $\frac{d}{d-2}$-mobile $T$ has all its edges of weight~$1$, hence all edges
are black-white with weight~$1$ on the half-edge incident to the white extremity. Since
white vertices have weight $3$, they have degree $3$. Hence, for $d=3$ the toroidal $\frac{d}{d-2}$-mobiles identify  
to toroidal mobiles (edges are black-white, buds are at black vertices only) where every vertex (white or black)
 has degree $3$, which we call \emph{3-regular toroidal mobiles}, see Figure~\ref{fig:mobile_triang} (1st drawing) for an example.  
Note that such mobiles must be of type I, since in type II
the unique vertex in the kernel must have degree at least $4$. 
A 3-regular toroidal mobile $T$ is called \emph{balanced} if 
every cycle of $T$ has the same number of incident half-edges
on the left side as on the right side. 
Let $N(z)$ be the generating function of balanced 3-regular toroidal
mobiles with a marked half-edge in the associated kernel.

\begin{figure}[!h]
\begin{center}
\includegraphics[width=13cm]{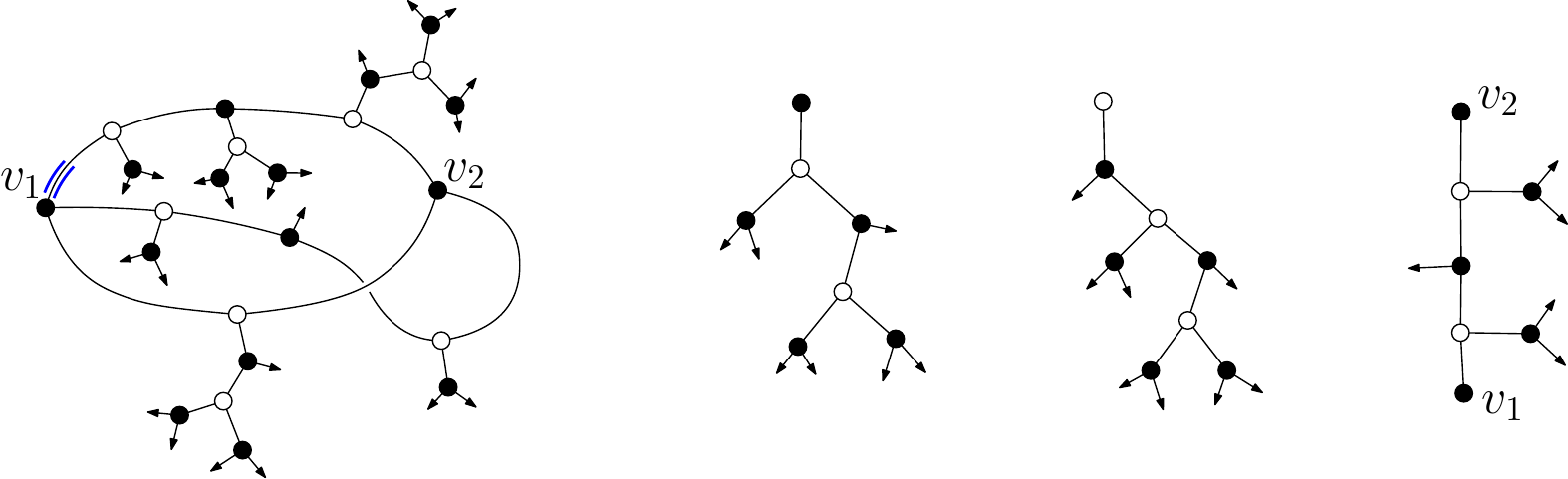}
\end{center}
\caption{From left to right: a toroidal 3-regular mobile $T$ 
counted by $\Nbb(z)$ (where the marked half-edge of the kernel
is indicated); a rooted R-mobile; a rooted S-mobile;
and a bi-rooted 3-regular mobile (the second branch of $T$, both roots
are black).}
\label{fig:mobile_triang}
\end{figure}

When setting $x_i=\delta_{i=3}$ in system~\eqref{eq:syst}, one obtains $W_0=zW_1^2$ and
$W_1=(1+W_0)^2$. Let $R\equiv R(z)$ and $S\equiv S(z)$ be given by 
$R=1+W_0$ and $S=W_1$. So $R,S$ satisfy the system
$\{R=1+zS^2, S=R^2\}$. Then by Proposition~\ref{prop:series_exp_Md},
we have:
$$
T(z)=\frac{1}{2}R(z)^3N(z),
$$

Note that the generating function $N(z)$ splits as
$$
N(z)=\Nbb(z)+\Nwb(z)+\Nbw(z)+\Nww(z)=\Nbb(z)+2\Nbw(z)+\Nww(z),
$$
depending on the colors of the two vertices $v_1,v_2$ of the 
kernel (with $v_1$ the one incident to the marked half-edge),
where the second equality follows from $\Nbw(z)=\Nwb(z)$, 
since $v_1$ and $v_2$ play symmetric roles.         

We now define a \emph{rooted} mobile as a planar 
mobile with a marked vertex 
that is a leaf, called the \emph{root} (it is allowed
for a rooted mobile to be just made of a black vertex with a single
incident bud).  
And we define a \emph{bi-rooted}
 mobile as a mobile with two marked vertices $v_1,v_2$ 
that are leaves, called \emph{primary root} and \emph{secondary root}. 
A rooted or bi-rooted mobile is called \emph{3-regular} if 
all its non-root vertices have degree $3$.  

An \emph{$R$-mobile} (resp. \emph{$S$-mobile}) is defined  
as a rooted 3-regular mobile where the root is black (resp. white),
see 2nd and 3rd drawing in Figure~\ref{fig:mobile_triang}.  
By a decomposition at the root, one checks that 
$R$ is the generating function of $R$-mobiles
and $S$ is the generating function of $S$-mobiles, 
with $z$ dual to the number of non-root white vertices.

For a bi-rooted mobile, the path connecting the two roots is called
the \emph{spine}, the traversal direction being from the primary to
the secondary root (see the fourth drawing of
Figure~\ref{fig:mobile_triang}). For each non-extremal vertex $v$ of
the spine, the \emph{balance} at $v$ is defined as the number of
half-edges (including buds) incident to $v$ on the left side of the
spine, minus the number of half-edges (including buds) incident to $v$
on the right side of the spine.  And the \emph{balance} of the
bi-rooted mobile is defined as the total balance over all non-extremal
vertices of its spine.  For a bi-rooted 3-regular mobile, the balance
at each vertex of the spine is either $+1$ or $-1$, so that the
sequence of balances along the spine is naturally encoded by a
directed path with steps in $\{-1,+1\}$, and the final height of the
path is the balance of the rooted bimobile, see
Figure~\ref{fig:path_triang}.

\begin{figure}[!h]
\begin{center}
\includegraphics[width=13cm]{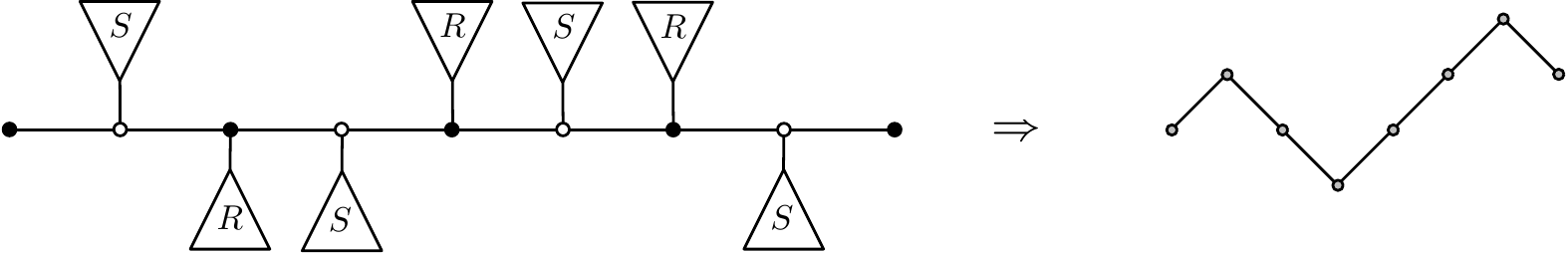}
\end{center}
\caption{Left: a bi-rooted mobile of balance $1$ 
(generic notation for
mobiles hanging from the spine). Right: the associated path
with steps $+1$ or $-1$ (up or down), ending at height $1$.}
\label{fig:path_triang}
\end{figure}

Clearly  
a 3-regular toroidal mobile $T$ 
(with a marked half-edge in the kernel)  
 decomposes into an ordered triple of  
 bi-rooted 3-regular mobiles (one for each edge of the kernel),   
and $T$ is balanced if and only if the $3$ bi-rooted mobiles have the 
same balance.  
Hence, if for $i\in\mathbb{Z}$ we 
let $\Kbbi(z)$, $\Kbwi(z)$, $\Kwwi(z)$ be the generating functions 
of bi-rooted 3-regular mobiles of balance $i$ 
where $v_1,v_2$ are black/black (resp.
black/white, white/white), and with $z$ dual 
to the number of non-root white vertices, then we find
$$
\Nbb(z)=\sum_{i\in\mathbb{Z}}\Kbbi(z)^3,\ 
\ \Nbw(z)=z\sum_{i\in\mathbb{Z}}\Kbwi(z)^3,\ \ 
\Nww(z)=z^2\sum_{i\in\mathbb{Z}}\Kwwi(z)^3.
$$

For $i\in\zZ$, let $p_{n,i}$ be the number
 of walks of length $n$ with steps in $\{-1,1\}$,
starting at $0$ and ending at $i$ (note that 
$p_{n,i}=0$ if $i\neq n\mathrm{\ mod\ }2$). 
We also define the generating
function of walks ending at $i$ as 
$$
\Pi(t)=\sum_{n\geq 0}p_{n,i}t^{\lfloor n/2\rfloor}.
$$ 
We clearly have for $i\in\zZ$, 
$$
\Kwwi(z)=0\ \mathrm{for}\ i\ \mathrm{even},\ \ \  
\Kwwi(z)=R\cdot\Pi(t)\Big|_{t=zRS}\ \ \mathrm{for}\ i\ \mathrm{odd}.
$$
$$
\Kbbi(z)=0\ \mathrm{for}\ i\ \mathrm{even},\ \ \  
\Kbbi(z)=zS\cdot\Pi(t)\Big|_{t=zRS}\ \ \mathrm{for}\ i\ \mathrm{odd}.
$$
$$
\Kbwi(z)=0\ \mathrm{for}\ i\ \mathrm{odd},\ \ \ \  
\Kbwi(z)=\Pi(t)\Big|_{t=zRS}\ \ \mathrm{for}\ i\ \mathrm{even}.
$$
Let $B(t)=P^{(0)}(t)$ be the generating
function of bridges (walks ending at $0$), 
and let $U(t)$ be the generating function of
non-empty Dyck walks (i.e., bridges of positive 
length never visiting negative values). 
Then $U\equiv U(t)$ is classically given by
$$
U=t\cdot(1+U)^2,
$$
and then (looking at the first return to $0$
for non-empty bridges), $B\equiv B(t)$ satisfies
the equation $B=1+2t(1+U)B$, so that 
$$
B=\frac1{1-2t\cdot(1+U)}.
$$
Then we have $\Pi(t)=P^{(-i)}(t)$ for $i<0$, 
and for $i>0$ we have (by a classical decomposition at the last
visits to $0,1,\ldots,i-1$, see~\cite{PoSc04}) 
$$
\Pi(t)=B\cdot(1+U)^i\cdot t^{\lfloor i/2\rfloor}.
$$
Hence we have
\begin{align*}
\Nww(z)&=z^2R^3
\sum_{\substack{i\in\zZ\\i\ \mathrm{odd}}}\Pi(t)^3\Big|_{t=zRS}\\
&=2z^2R^3\frac{B^3\cdot(1+U)^3}{1-t^3(1+U)^6}\Big|_{t=zRS}=2z^2R^3\frac{B^3\cdot(1+U)^3}{1-U^3}\Big|_{t=zRS}
\end{align*}
The last expression can be written in terms of $U$
uniquely. Indeed, all involved quantities can be expressed in terms of $U$: we have
$$
t=\frac{U}{(1+U)^2}=\frac1{U+2+U^{-1}},\ \ B=\frac1{1-2t(1+U)}=\frac{1+U}{1-U},
$$
and $t=zRS=zR^3=(R-1)/R=1-1/R$, so that 
$$
R=\frac{1}{1-t}=\frac{(1+U)^2}{(U^2+U+1)},\ \ \ z=\frac{R-1}{R^4}=\frac{(U^2+U+1)^3U}{(U+1)^8}.
$$
Overall we find
$$
\Nww(z)=\frac{2U^2(U^2+U+1)^2}{(U-1)^4(1+U)^4}=\frac{2(U+1+U^{-1})^2}{(U-2+U^{-1})^2(U+2+U^{-1})^2}.
$$
Similarly as in~\cite{CMS09}, we obtain an expression that is rational
in $U+U^{-1}$ and so it is also rational in $t$ since
$U+U^{-1}=1/t-2$, and then rational in $R$ since $t=1-1/R$. Finally, we obtain
$$
\Nww(z)=\frac{2(R-1)^2}{(3R-4)^2R^2}.
$$
Similarly we find
\begin{align*}
\Nbb(z)&=2z^3S^3\frac{B^3\cdot(1+U)^3}{1-t^3(1+U)^6}\Big|_{t=zRS}=2z^3S^3\frac{B^3\cdot(1+U)^3}{1-U^3}\Big|_{t=zRS}  \\
&=\frac{2(U+1+U^{-1})^2}{(U-2+U^{-1})^2(U+2+U^{-1})^3}=\frac{2(R-1)^3}{(3R-4)^2R^3}
\end{align*}
and 
\begin{align*}
\Nbw(z)&=zB^3\Big(\frac{2}{1-U^3}-1\Big)\Big|_{t=zRS}\\
&=\frac{(U+1+U^{-1})^2(U-1+U^{-1})}{(U-2+U^{-1})^2(U+2+U^{-1})^2}=\frac{(1-R)(2R-3)}{(3R-4)^2R^2}
\end{align*}

We can now conclude the proof of Proposition~\ref{prop:triang}. 
The sum of the $3$ contributions above (with the $3$rd contribution multiplied by $2$) gives
 $N(z)=\frac{2(R-1)}{(4-3R)^2R^3}$, so that we obtain 
$$
T(z)=\frac1{2}N(z)R^3=\frac{(R-1)}{(4-3R)^2},
$$
which gives the stated expression upon writing $r$ for $R-1$ (so 
that $r$ is given by $r=z(1+r)^4$). 

\subsection{Bijective derivation of Proposition~\ref{prop:quadrang}}\label{sec:bij_deriv_quadrang}

We now compute the generating function $Q(z)$
(with $z$ dual to the number of vertices) 
of rooted toroidal quadrangulations that are 
bipartite and essentially simple (we will overrule
here some notation from the previous section). 
For $b=2$, a toroidal $\frac{b}{b-1}$-mobile $T$ has all its edges of weight~$1$, hence all edges
are black-white with weight~$1$ on the half-edge incident to the white extremity. Since
white vertices have weight $4$, they have degree $4$. Hence, for $b=2$ the toroidal $\frac{b}{b-1}$-mobiles identify  
to toroidal mobiles where black vertices have degree $4$ and 
white vertices have degree $2$, which we call \emph{$(4,2)$-regular toroidal mobiles}.  Such a
mobile is called \emph{balanced} if, for 
every cycle, it has the same number of incident
half-edges (including buds) on the left side
as on the right side. Let $\NI(z)$ 
(resp. $\NII(z)$) be the generating function of
toroidal balanced $(4,2)$-regular mobiles
of type I (resp. type II), with $z$ dual to the number of white vertices. 

When setting $x_i=\delta_{i=4}$ in system~\eqref{eq:syst_bip}, one
obtains $V_0=zV_1$ and $V_1=(1+V_0)^3$. Let $R\equiv R(z)$ be given by $R=1+V_0$, so $R$ satisfies 
 $R=1+zR^3$. Then by
Proposition~\ref{prop:series_exp_Mdprime}, we have:
 
$$
Q(z)=R(z)^4\cdot\Big(\frac{2}{3}\NI(z)+\NII(z)\Big).
$$

A rooted or bi-rooted (planar) mobile is called \emph{$(4,2)$-regular}
if the root-vertex is black and all the non-root vertices have degree
$4$ if black and degree $2$ if white. Rooted $(4,2)$-regular mobile
are shortly called R-mobiles; again it is easy to check that $R(z)$ is
the generating function of R-mobiles, with $z$ dual to the number of
white vertices.  For a bi-rooted $(4,2)$-regular mobile the balance at
each black vertex of the spine is in $\{-2,0,+2\}$, so that the sequence of
balances along the spine is now encoded by a path with increments in
$\{-1,0,+1\}$, the final value of the path giving half of the total
balance (see Figure~\ref{fig:path_quad_mobile}).  Let $p_{n,i}$ be
the number of such paths of length $n$ ending at $i$, and let
$\Pi(t)=\sum_{n\geq 0}p_{n,i}t^{n}$ be the generating function for
walks ending at $i$, and let $B\equiv B(t)=P^{(0)}(t)$ be the
generating function of those ending at $0$, called \emph{bridges}.

\begin{figure}[!h]
\begin{center}
\includegraphics[width=13cm]{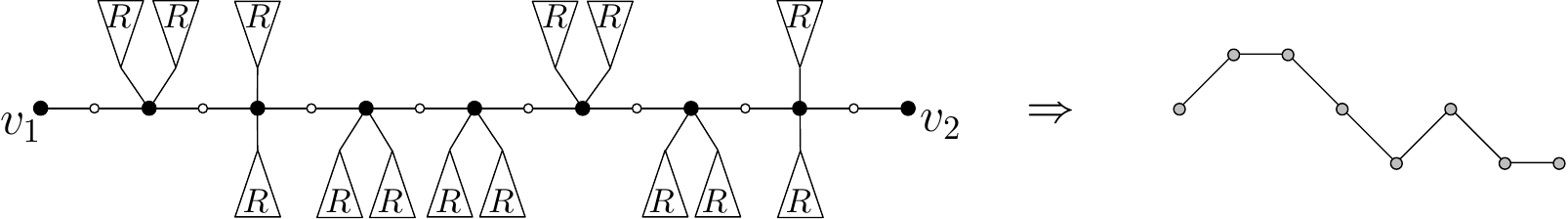}
\end{center}
\caption{Left: a bi-rooted $(4,2)$-regular mobile of 
balance $-1$ (generic notation
for the mobiles hanging from the spine). Right: the associated
path with steps in $\{-1,0,1\}$, ending at height $-1$.}
\label{fig:path_quad_mobile}
\end{figure}

A mobile counted by $\NII(z)$ (see Figure~\ref{fig:mobile_quad}
for an example) clearly
decomposes  into a pair
of bi-rooted $(4,2)$-regular mobiles
both of balance $0$ (one bi-rooted mobile for each 
of the two edges of the kernel), which gives
$$
\NII(z)=z^2B^2\Big|_{t=zR^2}.
$$

\begin{figure}[!h]
\begin{center}
\includegraphics[width=10cm]{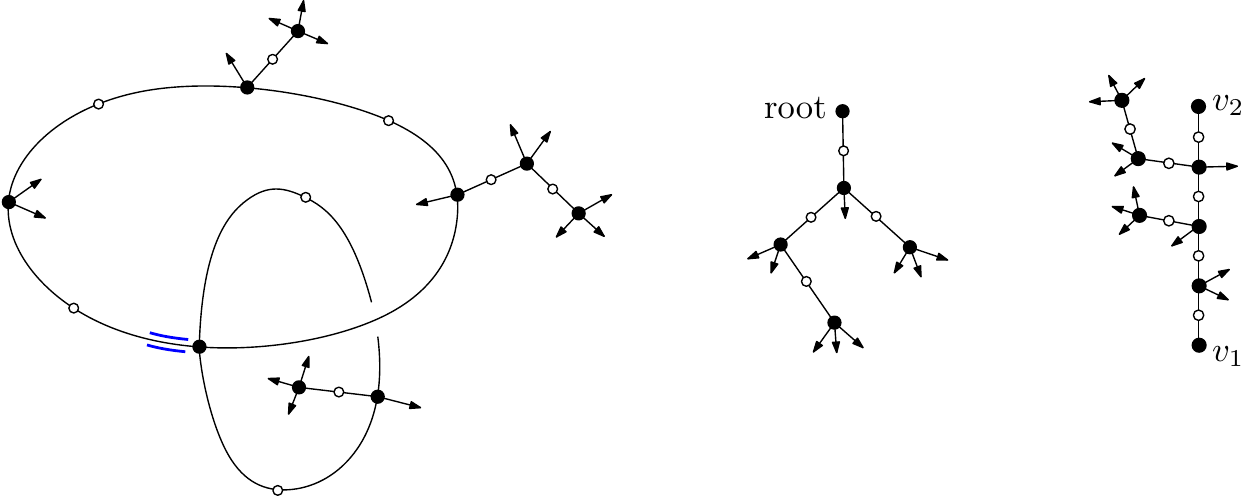}
\end{center}
\caption{From left to right: a toroidal $(4,2)$-regular mobile $M$ 
counted by $\NII(z)$; an R-mobile; 
and a bi-rooted $(4,2)$-regular mobile (the second branch of $M$).} 
\label{fig:mobile_quad}
\end{figure}

Let $C\equiv C(t)$ be the generating function
of walks counted by $B(t)$ that never 
visit  values in $\ZZ_{<0}$ (called \emph{Motzkin excursions}), and let $U(t):= tC(t)$. 
Note that $U\equiv U(t)$ is given by the equation
$$
U=t\cdot(1+U+U^2).
$$
Again our aim will be to express all generating
functions rationally in terms of $U$. 
We have
$$
t=\frac{1}{U+1+U^{-1}},
$$
and moreover we have $t=zR^2=(R-1)/R=1-1/R$,
which gives 
$$
R=\frac1{1-t}=\frac{U^2+U+1}{U^2+1},\ \ \ z=\frac{R-1}{R^3}=\frac{(U^2+1)^2U}{(U^2+U+1)^3}.
$$
Note that $B$ satisfies the equation
$B=1+(t+2tU)B$ (obtained by looking at the 
first return to $0$), which gives
$$
B=\frac{1}{1-t-2tU}=\frac{U^2+U+1}{(1-U)(1+U)}.
$$
We thus obtain the following expression for $\NII(z)$ in terms of $U$:
$$
\NII(z)=\frac{(U^2+1)^4U^2}{(U^2+U+1)^4(U-1)^2(U+1)^2}=\frac{(U+U^{-1})^4}{(U+1+U^{-1})^4(U-2+U^{-1})(U+2+U^{-1})}.
$$
We obtain an expression that is rational
in $U+U^{-1}$ and so it is also rational in $t$ since
$U+U^{-1}=1/t-1$, and then rational in $R$ since $t=1-1/R$. Finally, we obtain

$$
\NII(z)=\frac{(R-1)^2}{(2R-1)(3-2R)R^4}.
$$

Regarding mobiles counted by $\NI(z)$, note that the two vertices $v_1,v_2$
 of the kernel $\kappa$ have to be black
(since white vertices have degree $2$),
and moreover, for $i\in\{1,2\}$, $v_i$ has  
exactly one corner (denoted $\nu_i$) that carries an attached R-mobile. Note that 3 situations
can arise in a \ccw walk (of length $6$ since
$\kappa$ has $3$ edges) around the unique face of 
$\kappa$: $\nu_2$ is either (a) just after
$\nu_1$, (b) or $3$ steps after $\nu_1$, (c) or just before $\nu_1$. 
Let $\NIi(z)$, $\NIii(z)$, $\NIiii(z)$ be the respective
contributions to $\NI(z)$. 
The first and last situations are clearly symmetric
(up to exchanging the roles of $v_1$ and $v_2$), 
hence $\NIi(z)=\NIiii(z)$. 

In case (b), the mobile is made of $3$ bi-rooted mobiles (one for each
branch connecting $v_1$ to $v_2$) 
of the same excess $i\in\zZ$, plus two attached 
R-mobiles (those at $\{\nu_1,\nu_2\}$). Hence 
$$
\NIii(z)=3R(z)^2\sum_{i\in\zZ}z^3\Pi(t)^3\Big|_{t=zR^2}
$$
where the factor $3$ accounts for the choice of the marked
half-edge of $\kappa$, the factor $R(z)^2$ accounts for the attached R-mobiles
at $v_1$ and $v_2$, and each of the $3$ factors $z\Pi(t)\big|_{t=zR^2}$
accounts for each of the $3$ branches connecting $v_1$ to $v_2$.

Note that $\Pi(t)=P^{-i}(t)$ for $i<0$, and for $i>0$ a 
decomposition at the last visits to $0$, to $1,\ldots,i-1$,    ensures that 
$$
\Pi(t)=B(t)\cdot U(t)^{i}.
$$
Hence we have
$$
\NIii(z)=3z^3R(z)^2B(t)^3\sum_{i\in\zZ}U(t)^{3|i|}\Big|_{t=zR^2}
=3z^3R(z)^2B(t)^3\Big(1+\frac{2U(t)^3}{1-U(t)^3}\Big)\Big|_{t=zR^2}.
$$
Again we can express everything rationally in terms of $U$, and find
$$
\NIii(z)=\frac{3(U+U^{-1})^4(U-1+U^{-1})}{(U-2+U^{-1})^2(U+1+U^{-1})^5(U+2+U^{-1})}=\frac{3(R-1)^3(2-R)}{(2R-1)R^5(3-2R)^2}.
$$
Finally, in case (a), it is easy to see that two of the 
bi-rooted mobiles from $v_1$ to $v_2$ have same balance $i\in\zZ$, 
while the bi-rooted mobile for the remaining branch has balance $i-1$
(see 1st drawing of Figure~\ref{fig:types_NI} for an example).  

\begin{figure}[!h]
\begin{center}
\includegraphics[width=10cm]{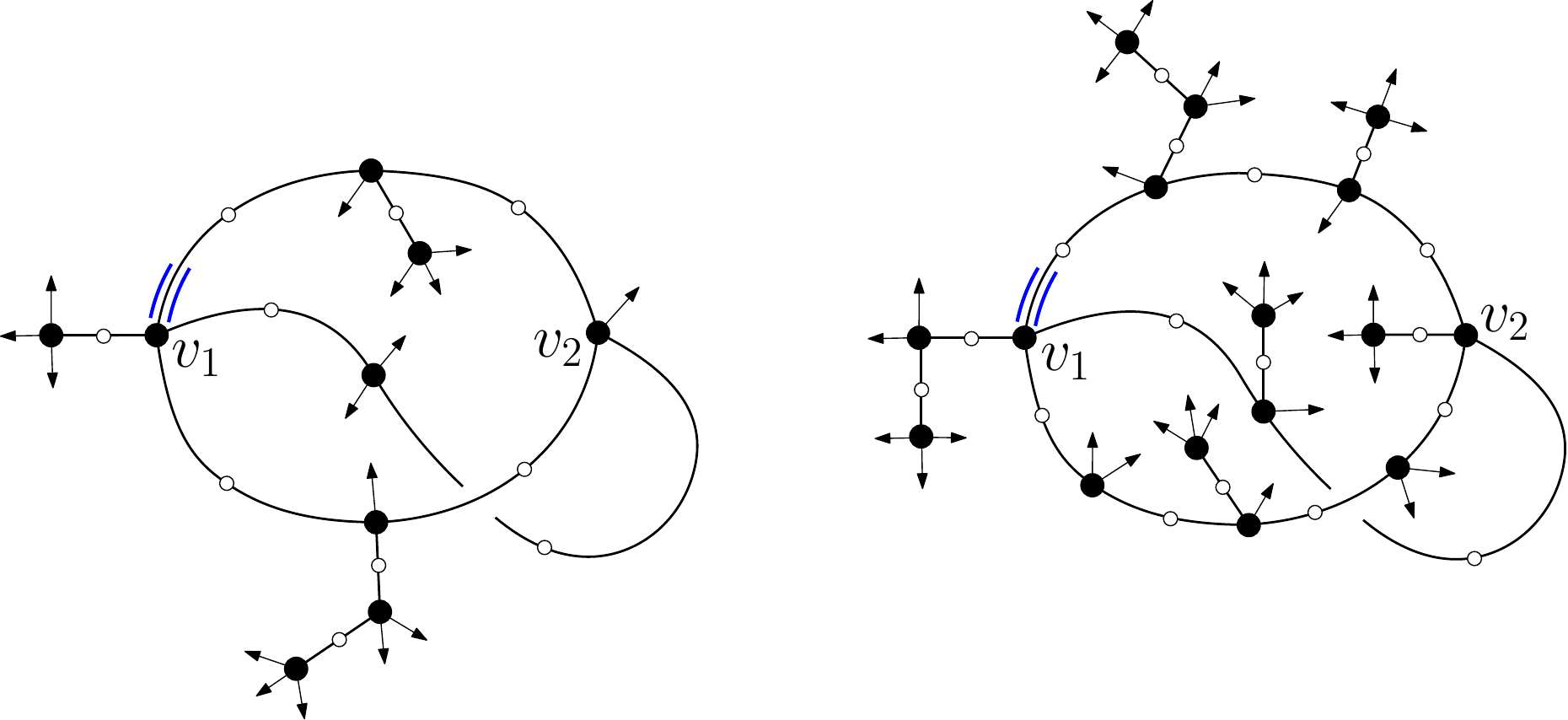}
\end{center}
\caption{Left: a toroidal $(4,2)$-regular mobile  
counted by $\NIi(z)$; Right: a toroidal $(4,2)$-regular mobile 
counted by $\NIii(z)$.} 
\label{fig:types_NI}
\end{figure}

Hence 
\begin{align*}
\NIi(z)=3R(z)^2\sum_{i\in\zZ}z^3\Pi(t)^2P^{(i-1)}(t)\Big|_{t=zR^2}&=3R(z)^2\sum_{i\in\zZ}z^3B(t)^3U^{2|i|+|i-1|}(t)\Big|_{t=zR^2}\\
&=3R(z)^2z^3B(t)^3\frac{U(t)+U(t)^2}{1-U(t)^3}\Big|_{t=zR^2}
\end{align*}
Again we rewrite the expression in terms of $U$ and then $R$, finding
$$
\NIi(z)
=\frac{3(U+U^{-1})^4}{(U-2+U^{-1})^2(U+1+U^{-1})^5(U+2+U^{-1})}
=\frac{3(R-1)^4}{(2R-1)R^5(3-2R)^2}.
$$
We thus get
$$
\NI(z)=2\NIi(z)+\NIii(z)=\frac{3(R-1)^3}{R^4(2R-1)(3-2R)^2}.
$$

We thus obtain
$$
Q(z)=R(z)^4\cdot\Big(\frac{2}{3}\NI(z)+\NII(z)\Big)=\frac{(R-1)^2}{(2R-1)(3-2R)^2},
$$
which concludes the proof of Proposition~\ref{prop:quadrang}, upon writing $r=R-1$
(so that $r$ is given by $r=z(1+r)^3$).

\section{Proofs of the bijections}
\subsection{Proof of Theorem~\ref{theo:phi_ori}}\label{sec:proof_phi}
\subsubsection{The Bernardi-Chapuy bijection} 
Similarly as in the planar case~\cite{BF12}, 
the proof of Theorem~\ref{theo:phi_ori}
is by a reduction (in the dual setting) to the bijection of
Bernardi and Chapuy~\cite{BC11}. In a \emph{rooted map} 
(of genus $g\geq 0$), the convention adopted here  
 is to indicate the root-corner by an artificial ingoing half-edge $\hat{h}$, see the top-left drawing in 
Figure~\ref{fig:bijection_BC}. 
For $M$ a rooted
map, an orientation of $M$ is called a \emph{co-left orientation}
if for any edge $e$ of $M$ there is a (necessarily unique) 
sequence of half-edges $h_1,h_1',\ldots,h_k$ such that 
\begin{itemize} 
\item
$h_1$ is the ingoing part of $e$, and $h_k=\hat{h}$, 
\item for every $i\in[1..k-1]$, $h_i$ and $h_i'$ are opposite on the
  same edge, with $h_i$ the ingoing part and $h_i'$ the outgoing part;
  in addition all the half-edges between $h_i'$ and $h_{i+1}$
  (excluded) in clockwise order around their common incident vertex
  are outgoing.
\end{itemize}

For $g\geq 0$, let $\cRg$ be the family of co-left
orientations of rooted maps of genus $g$.  Bernardi and Chapuy give 
in~\cite[Section 7]{BC11} a bijection between $\cT^{g}_1$ (mobiles of genus $g$ and excess $1$) and $\cRg$. 

We first describe the mapping $\Psi$ from $\cT^{g}_1$ to $\cRg$.  
For $T\in\cT^{g}_1$, the \emph{partial closure} of $T$ is the figure obtained as follows (see the middle drawing
in the top-row of 
Figure~\ref{fig:bijection_BC}): 
\begin{itemize}
\item
for each edge $e=(u,v)\in T$, with $u$ the black extremity
and $v$ the white extremity, insert an ingoing bud 
in the corner just after $e$ in counterclockwise order around $u$
(since $T$ has excess $1$, there are one more ingoing
buds than outgoing buds); 
\item
match the outgoing and ingoing buds according to a walk (with the face on our right)
 around the unique face in $T$, considering 
outgoing buds as opening parentheses and ingoing 
buds as closing parentheses; each matched pair yields a new directed
edge, and the unique unmatched ingoing bud is called \emph{exposed} (in~\cite[Section 7]{BC11}
they call \emph{balanced blossoming mobile} the mobile $T$ plus the unique exposed ingoing bud).  
\end{itemize}

\begin{figure}
\begin{center}
\includegraphics[width=10cm]{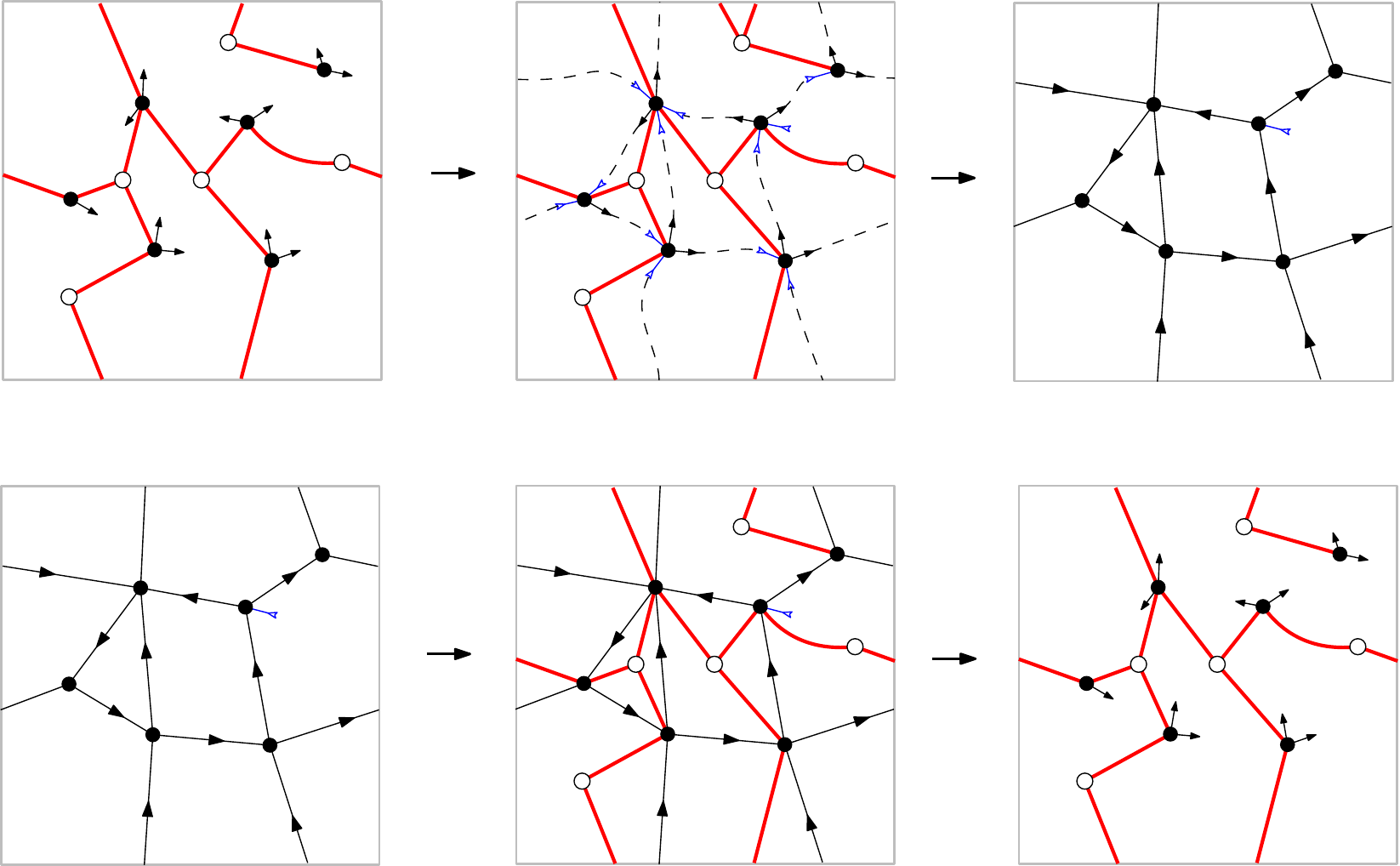}
\end{center}
\caption{The Bernardi-Chapuy bijection  between $\cT_1^{g}$ and $\cRg$ ($g=1$ in the example). 
The top-row shows the mapping $\Psi$ from $\cT^{g}_1$ to $\cR_g$. 
 The bottom-row shows the mapping $\Phi$ from $\cR_g$ to $\cT^{g}_1$.}
\label{fig:bijection_BC}
\end{figure}

Then, $M:=\Psi(T)$ is obtained from the partial closure of $T$ by erasing all the white vertices
of $T$, all the edges of $T$, and declaring the single
exposed ingoing bud as the root of the obtained oriented
map $M$, see the top-row of Figure~\ref{fig:bijection_BC}.  

Conversely, for $M$ an oriented map in
$\cRg$ (whose vertices are considered as black), $T=\Phi(M)$ is
obtained as follows (see the bottom-row of
Figure~\ref{fig:bijection_BC}):
\begin{itemize}
\item
Insert a white vertex in each face of $M$, 
\item
For each ingoing half-edge $h$ of $M$ (including the root half-edge),
create a new edge connecting the vertex incident to $h$ to the 
white vertex in the face on the left of $h$ (looking from $h$
toward the vertex incident to $h$),
\item
Delete all the ingoing half-edges, and declare the outgoing 
half-edges as buds.
\end{itemize}

\subsubsection{Deducing the bijectivity of $\Phi_+$} 
We now explain how the bijectivity of $\Phi_+$ in
Theorem~\ref{theo:phi_ori} can be deduced from properties of the
bijections $\Psi/\Phi$ and properties of the relevant oriented maps. 
\begin{figure}
\begin{center}
\includegraphics[width=10cm]{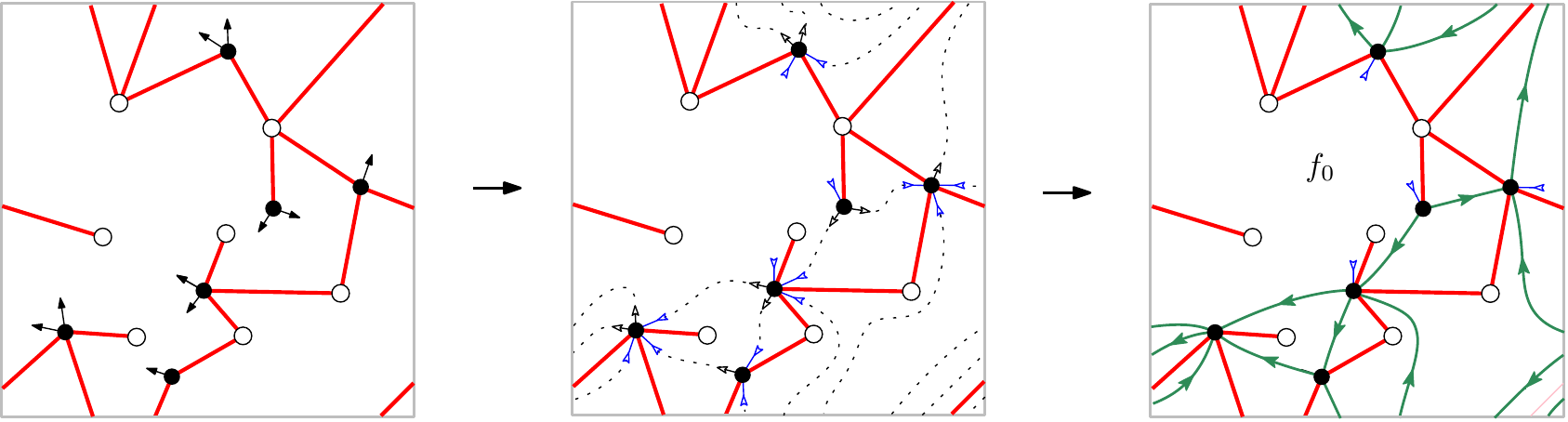}
\end{center}
\caption{The partial closure of a mobile of excess $4$.}
\label{fig:partial_closure}
\end{figure}
A first remark is that, for $d\geq 1$ and $T\in\cTgd$, the \emph{partial closure} of $T$ can be performed exactly in the same 
way as for $d=1$. One obtains a map (made of $T$, the new white vertices, and the new edges created by  
matching outgoing buds with ingoing buds) with $d$ unmatched ingoing buds incident to a same face, see 
Figure~\ref{fig:partial_closure} for an example.

For $d\geq 1$, let $\cRgd$ be the subfamily of $\cRg$ where the
root-vertex has $d$ outgoing half-edges and a single ingoing half-edge
(the root).  For $M\in\cRgd$ let $\iota(M)$ be the underlying
vertex-rooted oriented map (i.e., we delete the root ingoing half-edge
but record that the incident vertex is distinguished), and let $\cSgd$
be the family of vertex-rooted oriented maps of genus $g$ that is the
image of $\cRgd$ by the mapping $\iota$.  For two oriented maps $M,M'$
in $\cRgd$ we write $M\sim M'$ if $\iota(M)=\iota(M')$, so that
$\cSgd\equiv\cRgd/\!\!\sim$. Moreover let $\cUgd$ be
the subfamily of mobiles in $\cT_1^{g}$ that are associated to maps in
$\cRgd$.  Let $T'\in\cUgd$ and let $M'=\Psi(T')$, with $v$ the root-vertex of $M'$.
Since $v$ has indegree $1$ and outdegree $d$ in $M'$, the vertex $v$
is a leaf in $T'$ ---it is incident to a single edge $e$--- with $d$
attached buds. If we delete $v$ together with the attached edge and
buds we clearly obtain a mobile in $\cTgd$; we denote by $\iota(T')$ this mobile. 

For two mobiles $T',U'$ in $\cUgd$ we write $T'\sim U'$ if
$\iota(T')=\iota(U')$.  Conversely, for $T\in\cTgd$, let $G$ be the
 partial closure of $T$; and let $f_0$ be the 
face of $G$ containing the $d$ unmatched ingoing buds. It is easy to
see that $f_0$ has exactly $d$ corner that are at a white vertex;
indeed there is one such corner before each unmatched ingoing bud in a
\cw walk around $f_0$ (i.e., walking with the interior of $f_0$ on the
right).  Then we obtain all the mobiles $T'$ such that $\iota(T')=T$
as follows (see Figure~\ref{fig:reduction_mobiles}): choose a white
corner $c$ in $f_0$, and then attach an edge at $c$ (inside $f_0$)
connected to a new black vertex $v$, and attach $d$ buds at $v$.

\begin{figure}[!h]
\begin{center}
\includegraphics[width=10cm]{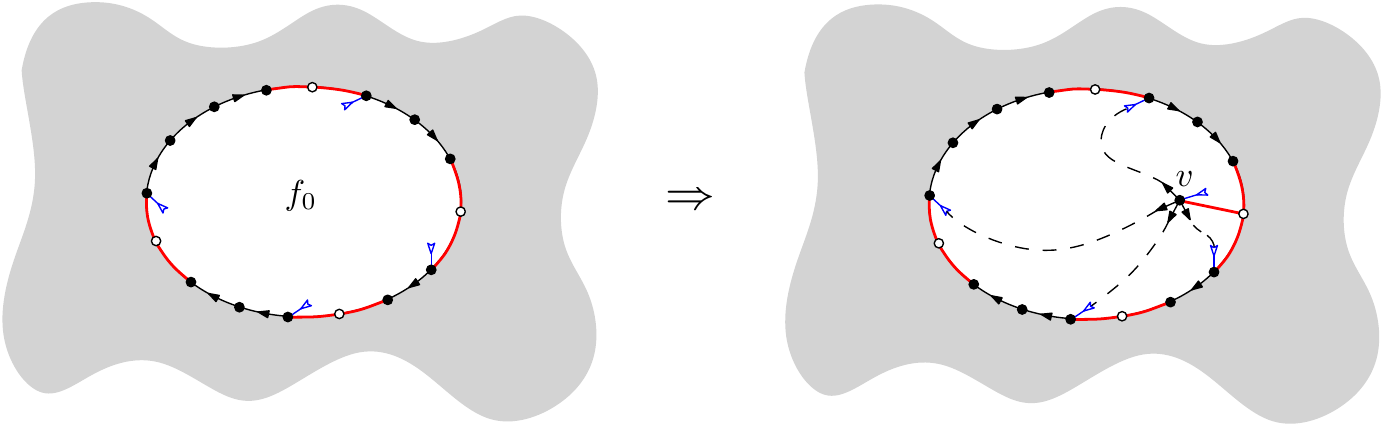}
\end{center}
\caption{Lifting a mobile in $\cTgd$ to a mobile in $\cUgd$.}
\label{fig:reduction_mobiles}
\end{figure}

From the preceding discussion, it is clear that the bijection $\Psi/\Phi$ 
between $\cUgd$ and $\cRgd$ respects the equivalence relations $\sim$,
i.e. $\Phi(M')\sim\Phi(N')$ for $M'\sim N'$ and $\Psi(T')\sim\Psi(U')$ for $T'\sim U'$. 
Since $\cSgd\equiv\cRgd/\!\sim$ and $\cTgd\equiv \cUgd/\!\sim$, we conclude that
 $\Psi/\Phi$ induces a bijection between $\cTgd$ and $\cSgd$.

Moreover the duality property of $\cRg$ (see~\cite[Lemma 8.1]{BC11}) implies   
that $\cOgd$ is the image of $\cSgd$ by duality 
(for $M\in\cSgd$ and $M^*$ the dual face-rooted map, every edge
$e^*\in M^*$ is directed from the left-side to the right-side
of the dual edge $e\in M$).  
Hence $\Phi$ induces a bijection between
$\cOgd$ and $\cTgd$ for every $d\geq 1$, which one can check 
to be precisely the bijection $\Phi_+$ described in Section~\ref{sec:bijPhi+}.

\subsection{Proof of Theorem~\ref{theo:bij_dang}}

\label{sec:proofbijdang}

In this section we prove Theorem~\ref{theo:bij_dang}, which will follow from two lemmas:
the first one (Lemma~\ref{lem:phispebal}) ensuring that the bijection $\Phi_+$ preserves the balancedness property,
and the second one (Lemma~\ref{lem:canonical_dori}) 
ensuring that the maps in $\cF_d$ identify to the face-rooted toroidal maps 
endowed with a balanced $\frac{d}{d-2}$-orientation in $\cO_d^{1}$.  

\subsubsection{Balanced specialization of  $\Phi_+$}
\label{sec:proofbijdangspe}

For $M$ a face-rooted map of genus $g$ (whose vertices are considered
as white), we define the \emph{star-completion} of $M$ as the map
$\aM$ obtained from $M$ by adding a black vertex $v_f$ inside each
non-root face $f$, and connecting $v_f$ to every vertex around $f$
(via every corner around $f$), so that $v_f$ has degree
$\mathrm{deg}(f)$ in $\aM$. The edges of $\aM$ belonging to $M$ are
called \emph{$M$-edges} and the edges incident to black vertices are
called \emph{star-edges}.

Let $d\geq 3$.
Let $M$ be a face-rooted toroidal $d$-angulation endowed with a
$\frac{d}{d-2}$-orientation $X$.  We extend $X$ to an 
$\N$-biorientation $X^\star$ of $\aM$ as follows : for each
half-edge $h$ of $\aM$, if $h$ is part of a $M$-edge, then it has the
same weight (thus the same orientation) as in $X$, if $h$ is part of
a star-edge, then it has weight $0$ if it is incident to a white
vertex, and weight $1$ if it is incident to a black  vertex
(thus star-edges are fully oriented from the black vertex toward the
white vertex).

\begin{lemma}\label{lem:augment}
  Let $M$ be a $d$-toroidal map endowed with a
  $\frac{d}{d-2}$-orientation $X$.  Then $X$ is balanced if and only if $X^\star$ is balanced.
 Moreover, if the $\gamma$-score of two
  non-contractible non-homotopic cycles of
  $\aM$ is $0$, then $X^\star$ is balanced.
\end{lemma}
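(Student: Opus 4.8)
The plan is to relate the $\gamma$-score of a non-contractible cycle in $\aM$ to the $\gamma$-score of a suitable non-contractible cycle in $M$, so that balancedness of one orientation transfers to the other, and then to invoke the homotopy-linearity already established (Lemma~\ref{lem:gammahomology}, via Lemma~\ref{lem:gamma0all}) to get the "two non-homotopic cycles suffice" strengthening. First I would observe that, since $X^\star$ and $X$ agree on all $M$-edges and assign fixed weights ($0$ on the white side, $1$ on the black side) to all star-edges, the contribution of star-edges to any $\gamma$-count is completely determined by which black vertices lie on the cycle and how the cycle separates their incident star-edges. The cleanest route is to restrict attention to non-contractible cycles $C$ of $\aM$ that use only $M$-edges: such a cycle is a non-contractible cycle of $M$ as well, and I would compute $\gamma^{X^\star}(C)$ in terms of $\gamma^{X}(C)$. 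Along such a $C$ there are no black vertices, and every star-edge incident to a white vertex $v$ on $C$ is oriented \emph{into} $v$ (weight $0$ on the $v$-side), so star-edges contribute $0$ to both $\gamma_R^{X^\star}(C)$ and $\gamma_L^{X^\star}(C)$. Hence $\gamma^{X^\star}(C)=\gamma^{X}(C)$ for every non-contractible cycle $C$ of $M$ (viewed inside $\aM$). Since $M$ has non-contractible cycles in both homotopy classes of a basis, this immediately gives: if $X^\star$ is balanced then so is $X$, and conversely if $X$ is balanced then all cycles of $\aM$ that use only $M$-edges have $\gamma^{X^\star}$-score $0$ — and two non-homotopic such cycles exist, so by Lemma~\ref{lem:gamma0all} applied to $\aM$ (noting $\aM$ need not be a $d$-toroidal map, so one should check the hypotheses of Lemma~\ref{lem:gammahomology} still apply, which they do since that proof only uses that every \emph{vertex} of $\aM$ has constant total weight and faces have constant degree — here $\aM$ is a triangulation when $M$ is a $d$-angulation, with white-vertex weight $d$ and black-vertex weight $\deg(f)$, so one needs the mild generalization that the per-vertex weight may vary, which the Euler-formula computation in Lemma~\ref{lem:gammahomology} tolerates) one concludes $X^\star$ is balanced.

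The honest technical point — and what I expect to be the main obstacle — is that Lemma~\ref{lem:gammahomology} as stated is for $\frac{d}{d-2}$-orientations of a $d$-toroidal map, whereas $\aM$ is not of that form (its vertices do not all have the same weight, and its faces, while all of degree $3$ in the $d$-angulation case, would have varying degree in the more general setting). So I would either (i) re-examine the proof of Lemma~\ref{lem:gammahomology} and check that the only properties used are that the total weight at each vertex is a fixed constant (used in the $(k_1,k_2)=(1,-1)$ case to cancel the $u$-versus-$v$ terms) and that an Euler-relation count on the cylinder regions $G_1,G_2$ goes through — both of which hold for $\aM$ with per-vertex constant $d$ on white vertices and $\deg(f)$ on black vertices, because the relevant Euler computation only needs $2m = \sum_{\text{faces}}\deg + (\text{boundary})$ and $(d\!-\!2)$-type edge weights, which are still uniform on $M$-edges and the star-edges each have total weight $1$ — or (ii) bypass this entirely by working with $\aM$ expanded/reduced back to $M$. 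Route (i) seems preferable since the $\aM$-statement is exactly what is needed downstream; I would state a one-line remark that Lemmas~\ref{lem:gammahomology} and~\ref{lem:gamma0all} hold verbatim for any $\N$-biorientation of a toroidal map in which the total weight at each vertex is prescribed by a fixed function $\alpha$ and the total weight of each edge is prescribed by a fixed function $\beta$ (the proof is unchanged, since in the $(1,-1)$ case the discrepancy is still $\alpha(u)-\alpha(v)$ which need not be zero — \emph{careful}: here it would not cancel).

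To avoid that last subtlety I would actually phrase the argument differently for the "moreover" part: take two non-contractible non-homotopic cycles $C_1,C_2$ of $\aM$ with $\gamma^{X^\star}(C_i)=0$. These cycles may pass through black vertices and use star-edges. I would push them off the black vertices: each black vertex $v_f$ together with its star-edges can be contracted (the star is a "tripod" inside the face $f$), and a cycle through $v_f$ can be rerouted along the boundary of $f$ using $M$-edges, changing its $\gamma$-score in a controlled, homotopy-invariant way — but this rerouting is exactly the kind of face-boundary modification under which $\gamma^{X^\star}$ changes by a quantity computable from the weight condition at $v_f$ (total weight $\deg(f)$, all on outgoing star-edges) and the face-degree condition. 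Concretely, replacing the two arcs of star-edges through $v_f$ by the corresponding arc of the face-cycle of $f$ alters $\gamma_R-\gamma_L$ by $0$ precisely because of Claim~\ref{claim:epsilon} applied to the face $f$ (a $d$-angle with $\epsilon = d-d = 0$ when $\deg f = d$; more generally $\deg(f)-\deg(f)=0$). Thus one can assume $C_1,C_2$ avoid all black vertices, hence use only $M$-edges, hence are non-contractible non-homotopic cycles of $M$ with $\gamma^X(C_i)=\gamma^{X^\star}(C_i)=0$; Lemma~\ref{lem:gamma0all} then makes $X$ balanced, and by the first part $X^\star$ is balanced. The hard part will be making the "reroute around a black vertex" step precise — tracking which star-edges count to the left versus the right of the cycle and checking the face-contour replacement is $\gamma$-neutral — but this is a local, bounded computation entirely analogous to the boundary-edge counting already performed in the proofs of Lemma~\ref{lem:multiplesigma} and Claim~\ref{claim:epsilon}.
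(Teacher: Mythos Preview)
Your overall strategy---relate $\gamma^{X^\star}$ on cycles of $\aM$ to $\gamma^X$ on cycles of $M$, then invoke Lemma~\ref{lem:gamma0all}---is sound, and your observation that $\gamma^{X^\star}(C)=\gamma^X(C)$ for every non-contractible cycle $C$ using only $M$-edges is correct and is the key first step. However, your execution of route~(i) misidentifies the actual obstacle. You worry that vertex weights in $X^\star$ may vary (white vertices have weight $d$, black vertices have weight $\deg(f)$), but since $M$ is a $d$-angulation every black vertex has weight exactly $d$ as well, so \emph{all} vertices of $\aM$ have weight $d$ and the $(1,-1)$ cancellation in the Claim of Lemma~\ref{lem:gammahomology} goes through unchanged. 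The genuine issue you do not address is that the Euler computation at the end of that proof uses uniform edge-weight $d-2$ and uniform face-degree $d$, both of which fail for $\aM$ (star-edges have weight $1$, and faces of $\aM$ are triangles apart from the root face). In fact this computation \emph{can} be redone for $\aM$: each triangle face has one $M$-edge-side and two star-edge-sides, and carrying this through the cylinder count one still obtains $x'+y=x+y'$. So your route~(i) is salvageable, but not for the reason you give, and it requires a short computation you have not supplied. Your route~(ii) is plausible but the claim that the reroute is $\gamma$-neutral ``by Claim~\ref{claim:epsilon}'' is not justified---that claim concerns contractible closed walks in $M$, not the local surgery you describe in $\aM$, and the verification would be a genuine (if routine) case analysis you have not carried out.

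The paper takes a cleaner and genuinely different route: rather than generalizing Lemma~\ref{lem:gamma0all} to $\aM$, it manufactures an auxiliary $d$-toroidal map $M'$ (by replacing each star-edge by a path of length $\tfrac{d-1}{2}$ when $d$ is odd, or inserting a $d$-gon plus connection-paths when $d$ is even) and extends $X$ to a genuine $\frac{d}{d-2}$-orientation $X'$ of $M'$. Any non-contractible cycle $C$ of $M$ has $\gamma^X(C)=\gamma^{X'}(C)$, and any non-contractible cycle $C$ of $\aM$ lifts to a non-contractible cycle $C'$ of $M'$ with $\gamma^{X^\star}(C)=\gamma^{X'}(C')$. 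Now Lemma~\ref{lem:gamma0all} applies to $M'$ without modification, and the three equivalences fall out. The advantage of the paper's approach is that it avoids revisiting the internals of Lemma~\ref{lem:gammahomology}; the advantage of your (corrected) route~(i) is that it is more direct and makes explicit that only the per-vertex weight constraint matters for the Claim, while the Euler step needs only that the edge-weight and face-degree data match up combinatorially.
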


\begin{proof}
  We start with the case of $d$ odd, which is a bit easier.  Let $M'$
  be the $d$-angulation obtained from $M$ where in each face $f$ of
  $M$ we insert a new vertex $v_f$, called a \emph{star-vertex},
  connected to every corner around $f$ via a path of length
  $\frac{d-1}{2}$, called a \emph{connection-path}, see
  Figure~\ref{fig:augment}(a).  Any $\frac{d}{d-2}$-orientation $X$ of
  $M$ can be extended to a $\frac{d}{d-2}$-orientation $X'$ of $M'$:
  for each connection-path $e_1,\ldots,e_{(d-1)/(2)}$ (which is
  traversed starting from the star-vertex extremity), we give weight
  $2i-1$ (resp. $d-2i-1$) to the first (resp. second) traversed
  half-edge of $e_i$.  

 \begin{figure}[!h]
\begin{center}
\includegraphics[width=12cm]{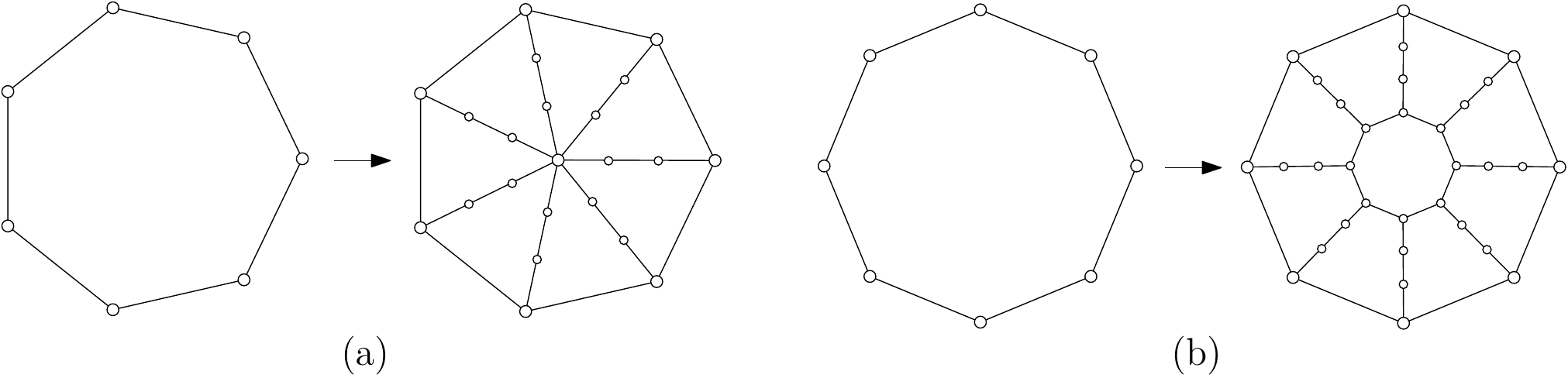}
\end{center}
\caption{Operations within each face for the mapping
from $M$ to $M'$ used in the proof of Lemma~\ref{lem:augment}. 
(a) shows the case of odd $d$;  (b) shows the case of even~$d$.}
\label{fig:augment}
\end{figure}

Note that the connection-paths have weight $0$
  at the incident vertex of $M$, hence for any non-contractible cycle
  $C$ of $M$, the $\gamma$-score of $C$ is the same for $X$ as for
  $X'$. Hence, if $X'$ is balanced, then so is $X$ and the converse
  also holds by Lemma~\ref{lem:gamma0all}.
  Note also that any
  star-edge $e$ of $\aM$ corresponds to a connection-path of
  $M'$. Accordingly any non-contractible cycle $C$ of $\aM$ naturally
  induces a non-contractible cycle $C'$ in $M'$. In addition, since
  the half-edges at the star-vertex extremity in connection-paths have
  weight $1$,   for any non-contractible cycle $C$ of $M^\star$, 
  we have
 $\gamma^{X^\star}(C)=\gamma^{X'}(C')$. So again 
if $X'$ is balanced, then so is $X^\star$ and the converse
  also holds by Lemma~\ref{lem:gamma0all}.  So  $X$ is balanced if and
  only if $X^\star$ is balanced.

  Moreover, if the $\gamma$-score of two non-contractible
  non-homotopic cycles $C_1,C_2$ of $\aM$ is $0$, then the
  $\gamma$-score of the two corresponding cycles in $X'$ is $0$, so by
  Lemma~\ref{lem:gamma0all}, $X'$ is balanced and so is $X^\star$.

  For $d$ even, the augmentation from $M$ to $M'$ is done differently;
  we insert a $d$-gon $D_f$ inside every face $f$, we set a one-to-one
  correspondence between the corners in clockwise order around $f$ and
  the vertices in clockwise order around $D_f$, and we connect any
  matched pair by a path of length $d/2-1$, called a \emph{connection
    path}, see Figure~\ref{fig:augment}(b).  Similarly as before,
  every $\frac{d}{d-2}$-orientation $X$ of $M$ induces a
  $\frac{d}{d-2}$-orientation $X'$ of $M'$: we give weight $d/2-1$ to
  every half-edge of $D_f$, and for each connection-path
  $e_1,\ldots,e_{d/2-1}$ (which is traversed starting from the
  star-vertex extremity), we give weight $2i$ (resp. $d-2i-2$) to the
  first (resp. second) traversed half-edge of $e_i$.

  Similarly as in
  the odd case, the half-edges of connection-paths incident to
  vertices of $M$ have weight $0$, hence for any non-contractible cycle
  $C$ of $M$, the $\gamma$-score of $C$ is the same for $X$ as for
  $X'$.
Hence $X$ is
  balanced if and only if $X'$ is balanced. For $C$ a non-contractible cycle of $\aM$
  together with a traversal direction, let $C'$ be the induced
  cycle of $M'$, with the convention that when $C$ passes by a
  star-vertex $v_f$, then $C'$ takes the left side of the
  corresponding $d$-gon.  Let $f$ be a face of $M$ such that $C$
  passes by the corresponding star-vertex $v_f$, and let $n_{L}(f)$
  (resp. $n_R(f)$) be the number of star-edges on the left
  (resp. right) of $C$ at $v_f$.  Then the contribution to
  $\gamma^{X'}_{L}(C')$ within $f$ is $2n_{L}(f)$, while the
  contribution to $\gamma^{X'}_R(C')$ within $f$ is $d-2$ (due to the
  two half-edges of $D_f$ incident to $C'$ on its right side). Hence
  the contribution to $\gamma^{X'}(C')$ within $f$ is
  $d-2-2n_{L}(f)=n_R(f)-n_{L}(f)$. Since $\gamma^{X^\star}(C)$ and
  $\gamma^{X'}(C')$ have the same contribution within $f$, we conclude
  that $\gamma^{X^\star}(C)=\gamma^{X'}(C')$. From here, the lemma is
  proved in the same way as in the odd case.
\end{proof}

\begin{lemma}
\label{lem:phispebal}  
The mapping $\Phi_+$ specializes into a bijection between face-rooted
toroidal $d$-angulations endowed with a balanced $\frac{d}{d-2}$-orientation in
$\cO_d^{1}$,  and the
family $\mathcal U_d^{Bal}$ of $\NN$-bimobiles.
\end{lemma}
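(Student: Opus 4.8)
The plan is to build on the two reduction steps already available. By the remark preceding the lemma, $\Phi_+$ restricts to a bijection between face-rooted toroidal $d$-angulations equipped with a $\frac{d}{d-2}$-orientation in $\cO_d^{1}$ and the family $\mathcal U_d$; writing $T=\Phi_+(X)$ for such an orientation $X$ of a $d$-angulation $M$, it therefore only remains to show that $X$ is balanced if and only if $T$ is balanced. By Lemma~\ref{lem:augment}, $X$ is balanced if and only if the $\N$-biorientation $X^\star$ of the star-completion $\aM$ is balanced, and moreover $X^\star$ is already balanced as soon as two non-contractible non-homotopic cycles of $\aM$ have $\gamma$-score $0$. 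So everything reduces to comparing $\gamma$-scores of cycles of $T$ with $\gamma$-scores of cycles of $(\aM,X^\star)$.

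The next step is to make explicit the superimposition of $T$ with $\aM$ that is implicit in the construction of $\Phi_+$. The black vertices of $T$ are the black vertices of $\aM$ (one per non-root face) and the white vertices of $T$ are the vertices of $\aM$ (those of $M$). Inspecting the local rules of Figure~\ref{fig:local_rule_weighted_biori}, each black--white edge and each bud of $T$ can be drawn alongside the star-edge of $\aM$ at the corresponding corner, while each white--white edge of $T$ (which arises from a $2$-way $M$-edge $e$, a situation occurring only for $d\geq 4$) can be drawn alongside $e$ itself. Consequently every non-contractible cycle $\Gamma$ of $T$, given with a traversal direction, is freely homotopic to a well-defined non-contractible cycle $\tilde\Gamma$ of $\aM$ obtained by replacing each black--white edge of $\Gamma$ by the corresponding star-edge and each white--white edge by the corresponding $M$-edge; since $T$ has no black--black edge, this $\tilde\Gamma$ is a genuine cycle of $\aM$ with the same vertex set as $\Gamma$.

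I then claim $\gamma^{T}(\Gamma)=\gamma^{X^\star}(\tilde\Gamma)$, proved by comparing contributions vertex by vertex. At a black vertex $v_f$, every star-edge of $\aM$ is outgoing at $v_f$ with weight $1$ in $X^\star$, so its contribution to $\gamma_L^{X^\star}(\tilde\Gamma)$ (resp. $\gamma_R$) is the number of star-half-edges at $v_f$ on the left (resp. right) of $\tilde\Gamma$; by the superimposition this equals the number of half-edges (edges and buds) of $T$ at $v_f$ on the left (resp. right) of $\Gamma$, which is exactly the contribution of $v_f$ to $\gamma_L^{T}(\Gamma)$ (resp. $\gamma_R$). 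At a white vertex $u$, the star-half-edges at $u$ have weight $0$ in $X^\star$, so only the $M$-half-edges of $\aM$ at $u$ contribute, namely the outgoing ones with their $X$-weights; the half-edges of $T$ at $u$ are precisely these outgoing $M$-half-edges, carrying the same weights (by the parameter correspondence of $\Phi_+$), and the superimposition guarantees that the left/right splitting of them induced by $\tilde\Gamma$ at $u$ agrees with the one induced by $\Gamma$ at $u$ (the same holds when $\Gamma$ traverses a white--white edge at $u$, the two half-edges of the associated $2$-way $M$-edge carrying the same positive weights in $T$ and in $X^\star$). Summing over all vertices visited gives the claim.

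Finally, if $X$ is balanced then $X^\star$ is balanced by Lemma~\ref{lem:augment}, hence every non-contractible cycle of $\aM$ has $\gamma$-score $0$; in particular $\gamma^{T}(\Gamma)=\gamma^{X^\star}(\tilde\Gamma)=0$ for every non-contractible cycle $\Gamma$ of $T$, i.e.\ $T$ is balanced. Conversely, if $T$ is balanced, one picks two non-contractible non-homotopic cycles $\Gamma_1,\Gamma_2$ of $T$ (they exist since $T$ is a genus-$1$ unicellular map: e.g.\ the two loops, resp.\ two of the three parallel edges, of its kernel); then $\tilde\Gamma_1,\tilde\Gamma_2$ are non-contractible non-homotopic cycles of $\aM$ with $\gamma^{X^\star}$-score $\gamma^{T}(\Gamma_i)=0$, so $X^\star$ is balanced by the ``moreover'' part of Lemma~\ref{lem:augment}, and hence $X$ is balanced. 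The bijectivity statement then follows by combining this equivalence with the bijection $\Phi_+\colon\{\frac{d}{d-2}\text{-orientations in }\cO_d^{1}\}\to\mathcal U_d$. The main obstacle is the vertex-by-vertex comparison of $\gamma$-scores: it hinges on verifying, from the precise local rules defining $\Phi_+$, that the superimposed edges and buds of $T$ occupy the rotational positions of the star-edges of $\aM$ around each vertex, so that the left/right splittings match; the bookkeeping for $2$-way edges (white--white edges of $T$, relevant when $d\geq 4$) must be carried along as well.
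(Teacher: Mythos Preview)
Your overall strategy coincides with the paper's: reduce to the equivalence ``$X$ balanced $\Leftrightarrow$ $T$ balanced'' via Lemma~\ref{lem:augment}, and prove this by showing that for every non-contractible cycle $\Gamma$ of $T$ one has $\gamma^{T}(\Gamma)=\gamma^{X^\star}(\tilde\Gamma)$. The gap is in how you justify this last equality at white vertices.

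You assert that ``the left/right splitting of [the $T$-half-edges at $u$] induced by $\tilde\Gamma$ agrees with the one induced by $\Gamma$'', which would give $\gamma_L^{T}(\Gamma)=\gamma_L^{X^\star}(\tilde\Gamma)$ and $\gamma_R^{T}(\Gamma)=\gamma_R^{X^\star}(\tilde\Gamma)$ separately. This is false. By your own superimposition, a black--white $T$-half-edge at $u$ sits at a \emph{corner} (star-edge) position, while the corresponding outgoing $M$-half-edge sits at the adjacent \emph{$M$-edge} position, i.e.\ one slot clockwise. In particular, the two on-cycle $T$-half-edges at $u$ (at the two star-edge positions used by $\tilde\Gamma$) correspond to two outgoing $M$-half-edges that are \emph{not} on $\tilde\Gamma$: one lands on the left of $\tilde\Gamma$ and the other on the right. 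A direct computation (already for $d=3$, where all edges are $1$-way) shows $\gamma_L^{X^\star}(\tilde\Gamma)=\gamma_L^{T}(\Gamma)+(d-2)\,n_\bullet(\Gamma)$ and likewise on the right, with $n_\bullet(\Gamma)$ the number of black vertices on $\Gamma$; only the \emph{difference} $\gamma_R-\gamma_L$ coincides. This is exactly what the paper establishes by splitting the white-incident $M$-half-edges on each side into \emph{$C$-internal} ones (which match the off-cycle $T$-half-edges) and \emph{$C$-adjacent} ones (one per black vertex on $C$, each of weight $d-2$), see Figure~\ref{fig:Cadj}. Once you replace your incorrect ``splittings agree'' step by this offset argument, your proof becomes the paper's.
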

\begin{proof}
As already mentioned, the bijection $\Phi_+$ specializes into a bijection between face-rooted
toroidal $d$-angulations endowed with a $\frac{d}{d-2}$-orientation in
$\cO_d^{1}$,  and the $\NN$-bimobile family
$\mathcal U_d$. We  show here the
``balanced version''  of this bijection.

\begin{figure}
\begin{center}
\includegraphics[width=6cm]{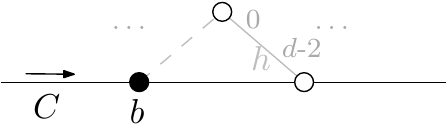}
\end{center}
\caption{Each black vertex $b$ on $C$ corresponds ($1$-to-$1$) to a $C$-adjacent half-edge $h$ in $\cHci_L(C)$ (of weight $d-2$).}
\label{fig:Cadj}
\end{figure}

Let $M$ be a face-rooted
toroidal $d$-angulation endowed with a $\frac{d}{d-2}$-orientation $X$
in
$\cO_d^{1}$. Let $T$ be 
the corresponding $\NN$-bimobile in $\mathcal U_d$ given by $\Phi_+$. Let $C$ be a (non-contractible) cycle
of $T$ together with a traversal direction. Note that $C$
 is also a non-contractible cycle of $\aM$. 
Consider the extension $X^\star$ of $X$ to $\aM$.  Clearly, for any black vertex $u$ on $C$, the 
contribution of $u$ to the left (resp. right) $\gamma$-score of $C$ is the same 
in $\aM$ as in $T$. 
We let $\cHci_L(C)$ be the sef of half-edges of $M$ that are on the left of $C$ and incident to a white vertex on $C$. 
A half-edge $h$ in $\cHci_L(C)$, with $v$ its incident vertex,
 is called \emph{$C$-adjacent} if the next half-edge in $\aM$ in ccw order around $v$ is on $C$; it is called
 \emph{$C$-internal} otherwise. 
Then the local rules of Figure~\ref{fig:local_rule_weighted_biori} ensure that $\gamma_L^T(C)$ gives the 
total contribution to $\gamma_{L}^{X^\star}(C)$ by $C$-internal half-edges in $\cHci_L(C)$.  
We let $A_L(C)$ be the total contribution to $\gamma_{L}^{X^\star}(C)$ by $C$-adjacent half-edges in $\cHci_L(C)$.
As shown in Figure~\ref{fig:Cadj}, each black vertex on $C$ yields a contribution $d-2$ to $A_L(C)$, hence $A_L(C)=(d-2)\nb(C)$.  
We conclude that $\gamma_{L}^{X^\star}(C)=\gamma_{L}^{T}(C)+(d-2)n_{\bullet}(C)$. Similarly we have 
$\gamma_{R}^{X^\star}(C)=\gamma_{R}^{T}(C)+(d-2)n_{\bullet}(C)$, so that $\gamma^{X^\star}(C)=\gamma^{T}(C)$ for any
non-contractible cycle $C$ of $T$. Hence, if $X$ is balanced, then, by
Lemma~\ref{lem:augment}, so is $X^\star$ and so is $T$.
Conversely, if $T$ is balanced, then it has  two
non-contractible non-homotopic cycles with $\gamma$-score equal to
zero. Hence, by what precedes, $X^\star$ has also $\gamma$-score equal
to zero on these two cycles.  
Then Lemma~\ref{lem:augment} ensures that $X^\star$ is balanced, and so is $X$.  
\end{proof}

\subsubsection{Properties of rightmost walks}
\label{sec:right}

Consider a face-rooted $d$-toroidal map $M$.

We have
the following crucial lemma regarding rightmost walks in $\frac{d}{d-2}$-orientations of
$M$:

\begin{lemma}
\label{lem:facialwalktilde}
In a balanced $\frac{d}{d-2}$-orientation of $M$,
any rightmost walk of $M$ eventually loops on the contour of a $d$-angle $W$ with
the (contractible) interior of $W$  on its right side.
\end{lemma}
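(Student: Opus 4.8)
The plan is to show that a rightmost walk $W$ in a balanced $\frac{d}{d-2}$-orientation eventually loops, that the loop part is a closed walk of length exactly $d$, and that this closed walk is a $d$-angle (i.e.\ a contractible closed walk with a contractible region on its right). First I would invoke the general property of rightmost walks in $\N$-bioriented maps: since every vertex has at least one outgoing half-edge (true here, as every vertex has weight $d\geq 3$), the rightmost walk from any outgoing half-edge is well-defined and, the underlying map being finite, must eventually enter a cycle of half-edges; call this looping part $L$, which is a non-repetitive closed walk. The subtle point is that $L$ could a priori be non-contractible, or contractible with its interior on the wrong side. I would rule out non-contractibility using balancedness: if $L$ were a non-contractible closed walk, one can extract from it a non-contractible cycle $C$, and I would argue that because $L$ is a rightmost walk, \emph{all} edges incident to one side of $C$ are ingoing at $C$, forcing $\gamma$ of that side to be $0$ while the other side collects all the remaining weight; this would make $\gamma(C)\neq 0$ (indeed $\gamma_L(C)+\gamma_R(C)=dk-(d-2)k=2k>0$ for $k=|C|$, exactly the computation already used in Section~\ref{sec:exist} for biased orientations), contradicting balancedness. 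Hence $L$ is contractible.

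Next I would check the orientation of the interior. By the very definition of a rightmost walk, at each vertex along $L$ the walk turns as far right as possible, so all edges strictly between consecutive edges of $L$ in counterclockwise order at each common vertex are ingoing; this is exactly the statement that the region lying on the right of $L$ is "shielded", and one checks that this region is the contractible one (the non-contractible alternative having been excluded). So $L$ is a contractible closed walk with its contractible interior $R$ on its right — i.e.\ a $d$-angle once we verify its length is $d$.

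For the length, let $k$ be the length of $L$ and $R$ its interior. I would apply Claim~\ref{claim:epsilon}: the quantity $\epsilon$, the sum of weights of half-edges in $R$ incident to a vertex on $L$, satisfies $\epsilon=k-d$. On the other hand, since $L$ is a rightmost walk, every half-edge incident to a vertex of $L$ that lies strictly inside $R$ (i.e.\ strictly between two consecutive edges of $L$ in ccw order at the common vertex) is \emph{ingoing}, hence has weight $0$; therefore $\epsilon=0$, giving $k=d$. (A small bookkeeping care is needed when $L$ revisits a vertex or an edge, but "non-repetitive" plus the rightmost-turn condition means the half-edges counted in $\epsilon$ are precisely these ccw-interior ones, all of weight $0$.) This gives that $L$ is a $d$-angle with contractible interior on its right, which is the assertion of the lemma.

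The main obstacle I expect is the topological bookkeeping in the non-contractible case: one must be careful that a rightmost walk that loops need not be a \emph{cycle} (it can repeat vertices), so extracting a genuine non-contractible cycle $C$ from the loop $L$ and then correctly accounting, via the rightmost-turn property, that one side of $C$ receives no outgoing half-edges requires a careful local analysis of how $L$ sits around $C$ — this is where the balancedness hypothesis must be leveraged precisely, and it is the step most likely to need the detailed figures/arguments of Section~\ref{sec:balanced} rather than a one-line citation.
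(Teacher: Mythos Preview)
Your overall strategy---use balancedness to rule out a non-contractible loop, then use Claim~\ref{claim:epsilon} to pin down the length---matches the paper's for the case where the looping part $L$ is a simple cycle. However, there are two genuine gaps, and the missing ingredient in both is the same.

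\textbf{First gap: why is the disk on the right rather than the left?} Even once $L$ is known to be a contractible cycle, one side is a disk and the other is the torus minus a disk. You assert ``one checks that this region is the contractible one'', but give no argument. The paper supplies this via a preliminary Euler computation: suppose some subwalk $W'$ of $L$ (possibly $W'=L$) encloses a disk on its \emph{left}. Since $L$ is rightmost, every half-edge on the right of $W'$ has weight $0$, so for each boundary vertex $v'\neq v$ (with $v$ the start of $W'$) the full weight $d$ lies inside the disk or on $W'$; the starting half-edge contributes at least $1$ at $v$. Combining this with the face-degree relation and Euler's formula on the disk gives $k'\leq -1$, a contradiction. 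This is stronger than a direct application of Claim~\ref{claim:epsilon} because it tracks the boundary vertices carefully (and in particular handles the starting vertex $v$ separately).

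\textbf{Second gap: the non-cycle case.} You propose to extract a non-contractible cycle $C$ from $L$ and argue one side of $C$ has only ingoing half-edges. As you yourself note, this is the hard step: at a vertex that $L$ visits twice, the rightmost-turn condition constrains the half-edges between the \emph{two} traversals of $L$, not between consecutive edges of the extracted $C$, so the ``one side of $C$ is all ingoing'' claim does not follow. The paper does \emph{not} try to extract a cycle here. Instead it reuses the no-left-disk Euler argument above: since $L$ cannot cross itself, any self-intersection is a tangency on the left, and applying the no-left-disk result to the two subwalks at such a tangency forces (after a short topological case analysis) the right side of $L$ to be a disk. Balancedness is used only in the cycle case.

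In short, the key lemma you are missing is ``no subwalk of $L$ encloses a disk on its left''. With it, both gaps close along the paper's lines, and the delicate cycle-extraction you flag as the main obstacle is avoided altogether.
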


\begin{proof} Let $W$ be the looping part of a
   rightmost path. Note that $W$ is a non-repetitive closed walk,  
and it cannot cross itself, otherwise it is not a rightmost walk. However $W$
  may have repeated vertices but in that case $W$ intersects itself
  tangentially on the left side.

 Let $(e_1,\ldots,e_p)$ be the cyclic list of edges in $W$.
 Suppose by contradiction that there is an oriented subwalk $W'=e_i,\ldots,e_{(i+k')\ \mathrm{mod}\ p}$ of
  $W$ (possibly $W'=W$) that forms a closed walk (i.e., the head of the last edge is the same as 
the tail of the first edge of $W'$) enclosing on its left side a region $R$ homeomorphic to an open disk.  
Let $v$ be the starting and ending vertex of $W'$.  
Let $H$ be the planar map obtained from $M$ by keeping $R\cup W'$, where $W'$ (which may visit vertices repeated times, 
but only `from the outside') is turned into a cycle of length $k'$, the outer cycle of $H$. 
Let $n',m',f'$ be the numbers of vertices, edges and faces of $G$.  By
  Euler's formula, $n'-m'+f'=2$. All the inner faces of $H$ have degree $d$ and
  the outer face has degree $k'$, so $2m'=d(f'-1)+k'$. Since $W'$ is a
  subwalk of a rightmost walk, 
all the half-edges that are not in $H$ and
  incident to a vertex $v'\neq v$ on $W'$ have weight
  zero.  The first half-edge of $W'$ has non-zero weight.  Thus, as we
  are considering a $\frac{d}{d-2}$-orientation, we have
  $(d-2)m'\geq d(n'-1)+1$.  Combining these three (in)equalities gives
  $k'\leq -1$, a contradiction.

We have the following crucial property:

\begin{claimn}
\label{cl:leftsidedisk}
  The right side of $W$ encloses a region  homeomorphic to an open disk
\end{claimn}

\begin{proofclaim}
  We consider two cases depending on the fact that $W$ is a cycle
  (i.e., with no repetition of vertices) or not.

\begin{itemize}
\item \emph{$W$ is a cycle} 

  Suppose by contradiction that $W$ is a non-contractible cycle $C$. Let $k$ be its length. 
  Since $W$ is a rightmost walk, all the half-edges incident to the right
  side of $C$ have weight $0$.  Since we are considering a
  $\frac{d}{d-2}$-orientation of $M$, the sum of the weights of all
  edges of $W$ is $(d-2)k$ and the sum of the
  weights of all the half-edges incident to vertices of $W$ is
  $dk$. So finally the sum of the weights of all the half-edges
  incident to the left side of $C$ is $2k$ and we have $\gamma(C)=-2k<0$.
  So the orientation is not balanced, a contradiction. 

  Thus $W$ is a contractible cycle. By previous arguments, the
  contractible cycle $W$ does not enclose a region homeomorphic to an
  open disk on its left side. So $W$ encloses a region homeomorphic to
  an open disk on its right side, as claimed.

\item \emph{$W$ is not a cycle} 

  Since $W$ cannot cross itself nor intersect itself tangentially on
  the right side, it has to intersect tangentially on the left side.
  Such an intersection can be on a single vertex or a path, as
  depicted on Figure~\ref{fig:righttangent}(i). The edges of $W$
  incident to this intersection are noted as on figure (i)--(iv),
  where $W$ is going periodically through $a,b,c,d$ in this order.  By
  previous arguments, the (green) subwalk of $W$ from $a$ to $b$ does
  not enclose regions homeomorphic to  open disks on its left
  side. So we are not in the case depicted on
  Figure~\ref{fig:righttangent}(ii). Moreover if this (green) subwalk
  encloses a region homeomorphic to an open disk on its right side,
  then this region contains the (red) subwalk of $W$ from $c$ to $d$,
  see Figure~\ref{fig:righttangent}(iii). Since $W$ cannot cross
  itself, this (red) subwalk necessarily encloses  regions
  homeomorphic to  open disks on its left side, a contradiction. So
  the (green) subwalk of $W$ starting from $a$ has to form a
  non-contractible curve before reaching $b$. Similarly for the (red)
  subwalk starting from $c$ and reaching $d$. Since $W$ is a rightmost
  walk
  and cannot cross itself, we are, without loss of generality, in the situation of
  Figure~\ref{fig:righttangent}(iv) (with possibly more tangent
  intersections on the left side). In any case, $W$
  encloses a region homeomorphic to an open disk on its right side.

 \begin{figure}[!h]
 \center
 \begin{tabular}{cc}
 \includegraphics[scale=0.3]{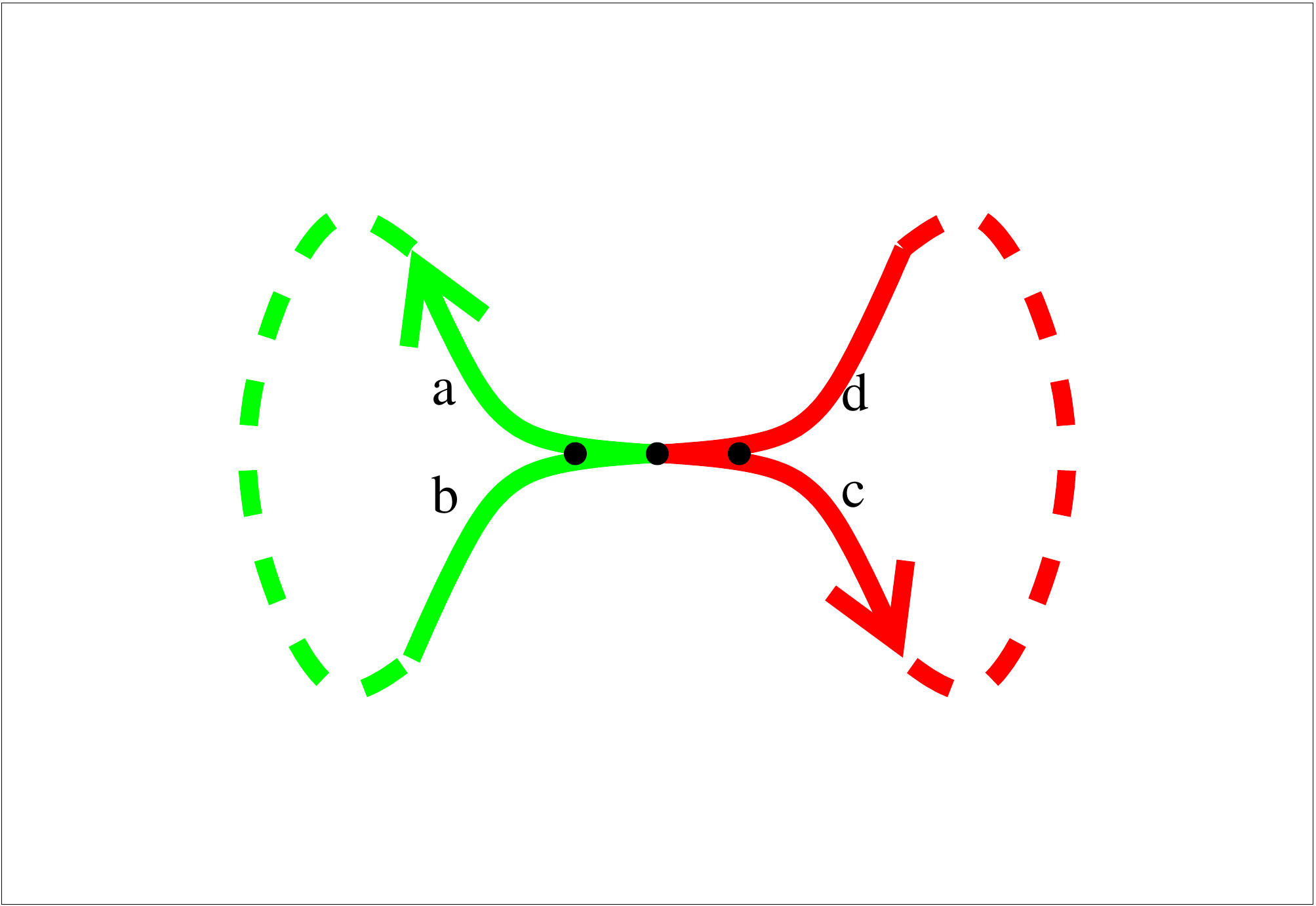} \ \ & \ \ 
 \includegraphics[scale=0.3]{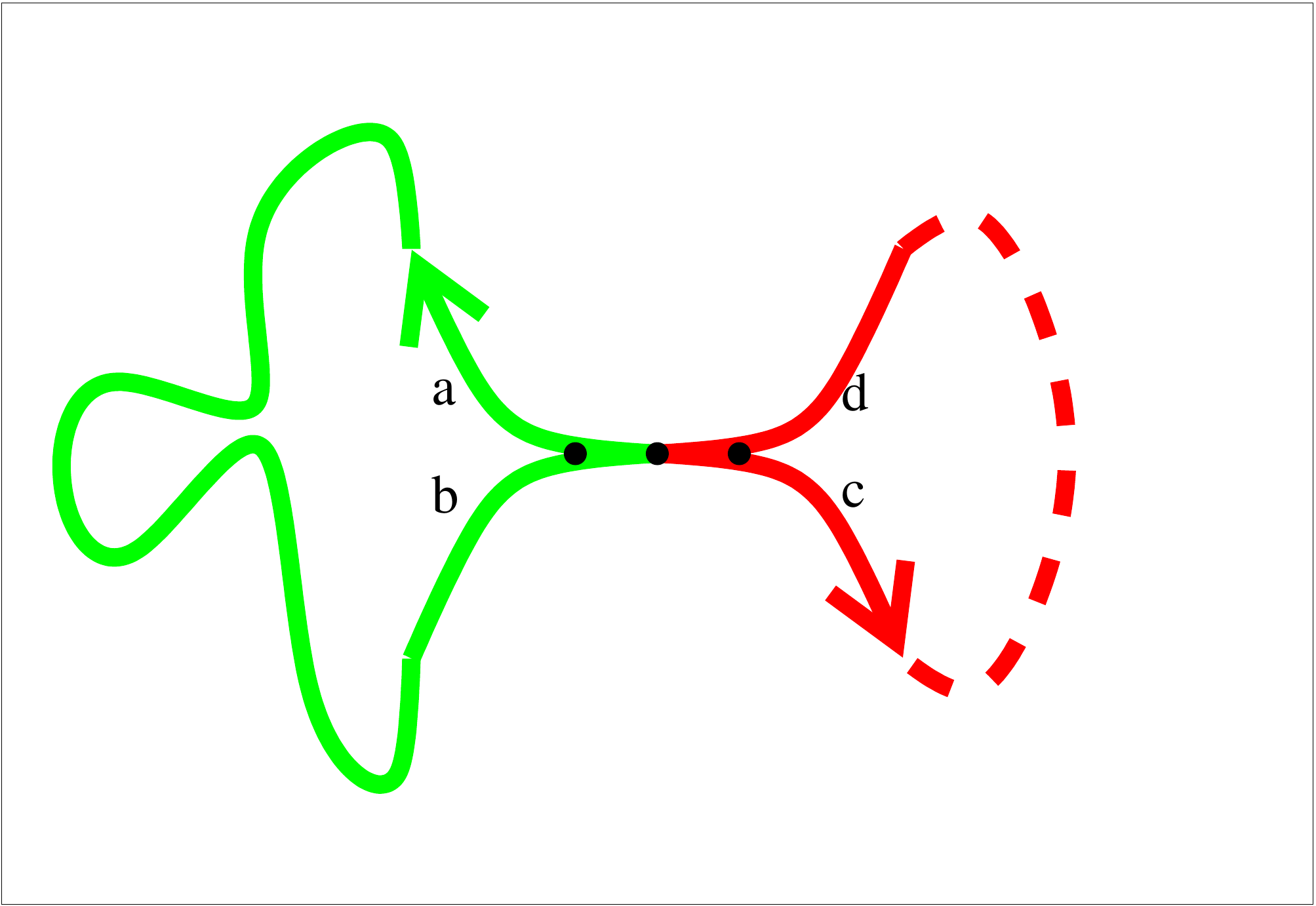}\\
(i) \ \ & \ \ (ii)\\
& \\
 \includegraphics[scale=0.3]{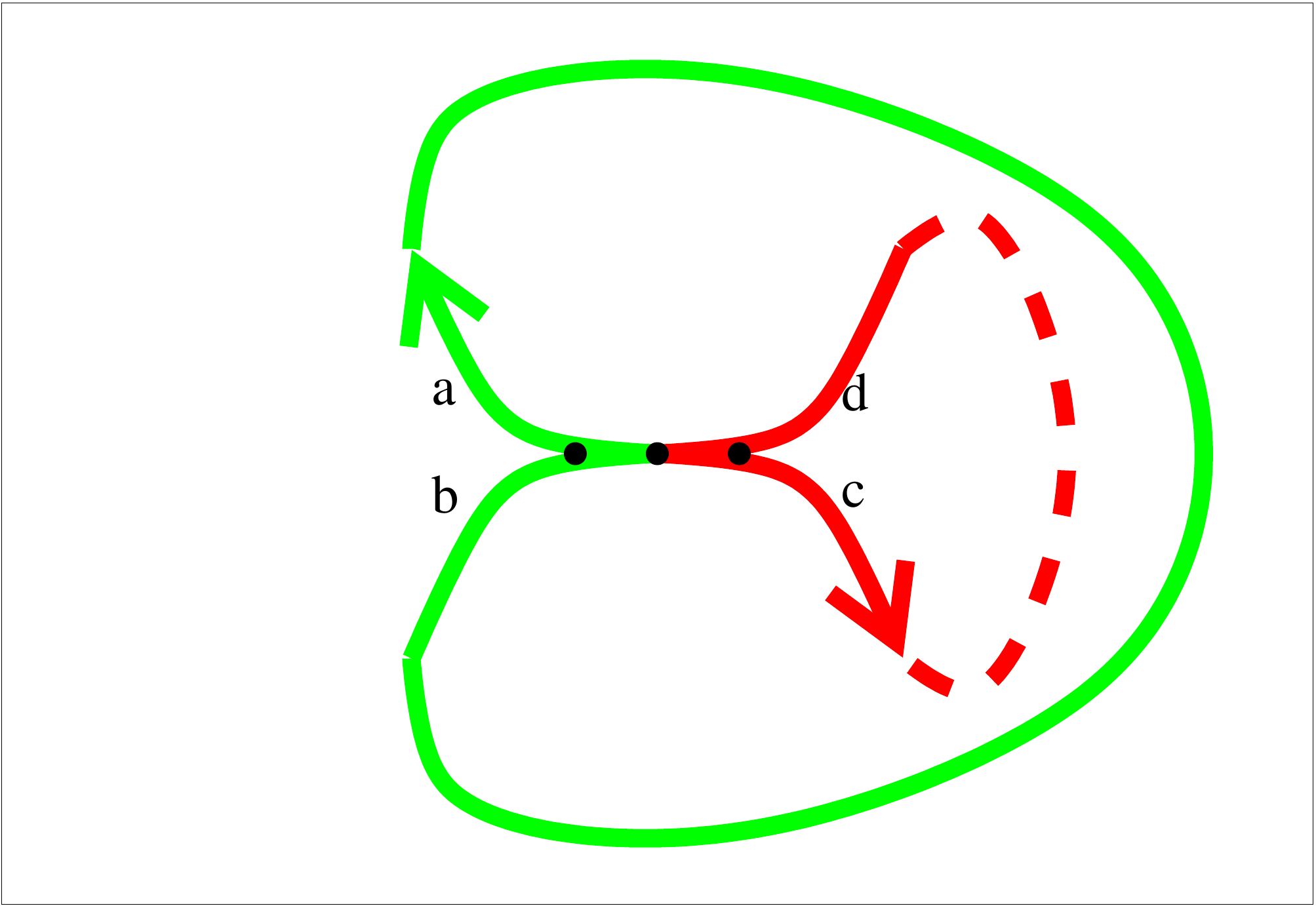}\ \ & \ \ 
 \includegraphics[scale=0.3]{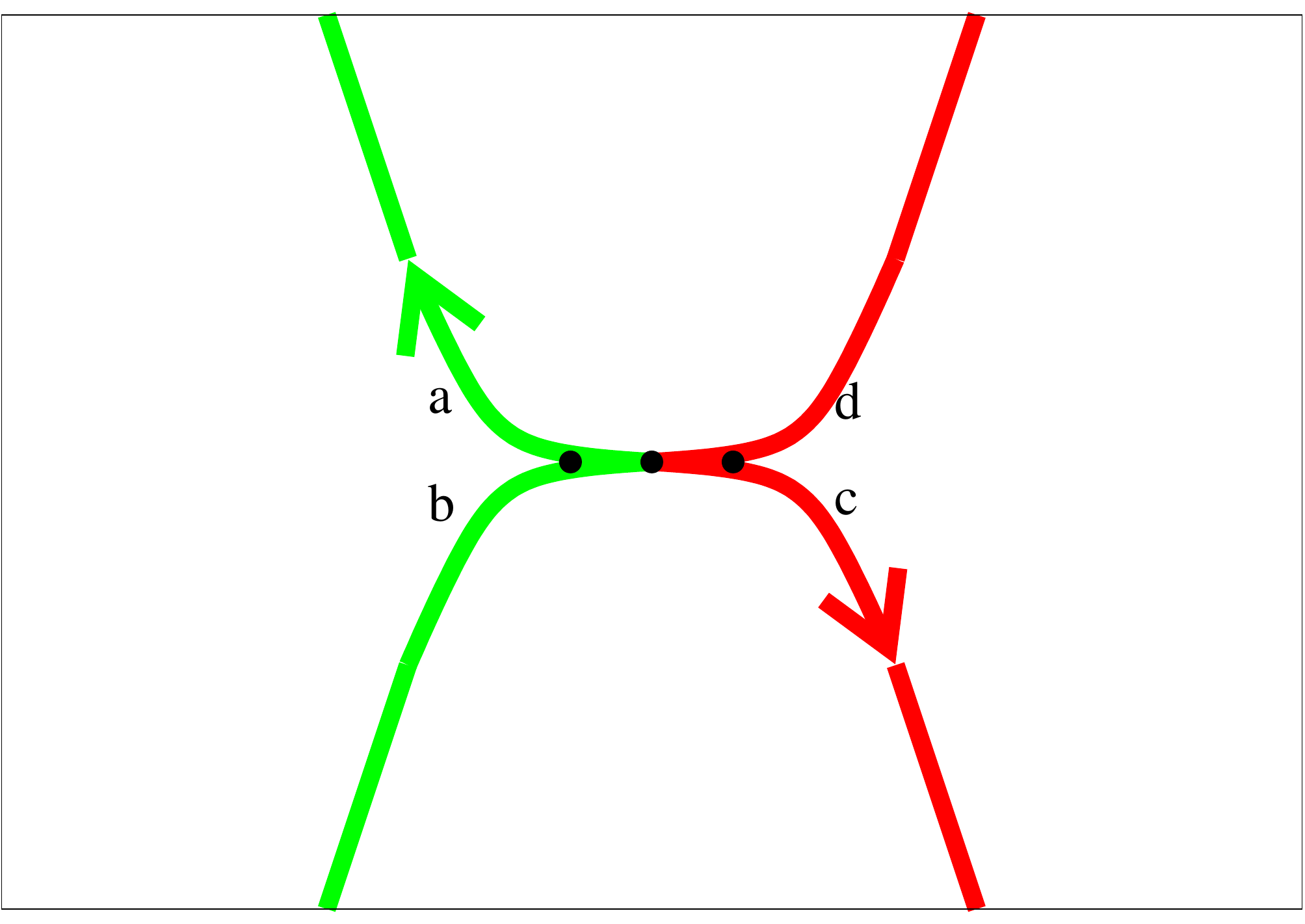}\\
(iii) \ \ & \ \ (iv)\\
 \end{tabular}
\caption{Case analysis for the proof of the claim in Lemma~\ref{lem:facialwalktilde}.}
 \label{fig:righttangent}
 \end{figure}

\end{itemize}

\end{proofclaim}

The claim ensures that $W$ encloses a region $R$ homeomorphic to an open
disk on its right side. Since $W$ is a rightmost walk, there is no outgoing half-edge in $R$ whose incident vertex is on $W$. Hence, by Claim~\ref{claim:epsilon}, we conclude that $W$ has length $d$. 
\end{proof}

Recall that $\cF_d$ is the family of face-rooted $d$-toroidal maps such that the root-face contour is a maximal $d$-angle. 

\begin{lemma}\label{lem:canonical_dori}
A face-rooted toroidal $d$-angulation $M$
has a balanced  $\frac{d}{d-2}$-orientation in $\cO_d^{1}$ if and
only if  $M\in\cF_d$.
In that case, $M$ has a unique balanced  $\frac{d}{d-2}$-orientation 
 in $\cO_d^{1}$, which is the minimal one.
\end{lemma}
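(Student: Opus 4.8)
The plan is to prove the two implications separately and then read off uniqueness from Lemma~\ref{lem:necmin} and Corollary~\ref{theo:general_gamma}. Throughout I write $f_0$ for the root-face (which has degree $d$) and, for a $d$-angle $W$, I write $R_W$ for its interior, which lies on the right of $W$.

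For the implication ``$\cO_d^1\Rightarrow\cF_d$'', suppose $M$ carries a balanced $\frac{d}{d-2}$-orientation $X$ in $\cO_d^1$. Then $M$ is a $d$-toroidal map by Lemma~\ref{th:existencefractional}, so I only need to show that $\partial f_0$ is a maximal $d$-angle. Assuming the contrary, there is a $d$-angle $W$ with $\overline{f_0}\subsetneq R_W=:R$. Claim~\ref{claim:epsilon} applied to $W$ (of length $d$) gives $\epsilon=0$, i.e.~every half-edge lying strictly inside $R$ and incident to a vertex of $W$ is ingoing. From this I would deduce, by an induction on the length of a rightmost walk, that any rightmost walk starting at an outgoing half-edge of an edge of $W$ only traverses edges lying on $W$ or strictly outside $R$: at a vertex of $W$ the edges entering $R$ are ingoing there, hence never the rightmost outgoing edge, and any vertex reached along an edge on $W$ or strictly outside $R$ is again on $W$ or strictly outside $R$ (no edge of $M$ straddles $W$). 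Since $X\in\cO_d^1$ such a rightmost walk loops on $\partial f_0$, so every edge of $\partial f_0$ lies on $W$ or strictly outside $R$; but each edge of $\partial f_0$ has $f_0\subseteq R$ on one side, hence cannot lie strictly outside $R$, so all $d$ edges of $\partial f_0$ lie on $W$. As $W$ has length $d$ this forces $W=\partial f_0$ and $R=\overline{f_0}$, contradicting $\overline{f_0}\subsetneq R$. Hence $M\in\cF_d$.

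For the implication ``$\cF_d\Rightarrow\cO_d^1$'', I would start from the balanced $\frac{d}{d-2}$-orientation provided by Proposition~\ref{th:existencebalanced} and, using Corollary~\ref{theo:general_gamma}, pass to the unique balanced orientation $X_0$ that is minimal; the goal is to show $X_0\in\cO_d^1$. Every vertex has weight $d>0$, hence an outgoing half-edge, so the first defining condition of a right orientation holds. By Lemma~\ref{lem:facialwalktilde} any rightmost walk eventually loops on the contour of some $d$-angle $W$, with $R:=R_W$ on its right; let $S$ be the set of faces strictly inside $R$, which is non-empty since a disk region of boundary length $d\geq 3$ contains a face. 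If $f_0\notin S$, then the edges on the boundary of $S$ are exactly the edges of $W$, and since $W$ is a rightmost walk it traverses each of them forward with $R$ — hence the $S$-face — on its right; so each such edge is $1$-way with the $S$-face on its right, or bidirected. By the reformulation of non-minimality recalled in Section~\ref{sec:preliminaries} this makes $X_0$ non-minimal, a contradiction. Therefore $f_0\in S$, i.e.~$\overline{f_0}\subseteq R$; since $M\in\cF_d$ the contour $\partial f_0$ is a maximal $d$-angle, which forces $R=\overline{f_0}$ and $W=\partial f_0$, with $f_0$ on the right of $W$. Thus every rightmost walk loops on $\partial f_0$ with $f_0$ on its right, so $X_0\in\cO_d^1$; and any balanced $\frac{d}{d-2}$-orientation of $M$ in $\cO_d^1$ is minimal by Lemma~\ref{lem:necmin}, hence equals $X_0$ by Corollary~\ref{theo:general_gamma}, giving both the existence and the uniqueness statements.

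The step I expect to be the main obstacle is the ``no penetration'' claim in the first implication: turning $\epsilon=0$ into the statement that a rightmost walk cannot enter the interior $R$ of $W$ requires a careful handling of contractible closed walks $W$ that may repeat vertices (so that ``lying on $W$'' and ``strictly inside/outside $R$'' are unambiguous for edges and vertices), and the final deduction $W=\partial f_0$ from the coincidence of edge-multisets needs the same care.
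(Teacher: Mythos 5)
Your proposal is correct and takes essentially the same route as the paper: the forward direction uses Lemma~\ref{th:existencefractional} and Claim~\ref{claim:epsilon} to show that a rightmost walk started on a $d$-angle strictly enclosing the root-face cannot penetrate its interior and hence cannot loop on the root-face contour, while the converse takes the minimal balanced orientation given by Proposition~\ref{th:existencebalanced} and Corollary~\ref{theo:general_gamma}, applies Lemma~\ref{lem:facialwalktilde}, and excludes $f_0\notin S$ by minimality, with uniqueness from Lemma~\ref{lem:necmin} and Corollary~\ref{theo:general_gamma}. The only cosmetic differences are that you use the equivalent face-set reformulation of non-minimality from Section~\ref{sec:preliminaries} rather than passing to the $(d-2)$-expansion, and the degenerate-contour point you flag is treated no more explicitly in the paper's own proof.
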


\begin{proof}
  $(\Longrightarrow)$ Suppose that $M$ has a balanced
  $\frac{d}{d-2}$-orientation $O$ in $\cO_d^{1}$. Then, by
  Lemma~\ref{th:existencefractional}, $M$ has essential girth $d$.
  
  Suppose by contradiction that the contour $C_0$ of the root-face
  $f_0$ is not a maximal $d$-angle.  Consider a maximal $d$-angle
  $C_{\max}$ whose interior strictly contains the interior of $C_0$. Consider an outgoing
  half-edge $h$ of $C_{\max}$ and the rightmost walk $W$ started from
  $h$.  By Claim~\ref{claim:epsilon}, all the half-edges that are in the
  interior of $C_{\max}$ and incident to it have weight zero, i.e. they
  are ingoing at their incident vertex. So it is not possible that $W$
  enters in the interior of $C_{\max}$. So $W$ does not loop on $C_0$, a
  contradiction.  So $M\in\cF_d$.

$(\Longleftarrow)$ Suppose that $M\in\cF_d$.  By
Proposition~\ref{th:existencebalanced}, $M$ admits a balanced
$\frac{d}{d-2}$-orientation.
Then by Corollary~\ref{theo:general_gamma}, $M$ has a (unique) balanced
$\frac{d}{d-2}$-orientation $D_{\min}$
that is minimal.

Let us prove that $D_{\min}\in\cO_d^{1}$.  Consider an outgoing
half-edge $h$ of $D_{\min}$ and the rightmost walk $W$ starting from
$h$.  By Lemma~\ref{lem:facialwalktilde}, $W$ ends on a $d$-angle $W'$
with its interior $R$ on the right side.  Consider the
$(d-2)$-expansion $M'$ of $M$ and the orientation $D'_{\min}$ of $M'$
corresponding to $D_{\min}$ (see Section~\ref{sec:preliminaries} for
the definition of $\beta$-expansion). Let $S$ be the set of faces
corresponding to the region $R$ in $M'$.  The set $S$ is such that
every edge on the boundary of $S$ has a face in $S$ on its right.
Since $D'_{\min}$ is minimal, $S$ contains the face of
$M'$ corresponding to the root face $f_0$. Since $M \in \cF_d$, 
 the contour of $f_0$ is a maximal $d$-angle.  So $W'$ is
indeed the contour of $f_0$ with $f_0$ on its right.  So
$D_{\min}\in\cO_d^{1}$.

Moreover, suppose by contradiction, that $M$ has a balanced
$\frac{d}{d-2}$-orientation $D$ in $\cO_d^{1}$ that is different from
$D_{\min}$. By unicity of the balanced $\frac{d}{d-2}$-orientation
that is minimal (Corollary~\ref{theo:general_gamma}), we have that $D$
is non-minimal, a contradiction to Lemma~\ref{lem:necmin}.
\end{proof}

\subsection{Proof of Theorems~\ref{theo:bij_maps_2b}
  and~\ref{theo:bij_maps_d}}
\label{sec:proofs_theorems}

  \subsubsection{Proof of Theorem~\ref{theo:bij_maps_2b} for $b\geq    2$} 
\label{sec:proof_b_g_2}
We start by giving some terminology and results for $b\geq 1$, before
continuing with $b\geq 2$ in the rest of the section.

Let $b\geq 1$. Let $\cE_{2b}$ be the family of face-rooted 
toroidal maps with
root-face  of degree exactly $2b$ and with
 all  face-degrees even and at least
$2b$.

Recall from  Section~\ref{sec:weightbi},
that a $\ZZ$-biorientation has the weights at outgoing half-edges
that are in $\ZZ_{>0}$ while the weights at ingoing half-edges are in $\ZZ_{\leq 0}$. In
an $\NN$-biorientation all the ingoing half-edges have weight~$0$.

For $M\in \cE_{2b}$, we define a \emph{$\bb$-$\zZ$-orientation} of $M$ as a
$\ZZ$-biorientation of $M$ with weights in $\{-1,\ldots,b\}$, such that
each vertex has weight $b$, each edge has weight $b-1$, and each face
$f$ has weight $-\tfrac1{2}\mathrm{deg}(f)+b$.  Recall that the weight
of a face $f$ is the sum of the weights of the ingoing half-edges that
have $f$ on their left (traversing the half-edge toward its incident
vertex).

The bijection $\Phi_+$ specializes into a bijection between maps in $\cE_{2b}$
endowed with a $\bb$-$\zZ$-orientation in $\cO_{2b}^1$
and the family $\hat{\mathcal V}_b$ of
  toroidal $\frac{b}{b-1}$-$\ZZ$-mobiles, as defined in Section~\ref{sec:bijdangul}.  
Showing Theorem~\ref{theo:bij_maps_2b} for $b\geq 2$ thus amounts to proving
the following statement:

\begin{proposition}\label{prop:unique_b_at_least_2}
Let $b\geq 2$ and  $M$ be a map in $\cE_{2b}$. Then 
$M$ admits a $\bb$-$\zZ$-orientation 
in $\cO_{2b}^1$ whose associated mobile by $\Phi_+$  is in $\hat{\mathcal V}_b^{Bal}$ if and only if $M$ is in
$\chL_{2b} $.

In that case $M$ admits a unique such orientation. 
\end{proposition}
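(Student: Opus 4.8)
The plan is to mirror the strategy already developed for $d$-angulations in Section~\ref{sec:proofbijdang}, namely: (i) reduce balancedness of the mobile to balancedness of the orientation via the bijection $\Phi_+$, using a ``star-completion''-type argument as in Lemmas~\ref{lem:augment} and~\ref{lem:phispebal}; and (ii) prove the analogue of Lemma~\ref{lem:canonical_dori}, characterizing which maps in $\cE_{2b}$ admit a balanced $\bb$-$\zZ$-orientation in $\cO_{2b}^1$, and establishing uniqueness. For step (i), I would first observe that the $\gamma$-score of a non-contractible cycle of the mobile $T=\Phi_+(O)$ and the $\gamma$-score of the corresponding cycle of $O$ differ by a quantity that depends only on the number of black vertices of $T$ lying on the cycle (each black vertex of degree $i$ on $C$ contributing the same amount $\tfrac12 i - \tfrac12 i$ to the left and right sides after accounting for its weight $-\tfrac i2 + b$ and its $i$ half-edges), so that $\gamma^T(C)=\gamma^O(C)$. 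The bookkeeping is essentially the same as in Figure~\ref{fig:Cadj}, but now faces have varying even degree; the relevant identity is that at a black vertex of degree $i$ on $C$, the contribution $(b-1)$ per incident $M$-edge of the star-completion cancels symmetrically on the two sides. Combined with an augmentation lemma in the spirit of Lemma~\ref{lem:augment} (replacing each face $f$ of degree $2\ell$ by a suitable gadget so that $\gamma$-scores are preserved and Lemma~\ref{lem:gamma0all} can be invoked), this shows $O$ is balanced iff $T\in\hat{\mathcal V}_b^{Bal}$, and that balancedness follows from vanishing $\gamma$-score on two non-homotopic non-contractible cycles.

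For step (ii), the key point is the analogue of the ``maximal $d$-angle'' condition: here $\chL_{2b}$ is the subfamily of $\cL_{2b}$ — face-rooted toroidal maps of essential girth $2b$ with the root-face contour a maximal $2b$-angle — intersected with $\cE_{2b}$ (and, for $b\ge 2$, automatically bipartite since all face-degrees are even). I would first establish an analogue of Claim~\ref{claim:epsilon}: in a $\bb$-$\zZ$-orientation, for a contractible closed walk $W$ of length $k$ enclosing a region $R$, the sum $\epsilon$ of weights of half-edges in $R$ incident to $W$ satisfies a linear relation forcing $\epsilon = \tfrac{k}{2} - b$ when all interior faces have degree $2b$, and more generally $\epsilon \le \tfrac k2 - b$ when interior faces can be larger; this, together with essential girth $2b$, gives the necessity of essential girth $2b$ (analogue of Lemma~\ref{th:existencefractional}). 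Then for the ``only if'' direction of Proposition~\ref{prop:unique_b_at_least_2}: given a balanced $\bb$-$\zZ$-orientation in $\cO_{2b}^1$, if the root-face contour were not a maximal $2b$-angle, a maximal $2b$-angle $C_{\max}$ strictly containing it would have all interior-incident half-edges of weight $\le 0$ (by the $\epsilon$-computation), so no rightmost walk could enter its interior, contradicting the defining property of $\cO_{2b}^1$ — exactly as in Lemma~\ref{lem:canonical_dori}.

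For the ``if'' direction and uniqueness, I would need an existence result: every $M\in\chL_{2b}$ (hence bipartite, with root-face a maximal $2b$-angle) admits a balanced $\bb$-$\zZ$-orientation. This should follow by adapting Proposition~\ref{th:existencebalanced} / Proposition~\ref{th:evencase} to the setting with larger faces and negative weights — likely by first reducing to the $d$-angulation case via a local transformation that triangulates (or $2b$-angulates) the large faces while controlling the resulting orientation, or directly via the linear-combination-of-biased-orientations technique of Section~\ref{sec:existence} applied to $A$ the annular map (Lemma~\ref{lem:annular} generalizes since its proof only used $2m'\ge d(f'-2)+\text{(boundary terms)}$, which still holds when face-degrees are $\ge 2b$). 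Given existence, Corollary~\ref{theo:general_gamma} yields a unique minimal balanced one; a rightmost-walk argument analogous to Lemma~\ref{lem:facialwalktilde} (using the generalized $\epsilon$-estimate to show rightmost walks loop on a $2b$-angle with interior on the right, which must be the maximal one enclosing $f_0$) shows this minimal orientation lies in $\cO_{2b}^1$; and Lemma~\ref{lem:necmin} forces any orientation in $\cO_{2b}^1$ to be minimal, giving uniqueness. The main obstacle I expect is the generalization of Lemma~\ref{lem:facialwalktilde} (the case analysis showing a rightmost walk's looping part encloses a disk on its right) to the mixed-face-degree, $\ZZ$-weighted setting: the inequalities in that proof (e.g.\ $(d-2)m' \ge d(n'-1)+1$) must be re-derived allowing faces of degree $\ge 2b$ and half-edge weights down to $-1$, and one must check the ``no enclosed disk on the left'' and ``not a non-contractible cycle'' subcases still go through — in particular the step where the left $\gamma$-contribution is bounded uses that each vertex has total weight $b$, which is still available, but the edge-weight accounting now involves $b-1$ rather than $d-2$ and the faces are no longer all of the same degree.
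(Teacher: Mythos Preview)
Your plan has a genuine gap at its foundation: the minimality/uniqueness machinery you invoke (Corollary~\ref{theo:general_gamma}, Lemma~\ref{lem:necmin}, the lattice structure behind Theorem~\ref{theo:propp}) is set up only for $\NN$-biorientations, via the $\beta$-expansion which requires strictly positive edge-weights. The $\frac{b}{b-1}$-$\ZZ$-orientations you want to work with have half-edge weights in $\{-1,\ldots,b\}$ and edge-weights $b-1$, but the presence of negative half-edge weights means the proof of Corollary~\ref{theo:general_gamma} (via $\beta$-expansion to an $\alpha$-orientation) does not apply, and ``minimal'' is not even defined in the paper for $\ZZ$-biorientations. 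So your step (ii) --- ``Given existence, Corollary~\ref{theo:general_gamma} yields a unique minimal balanced one'' --- has no support. The paper deals with this by \emph{not} working directly with $\ZZ$-orientations of $M$: it introduces the transfer map $\sigma$ (Figure~\ref{fig:rule_sigma}) sending a $\frac{b}{b-1}$-$\ZZ$-orientation of $M$ to a so-called \emph{$b$-regular orientation} of the star-completion $\aM$, which is an honest $\NN$-biorientation (star-edges have weight $1$, $M$-edges have weight $b-1$). All of Corollary~\ref{theo:general_gamma}, minimality, and Lemma~\ref{lem:necmin} then apply to $b$-regular orientations, and the whole argument (existence via a $2b$-angular lift, uniqueness, transferability, the rightmost-walk analysis) is carried out there. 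This detour through $\aM$ is the key idea your plan is missing.

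A secondary error: your parenthetical ``for $b\geq 2$, automatically bipartite since all face-degrees are even'' is false on the torus (a non-contractible cycle can have odd length even when all faces have even degree). Bipartiteness is a genuine additional condition that the paper has to \emph{prove} follows from the existence of a $\hgamma$-balanced $b$-regular orientation (this is the content of the bipartiteness argument inside Lemmas~\ref{lem:exists_bregular} and~\ref{lem:criterion_balanced_b_reg}). Relatedly, in your step (i) the claim that ``$\gamma^T(C)=\gamma^O(C)$'' is too quick: a cycle $C$ of $T$ passes through black vertices, hence is not a cycle of $M$, and the paper has to introduce a modified score $\hgamma$ on the $b$-regular orientation of $\aM$ (Lemma~\ref{lem:hbaltbal}) precisely because the naive $\gamma$-comparison does not line up once faces have varying degree and black vertices carry nonzero weight.
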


The rest of this section is devoted to proving
Proposition~\ref{prop:unique_b_at_least_2}.
Similarly as in the planar
case~\cite{BF12b} we  work with closely related orientations
called $b$-regular orientations.

Let $b\geq 2$ and $M\in\cE_{2b}$. Let $\aM$ be
the star-completion of $M$, as defined in
Section~\ref{sec:proofbijdangspe}.  A \emph{$b$-regular orientation}
of $\aM$ is defined as an $\NN$-biorientation of $\aM$ such that every
$M$-edge has weight $b-1$, every star-edge has weight $1$ (hence is a
simply oriented edge), every $M$-vertex has weight $b$, and every
star-vertex $u$ has weight (i.e., outdegree)
$\tfrac1{2}\mathrm{deg}(u)+b$ (hence indegree
$\tfrac1{2}\mathrm{deg}(u)-b$).

A $b$-regular orientation of $\aM$
is called \emph{transferable} if for each star-edge $e$ directed out of 
its incident $M$-vertex $v$, the $M$-edge $\eps$ just after $e$ in clockwise order
around $v$ is of weight $b-1$ at $v$ (and thus weight $0$ at the other
half-edge).  

For a transferable $b$-regular orientation $X$ of $\aM$, the 
\emph{induced} $\frac{b}{b-1}$-$\ZZ$-orientation $Y=\sigma(X)$ of $M$ is obtained
by applying the weight-transfer 
rules of Figure~\ref{fig:rule_sigma} to each
star-edge going toward its black extremity, and then
deleting the star-edges and black vertices.

\begin{figure}[!h]
\begin{center}
\includegraphics[width=8cm]{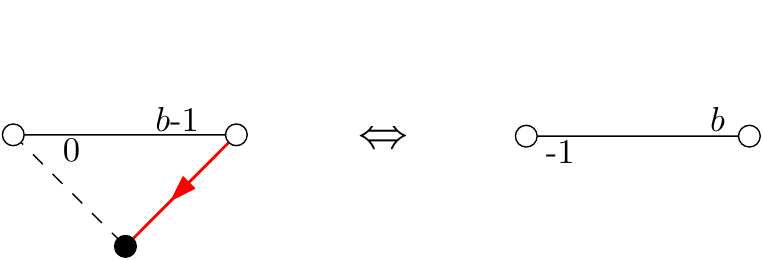}
\end{center}
\caption{The local rules of the 1-to-1 correspondence $\sigma$ 
to turn a transferable $b$-regular orientation 
of $\aM$ into a $\frac{b}{b-1}$-$\ZZ$-orientations of $M$ (the half-edge directions on the $M$-edges are not indicated, these are determined by the status of the weights, either in 
$\ZZ_{>0}$ or in $\ZZ_{\leq 0}$); after applying these rules
the star-vertices and star-edges of $\aM$ are to be deleted.} 
\label{fig:rule_sigma}
\end{figure}

\begin{lemma}\label{lem:bij_transferable}
  The mapping $\sigma$ is a bijection from the transferable
  $b$-regular orientations of $\aM$ to the
  $\frac{b}{b-1}$-$\ZZ$-orientations of $M$. In addition a
  transferable $b$-regular orientation $X$ is in $\cO_{2b}^{1}$ if and
  only if $\sigma(X)$ is in $\cO_{2b}^{1}$.
\end{lemma}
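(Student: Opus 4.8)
The plan is to exhibit the inverse of $\sigma$ directly, by reading the local rules of Figure~\ref{fig:rule_sigma} backwards, and then to compare rightmost walks in $\aM$ and in $M$. Throughout, $b\ge2$ and $M\in\cE_{2b}$, so every non-root face $f$ of $M$ has $\deg(f)$ even and at least $2b$, and the star-completion $\aM$ is well defined.

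For the bijectivity, I would first check that $\sigma$ lands in the right set, namely that $Y=\sigma(X)$ satisfies the four defining conditions of a $\bb$-$\zZ$-orientation. The rules of Figure~\ref{fig:rule_sigma} preserve the total weight of each $M$-edge, so every edge of $Y$ still has weight $b-1$; a transfer is applied only to an $M$-edge that is $1$-way of weights $(b-1,0)$ at the $M$-vertex it crosses (this is exactly transferability, which for $b\ge2$ also prevents an $M$-edge from being transferred across from both endpoints), so the only new half-edge weights created are $b$ and $-1$, and all half-edge weights of $Y$ lie in $\{-1,\dots,b\}$. At an $M$-vertex $v$, each outgoing star-edge is deleted but contributes a compensating $+1$ to the $M$-edge following it clockwise, and distinct outgoing star-edges at $v$ lie in distinct corners and hence feed distinct $M$-edges, so $v$ retains weight $b$. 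Finally, for a non-root face $f$, each of the $\tfrac12\deg(f)-b$ star-edges ingoing at $v_f$ is, at its $M$-endpoint, an outgoing star-edge whose transfer produces exactly one ingoing half-edge of weight $-1$ with $f$ on its left, so the weight of $f$ in $Y$ equals $-\tfrac12\deg(f)+b$. The reverse map is the obvious one: from $Y$, reinsert $v_f$ and the star-edges in each non-root face; for every half-edge of weight $-1$ revert the $M$-edge carrying it from $(b,-1)$ to $(b-1,0)$ and attach an outgoing star-edge at its $M$-endpoint in the corner prescribed by the figure; orient every remaining star-edge out of $v_f$ with weight $1$. The face-weight condition then makes $v_f$ receive exactly $\tfrac12\deg(f)-b$ ingoing star-edges, hence have outdegree $\tfrac12\deg(f)+b$, and weight-preservation keeps all $M$-edge weights $b-1$ and all $M$-vertex weights $b$, so the output is a transferable $b$-regular orientation. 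Since the two rule-sets are inverse corner by corner, $\sigma$ is a bijection.

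For the statement about $\cO_{2b}^1$, the easy points are that $\aM$ and $M$ are rooted at the same face $f_0$, which carries no star-vertex and whose contour is therefore made of $M$-edges only and identical in the two maps, and that every vertex of either map has an outgoing half-edge (an $M$-vertex has positive weight $b$, a star-vertex has outdegree $\tfrac12\deg(f)+b\ge2b$). So I only need to match rightmost walks. The key claim is that, since $X$ is transferable and $b\ge2$, a rightmost walk of $X$ never traverses a star-edge into a star-vertex: scanning counterclockwise at an $M$-vertex $v$ for the first outgoing half-edge, every outgoing star-edge at $v$ is immediately preceded, with no half-edge in between, by the $M$-edge $\eps$ of weight $b-1\ge1$ at $v$ furnished by transferability, and $\eps$ is outgoing at $v$; so the scan meets an $M$-edge before any outgoing star-edge and the star-edge is never selected. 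Hence a rightmost walk of $X$ starting at an $M$-half-edge uses only $M$-edges, and as $\sigma$ keeps the direction of every $M$-edge and does not alter the cyclic order of $M$-half-edges around a vertex, it coincides with the rightmost walk of $Y$ starting at the same half-edge; a rightmost walk of $X$ starting at an outgoing star-half-edge of some $v_f$ takes one step $v_f\to v'$ to an $M$-vertex and thereafter, by the same claim, uses only $M$-edges. In every case a rightmost walk of $X$ eventually loops on $\partial f_0$ with $f_0$ on its right if and only if the corresponding rightmost walk of $Y$ does, and conversely every rightmost walk of $Y$ is obtained in this way; thus $X$ is a right biorientation iff $Y$ is, which is the desired equivalence.

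The step I expect to be the main obstacle is this rightmost-walk analysis through the star-gadget: making precise that every outgoing star-edge at an $M$-vertex is ``shadowed'' by the adjacent weight-$(b-1)$ $M$-edge (which is exactly where transferability and $b\ge2$ enter), and checking that the correspondence of rightmost walks is complete, in particular that the walks of $X$ issued from star-half-edges are correctly accounted for. By comparison the bijectivity part is a routine sequence of local verifications, the only mildly delicate bookkeeping being the face-weight count $-\tfrac12\deg(f)+b$ expressed through the indegree $\tfrac12\deg(f)-b$ of $v_f$.
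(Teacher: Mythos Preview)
Your proof is correct and follows essentially the same route as the paper's. The paper dispatches the bijectivity as ``straightforward'' and justifies the $\cO_{2b}^1$ equivalence with the single observation that, by transferability, the rightmost outgoing half-edge at an $M$-vertex is never a star-edge, so rightmost walks use only $M$-edges once they reach an $M$-vertex; your argument spells out both parts in considerably more detail (including the useful check that no $M$-edge can be transferred at both endpoints when $b\ge 2$).

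One small presentational gap: your case analysis for rightmost walks covers ``starting at an $M$-half-edge'' and ``starting at an outgoing star-half-edge of some $v_f$'', but not the third possibility, namely starting at an outgoing star-half-edge at an $M$-vertex $v$ (going into $v_f$). In that case the first step does enter a star-vertex, so your ``never traverses a star-edge into a star-vertex'' should be read as ``never \emph{selects}'' (which is exactly what your shadowing argument proves); the walk then takes one further step $v_f\to v'$ to an $M$-vertex and enters the $M$-edge regime after two steps rather than one. This is the minor point you yourself flag at the end, and it is immediate to fill in.
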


\begin{proof}
The bijectivity of the mapping is straightforward.   
And the second statement 
 follows from the observation that if $X$
is a transferable $b$-regular orientation, then 
the rightmost walk $P_e$ starting at any edge $e\in X$ 
will only pass by $M$-edges after reaching an $M$-vertex $w$ for the 
first time (indeed,  
when entering an $M$-vertex $v$, 
the rightmost outgoing edge to leave $v$ 
can not be a star-edge since the orientation is transferable).
Once it has reached an $M$-edge $e'$, it will follow a rightmost walk 
$P_{e'}$ that consists
only of $M$-edges, the rightmost walk $P_{e'}$
being exactly the same in $X$ as in $\sigma(X)$, 
hence $P_{e'}$ eventually loops around the root-face contour in $X$
if and only if the same holds in $\sigma(X)$.   
\end{proof}

Let $X$ be a $b$-regular orientation of $\aM$.  For $C$ a
non-contractible cycle of $\aM$ traversed in a given direction, let
$w_{L}(C)$ (resp. $w_R(C)$) be the total weight of half-edges incident
to an $M$-vertex of $C$ from the left side (resp. right side) of $C$,
and let $o_{L}(C)$ (resp. $o_R(C)$) be the total number of outgoing
star-edges incident to a star-vertex on $C$ on the left side
(resp. right side) of $C$, and let $\iota_{L}(C)$ (resp. $\iota_R(C)$)
be the total number of ingoing star-edges incident to a star-vertex on
$C$ on the left side (resp. right side) of $C$.  Let
$\hgamma_{L}(C)=2w_{L}(C)+o_{L}(C)-\iota_L(C)$,
$\hgamma_{R}(C)=2w_{R}(C)+o_{R}(C)-\iota_R(C)$. We define the
\emph{$\hgamma$-score} of $C$ as
$\hgamma(C)=\hgamma_{R}(C)-\hgamma_{L}(C)$.  Then $X$ is called
\emph{$\hgamma$-balanced} if the $\hgamma$-score of any 
 non-contractible cycle $C$ 
of $\aM$ is $0$.


\begin{lemma}
  \label{lem:gammahgamma}
  Consider two $b$-regular orientations $X,X'$ of $\aM$ and $C$ a
non-contractible cycle of $\aM$ traversed in a given direction.
The cycle $C$ has the same $\gamma$-score in $X$ and $X'$
if and only if it has the same $\hgamma$-score in $X$ and $X'$.
\end{lemma}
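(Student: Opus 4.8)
The plan is to fix a non-contractible cycle $C$ of $\aM$ with a chosen traversal direction, and to show that the difference $\gamma(C) - \hgamma(C)$ depends only on the combinatorics of $C$ (how it passes through $M$-vertices and star-vertices) and not on the $b$-regular orientation $X$ chosen. Once this is established, $\gamma^{X}(C)=\gamma^{X'}(C)$ will be equivalent to $\hgamma^X(C)=\hgamma^{X'}(C)$, which is exactly the statement.

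First I would recall that $\gamma_L(C)$ counts the total weight of outgoing half-edges incident to a vertex of $C$ (of either type) on the left side of $C$, whereas $\hgamma_L(C)=2w_L(C)+o_L(C)-\iota_L(C)$ separates the $M$-vertex contribution $w_L(C)$ (doubled) from the star-vertex contribution, which is recorded via incidence counts of star-edges rather than weights. So the natural move is to decompose $\gamma_L(C)$ vertex by vertex along $C$. At an $M$-vertex $v$ on $C$, the contribution to $\gamma_L(C)$ is the total weight of outgoing half-edges of $v$ lying strictly on the left side of $C$; since $v$ has total weight $b$ (over all incident half-edges) and the two half-edges of $C$ at $v$ carry a combined weight between $0$ and $b-1$ (they belong to $M$-edges), a local bookkeeping will relate this to $w_L$-type quantities. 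At a star-vertex $u$ on $C$, all incident star-edges have weight $1$, so the contribution to $\gamma_L(C)$ is simply $o_L(C)$ restricted to $u$; but $u$ also has a fixed total weight $\tfrac12\deg(u)+b$ and fixed indegree $\tfrac12\deg(u)-b$, and the two edges of $C$ at $u$ are themselves star-edges of weight $1$. The key observation is that at a star-vertex the split into left/right is governed entirely by incidences because star-edges are unit-weight, while the relation $o+\iota = \deg$ and $o-\iota = 2b$ are orientation-independent.

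The cleanest execution is probably to compute $\gamma_L(C)-\hgamma_L(C)$ (and symmetrically on the right) as a sum of purely local terms at the vertices of $C$, and to check that each local term is independent of $X$ — at $M$-vertices the dependence on the orientation cancels because the total weight $b$ at the vertex and the weight $b-1$ on the $C$-edges through it (split as $b-1$ and $0$, in one of the two ways) are the only orientation-sensitive data, and these contribute identically to $\gamma_L$ and to $2w_L$; at star-vertices everything is orientation-independent since all star-edges have weight $1$ and the (out/in)-degrees are fixed. Summing over $C$ gives $\gamma_L(C)-\hgamma_L(C) = \Delta_L(C)$ and $\gamma_R(C)-\hgamma_R(C)=\Delta_R(C)$ with $\Delta_L,\Delta_R$ depending only on $C$; subtracting yields $\gamma(C)-\hgamma(C)=\Delta_R(C)-\Delta_L(C)$, independent of $X$. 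The lemma follows immediately.

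The main obstacle I anticipate is the careful local analysis at the vertices through which $C$ passes, in particular at star-vertices where $C$ may ``graze'' (pass through with both $C$-edges on the same side, or with a repeated vertex) — one must make sure the left/right incidence counts $o_L, o_R, \iota_L, \iota_R$ are well-defined in those degenerate situations and that the bookkeeping $o_L+o_R+(\text{$C$-edges}) = o$ is correct. A secondary subtlety is handling the two half-edges of $C$ at an $M$-vertex: their weights do depend on $X$ (the weight $b-1$ of the incident $M$-edge can sit on either half-edge), so one must verify that whichever way it splits, the contribution to $\gamma_L$ minus $2w_L$ is the same — this is where the choice of the factor $2$ and the subtraction $o_L-\iota_L$ in the definition of $\hgamma$ is doing its work, and checking it is the crux of the proof.
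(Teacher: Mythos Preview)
Your overall strategy --- relate $\gamma(C)$ and $\hgamma(C)$ by an orientation-independent correction --- is the same as the paper's, but your stated claim is wrong and your local analysis would not go through as written.

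The precise relation is not $\gamma(C)-\hgamma(C)=\text{const}$ but rather $\hgamma(C)=2\gamma(C)+\text{const}$. Indeed, decompose $\gamma_L(C)$ by vertex type: at an $M$-vertex the contribution is exactly $w_L$, and at a star-vertex (all star-edges have weight $1$ at the black end, $0$ at the white end) the contribution is exactly $o_L$. So $\gamma_L(C)=w_L(C)+o_L(C)$, and substituting into the definition of $\hgamma_L$ gives
\[
\hgamma_L(C)=2w_L(C)+o_L(C)-\iota_L(C)=2\gamma_L(C)-\bigl(o_L(C)+\iota_L(C)\bigr)=2\gamma_L(C)-s_L(C),
\]
where $s_L(C)$ is the total number of star-edges incident to a star-vertex of $C$ on the left. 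That quantity is orientation-independent, and the lemma follows at once from $\hgamma^X(C)=2\gamma^X(C)+(s_L(C)-s_R(C))$.

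Your vertex-by-vertex claim --- that each local term of $\gamma_L(C)-\hgamma_L(C)$ is orientation-independent --- is false. At an $M$-vertex $v$ the local difference is $-w_L(v)$, and at a star-vertex $u$ it is $\iota_L(u)$; both depend on $X$. (Your remark that ``at star-vertices everything is orientation-independent since the out/in-degrees are fixed'' confuses the fixed \emph{total} outdegree with its left/right split, which does vary.) Similarly, the side comment that the $C$-edges through an $M$-vertex carry weight $b-1$ is not always true, since $C$ is a cycle of $\aM$ and may use star-edges at that vertex. None of the anticipated obstacles (grazing cycles, the $(b-1,0)$ split on $C$-edges) actually arise: the global identity $\gamma_L=w_L+o_L$ is a one-line bookkeeping that needs no case analysis.
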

\begin{proof}
Let $s_L(C)$ (resp. $s_R(C)$) be the total number of  
star-edges incident to a star-vertex of $C$ on the 
left (resp. right) side of $C$. 
Note that we have 
$$
\hgamma_L^X(C)=2\gamma_L^X(C)-s_L(C),\ \ \ \hgamma_R^X(C)=2\gamma_R^X(C)-s_R(C). 
$$
Hence $\hgamma^X(C)=2\gamma^X(C)+(s_L(C)-s_R(C))$,
where we note that the quantity $s_L(C)-s_R(C)$
only depends on $M^\star$ and $C$ (not on the orientation $X$).  
\end{proof}

Let $\hat{\cM}_{2b}$ be the subfamily of maps in $\cE_{2b}$ that are
bipartite and of essential girth $2b$.

\begin{lemma}\label{lem:exists_bregular}
  Let $M$ be a map in $\cE_{2b}$. If $\aM$ admits a $b$-regular
orientation, then $M$ has essential girth $2b$. 
Moreover $\aM$ admits a \emph{$\hgamma$-balanced} $b$-regular orientation if and only
  if $M\in\hat{\cM}_{2b}$ (i.e., is bipartite of essential girth $2b$).
\end{lemma}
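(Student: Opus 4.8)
The statement has three parts: (i) existence of a $b$-regular orientation of $\aM$ forces essential girth $2b$; (ii) if $M \in \hat{\cM}_{2b}$ then $\aM$ admits a $\hgamma$-balanced $b$-regular orientation; (iii) conversely a $\hgamma$-balanced $b$-regular orientation forces $M$ to be bipartite of essential girth $2b$. The overall strategy should closely parallel the $d$-angulation case (Section~\ref{sec:balanced}): prove a counting/Euler identity giving the girth bound, establish existence of *some* $b$-regular orientation via a Hall-type argument (Lemma~\ref{lem:demandgeneral}), then produce a balanced one by the "biased orientations $+$ linear combination $+$ minimality" machinery already developed, transported through the $\gamma$/$\hgamma$ dictionary of Lemma~\ref{lem:gammahgamma}.

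**Part (i): girth bound.** First I would prove an analogue of Claim~\ref{claim:epsilon}: given a $b$-regular orientation of $\aM$ and a contractible closed walk $W$ of length $k$ in $M$ with enclosed region $R$, count vertices/edges/faces of $M$ strictly inside $R$ together with the star-vertices they carry, and run Euler's formula. The weight conditions (each $M$-edge has weight $b-1$, each $M$-vertex weight $b$, each star-vertex of degree $2i$ weight $i+b$ and indegree $i-b$, so $i \geq b$) combined with all face-degrees being $\geq 2b$ should force $k \geq 2b$, exactly as $d= k-\epsilon$ came out before. Since face-contours already give contractible closed walks of length $\geq 2b$ and essential girth is at most the smallest face-degree along such a walk, we get essential girth exactly $2b$. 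This part is routine bookkeeping.

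**Parts (ii)–(iii): the balancedness.** For (iii), assume $\aM$ carries a $\hgamma$-balanced $b$-regular orientation; I would show every non-contractible cycle $C$ of $M$ (equivalently of $\aM$, avoiding star-vertices) has even length. Using $w_L(C)+w_R(C) = b\ell$ minus the weight of the $\ell$ edges of $C$, namely $(b-1)\ell$, so the total half-edge weight incident to $C$ off the cycle is $\ell$, split evenly by balancedness; combined with the star-edge contributions (which come in the combination $o_L - \iota_L$) and the fact that $\hgamma$-balanced means these balance, one extracts a parity statement forcing $\ell$ even — mimicking the proof of Proposition~\ref{th:evencase}. Since face-degrees are already even, $M$ is bipartite. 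For (ii), the forward direction, I would first invoke a Hall-type argument in the spirit of Lemma~\ref{lem:demandgeneral} (or directly cite the planar analogue from \cite{BF12b}) to get *some* $b$-regular orientation of $\aM$; then define biased $b$-regular orientations with respect to a shortest non-contractible cycle (arranging $\hgamma$ to be extremal on it, via the annular-map/$\frac{\alpha}{\beta}$-orientation construction as in Lemma~\ref{lem:biased}), take the same rational linear combination of four biased orientations that was used in Section~\ref{sec:existence} to kill $\hgamma(B_1)$ and $\hgamma(B_2)$ on a homotopy basis, and finally use minimality plus Lemma~\ref{lem:gammahgamma} to clear denominators and land on an integral $\hgamma$-balanced $b$-regular orientation. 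The analogue of Lemma~\ref{lem:gamma0all} — that vanishing $\hgamma$-score on two non-homotopic cycles implies vanishing everywhere — is needed here and follows from Lemma~\ref{lem:gammahgamma} together with Lemma~\ref{lem:gamma0all}/Lemma~\ref{lem:gammahomology} applied to the $\gamma$-score.

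**Main obstacle.** The delicate point is the homotopy-additivity of $\hgamma$ (the $\hgamma$-analogue of Lemma~\ref{lem:gammahomology}) and hence the linear-combination step: $\hgamma$ mixes a weight term $2w$ with a signed star-edge count $o - \iota$, and the cut-and-paste compensation argument in Lemma~\ref{lem:gammahomology} relied on the "weight $d$ at every vertex" identity. Here one must verify the compensation still works with the inhomogeneous star-vertex weights ($i+b$ depending on degree) and with indegrees entering. I expect this is handled cleanly by first passing to $\gamma$ via Lemma~\ref{lem:gammahgamma} (where homotopy-additivity is already known in the $\frac{d}{d-2}$-setting, and $\aM$ here plays the role of a "$(2b-1)$-expansion"-like augmented map after a suitable local substitution of star-vertices by gadgets as in the proof of Lemma~\ref{lem:augment}), rather than redoing the geometry for $\hgamma$ directly — so the real work is setting up that reduction carefully. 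The rest is a faithful transcription of Section~\ref{sec:balanced} with the obvious changes of constants.
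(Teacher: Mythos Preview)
Your parts (i) and (iii) are on target and essentially match the paper: the girth bound is a straight Euler-formula count, and bipartiteness follows from $\hgamma$-balancedness by the parity argument you describe (for a non-contractible cycle $C$ of $M$ one gets $w_L(C)=w_R(C)$ and $w_L(C)+w_R(C)+(b-1)k=bk$, hence $k=2w_R(C)$ is even).

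Part (ii) is where you diverge from the paper. You propose to rebuild the biased-orientation/linear-combination/minimality apparatus of Section~\ref{sec:balanced} directly on $\aM$, and you correctly flag the obstacle: homotopy-additivity (Lemma~\ref{lem:gammahomology}) was proved for $\frac{d}{d-2}$-orientations of $d$-angulations with constant vertex weight, whereas in $\aM$ the star-vertex weights $\tfrac12\deg(u)+b$ vary. Your suggested fix (``substitute star-vertices by gadgets as in Lemma~\ref{lem:augment}'') is the right instinct but is not quite Lemma~\ref{lem:augment}, which only covers the case where $M$ is already a $d$-angulation. The paper takes the cleaner route of making this substitution \emph{first} and then invoking the already-proved theory, rather than last as a patch: it defines a \emph{$2b$-angular lift} $M'$ of $M$ by inserting, inside every non-root face $f$ of degree $2\ell$, a fixed planar map $M_\ell$ of girth $2b$ with outer cycle of length $2\ell$ and all inner faces of degree $2b$, connected to the corners of $f$ by paths of length $b-1$. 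One checks $M'$ is a bipartite toroidal $2b$-angulation of essential girth $2b$, so Proposition~\ref{th:evencase} hands you a balanced $\frac{b}{b-1}$-orientation of $M'$ for free. Contracting each inserted $M_\ell$ to a single black vertex (and each connection path to a star-edge) produces a $b$-regular orientation $X$ of $\aM$; a direct computation comparing $\hgamma^X(C)$ with $2\gamma^{X'}(C')$ for the canonical lift $C'$ of $C$ shows $X$ is $\hgamma$-balanced.

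So what the paper's approach buys is that the entire delicate machinery (homotopy-additivity, biased orientations, rational linear combinations, integrality-by-minimality) is invoked exactly once, on genuine $2b$-angulations, and never has to be redone for $\aM$. Your approach could in principle be pushed through, but you would have to re-establish Lemmas~\ref{lem:gammahomology}, \ref{lem:biased}, and~\ref{lem:multiplesigma} in the inhomogeneous-weight setting of $\aM$; the lift construction sidesteps all of that.
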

\begin{proof} 
  Assume $\aM$ is endowed with a $b$-regular orientation $X$, and let us
  show that $M$ has essential girth $2b$. Since the root-face has
  degree $2b$, the essential girth is at most $2b$, hence we just have
  to show that the essential girth is at least $2b$. 
Consider a contractible closed walk $C$ in $M$. Let $M_C$ be the planar 
map obtained by keeping $C$ and its interior, where $C$ is `unfolded' 
into a simple cycle, taken as the outer face contour. Since all inner face-degrees in
  $M_C$ are even, the outer face degree is also even, so that the length of $C$ is
  an even number, denoted $2k$, and we have to prove that $b\leq k$.
  Let $v,f,e$ be respectively the numbers of vertices,
  edges, and faces that are strictly inside $C$. By Euler's formula applied to $M_C$ we
  have $v-e+f=1$. Let $2S$ be the sum of the degrees of the faces
  inside $C$. Note that $2S=2e+2k$, i.e., $S=e+k$.  Consider the
  extension of the $b$-regular orientation $X$ to the interior of
  $C$. The total weight over all $M$-vertices strictly inside $C$ is
  $bv$ and the total weight (outdegree) over all star-vertices
  strictly inside $C$ is $S+bf$.  On the other hand the total weight
  over all edges that are strictly inside $C$ is $(b-1)e+2S$. Note
  also that the total weight over all edges strictly inside $C$ must
  be at least the total weight over all vertices strictly inside
  $C$. Hence we must have $bv+S+bf\leq (b-1)e+2S$, so that
  $b(v-e+f)\leq S-e$. But we have seen that $v-e+f=1$ and $S=e+k$,
  hence we obtain $b\leq k$.

  Now we show that if $\aM$ can be endowed with a $\hgamma$-balanced
  $b$-regular orientation $X$, then $M$ is bipartite.  Let $C$ be a
  non-contractible cycle of $M$, and let $k$ be the length of $C$.  We
  also denote by $C$ the corresponding cycle in $\aM$ (that is going
  through $M$-vertices only). We have
  $\hgamma(C)=w_R(C)-w_L(C)$. Since the orientation is
  $\hgamma$-balanced, we have $\hgamma(C)=0$ and thus
  $w_R(C)=w_L(C)$. Since every vertex on $C$ has weight $b$ and every
  edge on $C$ has weight $b-1$ we have
  $w_L(C)+w_R(C)+k\,(b-1)=k\,b$. So finally $k=2\,w_R(C)$ is
  even. Since all face-degrees of $M$ are even and all
  non-contractible cycles have even length, we conclude that $M$ is
  bipartite. So  $M\in\hat{\cM}_{2b}$.

  It now remains to show that if $M\in\hat{\cM}_{2b}$  then $\aM$
  admits a $\hgamma$-balanced $b$-regular orientation. Our strategy is
  the toroidal counterpart of the one for planar maps given
  in~\cite[Prop.~47]{BF12b}.  We define the \emph{$2b$-angular lift of $M$} as the
  bipartite toroidal $2b$-angulation $M'$ obtained by the following
  process.  We first fix for each $\ell\geq b$ an arbitrary planar map
  $M_{\ell}$ of girth $2b$, where the outer face has degree $2\ell$ and its contour is a cycle, and all inner faces have 
  degree $2b$.  Then in each non-root face $f$ of
  $M$, with $\ell=\mathrm{deg}(f)/2$, we insert a copy $Q_f$ of
  $M_{\ell}$ strictly inside $f$, we set a one-to-one correspondence
  between the corners in clockwise order around $f$ and the outer
  vertices of $Q_f$ in clockwise order around $Q_f$, and we connect
  any matched pair by a path of length $b-1$, called a
  \emph{connection path}, see Figure~\ref{fig:lift} for an example.
  Since $M$ is bipartite and all the faces inserted inside each face
  of $M$ have even degree, then $M'$ is bipartite as
  well.  And similarly as in the planar case~\cite{BF12b} it is 
  easy to check that $M'$ has essential girth $2b$.  Hence, by
  Proposition~\ref{th:evencase}, $M'$ can be endowed with a balanced
  $\frac{b}{b-1}$-orientation $X'$.  For $P$ a connection-path within
  a face $f$ of $M$, let $h$ the extremal half-edge of $P$ touching a
  vertex of $f$ and let $h'$ be the extremal half-edge of $P$ touching
  a vertex of $Q_f$. Then it is easy to see that the respective
  weights of $\{h,h'\}$ are either $\{0,1\}$ or $\{1,0\}$. The
  connection-path $P$ is called \emph{outgoing} (resp. \emph{ingoing})
  in the first (resp. second) case. By a simple counting argument
  using Euler's formula, one can check that for each non-root face $f$
  of $M$ of degree $2k$, the number of connection-paths inside $f$
  that are outgoing (resp. ingoing) is $k+b$ (resp. $k-b$).  Hence, if
  for each non-root face $f$ of $M$ we contract $Q_f$ into a black
  vertex $u_f$ and turn every outgoing (resp. ingoing) connection-path
  within $f$ into an edge of weight $1$ directed out of $u_f$
  (resp. toward $u_f$), we obtain a $b$-regular orientation $X$ of
  $\aM$.  It now remains to show that $X$ is $\hgamma$-balanced.

\begin{figure}[!t]
\begin{center}
\includegraphics[width=12cm]{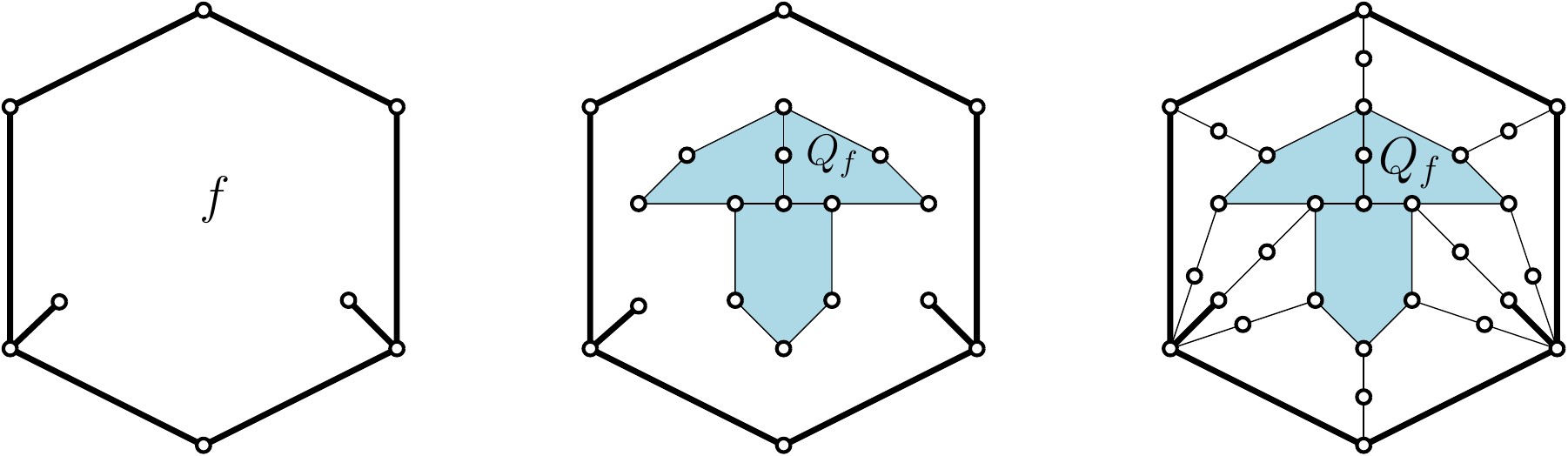}
\end{center}
\caption{Insertion operations to obtain a $2b$-angular lift (case $b=3$ here): in each 
face $f$ of degree $2p$ ($p=5$ here) a map $Q_f$ having outer degree $2p$, 
inner face degrees $2b$, and girth $2b$, is inserted inside $f$, and each outer vertex of $Q_f$
is connected to each corner around $f$ by a path of length $b-1$ called a \emph{connection-path}.}  
\label{fig:lift}
\end{figure} 

Let $e_1,e_2$ be a pair of star-edges incident to a same 
star-vertex $u$, and let $f$ be the face of $M$ corresponding to $u$,
and $v_1,v_2$ the respective white extremities of $e_1,e_2$. 
We let $P(e_1,e_2)$ denote an  arbitrarily selected path of $M'$,
 with $v_1,v_2$ as extremities and staying strictly in (the 
area corresponding to) $f$ in between. 
For each non-contractible
cycle $C$ of $\aM$, we call the \emph{canonical lift of $C$ to $M'$}, the cycle
$C'$ of $M'$ obtained as follows: $C'$ has
the same $M$-edges as in $C$, and for each star-vertex
$u$ on $C$, with $e_1,e_2$ the edges before and after 
$u$ along $C$, we replace the pair $e_1,e_2$ by the path
$P(e_1,e_2)$. Since $X'$ is balanced we have $\gamma^{X'}(C')=0$.
We want to deduce from it that $\hgamma^{X}(C)=0$. 

Let $S$ be the set of star-vertices on $C$. 
For every $u\in S$, let $f$ be the corresponding
face of $M$, let $v_1$ (resp. $e_1$) be the vertex (resp. edge)
before $u$ along $C$, and let $v_2$ (resp. $e_2$) 
be the vertex (resp. edge)
after $u$ along $C$. 
Let $o_L(u)$ (resp. $\iota_L(u)$) 
 be the number of outgoing (resp. ingoing) edges incident to $u$
 on the left side of $C$, and let $o_{R}(u)$ (resp. $\iota_{R}(u)$) 
 be the number of outgoing (resp. ingoing) edges incident to $u$
 on the right side of $C$. And let $w_L(u)$ (resp. $w_R(u)$) 
be the total weight in $X'$ of half-edges incident to vertices
of the path 
$P(e_1,e_2)\backslash\{v_1,v_2\}$ on its left (resp. right) side. 
Note that we have:

$$
\hgamma_L^{X}(C)-2\gamma_L^{X'}(C')
=\sum_{u\in S}o_L(u)-\iota_L(u)-2w_L(u)
$$
$$
\hgamma_{R}^{X}(C)-2\gamma_{R}^{X'}(C')
=\sum_{u\in S}o_{R}(u)-\iota_R(u)-2w_{R}(u).
$$

For $u\in S$ and $f$ the corresponding face of $M$, 
the cycle  $C$ splits the contour of $f$ into a left portion denoted
$P_{L}(u)$ and a right portion denoted $P_R(u)$.  Let $C_{L}(u)$
(resp. $C_R(u)$) be the closed walk formed by the concatenation of
$P_{L}(u)$ and $P(e_1,e_2)$ (resp. of $P_{R}(u)$ and $P(e_1,e_2)$).
Let $\ell_{L}(u)$ be the length of $P_{L}(u)$, let $\ell(u)$ be the
length of $P(e_1,e_2)$, and let $\ell_R(u)$ be the length of $P_R(u)$;
note that $\ell_{L}(u)=o_{L}(u)+\iota_L(u)+1$ and
$\ell_R(u)=o_R(u)+\iota_R(u)+1$.  Note also that
$\iota_{L}(u)+w_{L}(u)$ is the total weight of half-edges inside
$C_{L}(u)$ and incident to a vertex on $C_{L}(u)$. By a simple 
counting argument based on the Euler relation, 
  this number is equal to
$\frac1{2}(\ell_{L}(u)+\ell(u))-b$. This gives the equation
$$
o_{L}(u)-\iota_{L}(u)-2w_{L}(u)=2b-1-\ell(u).
$$
Similarly we obtain
$$
o_{R}(u)-\iota_{R}(u)-2w_{R}(u)=2b-1-\ell(u).
$$
Summing over $u\in S$ we find 
$$
\hgamma_{L}^{X}(C)-2\gamma_{L}^{X'}(C')=\hgamma_{R}^{X}(C)-2\gamma_{R}^{X'}(C'),
$$
hence $\hgamma^X(C)=2\gamma^{X'}(C')=0$. Hence $X$ is $\hgamma$-balanced.  
\end{proof}

\begin{lemma}\label{lem:criterion_balanced_b_reg}
  Consider $M\in\cE_{2b}$ such that $\aM$ admits a $b$-regular
  orientation $X$.
 If  the $\hgamma$-score of two
  non-contractible non-homotopic cycles of
  $\aM$ is $0$, then $X$ is $\hgamma$-balanced.
\end{lemma}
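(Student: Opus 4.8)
The plan is to follow the proof of Lemma~\ref{lem:gamma0all} almost verbatim, replacing the role of Lemma~\ref{lem:gammahomology} (homotopy-linearity of $\gamma$ for $\frac{d}{d-2}$-orientations) by a homotopy-linearity statement for $\hgamma$ in $b$-regular orientations of $\aM$. First I would record that, since $\aM$ admits a $b$-regular orientation, Lemma~\ref{lem:exists_bregular} ensures that $M$ has essential girth $2b$, so all objects involved are well defined. As in the proof of Lemma~\ref{lem:gamma0all}, I would fix a homotopy basis $\{B_1,B_2\}$ of $\aM$ made of non-contractible cycles whose intersection is a single vertex or a common path; crucially, since $M$ is a subgraph of $\aM$ carrying all the homotopy of the torus, this basis can be chosen to lie entirely in $M$ (take a spanning tree $T$ of $M$ and a spanning tree $T^*$ of $M^*$ with no edge dual to $T$; the two edges of $M$ outside $T$ and not dual to $T^*$ each close a cycle with $T$), so that $B_1,B_2$ and $B_1\cap B_2$ involve only $M$-vertices and $M$-edges.

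The key step, and the main obstacle, is to prove: for every $b$-regular orientation $X$ of $\aM$ and every non-contractible cycle $C$ of $\aM$ that is homotopic to $k_1B_1+k_2B_2$, one has $\hgamma(C)=k_1\,\hgamma(B_1)+k_2\,\hgamma(B_2)$ (all $\hgamma$-scores computed in $X$). I would prove this by adapting the universal-cover replication argument of Lemma~\ref{lem:gammahomology}: replicate $B_1,B_2$ to build an infinite path $B^\infty$ in the universal cover of $\aM$, take a copy $B'^\infty$ with a replica $C^\infty$ of $C$ in between, and cut out the cylinder regions $R_0,R_1,R_2$ exactly as there. The compensation at the intersection vertex $u$ of $B_1\cap B_2$ in the ``$(k_1,k_2)=(1,-1)$-type'' case still works because $u$ is an $M$-vertex, hence of weight $b$ like $v$. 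The Euler-formula balance on the cylinder maps $G_1,G_2$ must be redone allowing both $M$-edges/$M$-vertices and star-edges/star-vertices: $G_i$ is now a triangulation (each face bounded by one $M$-edge and two star-edges), each $M$-vertex carries weight $b$ while a star-vertex carries outdegree $\tfrac12\mathrm{deg}+b$ and indegree $\tfrac12\mathrm{deg}-b$, and each star-edge is fully directed of weight $1$; carrying out the count shows that the extra contribution of the star-edges meeting the relevant boundary cycle is exactly the correction $o_L-\iota_L$ (resp. $o_R-\iota_R$) built into the definition of $\hgamma$, so the identity still closes up. (Alternatively, the identity can be deduced from Lemma~\ref{lem:gammahomology} by passing to the $2b$-angular lift $M'$ of $M$ and to a $\frac{2b}{2b-2}$-orientation of $M'$ inducing $X$, using the relation between $\hgamma$-scores on $\aM$ and $\gamma$-scores of canonical lifts on $M'$, as in the proof of Lemma~\ref{lem:exists_bregular}.)

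Granting this homotopy-linearity, the conclusion follows as in Lemma~\ref{lem:gamma0all}. Let $C_1,C_2$ be the two non-contractible non-homotopic cycles of $\aM$ with $\hgamma(C_1)=\hgamma(C_2)=0$ in $X$, and write $C_1$ homotopic to $k_1B_1+k_2B_2$ and $C_2$ homotopic to $k_1'B_1+k_2'B_2$. Since $C_1$ is non-contractible we may assume $k_1\neq 0$ (exchanging $B_1,B_2$ if needed), and since $C_1,C_2$ are both (simple) non-contractible cycles and are non-homotopic, it is impossible that one is homotopic to a multiple of the other, so $k_1k_2'-k_1'k_2\neq 0$. By the homotopy-linearity, $k_1\,\hgamma(B_1)+k_2\,\hgamma(B_2)=0$ and $k_1'\,\hgamma(B_1)+k_2'\,\hgamma(B_2)=0$; as the determinant of this linear system is non-zero, it forces $\hgamma(B_1)=\hgamma(B_2)=0$, and then $\hgamma(C)=0$ for every non-contractible cycle $C$ of $\aM$. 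Hence $X$ is $\hgamma$-balanced.
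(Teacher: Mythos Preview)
Your approach is genuinely different from the paper's, and the main route you propose has a real gap while the alternative route is unworkable.

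The paper does \emph{not} re-prove a homotopy-linearity statement for $\hgamma$. Instead it proceeds by comparison with a reference orientation: it first shows directly (by a parity computation using the two cycles $C_1,C_2$ with $\hgamma$-score zero) that $M$ must be bipartite; this is the substantial step, and it is exactly what unlocks Lemma~\ref{lem:exists_bregular}, which then provides a $\hgamma$-balanced $b$-regular orientation $X'$ of $\aM$. From there the argument is soft: by Lemma~\ref{lem:gammahgamma}, equal $\hgamma$-scores on $C_1,C_2$ force equal $\gamma$-scores on $C_1,C_2$, Corollary~\ref{theo:general_gamma} upgrades this to $\gamma$-equivalence of $X$ and $X'$, and Lemma~\ref{lem:gammahgamma} again converts this back to $\hgamma$-equivalence, so $X$ is $\hgamma$-balanced. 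No cylinder/universal-cover computation is redone.

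Your main route asserts homotopy-linearity of $\hgamma$ by redoing the proof of Lemma~\ref{lem:gammahomology} on $\aM$. The claim-part (compensation at the extremities of $B_1\cap B_2$) is fine once $B_1,B_2$ are chosen inside $M$, but the Euler-formula part is the crux and you have not carried it out: $\aM$ is not a $d$-angulation, the vertex weights are not uniform (a star-vertex has weight $\tfrac12\deg+b$), and the boundary cycle $C$ may pass through star-vertices. Saying that ``the extra contribution of the star-edges \ldots is exactly the correction $o_L-\iota_L$'' is precisely the non-trivial identity that would need to be established; as written this is an assertion, not a proof. Your proposed alternative, via the $2b$-angular lift $M'$ and Lemma~\ref{lem:gammahomology}, does not work either: the construction in Lemma~\ref{lem:exists_bregular} produces a $b$-regular orientation of $\aM$ \emph{from} a $\frac{b}{b-1}$-orientation of $M'$, and there is no canonical way to lift a \emph{given} $b$-regular orientation $X$ of $\aM$ back to a $\frac{2b}{2b-2}$-orientation of $M'$ (the contraction of each $Q_f$ to a star-vertex loses information); moreover the existence of such an orientation of $M'$ in the paper already relies on $M$ being bipartite, which you have not yet established.
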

\begin{proof}
  Let $C_1,C_2$ be two non-homotopic
  non-contractible cycles of $\aM$, each given with a traversal
  direction, such that
  $\hgamma(C_1)=\hgamma(C_2)=0$.

By Lemma~\ref{lem:exists_bregular}, $M$ has essential girth $2b$. 
We now show that $M$ has to be bipartite. 
For $C\in\{C_1,C_2\}$, let $\nww(C)$ be the number of $M$-edges
on $C$, and 
let $\Vb(C)$ (resp. $\Vw(C)$) be the set of black (resp. white)
 vertices on $C$ and 
$\nb(C)=|\Vb(C)|,\ \nw(C)=|\Vw(C)|$.  
For each $u\in\Vb(C)$, 
let $c_L(u)$ (resp. $c_R(u)$) be the number of corners of $\aM$ 
incident to $u$ on the left (resp. right) of $C$, 
and let $\kappa_L(C)=\sum_{u\in \Vb(C)}c_L(u)$,
and $\kappa_R(C)=\sum_{u\in \Vb(C)}c_R(u)$, and 
$\kappa(C)=\kappa_L(C)+\kappa_R(C)$; note that $\kappa(C)$
is the total degree of faces corresponding to the black 
vertices on $C$, hence $\kappa(C)$ is an even integer.   
The \emph{left length} of $C$ is defined as 
$$
\ell_L(C)=\nww(C)+\kappa_L(C).
$$
It corresponds to the length of the closed walk of edges of $M$
that coincides with $C$ at $M$-edges, and takes the 
left boundary of the corresponding face of $M$ each time $C$
passes by a black vertex. Since all face-degrees of $M$ 
are even and $C_1,C_2$ are non-contractible non-homotopic cycles,
it is enough to show that $\ell_L(C)$ is even 
for $C\in\{C_1,C_2\}$ to prove that $M$ is bipartite.
 Recall that $w_L(C)$ (resp. $w_R(C)$) 
denotes the total weight of half-edges incident to white 
vertices of $C$ on the left (resp. right) side of $C$,
and $\iota_L(C)$ (resp. $\iota_R(C)$) denotes the total 
number of ingoing edges at black vertices on $C$ on the left
(resp. right) side of $C$.  
 It is easy to see that the property 
 $\hgamma(C)=0$ rewrites as $\eta_L(C)=\eta_R(T)$, where 
$$
\eta_L(C)=2w_L(C)-2\iota_L(C)+\kappa_L(C),\ \ \ \ \eta_R(C)=2w_R(C)-2\iota_R(C)+\kappa_R(C).
$$
Let $e(C)$ be the number of edges on $C$ (which is also 
the length of $C$). Let $\Sigma(C)$ be the total weight of half-edges incident to vertices
in $\Vw(C)$ minus the total ingoing degree of vertices in $\Vb(C)$.  Then  it is easy to see
that 
$$
\Sigma(C)=w_L(C)-\iota_L(C)+w_R(C)-\iota_R(C)+(b-1)\cdot \nww(C). 
$$
Moreover, since $X$ is $b$-regular we have
$$
\Sigma(C)=b\cdot\nw(C)+b\cdot\nb(C)-\frac1{2}\kappa(C)=b\cdot e(C)-\frac1{2}\kappa(C)=b(\nww(C)+2\nb(C))-\frac1{2}\kappa(C).
$$
The equality between the two expressions of $\Sigma(C)$ yields $\eta_L(C)+\eta_R(C)=2\nww(C)+4b\nb(C)$,
which gives $\eta_L(C)=\nww(C)+2b\nb(C)$. Since $\eta_L(C)=2w_L(C)-2\iota_L(C)+\kappa_L(C)$,
we conclude that 
\[\ell_L(C)=\nww(C)+\kappa_L(C)=(\eta_L(C)-2b\nb(C))+(\eta_L(C)-2w_L(C)+2\iota_L(C)),\]
so that $\ell_L(C)$ is even. 
This concludes the proof that $M$ is bipartite.

We now prove that $X$ is $\hgamma$-balanced.
By Lemma~\ref{lem:exists_bregular}, $\aM$
has 
 a $\hgamma$-balanced $b$-regular orientation $X'$.
Then, $C_1$ and $C_2$ have the same
$\hgamma$-score (which is zero) 
in $X$ as in $X'$, hence, by Lemma~\ref{lem:gammahgamma}, they have 
the same $\gamma$-score in $X$ as in $X'$.  
By Corollary~\ref{theo:general_gamma}, $X$ and $X'$ are
$\gamma$-equivalent. Thus, again by Lemma~\ref{lem:gammahgamma}, $X$ is $\hgamma$-balanced.
\end{proof}

\begin{lemma}\label{lem:bregCanonical}
  Let $M\in\hat{\cM}_{2b}$.
Then $\aM$ has a unique minimal
$\hgamma$-balanced $b$-regular orientation. This orientation is 
 transferable.
Moreover, it 
is in $\cO_{2b}^1$ if and only if $M\in\chL_{2b}$ (the root-face contour is a maximal $2b$-angle). 
\end{lemma}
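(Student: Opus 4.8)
The plan is to mimic the treatment of $d$-angulations (Lemma~\ref{lem:canonical_dori}, together with its ingredients Lemma~\ref{lem:facialwalktilde} and Claim~\ref{claim:epsilon}), but to carry it out on the star-completion $\aM$ for $b$-regular orientations --- which are $\NN$-biorientations, so that all weights stay nonnegative --- and then to transport the conclusions to $M$ through the correspondence $\sigma$ of Lemma~\ref{lem:bij_transferable}. For the existence and uniqueness of the minimal $\hgamma$-balanced $b$-regular orientation I would argue as follows. Since $M\in\hat{\cM}_{2b}$, Lemma~\ref{lem:exists_bregular} gives a $\hgamma$-balanced $b$-regular orientation $X_0$ of $\aM$. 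As $b\geq 2$, a $b$-regular orientation is an $\frac{\ha}{\hat{\beta}}$-orientation of $\aM$ with $\hat{\beta}$ strictly positive (value $b-1$ on $M$-edges, $1$ on star-edges), so Corollary~\ref{theo:general_gamma} yields a unique minimal orientation $X_{\min}$ that is $\gamma$-equivalent to $X_0$. By the identity established in the proof of Lemma~\ref{lem:gammahgamma}, expressing $\hgamma^X(C)$ as $2\gamma^X(C)$ plus a quantity depending only on $\aM$ and $C$, being $\gamma$-equivalent to $X_0$ forces $X_{\min}$ to have the same $\hgamma$-score as $X_0$ on every non-contractible cycle, so $X_{\min}$ is $\hgamma$-balanced. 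Conversely any minimal $\hgamma$-balanced $b$-regular orientation has $\hgamma$-score $0$, hence agrees with $X_0$, on two non-homotopic non-contractible cycles of $\aM$; by Lemma~\ref{lem:gammahgamma} it then agrees with $X_0$ in $\gamma$-score on those two cycles, so by the second part of Corollary~\ref{theo:general_gamma} it is $\gamma$-equivalent to $X_0$, and being minimal it coincides with $X_{\min}$.

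For transferability I would argue by contradiction. A non-transferable configuration is a star-edge $e$ leaving an $M$-vertex $v$ such that the $M$-edge $\epsilon$ immediately clockwise after $e$ around $v$ has weight $<b-1$ at $v$. It gives the triangular face $T$ of $\aM$ squeezed between $e$ and $\epsilon$, whose boundary traversed with $T$ on the right runs from $v$ along $e$ to the star-vertex, then to the other endpoint $w$ of $\epsilon$, then back along $\epsilon$ to $v$; this is \emph{almost} a clockwise closed walk, failing only possibly along its third, star-edge side. Starting from $T$ one grows a nonempty set $S$ of faces of $\aM$ with $f_0\notin S$ by adding, across each boundary edge that violates ``simply directed with a face of $S$ on its right, or bidirected'', the face on the other side. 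The delicate point will be to prove that this process terminates without ever absorbing $f_0$: I expect the hypotheses that all face-degrees of $M$ are $\geq 2b$, that the essential girth is $2b$, and that $X_{\min}$ is $\hgamma$-balanced (ruling out the region wrapping around non-contractibly) to be exactly what is needed. The resulting $S$ contradicts the minimality of $X_{\min}$ via the $\beta$-expansion criterion. This is the toroidal analogue of the argument of~\cite{BF12b}, and I expect it to be the main obstacle. Once transferability holds, Lemma~\ref{lem:bij_transferable} furnishes $Y_{\min}:=\sigma(X_{\min})$, a $\frac{b}{b-1}$-$\ZZ$-orientation of $M$, with $X_{\min}\in\cO_{2b}^1$ if and only if $Y_{\min}\in\cO_{2b}^1$.

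Finally, the equivalence $X_{\min}\in\cO_{2b}^1\iff M\in\chL_{2b}$ I would prove exactly as Lemma~\ref{lem:canonical_dori}, working with $X_{\min}$ on $\aM$, using two adaptations. First, a count in the style of Claim~\ref{claim:epsilon}: for a contractible closed walk $W$ of $M$ of length $k$ whose interior $R$ (seen in $\aM$) contains $f_0$, a routine Euler computation on $\aM$ --- with the relations $3t=2m_1+k-2b$ and $n_1+n_2=m_1$, where $t,m_1,n_1,n_2$ count respectively the triangles, $M$-edges, interior $M$-vertices and star-vertices strictly inside $R$ --- gives that the sum of the weights of the half-edges lying in $R$ and incident to a vertex of $W$ equals $\frac{k}{2}-b$. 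Second, an analogue of Lemma~\ref{lem:facialwalktilde}: since $X_{\min}$ is transferable, a rightmost walk of $\aM$ eventually traverses only $M$-edges (as in the proof of Lemma~\ref{lem:bij_transferable}), so its looping part is a non-repetitive closed walk $W$ of $M$; using $\hgamma$-balancedness as in the proof of Lemma~\ref{lem:facialwalktilde} one shows that $W$ has a contractible interior on its right, whence all half-edges of that interior incident to $W$ are ingoing, so the count forces $k=2b$, and minimality forces $f_0$ to lie inside $W$. If $M\in\chL_{2b}$, the maximality of the $2b$-angle $\partial f_0$ then forces $W=\partial f_0$, so every rightmost walk loops on $\partial f_0$ and (since all vertex-weights of $\aM$ are positive) $X_{\min}\in\cO_{2b}^1$; conversely, if $\partial f_0$ is not maximal and $C$ is a maximal $2b$-angle strictly enclosing it, the count with $k=2b$ shows every half-edge inside $C$ incident to $C$ is ingoing, so no rightmost walk from a half-edge of $C$ can reach $f_0$, contradicting $X_{\min}\in\cO_{2b}^1$.
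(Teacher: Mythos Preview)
Your existence/uniqueness argument and your treatment of the equivalence $X_{\min}\in\cO_{2b}^1\iff M\in\chL_{2b}$ are essentially the paper's approach: invoke Lemma~\ref{lem:exists_bregular}, pass between $\hgamma$- and $\gamma$-equivalence via Lemma~\ref{lem:gammahgamma}, apply Corollary~\ref{theo:general_gamma}; then use transferability to reduce rightmost walks in $\aM$ to rightmost walks of $M$-edges and rerun the argument of Section~\ref{sec:right} together with a Euler count of the Claim~\ref{claim:epsilon} type. That part is fine.

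The gap is in the transferability step. You set up a growing-region argument, concede that ``the delicate point will be to prove that this process terminates without ever absorbing $f_0$'', and invoke $\hgamma$-balancedness as a hoped-for ingredient. None of that is needed, and as written it is not a proof. The actual argument is purely local and uses only minimality, exactly as in \cite[Lemma~50]{BF12b}: starting from a star-edge $\epsilon=\{b,w\}$ directed toward its black extremity $b$ with the next $M$-edge $e=\{w,w'\}$ in cw order around $w$ having weight $<b-1$ at $w$, observe that $e$ has positive weight at $w'$; then the star-edge $\epsilon'$ just after $\epsilon$ in ccw order around $b$ must also be directed toward $b$, since otherwise the single triangular face bounded by $(\epsilon',e,\epsilon)$ already witnesses non-minimality. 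Now repeat: the $M$-edge just after $\epsilon'$ in cw order around $w'$ cannot have weight $b-1$ at $w'$ (the incident half-edges $e$ and $\epsilon'$ already contribute at least $2$ to the weight $b$ of $w'$), so one takes one more step ccw around $b$. Continuing all the way around $b$ forces every star-edge at $b$ to be ingoing, contradicting the fact that $b$ has outdegree $\tfrac12\mathrm{deg}(b)+b>0$. No global region-growing, no balancedness, no torus-specific reasoning is required here.
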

\begin{proof}
  By Lemma~\ref{lem:exists_bregular}, $\aM$ admits a
  $\hgamma$-balanced $b$-regular orientation $X$. By Lemma~\ref{lem:gammahgamma}, a $b$-regular orientation
 is $\hgamma$-balanced if and only if it is $\gamma$-equivalent to $X$. Hence, by 
  Corollary~\ref{theo:general_gamma}, $M$ admits a unique $b$-regular
  orientation $X_0$ that is minimal and $\hgamma$-balanced.

  The argument to ensure that $X_0$ is transferable is the same as
  given in the planar case~\cite[Lemma~50]{BF12b}. Suppose by
  contradiction that there is a star-edge $\epsilon=\{b,w\}$ going
  toward its black extremity $b$, and such that the $M$-edge
  $e=\{w,w'\}$ just after $\epsilon$ in \cw order around $w$ has
  weight different from $b-1$. Thus $e$ has strictly positive weight at $w'$. Then
  let $\epsilon'$ be the star-edge just after $\epsilon$ in \ccw order
  around $b$. Note that $\epsilon'$ has to be directed toward $b$,
  otherwise $(\epsilon',\epsilon,e)$ would form a face $S$ distinct
  from the root-face, such that every edge on the boundary of $S$ has
  a face in $S$ on its right, contradicting the minimality of $X_0$.
  Let $e'=(w',w'')$ be the M-edge just after $\epsilon'$ in \cw order
  around $w'$. Since the edges $e$ and $\epsilon'$ contribute by at
  least $2$ to the weight of $w'$, the edge $e'$ can not have weight
  $b-1$ at $w'$, hence it has positive weight at $w''$. Continuing
  iteratively in \ccw order around $b$ we obtain that $b$ has only
  ingoing edges, a contradiction. So $X_0$ is transferable.

  Let us now characterize when $X_0$ is in $\cO_{2b}^1$.

  Suppose that the root-face contour is not a maximal $2b$-angle, let
  $C$ be a maximal $2b$-angle whose interior contains the root-face. By
  a counting argument similar to the proof of Claim~\ref{claim:epsilon}, 
  all half-edges incident to a vertex on $C$ and in the interior of
  $C$ have weight $0$, hence a rightmost walk starting from an edge on
  $C$ can never loop on the root-face contour. Hence $X_0$ is not in
  $\cO_{2b}^1$.

  Conversely assume that $M$ is in $\chL_{2b}$. Let $e$ be an outgoing
  half-edge of $X_0$ and let $P_e$ be the rightmost path starting at
  $e$.  Since $X_0$ is transferable, it is easy to see that once $P_e$
  has reached an $M$-vertex (which occurs after traversing at most two
  edges), it will only take $M$-edges. Hence the cycle $C$ formed when
  $P_e$ eventually loops is a right cycle of $M$-edges.  By the same
  line of arguments as in Section~\ref{sec:right}, this cycle has to
  be of length $2b$, with a contractible region on its right.  This
  region has to contain the root-face since $X_0$ is minimal.  Since
  the root-face contour is a maximal $2b$-angle, we conclude that $C$
  is actually the root-face contour. Hence $X_0$ is in $\cO_{2b}^1$.
\end{proof}

\begin{lemma}
  \label{lem:hbaltbal}
Let $M$ be a map in $\cE_{2b}$. Let $Y$ be a $\bb$-$\zZ$-orientation
of $M$ 
in $\cO_{2b}^1$, let $X=\sigma^{-1}(Y)$ 
be the associated 
$b$-regular orientation of $\aM$, and let $T$
be the associated mobile in $\hat{\mathcal
    V}_b$.   
Then $X$ is $\hgamma$-balanced if and only if $T$ is balanced.  
\end{lemma}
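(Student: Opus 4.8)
The plan is to relate the $\hgamma$-score of an arbitrary non-contractible cycle $C$ of $\aM$ (in the $b$-regular orientation $X$) to the $\gamma$-score of a corresponding cycle of the mobile $T$ (in the sense defined in Section~\ref{sec:bijdangul}), mimicking the proof of Lemma~\ref{lem:phispebal}. Recall that $T=\Phi_+(Y)$ where $Y=\sigma(X)$ is a $\frac{b}{b-1}$-$\ZZ$-orientation of $M$, and the bijection $\Phi_+$ in the weighted bioriented setting sends each vertex of $Y$ to a white vertex of $T$ of the same weight, each non-root face of $Y$ to a black vertex of $T$ of the same degree, and each edge of $Y$ to an edge of $T$ of the same weight. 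A cycle $C$ of $T$ is also naturally a non-contractible cycle of $\aM$ (passing through $M$-vertices along $M$-edges and through star-vertices when it ``turns around'' a black vertex of $T$), just as in Lemma~\ref{lem:phispebal}.

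First I would fix a non-contractible cycle $C$ of $T$ with a traversal direction and compute $\gamma_L^T(C)$ and $\gamma_R^T(C)$ in terms of data on $\aM$. On the white side, the local rules of Figure~\ref{fig:local_rule_weighted_biori}, together with the star-completion picture, show that the weight $w_L^T(C)$ contributed by half-edges at white vertices of $C$ equals the total weight in $Y$ (equivalently, the total $w$-contribution in $X$ via the transfer rules of Figure~\ref{fig:rule_sigma}) of $C$-internal half-edges incident to $M$-vertices on $C$, while the $C$-adjacent half-edges at $M$-vertices of $C$ get accounted through the star-edges hitting black vertices of $C$. On the black side, $s_L^T(C)$ counts the half-edges (including buds) at black vertices of $C$ on the left, and each black vertex of degree $i$ contributes; after passing through the $\sigma$-correspondence these become the outgoing/ingoing star-edges at the star-vertices of $C$. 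Carrying out this bookkeeping carefully — this is the step I expect to be the main obstacle, since one must track how the weight-transfer rule of Figure~\ref{fig:rule_sigma} redistributes weight between a star-edge and the adjacent $M$-edge, and how buds at black vertices of $T$ correspond to ingoing star-edges — I expect to obtain
\[
\gamma_L^T(C)=\tfrac{1}{2}\hgamma_L^{X}(C)-c_L(C),\qquad \gamma_R^T(C)=\tfrac{1}{2}\hgamma_R^{X}(C)-c_R(C),
\]
where $c_L(C),c_R(C)$ are quantities (some linear combination of the number of black vertices of $C$ and of $M$-edges of $C$ on each side) depending only on $\aM$ and $C$, not on the orientation. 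The precise constant is unimportant; what matters is that $c_L(C)-c_R(C)$ depends only on $C$ and $\aM$.

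Granting such an identity, it follows that
\[
\gamma^T(C)=\tfrac{1}{2}\,\hgamma^{X}(C)-\big(c_L(C)-c_R(C)\big),
\]
so $\gamma^T(C)$ and $\hgamma^X(C)$ differ by a quantity independent of the orientation. Actually, to prove the two-sided equivalence cleanly, I would run this for \emph{any two} $b$-regular orientations $X,X'$ of $\aM$ whose associated mobiles are $T,T'$: then $C$ has the same $\gamma$-score in $T$ as in $T'$ iff it has the same $\hgamma$-score in $X$ as in $X'$ (the orientation-independent offset cancels). If $X$ is $\hgamma$-balanced, then for every non-contractible cycle $C$ of $T$ we get $\hgamma^X(C)=0$, hence $\gamma^T(C)$ equals the fixed value $-(c_L(C)-c_R(C))$; but taking the balanced $b$-regular orientation $X'$ supplied by Lemma~\ref{lem:exists_bregular} (with associated balanced mobile $T'$, whose balancedness I would verify by the same $\gamma^{T'}=\tfrac12\hgamma^{X'}-(c_L-c_R)$ computation) pins that fixed value to $0$, so $\gamma^T(C)=0$ for all non-contractible $C$, i.e. $T$ is balanced. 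Conversely, if $T$ is balanced, pick two non-contractible non-homotopic cycles $C_1,C_2$ of $T$ with $\gamma^T(C_i)=0$; by the identity these cycles have $\hgamma^X(C_i)$ equal to a fixed (orientation-independent) value, which again is $0$ by comparison with the $\hgamma$-balanced orientation of Lemma~\ref{lem:exists_bregular}; then Lemma~\ref{lem:criterion_balanced_b_reg} upgrades ``$\hgamma$-score $0$ on two non-homotopic non-contractible cycles'' to ``$X$ is $\hgamma$-balanced''. This closes the equivalence.

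Two small points I would address to keep the proof airtight: (i) a cycle of $T$ is genuinely identifiable with a non-contractible cycle of $\aM$, and conversely the non-contractible cycles of $\aM$ relevant to $\hgamma$-balancedness can be taken to be of this form (this is exactly the situation already handled in Lemma~\ref{lem:phispebal} and via Lemma~\ref{lem:augment}/Lemma~\ref{lem:criterion_balanced_b_reg}), and (ii) the homotopy classes of $C_1,C_2$ in $T$ coincide with those of the corresponding cycles in $\aM$, so ``non-homotopic'' is preserved — which is what licenses the appeal to Lemma~\ref{lem:criterion_balanced_b_reg}. The bulk of the work, and the only genuinely delicate part, is the local computation of $\gamma_L^T(C)$ in terms of $\hgamma_L^X(C)$ through the composite of $\sigma$ (Figure~\ref{fig:rule_sigma}) and $\Phi_+$ (Figure~\ref{fig:local_rule_weighted_biori}); everything after that is a short linear-algebra argument combined with the existence result Lemma~\ref{lem:exists_bregular} and the criterion Lemma~\ref{lem:criterion_balanced_b_reg}.
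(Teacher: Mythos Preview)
Your overall architecture matches the paper's: identify a non-contractible cycle $C$ of $T$ with a cycle of $\aM$, compute $\gamma^T(C)$ against $\hgamma^X(C)$ via the combined effect of the transfer rule (Figure~\ref{fig:rule_sigma}) and the local rules (Figure~\ref{fig:local_rule_weighted_biori}), and use Lemma~\ref{lem:criterion_balanced_b_reg} for the converse direction. That is exactly what the paper does.

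The gap is in how you handle the offset. The paper carries out the local bookkeeping you defer and finds that the left and right offsets are $2(b-1)\nbw(C)$ and $2(b-1)\nwb(C)$ respectively, where $\nbw(C)$ (resp.\ $\nwb(C)$) counts black-white edges of $C$ traversed black-to-white (resp.\ white-to-black). Since $\nbw(C)=\nwb(C)$ along any cycle, these cancel and one gets $\gamma^T(C)=\hgamma^X(C)$ \emph{exactly}. Your expected identity $\gamma_L^T(C)=\tfrac12\hgamma_L^X(C)-c_L(C)$ has the wrong scalar, and more importantly you never recognize that $c_L(C)=c_R(C)$ falls out directly from this symmetry. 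Instead you try to pin the offset to zero by comparison with a reference $\hgamma$-balanced orientation $X'$ from Lemma~\ref{lem:exists_bregular}. That step does not work as written: first, Lemma~\ref{lem:exists_bregular} produces a $\hgamma$-balanced $b$-regular orientation but says nothing about it being transferable or lying in $\cO_{2b}^1$, so $\sigma(X')$ and hence the mobile $T'$ need not exist; second, even granting a mobile $T'$, your proposed verification that $T'$ is balanced ``by the same $\gamma^{T'}=\tfrac12\hgamma^{X'}-(c_L-c_R)$ computation'' is circular, since that formula only yields $\gamma^{T'}(C)=-(c_L(C)-c_R(C))$, which is $0$ precisely if you already know the offset vanishes.

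The fix is simply to do the local computation you postponed: once you track the $C$-adjacent half-edges at white vertices (each black vertex on $C$ contributes $b-1$ on each side, as in Figure~\ref{fig:Cadj}) and note that half-edge weights at black vertices of $T$ encode $-\iota^X$, you get $w_L^X(C)-\iota_L^X(C)=w_L^T(C)+(b-1)\nbw(C)$ and the symmetric right-hand identity; the equality $\nbw(C)=\nwb(C)$ then gives $\gamma^T(C)=\hgamma^X(C)$ with no residual offset, and both implications follow immediately (the converse via Lemma~\ref{lem:criterion_balanced_b_reg}, exactly as you say).
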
 
\begin{proof}
Recall that the rules to obtain the mobile associated to $Y$ are the ones of
  Figure~\ref{fig:local_rule_weighted_biori}.

  Let $C$ be a non-contractible cycle of $T$ given with a traversal
  direction; note that $C$ is also a non-contractible cycle of $\aM$
  (it is convenient here to see $T$ and $\aM$ as superimposed). Let
  $\nb(C)$ be the number of black vertices on $C$, and let $\nbw(C)$
  (resp. $\nwb(C)$) be the number of black-white edges $e$ on $C$
  where the black extremity is traversed before (resp. after) the
  white extremity when traversing $e$ (along the traversal direction
  of $C$).  Note that all the black-white edges on $C$ have weights
  $(0,b-1)$ (the weights can not be $(-1,b)$ since the white extremity
  is a leaf in that case). Note also that $\nbw(C)=\nb(C)=\nwb(C)$,
  since every black vertex is preceded and followed by white vertices
  along $C$. Let $w_{L}^T(C)$ (resp. $w_R^T(C)$) be the total weight
  of half-edges in $T$ that are incident to a vertex (white or black)
  of $C$ on the left side (resp. right side) of $C$. Let $s_{L}(C)$
  (resp. $s_r(C)$) be the total number of half-edges, including the
  buds, that are incident to a black vertex of $C$ on the left
  (resp. right) side of $C$ (note that this quantity is the same for
  $T$ as for $X$). Let $w_L^X(C)$ (resp. $w_R^X(C)$) be the total weight in
  $X$ of half-edges at $M$-vertices of $C$, on the left (resp. right)
  side of $C$. Let $\iota_L^X(C)$ (resp. $\iota_R^X(C)$) be the total
  number of ingoing edges at black vertices on the left (resp. right)
  side of $C$.  We have $\gamma_L^T(C)=2w_{L}^T(C)+s_L(C)$,
  $\gamma_R^T(C)=2w_{R}^T(C)+s_R(C)$, and
  $\gamma^T(C)=\gamma_R^T(C)-\gamma_L^T(C)$. Moreover, we have
  $\hgamma_L^X(C)=2w_L^X(C)+s_L(C)-2\iota_L^X(C)$,
  $\hgamma_R^X(C)=2w_R^X(C)+s_R(C)-2\iota_R^X(C)$, and
  $\hgamma^X(C)=\hgamma_R^X(C)-\hgamma_L^X(C)$. 

The quantity $w_L^{T}(C)$ decomposes as $w_L^{\circ,T}(C)+w_L^{\bullet,T}(C)$, where the first (resp. second)
term gathers the contribution from the half-edges at white (resp. black) vertices. 
Clearly $w_L^{\bullet,T}(C)=-\iota_L^X(C)$. 
We let $\cHci_L(C)$ be the sef of half-edges of $\aM$ that are on the left of $C$ and incident to a white vertex on $C$. 
A half-edge $h$ in $\cHci_L(C)$, with $v$ its incident vertex,
 is called \emph{$C$-adjacent} if the next half-edge in $\aM$ in ccw order around $v$ is on $C$; it is called
 \emph{$C$-internal} otherwise. 
Then the combined effect of the transfer rule of Figure~\ref{fig:rule_sigma} and of the local rules of Figure~\ref{fig:local_rule_weighted_biori} ensure that $w_L^{\circ,T}(C)$ gives the 
total contribution to $w_L^X(C)$ by $C$-internal half-edges in $\cHci_L(C)$. Let $A_L(C)$ be the total contribution to $w_L^X(C)$ by $C$-adjacent half-edges in $\cHci_L(C)$. 
Then, very similarly
as in the proof of Lemma~\ref{lem:phispebal} (see Figure~\ref{fig:Cadj}), each black vertex on $C$ yields a contribution $b-1$ to $A_L(C)$, 
so that $A_L(C)=(b-1)\nb(C)$.  
We conclude that $ w_L^X(C)-\iota_L^X(C)=w_L^T(C)+(b-1)\nbw(C)$. Very similarly we have 
$w_R^X(C)=w_R^T(C)+\iota_R(C)+(b-1)\nwb(C)$. Hence $\gamma^T(C)=\hgamma^X(C)$. 
 
This implies that if $X$ is $\hgamma$-balanced then $T$ is
 balanced. 
 Now, suppose that $T$ is balanced. Then $\gamma^T(C)=0$ for any
 non-contractible cycle $C$ of $T$. Let $\{C_1,C_2\}$ be two such
 distinct cycles. They are not homotopic since $T$ is unicellular. By what
precedes we
 have $\hgamma^X(C_1)=0$ and $\hgamma^X(C_2)=0$. Hence $X$ is $\hgamma$-balanced by 
 Lemma~\ref{lem:criterion_balanced_b_reg}.
\end{proof}

We are now able to prove Proposition~\ref{prop:unique_b_at_least_2}.

\noindent\emph{Proof of Proposition~\ref{prop:unique_b_at_least_2}.}
Suppose that $M\in\cE_{2b}$ admits a $\bb$-$\zZ$-orientation
$Y\in\cO_{2b}^1$ whose associated mobile by $\Phi_+$ is in
$\hat{\mathcal V}_b^{Bal}$, and let $X=\sigma^{-1}(Y)$.  Then $X$ is
$\hgamma$-balanced (according to Lemma~\ref{lem:hbaltbal}), is
transferable and in $\cO_{2b}^1$ (according to
Lemma~\ref{lem:bij_transferable}), and is minimal (according to
Lemma~\ref{lem:necmin}).  Lemma~\ref{lem:exists_bregular} implies that 
$M\in\hat{\cM}_{2b}$.
Hence, according to
Lemma~\ref{lem:bregCanonical}, $M$ is in $\chL_{2b}$ and moreover $Y$ is unique (it has to
be the image by $\sigma$ of the unique minimal $\hgamma$-balanced $b$-regular
 orientation of
$M^\star$).

Conversely let us prove the existence part, for $M\in\chL_{2b}$.  By
Lemma~\ref{lem:bregCanonical}, let $X$ be the minimal
$\hgamma$-balanced $b$-regular orientation of $M^\star$ that is
moreover transferable and in $\cO_{2b}^1$. Let $Y=\sigma(X)$ so that
$Y\in\cO_{2b}^1$ by Lemma~\ref{lem:bij_transferable}.  And
Lemma~\ref{lem:hbaltbal} ensures that the mobile associated to $Y$ by
$\Phi_+$ is in $\hat{\mathcal V}_b^{Bal}$.$\hfill\square$

\subsubsection{Proof of Theorem~\ref{theo:bij_maps_d} for $d\geq 2$}
\label{sec:proof_d_g_2}

We start by giving some terminology and results for $d\geq 1$, before
continuing with $d\geq 2$ in the rest of the section.

Let $d\geq 1$.  Let $\cH_d$ be the family of face-rooted toroidal maps
with root-face degree $d$ and with all faces of degree at least $d$.
For $M\in \cH_d$, we define a $\frac{d}{d-2}$-$\ZZ$-orientation of $M$
as a $\ZZ$-biorientation with weights in $\{-2,\ldots,d\}$ such that
all vertices have weight $d$, all edges have weight $d-2$, and every
face $f$ has weight $-\mathrm{deg}(f)+d$.

The bijection $\Phi_+$ specializes into a bijection between maps in
$\cH_d$ endowed
with a $\frac{d}{d-2}$-$\zZ$-orientation in $\cO_d^1$ and
 the family $\mathcal V_d$ of toroidal
 $\frac{d}{d-2}$-$\zZ$-mobiles.
Showing Theorem~\ref{theo:bij_maps_2b} for $d\geq 2$ thus amounts to proving
the following statement:

\begin{proposition}\label{prop:unique_d_at_least_2}
Let $d\geq 2$ and let $M$ be a map in $\cH_{d}$. Then 
$M$ admits a $\frac{d}{d-2}$-$\zZ$-orientation 
in $\cO_{d}^1$ whose associated mobile by $\Phi_+$  is in ${\mathcal V}_d^{Bal}$ if and only if $M$ is in
$\cL_{d} $.
In that case $M$ admits a unique such orientation. 
\end{proposition}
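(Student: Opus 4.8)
The plan is to mimic the proof of Proposition~\ref{prop:unique_b_at_least_2} given in Section~\ref{sec:proof_b_g_2}, replacing everywhere the bipartite $2b$-angular setting with the general $d$-angular setting and removing the bipartiteness considerations. Concretely, I would first introduce, for $M\in\cH_d$ with star-completion $\aM$, the notion of a \emph{$d$-regular orientation} of $\aM$: an $\NN$-biorientation where every $M$-edge has weight $d-2$, every star-edge has weight $1$ (and is simply oriented from the star-vertex to the incident $M$-vertex), every $M$-vertex has weight $d$, and every star-vertex $u$ of degree $\deg(u)$ has outdegree $\deg(u)+d-\tfrac12\cdot 2\deg(u)$ — more precisely, chosen so that the weight-transfer of Figure~\ref{fig:rule_sigma} (adapted) produces on $M$ a $\frac{d}{d-2}$-$\ZZ$-orientation with face-weight $-\deg(f)+d$. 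I would then define \emph{transferable} $d$-regular orientations exactly as in the bipartite case (each star-edge directed out of its $M$-vertex $v$ is immediately followed, clockwise around $v$, by an $M$-edge of weight $d-2$ at $v$), and a weight-transfer bijection $\sigma$ analogous to Lemma~\ref{lem:bij_transferable}, establishing that $\sigma$ is a bijection between transferable $d$-regular orientations of $\aM$ and $\frac{d}{d-2}$-$\ZZ$-orientations of $M$, commuting with membership in $\cO_d^1$ (same rightmost-walk argument: once a rightmost walk reaches an $M$-vertex it only uses $M$-edges, and that sub-walk is identical in $X$ and $\sigma(X)$).

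Next I would define the $\hgamma$-score of a non-contractible cycle $C$ of $\aM$ for a $d$-regular orientation, with the normalization $\hgamma_L(C)=2w_L(C)+o_L(C)-\iota_L(C)$ etc. (here no factor issues since weights are already integers), and prove the analogue of Lemma~\ref{lem:gammahgamma} that $\gamma$-equivalence and $\hgamma$-equivalence coincide on $d$-regular orientations (the difference $\hgamma-2\gamma$ depends only on $\aM$ and $C$). The heart of the argument is the existence statement, i.e.\ the analogue of Lemma~\ref{lem:exists_bregular}: if $M\in\cH_d$ has essential girth $d$ (which I would first show is \emph{necessary} for $\aM$ to admit a $d$-regular orientation, by the same Euler-relation counting as in Lemma~\ref{lem:exists_bregular}, now without parity), then $\aM$ admits a $\hgamma$-balanced $d$-regular orientation. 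For this I would construct the \emph{$d$-angular lift} $M'$ of $M$: fix for each $\ell\geq d$ an arbitrary planar $d$-angulation-with-boundary of girth $d$ with outer face of degree $\ell$, insert a copy $Q_f$ inside each non-root face $f$ of degree $\ell$, connect matched corners by paths of length $d-2$ (wait — in the bipartite case the connection paths had length $b-1$, and here after the weight-doubling correspondence the appropriate length of a connection path in the $\frac{d}{d-2}$ world is $d-2$; I will need to adjust the lift so that inner faces of $Q_f$ all have degree $d$ and $M'$ has essential girth $d$, which follows exactly as in the planar case \cite{BF12b}). Then $M'$ is a $d$-toroidal map, hence by Proposition~\ref{th:existencebalanced} carries a balanced $\frac{d}{d-2}$-orientation $X'$; contracting each $Q_f$ to a black vertex and reading off the connection-path weights yields a $d$-regular orientation $X$ of $\aM$, and a counting argument via Euler shows the number of outgoing connection-paths in a face of degree $\ell$ is the right value. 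Finally the same canonical-lift-of-a-cycle computation as in Lemma~\ref{lem:exists_bregular} (comparing $\hgamma^X(C)$ with $2\gamma^{X'}(C')$, where each star-vertex $u$ on $C$ contributes $o_L(u)-\iota_L(u)-2w_L(u)=d-1-\ell(u)$ on each side via the local Euler relation $\iota_L(u)+w_L(u)=\tfrac12(\ell_L(u)+\ell(u))-\tfrac d2$) gives $\hgamma^X(C)=2\gamma^{X'}(C')=0$.

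With existence in hand, the two-cycle criterion (analogue of Lemma~\ref{lem:criterion_balanced_b_reg}) follows from Lemma~\ref{lem:gammahgamma} plus Corollary~\ref{theo:general_gamma} applied to $\aM$, exactly as in the bipartite case but dropping the bipartiteness sub-argument (which was only needed there because the target family $\chL_{2b}$ demanded it — here $\cL_d$ has no parity constraint). Then the canonical-orientation lemma (analogue of Lemma~\ref{lem:bregCanonical}): $\aM$ has a unique minimal $\hgamma$-balanced $d$-regular orientation $X_0$; it is transferable (same contradiction argument around a black vertex reached only by ingoing edges, using minimality); and $X_0\in\cO_d^1$ iff the root-face contour is a maximal $d$-angle, i.e.\ iff $M\in\cL_d$ — the $(\Longrightarrow)$ direction using Claim~\ref{claim:epsilon} to show all half-edges inside a larger maximal $d$-angle are ingoing, the $(\Longleftarrow)$ direction using the rightmost-walk analysis of Section~\ref{sec:right} (Lemma~\ref{lem:facialwalktilde}) to see the loop of any rightmost walk is a length-$d$ contractible walk whose right region contains the root-face. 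Last, the analogue of Lemma~\ref{lem:hbaltbal}: for $Y$ a $\frac{d}{d-2}$-$\ZZ$-orientation of $M$ in $\cO_d^1$ with $X=\sigma^{-1}(Y)$ and $T=\Phi_+(Y)$, one has $\gamma^T(C)=\hgamma^X(C)$ for every non-contractible cycle $C$, by the same bookkeeping as in Lemma~\ref{lem:phispebal}/Lemma~\ref{lem:hbaltbal} (each black vertex on $C$ contributes $d-2$ to the "$C$-adjacent" half-edge weights), so $X$ is $\hgamma$-balanced iff $T$ is balanced. Assembling: if $M\in\cH_d$ admits such a $Y$, then $X=\sigma^{-1}(Y)$ is $\hgamma$-balanced, transferable, in $\cO_d^1$, and minimal (Lemma~\ref{lem:necmin}), so $M$ has essential girth $d$ (necessity direction of the existence lemma) and by uniqueness of the minimal $\hgamma$-balanced $d$-regular orientation, $M\in\cL_d$ and $Y$ is unique; conversely if $M\in\cL_d$, take $X_0$ the minimal $\hgamma$-balanced transferable $d$-regular orientation (in $\cO_d^1$ by the canonical lemma) and set $Y=\sigma(X_0)$, whose mobile is in $\mathcal V_d^{Bal}$. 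The main obstacle I anticipate is getting the combinatorial constants in the $d$-angular lift right — choosing the connection-path length and the auxiliary planar $d$-angulations $Q_f$ so that $M'$ genuinely has essential girth $d$ and the local Euler counts ($o_L(u)-\iota_L(u)-2w_L(u)=d-1-\ell(u)$ on each side of $C$) come out symmetric — since, unlike the bipartite case, there is no parity to lean on and one must quote the planar girth-$d$ construction of \cite{BF12b} carefully; everything else is a routine transcription of Section~\ref{sec:proof_b_g_2} with bipartiteness arguments excised.
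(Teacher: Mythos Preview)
Your route is different from the paper's. Instead of redoing the $b$-regular machinery directly for general $d$, the paper reduces to the bipartite case already proved: it subdivides each edge of $M$ once to obtain the bipartite map $M_2\in\cE_{2d}$ (so $M\in\cL_d$ iff $M_2\in\chL_{2d}$), uses the local rule $\iota$ of Figure~\ref{fig:rule_M2} to identify $\frac{d}{d-2}$-$\ZZ$-orientations of $M$ with $\frac{d}{d-1}$-$\ZZ$-orientations of $M_2$, and then applies Section~\ref{sec:proof_b_g_2} with $b=d$ to $M_2^\star$. The only genuinely new ingredient is Lemma~\ref{lem:hbaltbald}, which shows that the mobile of $M$ is balanced iff the $d$-regular orientation $\sigma^{-1}(\iota(Z))$ of $M_2^\star$ is $\hgamma$-balanced; the existence, uniqueness, transferability, and $\cO^1$-membership statements are then inherited verbatim from the bipartite lemmas applied to $M_2$. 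This is considerably more economical than reproving all of them.

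Your direct approach, as written, has a concrete gap in the existence step. With star-edges of weight $1$ and the verbatim definition $\hgamma_L=2w_L+o_L-\iota_L$, the crucial left/right symmetry in the analogue of Lemma~\ref{lem:exists_bregular} fails: in a $d$-angular lift $M'$ endowed with a $\frac{d}{d-2}$-orientation, Claim~\ref{claim:epsilon} gives $\iota_L(u)+w_L(u)=(\ell_L(u)+\ell(u))-d$ with \emph{no} factor $\tfrac12$, so
\[
o_L(u)-\iota_L(u)-2w_L(u)=(\ell_L(u)-1)-2\bigl((\ell_L(u)+\ell(u))-d\bigr)=-\ell_L(u)-2\ell(u)+2d-1,
\]
which depends on $\ell_L(u)$ and hence does not match the right-side counterpart. (Relatedly, a single weight-$1$ transfer per corner only produces half-edge weights in $\{-1,\ldots,d-1\}$, whereas $\frac{d}{d-2}$-$\ZZ$-orientations need the full range $\{-2,\ldots,d\}$; white leaves of the mobile, with weight $d$, are then unreachable.) This can be repaired by taking star-edges of weight $2$ and renormalizing $\hgamma$, but the paper's subdivision trick sidesteps the whole issue.
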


Let $d\geq 1$ and let $M\in\cH_{d}$. We denote by $M_2$ the
(bipartite) map obtained from $M$ by inserting a new vertex on each
edge. Note that
$M\in\cL_{d}$ if and only if $M_2\in\chL_{2d}$. Applying (as done in
the planar case in~\cite[Lem.~55]{BF12b}) the rules of Figure~\ref{fig:rule_M2}
to each edge of a $\frac{d}{d-2}$-$\ZZ$-orientation $Z$ of $M$, we
obtain a $\frac{d}{d-1}$-$\ZZ$-orientation $Y=\iota(Z)$ of $M_2$.  The mapping $\iota$ is clearly
bijective. Moreover, $Z$ is in $\cO_{d}^{1}$ if and only if $\iota(Z)$ is in
$\cO_{2d}^{1}$.

\begin{figure}[!h]
\begin{center}
\includegraphics[width=12cm]{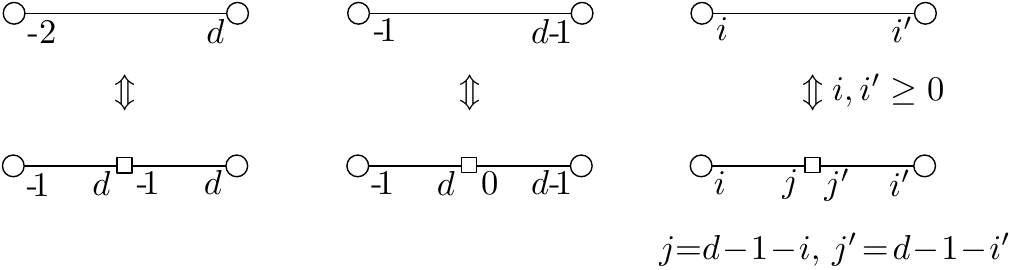}
\end{center}
\caption{The local rule in the 1-to-1 correspondence $\iota$ between
  the $\frac{d}{d-2}$-$\ZZ$-orientations of $M$ and the
  $\frac{d}{d-1}$-$\ZZ$-orientation of $M_2$ (the half-edge direction is not
  indicated, it is determined by the status of
  the weight it carries, either in $\ZZ_{>0}$ or in $\ZZ_{\leq 0}$).}
\label{fig:rule_M2}
\end{figure}

From now on we assume that $d\geq 2$. We first prove the analogue of
Lemma~\ref{lem:hbaltbal}:

\begin{lemma}
  \label{lem:hbaltbald}
Let $d\geq 2$ and $M$ be a map in $\cH_{d}$. Let $Z$ be a $\frac{d}{d-2}$-$\zZ$-orientation
of $M$ 
in $\cO_{d}^1$, let $X=\sigma^{-1}(\iota(Z))$ 
be the associated $d$-regular orientation of $\aM_2$, and let $T$
be the associated mobile (by $\Phi_+$) in ${\mathcal
    V}_d$.   
Then $X$ is $\hgamma$-balanced if and only if $T$ is balanced.  
\end{lemma}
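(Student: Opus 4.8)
The plan is to reduce the statement to Lemma~\ref{lem:hbaltbal} by passing to the subdivided map $M_2$, and then to control how that subdivision affects the associated mobiles. Since $M\in\cH_d$, the map $M_2$ is bipartite, has root-face degree $2d$, and all its face-degrees are even and at least $2d$; hence $M_2\in\cE_{2d}$. By the $1$-to-$1$ correspondence $\iota$ of Figure~\ref{fig:rule_M2}, the orientation $Y:=\iota(Z)$ is a $\frac{d}{d-1}$-$\ZZ$-orientation of $M_2$, and since $Z\in\cO_d^1$ we have $Y\in\cO_{2d}^1$. As $\sigma$ is a bijection from the transferable $d$-regular orientations of $\aM_2$ onto the $\frac{d}{d-1}$-$\ZZ$-orientations of $M_2$ (Lemma~\ref{lem:bij_transferable}), the orientation $X=\sigma^{-1}(Y)$ is well defined, and since $d\geq 2$ the $d$-regular machinery applies. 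Applying Lemma~\ref{lem:hbaltbal} with the roles of $M$ and $b$ played by $M_2$ and $d$ yields that $X$ is $\hgamma$-balanced if and only if the mobile $T':=\Phi_+(Y)\in\hat{\mathcal V}_d$ is balanced. It therefore suffices to prove that $T'$ is balanced if and only if $T=\Phi_+(Z)$ is balanced.

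To compare $T'$ and $T$, I would trace $\iota$ through $\Phi_+$. Because $\Phi_+$ sends vertex-outdegrees to degrees of white vertices, face-degrees to degrees of black vertices, and edge-weights to edge-weights, the edge-subdivision $\iota$ induces an explicit \emph{refinement} $T\mapsto T'$: each $M$-edge $e$ (contributing to $T$ one mobile-edge together with its companion bud) is replaced by the mobile-edges and buds produced from the two halves of $e$, and the midpoint $m_e$ of $e$ shows up as a new white vertex, of weight $d$ and degree at most $2$, that splits the corresponding mobile-edge of $T$ according to the type ($0$-, $1$-, or $2$-way) of $e$ in $Z$, in agreement with the local rules of Figures~\ref{fig:rule_M2} and~\ref{fig:local_rule_weighted_biori}. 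The key claim is that this refinement is \emph{$\gamma$-neutral}: a non-contractible cycle $C$ of $T$ lifts to a non-contractible cycle $C'$ of $T'$ by following the refined edges; any new vertex $m_e$ lying on $C'$ has both of its incident half-edges on $C'$ and thus contributes nothing to $\gamma_L(C')$ or $\gamma_R(C')$, while the contribution of each old vertex is unchanged because the rules of Figure~\ref{fig:rule_M2} do not alter the relevant half-edge weights at the endpoints of $e$ nor the way edges and buds sit relative to $C$ (the case in which $C'$ avoids some $m_e$ being handled by the same local inspection). Hence $\gamma^{T'}(C')=\gamma^T(C)$; since every non-contractible cycle of $T'$ arises (up to homotopy) as such a lift, and since — exactly as in the proofs of Lemmas~\ref{lem:gamma0all} and~\ref{lem:hbaltbal} — it is enough to test two non-homotopic non-contractible cycles, we conclude that $T'$ is balanced if and only if $T$ is balanced. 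Combined with the first step, this gives the lemma.

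The routine parts are the verification that $M_2\in\cE_{2d}$ and the bookkeeping of the weight-constraints ensuring that $\iota$ and $\sigma$ behave as stated; the one genuinely delicate step is the $\gamma$-neutrality of the refinement, which requires the same kind of case-by-case analysis (the $C$-adjacent versus $C$-internal dichotomy, already used in the proof of Lemma~\ref{lem:hbaltbal}) as in the planar treatment of~\cite{BF12b}. Everything else is a direct application of results already established.
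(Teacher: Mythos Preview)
Your route is genuinely different from the paper's. The paper does \emph{not} introduce the auxiliary mobile $T'=\Phi_+(\iota(Z))$ and does not appeal to Lemma~\ref{lem:hbaltbal}. Instead it observes that a non-contractible cycle $C$ of $T$ can be read directly as a cycle of $\aM_2$ (the edges and vertices of $T$ sit naturally in $\aM_2$, with the only discrepancy being that a black--black or white--white edge of $T$ acquires a white square midpoint in $\aM_2$). It then computes $\gamma^T(C)$ and $\hgamma^X(C)$ side by side on that common cycle, via a $C$-adjacent/$C$-internal decomposition and a local inspection at black--white edges (Figure~\ref{fig:contribution_black_white}), obtaining the exact relation $2\gamma^T(C)=\hgamma^X(C)$. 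This gives both implications at once, using Lemma~\ref{lem:criterion_balanced_b_reg} for the converse direction.

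Your factorization through $T'$ is a reasonable strategy, and step~1 (invoking Lemma~\ref{lem:hbaltbal} on $M_2$) is correct. The gap is in step~2. Your description of the passage $T\mapsto T'$ as a simple edge-refinement in which each midpoint $m_e$ ``splits the corresponding mobile-edge of $T$'' is not accurate in all cases. For instance, take a $1$-way edge of $Z$ with weights $(d-1,-1)$: under $\iota$ it becomes two $1$-way edges with weights $(d-1,0)$ and $(d,-1)$, and under $\Phi_+$ the midpoint $m_e$ becomes a white \emph{leaf} of $T'$ attached to a black vertex $b_f$, while the edge $v\!-\!b_f$ of $T$ is not subdivided but has its weight at $b_f$ changed from $-1$ to $0$, and an extra bud appears at $b_{f'}$. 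So the local picture is not ``$m_e$ inserted on $C'$ with both half-edges on $C'$'', and the assertion that ``the contribution of each old vertex is unchanged'' fails as stated. The balance you need does hold, but proving it requires exactly the kind of per-type local accounting that the paper performs directly between $T$ and $X$; routing it through $T'$ does not shorten that work. If you want to make your approach rigorous, you must carry out that case analysis explicitly (distinguishing, for each edge of $Z$, the pair of biorientation types of the two halves in $\iota(Z)$ and tracking the resulting mobile edges, buds, and weights on both sides of $C$), rather than asserting $\gamma$-neutrality.
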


\begin{proof}
Let $C$ be a non-contractible cycle of $T$. 
 Let $\nb(C)$ be the number of black vertices on $C$.
Let $\nbw(C)$ (resp. $\nwb(C)$) be the number of black-white edges $e$
on $C$ where the black extremity is traversed before (resp. after) the
white extremity when traversing $e$ (along the traversal direction of
$C$), and let $\nbb(C)$ be the number of black-black edges along $C$
(note that $\nbb(C)=0$ for $d>2$). As in the last section it is easy
to see that $\nbw(C)=\nwb(C)$.  Note that black-white edges on $C$ can
have weights $(0,d-2)$ or $(-1,d-1)$ (but not $(-2,d)$ since the white
extremity would be a leaf).

Let $w_{L}^T(C)$ (resp. $w_R^T(C)$) be the total weight of half-edges
 in $T$ incident to a vertex, white or black, of $C$ on the left side
(resp. right side) of $C$.  Let $s_{L}^T(C)$ (resp. $s_R^T(C)$) be the
total number of half-edges, including the buds, that are incident to a
black vertex of $C$ on the left (resp. right) side of $C$. 
Note that $C$ identifies to a cycle of $\aM_2$, which we also call $C$ by a slight abuse of notation
(the only difference to keep in mind is that, for each black-black or white-white edge $e$ on $C$ seen as 
a cycle of $T$, in $\aM_2$ there is a white square vertex in the middle of $e$).  
 Let
$w_L^X(C)$ (resp. $w_R^X(C)$) be the total weight (in $X$) of half-edges at
white vertices (round or square) of $C$ on the left (resp. right) side
of $C$.  Let $\iota_L^X(C)$ (resp. $\iota_R^X(C)$) be the total number
of ingoing edges (in $X$) at black vertices on the left (resp. right)
side of $C$. Let $s_L^X(C)$ (resp. $s_R^X(C)$) be the total number of
edges incident to a black vertex on the left (resp. right) side of
$C$.  We have $\gamma_L^T(C)=w_{L}^T(C)+s_L^T(C)$,
$\gamma_R^T(C)=w_{R}^T(C)+s_R^T(C)$, and
$\gamma^T(C)=\gamma_R^T(C)-\gamma_L^T(C)$.  And we have 
$\hgamma_L^X(C)=2w_L^X(C)+s_L^X(C)-2\iota_L^X(C)$,
$\hgamma_R^X(C)=2w_R^X(C)+s_R^X(C)-2\iota_R^X(C)$, and
$\hgamma^X(C)=\hgamma_R^X(C)-\hgamma_L^X(C)$.

The quantity $w_L^T(C)$ decomposes as $w_{L}^{\circ,T}(C)+w_L^{\bullet,T}(C)$ where the first (resp. second) term gathers 
the contributions from the white (resp. black) vertices.  
We let $\cH_L(C)$ be the set of half-edges of $\aM_2$ that are on the left of $C$ and incident to a vertex on $C$ (including
white square vertices on $C$, i.e., seeing $C$ as a cycle in $\aM_2$). 
 The set $\cH_L(C)$ partitions as $\cH_L(C)=\cHci_L(C)\cup\cHbs_L(C)\cup\cHbu_L(C)$ whether the incident vertex is white round, 
white square, or black. A half-edge $h$ of $\cHci_L(C)$ (resp. $\cHbu_L(C)$) is called \emph{$C$-adjacent} if the next half-edge of $\aM_2$ after $h$ in ccw order (resp. cw order) around the vertex incident to $h$ is on $C$; it is called \emph{$C$-internal} otherwise.  By the combined effect of the transfer rule of Figure~\ref{fig:rule_sigma}, the rules of Figure~\ref{fig:rule_M2}, and the local rules in Figure~\ref{fig:local_rule_weighted_biori}, the quantity 
$w_{L}^{\circ,T}(C)$ represents the total contribution to $w_L^X(C)$ of the $C$-internal half-edges in $\cHci_L(C)$,
while $w_L^{\bullet,T}(C)$ represents the total contribution to $-\iota_L^X(C)$ 
of the $C$-internal half-edges in $\cHbu_L(C)$. 

\begin{figure}
\begin{center}
\includegraphics[width=12cm]{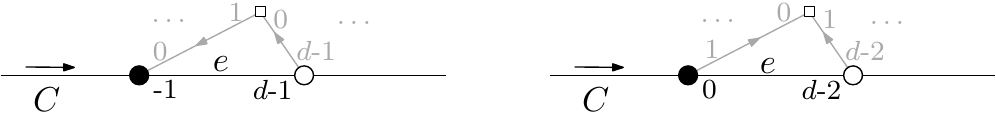}
\end{center}
\caption{The situation at an edge $e$ counted by $\nbw(C)$ (the weights and orientations of the two $C$-adjacent
edges, shown in gray, are determined by the combined effect of the transfer rule of Figure~\ref{fig:rule_sigma}, the rules of Figure~\ref{fig:rule_M2}, and the local rules in Figure~\ref{fig:local_rule_weighted_biori}).   
If $e$ has weights $(-1,d-1)$ it has contribution $d-1$ 
to $w_L^X(C)$ and contribution $1$ to $\iota_L^X(C)$.
If $e$ has weights $(0,d-2)$ it has contribution $d-2$ 
to $w_L^X(C)$ and contribution $0$ to $\iota_L^X(C)$.
Hence it always has a contribution $d-2$ to $w_L^X(C)-\iota_L(X)$.}
\label{fig:contribution_black_white}
\end{figure}

We let $A_L(C)$ be the total contribution to $w_L^X(C)-\iota_L^X(C)$ of $C$-adjacent half-edges
 from $\cHci(C)\cup\cHbu(C)$.  
An important observation (see Figure~\ref{fig:contribution_black_white}) is that  an edge of $T$ counted
by $\nbw(C)$ always gives a contribution $d-2$ to $A_L(C)$ 
(whether it has weights $(-1,d-1)$ or $(0,d-2)$), hence $A_L(C)=(d-2)\nbw(C)$. 
Finally, the total contribution to $w_L^X(C)-\iota_L^X(C)$ by half-edges in $\cHbs(C)$ is $\nbb(C)$. Indeed the only 
contribution is a contribution by one to $w_L^X(C)$ for each black-black edge $e$ on $C$
(there is a white square vertex in the middle of $e$, with an outgoing edge on each side, recalling that black-black edges in $T$ occur only for $d=2$). 
 We thus have
$$
w_L^X(C)-\iota_L^X(C)=w_L^T(C)+(d-2)\nbw(C)+\nbb(C), 
$$
and similarly we have 
$$
w_R^X(C)-\iota_R^X(C)=w_R^T(C)+(d-2)\nwb(C)+\nbb(C). 
$$
Moreover we have
$$
s_L^X(C)=2s_L^T(C)+\nb(C)-2\nbb(C),\ \ \ \ s_R^X(C)=2s_R^T(C)+\nb(C)-2\nbb(C).
$$
With these equalities, and using the fact that $\nbw(C)=\nwb(C)$,
we easily deduce $2\gamma^T(C)=\hgamma^X(C)$, and in particular 
$\gamma^T(C)=0$ if and only if $\hgamma^X(C)=0$. 

From there, very similarly as in the end of the proof of
Lemma~\ref{lem:hbaltbal}, we conclude that $X$ is balanced if and
only if $T$ is balanced, which concludes the proof.
\end{proof}

We are now able to prove Proposition~\ref{prop:unique_d_at_least_2}:

\noindent\emph{Proof of Proposition~\ref{prop:unique_d_at_least_2}.}
Suppose that $M\in\cH_{d}$ admits a $\frac{d}{d-2}$-$\zZ$-orientation  $Z\in\cO_{2b}^1$ whose
associated mobile by $\Phi_+$ is in $\mathcal{V}_d^{Bal}$, and let $X=\sigma^{-1}(\iota(Z))$.  
Then $X$ is $\hgamma$-balanced (according to Lemma~\ref{lem:hbaltbald}), 
is transferable and in $\cO_{2d}^1$
(by Lemma~\ref{lem:bij_transferable} and since $\iota$ preserve the property of being in $\cO_{2d}^1$), 
and  minimal (according to Lemma~\ref{lem:necmin}).
 Lemma~\ref{lem:exists_bregular} implies that 
$M_2\in\hat{\cM}_{2d}$.
Hence, according to Lemma~\ref{lem:bregCanonical}, $M_2$ is in $\chL_{2d}$, so that 
$M$ is in $\cL_d$, and moreover
$Z$ is unique (it has to be the image by $\iota^{-1}\circ \sigma$ of
the unique minimal  $\hgamma$-balanced $d$-regular orientation of $M_2^\star$). 

Conversely let us prove the existence part, for $M\in\cL_{d}$. By Lemma~\ref{lem:bregCanonical},
let $X$ be the minimal $\hgamma$-balanced  $d$-regular  orientation 
of $M_2^\star$, that is
moreover transferable and in $\cO_{2d}^1$. Let
$Z=\iota^{-1}(\sigma(X))$ so that $Z\in\cO_{2d}^1$ by
Lemma~\ref{lem:bij_transferable} and since $\iota$
preserve  the property of being in $\cO_{2d}^1$. 
 And Lemma~\ref{lem:hbaltbald} ensures that the  mobile associated
to $Z$ by $\Phi_+$ is in $\mathcal V_d^{Bal}$. $\hfill\square$

\subsubsection{Proof of Theorem~\ref{theo:bij_maps_2b} for $b=1$}
Before proving Theorem~\ref{theo:bij_maps_2b} for $b=1$ let us make a simple observation. 
We have proved Theorem~\ref{theo:bij_maps_2b} for $b\geq 2$ and Theorem~\ref{theo:bij_maps_d} for $d\geq 2$. 
For $b\geq 1$ a $\ZZ$-bimobile in $\cV_{2b}^{Bal}$ is called \emph{even} 
if all its half-edge weights are even. The mapping consisting in doubling the half-edge
weights gives a bijection between  
$\hat{\cV}_b^{Bal}$ and even $\ZZ$-bimobiles in $\cV_{2b}^{Bal}$. Moreover
the toroidal map 
(obtained by performing $\Psi_+$) associated to a bimobile in $\hat{\cV}_b^{Bal}$
is the same as the toroidal map associated to the weight-doubled bimobile. 
 Hence, if we call $\phi_d$, for $d\geq 2$, the bijection in Theorem~\ref{theo:bij_maps_d}
and $\hat{\phi}_b$, for $b\geq 2$, the bijection in
Theorem~\ref{theo:bij_maps_2b} then we have already obtained:

\begin{center}
\begin{minipage}{13.2cm}
`For $b\geq 2$ and $M\in\cL_{2b}$, we have that $\phi_{2b}(M)$ is even
if and only if $M$ is bipartite, 
 and in that case $\phi_{2b}(M)$ is equal to $\hat{\phi}_b(M)$ upon doubling the half-edge
weights'.  
\end{minipage}
\end{center}

Note that if we can establish (as stated next) 
the similar bipartiteness condition for $b=1$ then we 
will have Theorem~\ref{theo:bij_maps_2b} for $b=1$
 (as the bipartite specialization of Theorem~\ref{theo:bij_maps_d} for $d=2$). 

\begin{lemma}\label{lem:bip_2}
Let $M\in\cL_2$ and let $T=\phi_2(M)$. Then $T$ is even if and only if $M$ is bipartite.
\end{lemma}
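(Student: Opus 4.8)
Here is my plan for proving Lemma~\ref{lem:bip_2}.

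\medskip

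\noindent\textbf{Plan.} The statement is the $b=1$ (equivalently $d=2$) analogue of the bipartiteness criteria already obtained for $b\geq 2$ and $d\geq 2$, and I would try to reduce to those as much as possible rather than redo the whole orientation machinery. First I recall that $\phi_2$ factors through the correspondences established in Section~\ref{sec:proof_d_g_2}: given $M\in\cL_2$ one forms the subdivided map $M_2$, takes the star-completion $\aM_2$, and $\phi_2(M)$ is obtained from the minimal $\hat\gamma$-balanced $1$-regular orientation $X$ of $\aM_2$ (which is transferable and in $\cO_4^1$ by Lemma~\ref{lem:bregCanonical}) via the chain $X\mapsto \sigma(X)=\iota(Z)\mapsto Z\mapsto T$. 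Each of these steps is weight-preserving in the sense tracked by Lemma~\ref{lem:hbaltbald} and the local rules of Figures~\ref{fig:rule_sigma}, \ref{fig:rule_M2}, \ref{fig:local_rule_weighted_biori}, so `$T$ even' translates into a statement about the parities of the weights in $X$ (equivalently in $Z$): $T$ is even if and only if every half-edge weight of $Z$ is even, if and only if in $X$ every $M$-edge has even weight on each half-edge (note that for $d=2$, an $M$-edge of $M_2$ corresponds to a half-edge of an original edge of $M$, so this is a condition on the weights of $X$ restricted to $M$-edges, since star-edges and the subdivision edges automatically carry controlled weights).

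\medskip

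\noindent\textbf{The `only if' direction.} Suppose $T$ is even, so all weights of $Z$ are even. Then by the same computation as in the bipartiteness proofs (e.g. the argument in Lemma~\ref{lem:exists_bregular} or the beginning of Proposition~\ref{th:evencase}), for any non-contractible cycle $C$ of $M$ with length $k$: since the orientation $Z$ is in $\cO_2^1$ it is minimal (Lemma~\ref{lem:necmin}) and balanced (because $T\in\cV_2^{Bal}$, and by Lemma~\ref{lem:hbaltbald} balancedness transfers through), hence $\gamma_L(C)=\gamma_R(C)$; writing the total-weight identity $\gamma_L(C)+\gamma_R(C)=(\text{vertex weights along }C)-(\text{edge weights along }C)$ with the $\frac{2}{0}$-weights (vertices weight $2$, edges weight $0$) gives $\gamma_L(C)+\gamma_R(C)=2k-0=2k$, so $\gamma_L(C)=k$. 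Now, the key point specific to $d=2$: with all weights even, $\gamma_L(C)$ is even (it is a sum of even half-edge weights over white-vertex incidences, plus counts of half-edges at black vertices — one must check the black-vertex contributions are also even, which follows because each black vertex of $T$ on $C$ has even degree in the $d=2$ setting... actually here one should be careful, so the cleanest route is to argue directly on $Z$ on the map $M$, where the $\frac{2}{0}$-orientation has all half-edge weights even by hypothesis, so the half-edge weights incident to the left side of $C$ sum to an even number, i.e. $\gamma_L(C)$ even). Hence $k=\gamma_L(C)$ is even. Since all faces of $M$ have even degree (they have degree $\geq 2$... wait, $\cL_2$ allows faces of degree $\geq 2$ not necessarily even — so this needs the extra input that $M\in\cL_2$ means essential girth $2$ with a maximal root $2$-angle, and one must show all faces even). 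The honest fix: the face $f$ of $M$ corresponds to a black vertex of $T$ of the same degree $\deg(f)$, whose weight is $-\deg(f)+2$; evenness of $T$ forces $-\deg(f)+2$ even, hence $\deg(f)$ even. Combined with all non-contractible cycles being even, $M$ is bipartite.

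\medskip

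\noindent\textbf{The `if' direction and the main obstacle.} Conversely, suppose $M\in\cL_2$ is bipartite. Then $M\in\chL_2$, and I want to show $\phi_2(M)=T$ has all weights even. The natural strategy mirrors the proof of Proposition~\ref{th:evencase} / Claim~\ref{cl:even}: let $Q$ be the set of $M$-edges of $\aM_2$ whose two half-edges have odd weight in the minimal $\hat\gamma$-balanced orientation $X$ (the parity of the two half-edges of an $M$-edge agree since the edge weight $d-2=0$ is even), assume $Q\neq\emptyset$, let $M_Q$ be the induced embedded subgraph; every $M$-vertex has even total weight $d=2$... hmm, weight $2$ is even but could be split as $1+1$, so a vertex could have exactly two odd edges — so $M_Q$ has all vertices of degree $\geq 2$? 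No: a vertex of weight $2$ could have one half-edge of weight $2$ (even) and the rest $0$, or two of weight $1$. If it has any odd half-edge it has an even number of them, so $\geq 2$; good, vertices of $M_Q$ have degree $\geq 2$. Then by minimality $M_Q$ has a unique face (same argument as Lemma~\ref{lem:multiplesigma}: a bounded face would force a bidirected cycle contradicting minimality — here one must verify `odd weight $\Rightarrow$ nonzero weight', true since $0$ is even), so $M_Q$ is one of the three configurations of Figure~\ref{fig:threecases}, yielding a non-contractible cycle $C$ with exactly one $Q$-edge on each side; hence $\gamma_L(C)$ is odd. But balancedness plus the total-weight identity give $\gamma_L(C)=k$ where $k=\ell(C)$, and bipartiteness of $M$ gives $k$ even — contradiction. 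So $Q=\emptyset$ and all $M$-edge weights of $X$ are even, whence $Z$ and $T$ are even.

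\medskip

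\noindent The step I expect to be the main obstacle is the bookkeeping in the `if' direction: making sure the parity transfer between $T$, $Z=\iota^{-1}(\sigma(X))$, and $X$ is exactly right for $d=2$, where black–black edges of $T$ are present and the subdivision vertices of $M_2$ and the star-edges interact. Concretely I need to confirm that `$T$ even' $\iff$ `all $M$-edge weights of $X$ even' (the star-edges have weight $1$ which is odd, so these must be excluded from the parity count — this is why one works with the parity of $Q\subseteq M$-edges only), and that the cycle $C$ produced lives genuinely in $M$ (not using square vertices) so that the length/parity argument via bipartiteness of $M$ applies cleanly. Once the dictionary is pinned down, both directions are the same even/odd counting arguments already used for $b\geq 2$, just specialized to weight parity $2$ instead of $\sigma$.
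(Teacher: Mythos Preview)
Your overall strategy---reduce to a parity argument in the spirit of Proposition~\ref{th:evencase} and Lemma~\ref{lem:multiplesigma}---is natural, but the transfer to the $\ZZ$-bioriented setting breaks at two points, both in the `if' direction.

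First, the claim that vertices of $M_Q$ have even degree does not follow from the vertex weight being $2$. In a $\ZZ$-biorientation the weight of a vertex is the sum of the \emph{outgoing} half-edges only, so the constraint ``outgoing weights sum to $2$'' forces the number of outgoing half-edges of weight $1$ to be even, but says nothing about ingoing half-edges of weight $-1$. A vertex with a single outgoing half-edge of weight $2$ and exactly one ingoing half-edge of weight $-1$ is perfectly consistent with the local constraints and has degree $1$ in $M_Q$. (This is precisely where the $\NN$-biorientation argument of Proposition~\ref{th:evencase} uses that vertex weight sums \emph{all} incident half-edges.)

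Second, the minimality step fails: an edge of $Z$ with weights $(-1,1)$ is $1$-way, not $2$-way (the half-edge of weight $-1$ is ingoing). So the boundary of a putative bounded face of $M_Q$ is \emph{not} a set of bidirected edges, and you cannot invoke the ``set $S$ with all boundary edges having $S$ on the right'' contradiction from Lemma~\ref{lem:multiplesigma}. Minimality in the paper is only defined and used for $\NN$-biorientations. Relatedly, in both directions you invoke $\gamma^Z(C)=0$ for non-contractible cycles $C$ of $M$, citing Lemma~\ref{lem:hbaltbald}; but that lemma relates balancedness of $X$ on $\aM_2$ to balancedness of $T$, not to any $\gamma$-score of $Z$ on cycles of $M$, so this step is unsupported.

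The paper sidesteps all of this by working directly in the mobile $T$. There the weight of every vertex (white or black) is the sum of \emph{all} incident half-edges, so the odd edges $(-1,1)$ genuinely form an Eulerian subgraph $\Gamma$ of $T$; and since $T$ is already unicellular, $\Gamma$ must be one of a few explicit configurations, from which one extracts a cycle $C$ of $T$ with exactly one $\Gamma$-edge on each side. The contradiction then comes from computing $w_L(C)$ two ways, using balancedness of $T$ and the parity of the walk of $M$ ``just to the left'' of $C$. For the `only if' direction the paper similarly stays in $T$: after halving, white vertices are leaves, so cycles of $T'$ live on black vertices; a short count gives $s_L(C)$ even, and $s_L(C)$ is exactly the length of a non-contractible closed walk of $M$. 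The moral: for $d=2$ the parity bookkeeping is cleanest on the mobile side, not on the orientation side.
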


\begin{proof}
Assume $T$ is even, and let $T'$ be obtained from $T$ after dividing
by $2$ the half-edge weights.  Note that $T'\in\hat\cV_1^{Bal}$ and in 
particular the degrees of all black vertices of $T'$ are even, 
so that all face-degrees of $M$ are even. 
Since the weight of a white vertex of $T'$ is $1$, in  $T'$ 
all white vertices are leaves. 
  Consider two distinct cycles $C_1,C_2$ of $T'$ and
  $C\in \{C_1,C_2\}$. Since white vertices are leaves, the cycle $C$ is
  made only of black vertices and black-black edges, with zero weights
on both half-edges.  Let $k$ be the length of $C$.
  Let $w_L(C)$ (resp. $w_R(C)$) be the total weight of half-edges of
  $T$ incident to (black) vertices of $C$ on the left (resp. right)
  side of $C$. Let $s_L(C)$ (resp.  $s_R(C)$) be the total number of
  half-edges (including buds) incident to black vertices of $C$ on the
  left (resp. right) side of $C$.  
Since $T'$ is balanced we have 
  $2\,w_L(C)+s_L(C)=2\,w_R(C)+s_R(C)$. Let $\kappa(C)$ be the total
  degree of faces corresponding to vertices of $C$, so
  $\kappa(C)=s_L(C)+s_R(C)+2k$.  Since all the half-edges on $C$ have weight $0$, the
  total weight of vertices of $C$ is
  $w_L(C)+w_R(C)=\sum_{u\in C} (-\frac{1}{2}
  \mathrm{deg}(u)+1)=-\frac{1}{2}\kappa(C)+k$. By combining the three
  equalities, we obtain that $s_L(C)=-2 w_L(C)$. So $s_L(C)$ is even.
  So $s_L(C_i)$ is even for $i\in\{1,2\}$.
For $i\in\{1,2\}$, let $W_i$ be the walk of $M$ that is ``just on the
left'' of $C_i$ (seeing $M$ and $T$ as superimposed), 
i.e. obtained by following the left boundary of the
corresponding face of $M$ each time $C_i$ passes by a black vertex.
By the local rules of $\Phi_+$ shown in Figure~\ref{fig:local_rule_weighted_biori}, 
the length of $W_i$ is precisely equal to
$s_L(C_i)$ and thus is even.  All the faces of $M$ are even, and the
two walks $W_i$ are non-homotopic to a contractible cycle
and non-homotopic to each other. Thus $M$ is bipartite.

Conversely, assume that $M$ is bipartite. Note that there are 3 types 
of edges in $T\in\cV_2^{Bal}$:
those of weights $(-2,2)$ that connect a black vertex to a white leaf, those
of weights $(-1,1)$ that connect a black vertex to a white vertex of degree $2$ (incident
to two such edges), and those of weights $(0,0)$ that connect two black vertices. 
We call \emph{odd} the edges of weights $(-1,1)$. To prove that $T$ is even 
we thus have to show that $T$ has no odd edges.  
Let $\Gamma$ be the subgraph of $T$ induced by the odd edges. 
Since $M$ is bipartite, all its 
faces have even degree and thus all black vertices of $T$ have even weight
(since for $d=2$ the weight of a black vertex of $T$ is $2$ minus the degree of the 
associated face). Moreover every white vertex is incident to either no odd edge or 
to two odd edges. Hence $\Gamma$ is an Eulerian subgraph of $T$.   There are two
types of toroidal unicellular maps since two cycles of a toroidal
unicellular map may intersect either on a single vertex (square case)
or on a path (hexagonal case), as depicted on
Figure~\ref{fig:hexasquare}.  If $T$ is hexagonal, then $\Gamma$ is
exactly one of the cycles of $T$.
If $T$ is square, then $\Gamma$ can be either one of the cycles of $T$
or the union of the two cycles of $T$. One easily checks that in all cases, there exists a cycle
$C$ of $T$ that has exactly one incident edge in $\Gamma$ on each side. We endow $C$
with a traversal direction. 

\begin{figure}[!h]
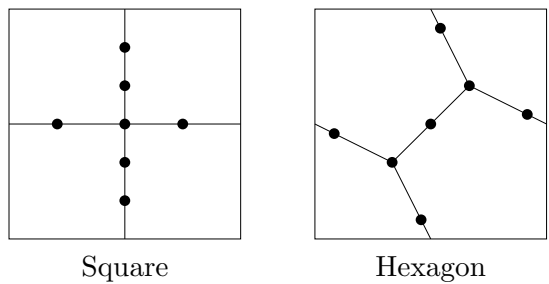

\center
\begin{tabular}{cc}
\includegraphics[scale=0.4]{cases-2} \ \ & \ \
\includegraphics[scale=0.4]{cases-3}\\
Square \ \ & \ \ Hexagon
\end{tabular}

\caption{The two types of toroidal unicellular maps.} 
\label{fig:hexasquare}
\end{figure}
Recall from Section~\ref{sec:bij_extended}, that $\gamma_L(C)=w_L(C)+s_L(C)$,
$\gamma_R(C)=w_R(C)+s_R(C)$. Moreover since
$T \in {\mathcal V}_2^{Bal}$, we have $\gamma_L(C)=\gamma_R(C)$.
Note that white vertices of $C$ have all their weight on $C$.  Let $\nb(C)$ be
the number of black vertices on $C$.  Let $\kappa(C)$ be the total
degree of faces corresponding to the black vertices on $C$.  So black
vertices of $C$ have total weight $-\kappa(C)+2\nb(C)$.  Let $\nbw(C)$
(resp. $\nwb(C)$)
be the number of black-white (resp. white-black) edges on $C$ while following the traversal
direction of $C$. Clearly $\nbw(C)=\nwb(C)$.
The total weight of half-edges on $C$ incident to a black vertex
 is precisely $-\nbw(C)-\nwb(C)=-2\nbw(C)$. So
$w_L(C)+w_R(C)=-\kappa(C)+2\nb(C)+2\nbw(C)$.
Note that we have $\kappa(C)=s_L(C)+s_R(C)+2\nb(C)$.
Combining the equalities gives $w_L(C)=-s_L(C)+\nbw(C)$.

Let $W$ be the walk of $M$ that is ``just on the left'' of $C$ (seeing
$M$ and $T$ as superimposed), 
i.e. obtained by following the left boundary of the
corresponding face of $M$ each time $C$ passes by a black vertex.
Since
$M$ is bipartite,  the length of $W$ is even, and according to the local rules of $\Phi_+$
shown in Figure~\ref{fig:local_rule_weighted_biori}, 
it is equal  to $s_L(C)+\nbw(C)$. So $w_L(C)=(s_L(C)+\nbw(C)) -
2s_L(C)$ is even. So $C$ is incident to an even number of edges of
$\Gamma$ on its left side, a contradiction.
   \end{proof}

\subsubsection{Proof of Theorem~\ref{theo:bij_maps_d} for $d=1$}

Recall from Section~\ref{sec:proof_d_g_2} that $\cH_1$ denotes
the family of face-rooted toroidal maps with root-face degree
  $1$ (i.e. a loop).  Moreover, for $M\in \cH_1$, a
  $\frac{1}{-1}$-$\ZZ$-orientation of $M$ is a $\ZZ$-biorientation
  with weights in $\{-2,-1,0,1\}$ such that all vertices have weight
  $1$, all edges have weight $-1$, and every face $f$ has weight
  $-\mathrm{deg}(f)+1$. Note that there are just two types of edges in
  such an orientation, with weights $(-1,0)$ or $(-2,1)$ (see the
  first row of Figure~\ref{fig:mapping_tau}).

The bijection $\Phi_+$ specializes into a bijection between maps in
$\cH_1$ endowed
with a $\frac{1}{-1}$-$\zZ$-orientation in $\cO_1^1$ and
 the family $\mathcal V_1$ of toroidal
 $\frac{1}{-1}$-$\zZ$-mobiles.
Showing Theorem~\ref{theo:bij_maps_2b} for $d=1$ thus amounts to proving
the following statement:

\begin{proposition}\label{prop:unique_d1}
Let  $M$ be a map in $\cH_{1}$. Then 
$M$ admits a $\frac{1}{-1}$-$\zZ$-orientation 
in $\cO_{1}^1$ whose associated mobile by $\Phi_+$  is in ${\mathcal V}_1^{Bal}$ if and only if $M$ is in
$\cL_{1} $.
In that case $M$ admits a unique such orientation. 
\end{proposition}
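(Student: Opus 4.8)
The plan is to follow the same three-step template used for the cases $d\geq 2$ (Proposition~\ref{prop:unique_d_at_least_2}), but now handling the degenerate features of $d=1$ by hand. Recall that for $M\in\cH_1$ the root-face is a loop; writing $M_2$ for the map obtained by subdividing each edge of $M$ once, we again have $M\in\cL_1$ iff $M_2\in\chL_2$, and the rule of Figure~\ref{fig:rule_M2} still gives a bijection $\iota$ between $\frac{1}{-1}$-$\ZZ$-orientations of $M$ and $\frac{1}{0}$-$\ZZ$-orientations of $M_2$ that preserves the property of being in $\cO^1$. Composing with $\sigma^{-1}$ (Lemma~\ref{lem:bij_transferable} applied with $b=1$) we pass to $1$-regular orientations of $\aM_2$. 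So the first step is to check that these correspondences are still valid for $b=1$: the only subtlety is that star-vertices may now have small degree and the formula ``outdegree $\tfrac12\deg(u)+b$'' must remain nonnegative, which holds because $M_2\in\cH_2$ forces every face of $M_2$, hence every star-vertex, to have degree at least $2$. Once this is in place, Lemmas~\ref{lem:exists_bregular}, \ref{lem:criterion_balanced_b_reg}, \ref{lem:bregCanonical} and the bijection framework of Section~\ref{sec:proof_b_g_2} apply verbatim to $\aM_2$ (they were stated for all $b\geq 1$).

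The second step is to prove the analogue of Lemma~\ref{lem:hbaltbal}/\ref{lem:hbaltbald} for $d=1$, i.e.\ that for $Z$ a $\frac{1}{-1}$-$\ZZ$-orientation of $M$ in $\cO_1^1$, with $X=\sigma^{-1}(\iota(Z))$ the associated $1$-regular orientation of $\aM_2$ and $T=\Phi_+(Z)$ the associated mobile in $\mathcal V_1$, one has $X$ is $\hgamma$-balanced iff $T$ is balanced. I would redo the contribution-counting argument of the proof of Lemma~\ref{lem:hbaltbald}, tracking for a non-contractible cycle $C$ of $T$ the quantities $w_L^X(C)-\iota_L^X(C)$, $s_L^X(C)$, etc., in terms of $w_L^T(C)$, $s_L^T(C)$, $\nb(C)$, $\nbw(C)$, $\nbb(C)$, and exploiting $\nbw(C)=\nwb(C)$ and the local rules of Figures~\ref{fig:rule_sigma}, \ref{fig:rule_M2}, \ref{fig:local_rule_weighted_biori}. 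In $\mathcal V_1$ the edges have weight $-1$ (so weights $(-1,0)$ or $(-2,1)$ on the two half-edges) and black vertices can have any degree $\geq 1$, so there can be black-black edges; the bookkeeping is a bit heavier than for $d\geq 2$ but structurally identical, yielding $2\gamma^T(C)=\hgamma^X(C)$, hence in particular $\gamma^T(C)=0 \iff \hgamma^X(C)=0$. Applying this to two distinct (necessarily non-homotopic, since $T$ is unicellular) non-contractible cycles and invoking Lemma~\ref{lem:criterion_balanced_b_reg} closes the equivalence.

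The third step is then a line-by-line copy of the proof of Proposition~\ref{prop:unique_d_at_least_2}. For the ``only if'' direction: if $M\in\cH_1$ admits $Z\in\cO_1^1$ with $\Phi_+(Z)\in\mathcal V_1^{Bal}$, then $X=\sigma^{-1}(\iota(Z))$ is $\hgamma$-balanced (step two), transferable and in $\cO_2^1$ (Lemma~\ref{lem:bij_transferable} plus the fact that $\iota$ preserves being in $\cO_2^1$), and minimal (Lemma~\ref{lem:necmin}); then Lemma~\ref{lem:exists_bregular} gives $M_2\in\hat\cM_2$ and Lemma~\ref{lem:bregCanonical} gives $M_2\in\chL_2$, hence $M\in\cL_1$, and $Z$ is unique as the image under $\iota^{-1}\circ\sigma$ of the unique minimal $\hgamma$-balanced $1$-regular orientation of $\aM_2$. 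For the ``if'' direction, run Lemma~\ref{lem:bregCanonical} in reverse. The main obstacle I anticipate is purely in step two: at $b=1$ the star-vertex outdegrees $\tfrac12\deg(u)+1$ can be as small as $2$ and white vertices of $T$ can be leaves carrying weight $1$ on a $(-2,1)$-edge, so one must be careful that the degenerate local configurations (short connection paths, black-black edges on $C$, the $(-2,1)$ vs $(-1,0)$ edge-weight dichotomy) are accounted for correctly; but since Lemmas~\ref{lem:exists_bregular}--\ref{lem:bregCanonical} were already proved for all $b\geq 1$, no genuinely new ingredient is needed, and I would simply remark that the proof of Proposition~\ref{prop:unique_d_at_least_2} carries over \emph{mutatis mutandis}, spelling out only the modified contribution count of step two.
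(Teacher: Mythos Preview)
Your plan has a real gap in step one. You assert that the lemmas of Section~\ref{sec:proof_b_g_2} ``were stated for all $b\geq 1$'' and apply verbatim to $1$-regular orientations of $\aM_2$. They were not: that section explicitly restricts to $b\geq 2$ after the opening definitions, and the proof of Lemma~\ref{lem:exists_bregular} (existence of a $\hgamma$-balanced $b$-regular orientation) depends on Proposition~\ref{th:evencase}, which is only available for $b\geq 2$. Moreover the $2b$-angular lift used in that proof involves connection paths of length $b-1$ and an inserted planar map of girth $2b$; for $b=1$ both degenerate (paths of length~$0$, inner faces of degree~$2$), so the construction does not go through ``verbatim.'' Without Lemma~\ref{lem:exists_bregular} at $b=1$ you also lose Lemma~\ref{lem:criterion_balanced_b_reg} and Lemma~\ref{lem:bregCanonical}, so step three collapses.

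The paper avoids this by a genuinely different route: it passes not to $M_2$ but to $M_4$, subdividing each edge into a path of length~$4$, and works with $2$-regular orientations of $\aM_4$, i.e.\ at $b=2$ where all the lemmas are already in force. The relevant transfer map is $\tau=\sigma^{-1}\circ\iota\circ(\text{double the weights})\circ\iota$, not your $\sigma^{-1}\circ\iota$. This buys access to the $b=2$ theory at the cost of an extra wrinkle: $\tau$ is only injective, not surjective, so in the existence direction one must check that the canonical minimal $\hgamma$-balanced $2$-regular orientation of $\aM_4$ actually lies in the image of $\tau$, i.e.\ that after one application of $\iota^{-1}\circ\sigma$ the resulting $\frac{2}{0}$-$\ZZ$-orientation of $M_2$ has all even weights. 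That is exactly what Lemma~\ref{lem:bip_2} (proved just before this proposition) supplies, via bipartiteness of $M_2$. Your proposal neither passes to $M_4$ nor invokes Lemma~\ref{lem:bip_2}, so it is missing the key ingredient.
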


\begin{figure}[!h]
\begin{center}
\includegraphics[width=8cm]{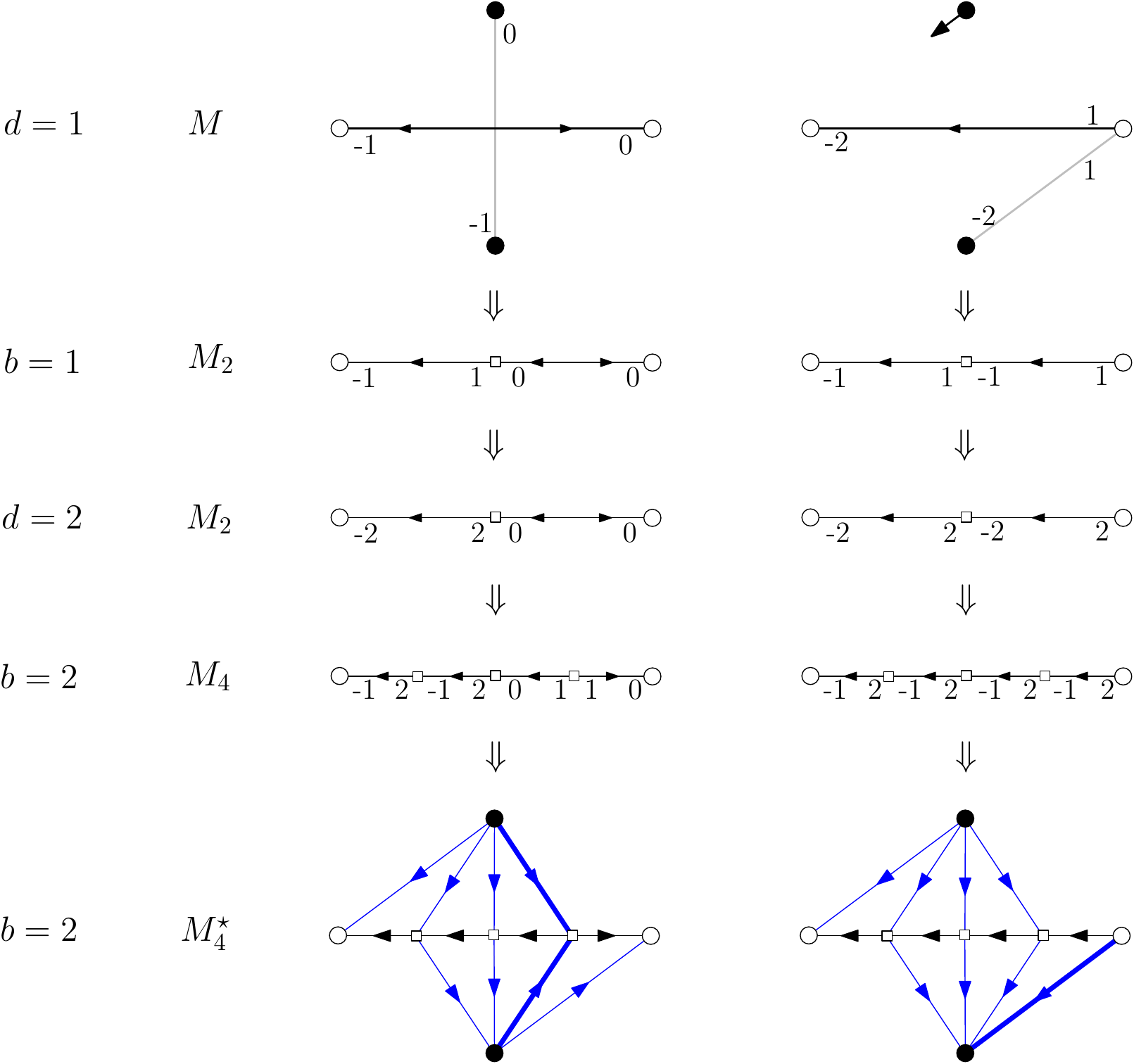}
\end{center}
\caption{The mapping $\tau$ from $\frac{1}{-1}$-orientations in 
$\cO_1^1$ 
to (certain) transferable $2$-regular orientations in $\cO_{4}^1$. 
In the top row, we show the 
corresponding mobile-edge; in the bottom-row
we show (in bolder form) on which star-edges we lift the mobile-edge.}
\label{fig:mapping_tau}
\end{figure}

For a map $M\in\cH_1$, let $M_2$ (resp. $M_4$) be the map obtained
from $M$ by subdividing every edge into a path of length $2$
(resp. $4$).  If $M$ is endowed with a $\frac{1}{-1}$-$\ZZ$-orientation $Z$
let $\tau(Z)$ be the (transferable) 2-regular orientation of
$M_4^\star$ obtained from $M$ using the rules of
Figure~\ref{fig:mapping_tau}, i.e., applying the rule of
Figure~\ref{fig:rule_M2} to obtain a $\frac{1}{0}$-$\ZZ$-orientation of
$M_2$, then doubling the weights to get to an even
$\frac{2}{0}$-$\ZZ$-orientation of $M_2$, then applying the rule of
Figure~\ref{fig:rule_M2} to get to a $\frac{2}{1}$-$\ZZ$-orientation of
$M_4$, and finally applying the mapping $\sigma^{-1}$ to get to a
transferable $2$-regular orientation of $M_4^\star$. Note that $Z$ is
in $\cO_1^1$ if and only if $\tau(Z)$ is in $\cO_4^1$.
Note that $\tau$ is injective but not a bijection since when doubling
the weights to obtain a $\frac{2}{0}$-orientation of $M_2$ we have
only even weights.

We first prove the analogue of
Lemma~\ref{lem:hbaltbald}:

\begin{lemma}
  \label{lem:hbaltbald1}
Let $M$ be a map in $\cH_{1}$, let $Z$ be a $\frac{1}{-1}$-$\zZ$-orientation
of $M$ 
in $\cO_{1}^1$, let $X=\tau(Z)$ 
be the associated 
$2$-regular orientation of $\aM_4$, and let $T$
be the associated mobile (by $\Phi_+$) in ${\mathcal
    V}_1$.   
Then $X$ is $\hgamma$-balanced if and only if $T$ is balanced.  
\end{lemma}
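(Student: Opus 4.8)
The plan is to follow verbatim the strategy of the proofs of Lemmas~\ref{lem:hbaltbal} and~\ref{lem:hbaltbald}: fix a non-contractible cycle $C$ of $T$, regard it simultaneously as a cycle of $\aM_4$ (seeing $T$ and $\aM_4$ as superimposed), and prove an identity $\hgamma^X(C)=c\cdot\gamma^T(C)$ with $c$ a fixed positive integer independent of $C$ and of $Z$. Granting this, the lemma follows as before: if $X$ is $\hgamma$-balanced then $\hgamma^X(C)=0$ for every non-contractible cycle, hence $\gamma^T(C)=0$ for every non-contractible cycle of $T$, so $T$ is balanced; conversely, if $T$ is balanced, pick two non-contractible cycles $C_1,C_2$ of $T$, which are necessarily non-homotopic since $T$ is a genus-$1$ unicellular map, and from $\gamma^T(C_i)=0$ deduce $\hgamma^X(C_i)=0$, so that $X$ is $\hgamma$-balanced by Lemma~\ref{lem:criterion_balanced_b_reg} (which applies since $M_4\in\cE_4$ and $\aM_4$ carries the $2$-regular orientation $X$).

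To obtain the weight identity I would exploit that $\tau$ factors through the construction of Lemma~\ref{lem:hbaltbald}. Writing $Y$ for the $\frac{2}{0}$-$\ZZ$-orientation of $M_2$ obtained from $Z$ by applying the rule of Figure~\ref{fig:rule_M2} and then doubling all half-edge weights, we have $M_2\in\cH_2$, $Y\in\cO_2^1$, and $X=\tau(Z)=\sigma^{-1}(\iota(Y))$; hence Lemma~\ref{lem:hbaltbald} (applied with $d=2$ to $M_2$ and $Y$) already gives that $X$ is $\hgamma$-balanced if and only if the mobile $T_2:=\Phi_+(Y)\in\mathcal V_2$ is balanced. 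It thus remains to compare $T=\Phi_+(Z)\in\mathcal V_1$ with $T_2=\Phi_+(2\iota(Z))\in\mathcal V_2$. Here I would check, for every non-contractible cycle $C$, that $\gamma^{T_2}(C)=c'\cdot\gamma^{T}(C)$ for a fixed positive integer $c'$: the subdivision $M\to M_2$ (via the rule of Figure~\ref{fig:rule_M2}) turns each $1$-way edge of $Z$, of half-edge weights $(1,-2)$, into a path whose effect on the mobile is controlled, and each $0$-way edge of $Z$, of half-edge weights $(-1,0)$, into a path carrying a degree-$2$ vertex that becomes a square white vertex of $T_2$ (equivalently a black--black edge); the subsequent doubling multiplies the weight-part of every $\gamma$-score by $2$ while leaving the half-edge counts at black vertices unchanged.

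The heart of the argument, and the step I expect to be the main obstacle, is this last comparison, which is of exactly the same nature as the computations in the proofs of Lemmas~\ref{lem:hbaltbal} and~\ref{lem:hbaltbald}. One decomposes $w_L^{T}(C)$ (and $w_L^{T_2}(C)$) into its white-vertex part and its black-vertex part, introduces the $C$-adjacent and $C$-internal half-edges on the left of $C$ (the relevant subsets of $\cHci_L(C)$ and $\cHbu_L(C)$), and verifies that under the combined effect of the transfer rule of Figure~\ref{fig:rule_sigma}, the rule of Figure~\ref{fig:rule_M2}, and the local rules of Figure~\ref{fig:local_rule_weighted_biori}, the $C$-internal half-edges carry the ``interior'' contributions while each black vertex of $C$ contributes a fixed amount through its $C$-adjacent half-edges; summing over $C$ and using $\nbw(C)=\nwb(C)$ then yields the proportionality. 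The only genuinely new feature compared with the earlier lemmas is the bookkeeping of the half-edge weights $(-1,0)$ on black--black edges and $(-2,1)$ on black--white edges of $T\in\mathcal V_1$, together with the extra subdivision layer $M\to M_2\to M_4$; the main risk is therefore an arithmetic slip in tabulating the contribution of each edge-type, after which the identity $\hgamma^X(C)=c\,\gamma^T(C)$ and the lemma follow.
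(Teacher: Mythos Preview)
Your overall plan is sound and the concluding scheme (use Lemma~\ref{lem:criterion_balanced_b_reg} on two non-homotopic cycles coming from the unicellular mobile) is exactly right. There is however a real gap in the first paragraph: a non-contractible cycle $C$ of $T$ does \emph{not} sit directly inside $\aM_4$. In Lemmas~\ref{lem:hbaltbal} and~\ref{lem:hbaltbald} the identification worked because each edge of $T$ became an edge of $\aM$, respectively a length-$2$ path of $\aM_2$ (one square vertex inserted). Here the subdivision is $M\to M_4$, so an edge of $T$ corresponds to a much longer piece of $\aM_4$, and there are several inequivalent ways to thread a path through the new subdivision and star vertices. The paper makes one explicit choice: it defines a \emph{canonical lift} $C'$ of $C$ to $M_4^\star$ by selecting, for every edge on $C$, a specific path (the bolder edges displayed in Figure~\ref{fig:mapping_tau}), and then proves the exact identity $\hgamma^X(C')=4\,\gamma^T(C)$ by tabulating the contributions of black--black edges on $C$, black--white edges on $C$, and the half-edges hanging off $C$. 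Without fixing such a lift, the quantity $\hgamma^X(C)$ in your write-up is not even defined, so the identity you aim for cannot be stated.

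Your second strategy---factor $\tau$ as $\sigma^{-1}\circ\iota\circ(\times 2)\circ\iota$ and invoke Lemma~\ref{lem:hbaltbald} for $M_2$ with $d=2$---is a legitimate alternative and does reduce the lemma to ``$T_2:=\Phi_+(2\iota(Z))$ balanced $\Leftrightarrow$ $T$ balanced''. But that remaining step is not as light as you suggest: $T$ and $T_2$ are mobiles built on different underlying maps ($M$ versus $M_2$), with different white-vertex sets and different cycle structures, so the sentence ``for every non-contractible cycle $C$, $\gamma^{T_2}(C)=c'\gamma^{T}(C)$'' has no meaning until you specify how a cycle of $T$ corresponds to one of $T_2$ and conversely. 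Making this precise again forces you to choose a lift and carry out essentially the same local bookkeeping you would do in the direct approach. The paper therefore skips the intermediate mobile $T_2$ entirely, lifts $C$ all the way to $M_4^\star$ in one shot, and keeps the case analysis in a single place.
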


\begin{proof}
  Let $C$ be a non-contractible cycle of
$T$ given with a traversal direction.  We call \emph{canonical lift}
of $C$ the (non-contractible) cycle $C'$ of $M_4^\star$ obtained by
keeping the bolder edges as shown in the bottom-row of
Figure~\ref{fig:mapping_tau}.

Let $e$ be a black-black edge
on $C$, where the half-edge of weight $-1$ is traversed
before the half-edge of weight $0$. 
 Looking at the left part of Figure~\ref{fig:mapping_tau} it is 
clear that $e$ has contribution $5$ 
 to $s_L^{X}(C')$, contribution $1$ 
 to $s_R^{X}(C')$, contribution $1$ 
 to $w_L^{X}(C')$, contribution $1$ 
 to $w_R^{X}(C')$, contribution $2$ 
 to $\iota_L^{X}(C')$, and contribution $0$ 
 to $\iota_R^{X}(C')$. Hence $e$ has contribution $3$
to $\gamma_L^X(C')=2(w_L^{X}(C')-\iota_L^{X}(C'))+s_L^X(C')$,
and contribution $3$
to $\gamma_R^X(C')=2(w_R^{X}(C')-\iota_R^{X}(C'))+s_R^X(C')$,
hence has zero contribution to $\gamma^X(C')$.   
Similarly a black-black edge where the half-edge of weight $-1$
is traversed after the half-edge of weight $0$ has zero contribution
to $\gamma^X(C')$.  

Now let $e$ be a black-white edge on $C$ where
the black extremity is traversed before the white extremity.  
Then it is easy to see (again looking at Figure~\ref{fig:mapping_tau}) 
that $e$ has contribution $3$ 
 to $s_L^{X}(C')$, contribution $0$ 
 to $s_R^{X}(C')$, contribution $1$ 
 to $w_L^{X}(C')$, contribution $0$ 
 to $w_R^{X}(C')$, contribution $3$ 
 to $\iota_L^{X}(C')$, and contribution $0$ 
 to $\iota_R^{X}(C')$. Hence it has contribution $-1$ to $\gamma_L^X(C')$ and contribution $0$ to $\gamma_R^X(C')$, hence contribution 
$-1$ to $\gamma^X(C')$. 
Symmetrically a black-white edge whose black extremity is traversed
after the white extremity has contribution $1$ to $\gamma^X(C')$.
Now the numbers of black-white edges of both types on $C$
 are clearly equal, so that the total contribution of black-white
edges on $C$ to $\gamma^X(C')$ is zero. 

On the other hand, let $h$ be 
a half-edge of $T$ not on $C$ but incident to a vertex on $C$,
and let $\delta$ be the weight of $h$ (by convention $\delta=0$
if $h$ is a bud).   
Then it is easy to see (still looking at Figure~\ref{fig:mapping_tau})
that if $h$ is on the left (resp. right) side of $C$ and incident to a
black vertex, then it has contribution $4$ 
to $s_L^X(C')$ (resp. to $s_R^X(C')$) 
and contribution $2\delta$ to $-\iota_L^X(C')$. 
And if $h$ is on the left (resp. right) side of $C$ and incident to a
white vertex, then it has 
contribution $2\delta$ to $w_L^X(C')$ (resp. to $w_R^X(C')$). 
From what precedes we conclude that $\gamma^X(C')=4\gamma^T(C)$,
and in particular $\gamma^X(C')=0$ if and only if $\gamma^T(C)=0$. 

From there, very similarly to the end of the proof of
Lemma~\ref{lem:hbaltbal}, we conclude that $X$ is balanced if and
only if $T$ is balanced, which concludes the proof.
\end{proof}

We are now able to prove Proposition~\ref{prop:unique_d1}:

\noindent\emph{Proof of Proposition~\ref{prop:unique_d1}.}
Suppose that $M$ admits a $\frac{1}{-1}$-$\zZ$-orientation $Z$ in
$\cO_{1}^1$ whose associated mobile by $\Phi_+$ is in
${\mathcal V}_1^{Bal}$.  By Lemma~\ref{lem:hbaltbald1}, we have
$X=\tau(Z)$ is a $\hgamma$-balanced $2$-regular orientation of
$\aM_4$.  Since $X$ is in $\cO_{4}^{1}$, by Lemma~\ref{lem:necmin}, we
have that $X$ is minimal. By  Lemma~\ref{lem:exists_bregular}, we have 
$M_4\in\hat{\cM}_{4}$.
Then, by Lemma~\ref{lem:bregCanonical}, we
have $M_4\in\chL_{4}$, hence  $M\in\cL_{1}$. In addition we have uniqueness of the
orientation $Z$, since $Z$ has to be preimage under the 
injective mapping $\tau$ of the unique minimal $\hgamma$-balanced $2$-regular orientation of $M_4^\star$.

Conversely we prove the existence part, for $M\in\cL_{1}$. Then $M_4\in\chL_{4}$ and
by Lemma~\ref{lem:bregCanonical}, $\aM_4$ admits a transferable
$\hgamma$-balanced $2$-regular orientation $X$ in $\cO_{4}^{1}$.
Consider $Y_4$ the  $\frac{2}{1}$-$\ZZ$-orientation of
$M_4$ in $\cO_{4}^{1}$ such that $Y=\sigma(X)$.

Consider $Z'$ the $\frac{2}{0}$-$\ZZ$-orientation of $M_2$ in
$\cO_{2}^{1}$ such that $Z'=\iota^{-1}(Y)=\iota^{-1}(\sigma(X))$.  By
Lemma~\ref{lem:hbaltbald}, the mobile $T'$ associated to $Z'$ is in
${\mathcal V}_2^{Bal}$.  Since $M\in\cL_{1}$, we have
$M_2\in\chL_{2}$, hence all the
weights of $T'$ are even according to Lemma~\ref{lem:bip_2}. 
So all the weights of $Z'$ are even.  Let
$Y'$ be the $\frac{1}{0}$-$\ZZ$-orientation of $M_2$ in $\cO_{2}^{1}$
obtained by dividing all the weights of $Z'$ by two.
Consider $Z$ the
$\frac{1}{-1}$-$\ZZ$-orientation of $M$ in $\cO_{1}^{1}$ such that
$Z=\iota^{-1}(Y')$.  Note that $X=\tau(Z)$. Let $T\in{\mathcal V}_1$ be the mobile
associated to $Z$. Lemma~\ref{lem:hbaltbald1} then ensures that $T$ is in ${\mathcal V}_1^{Bal}$.
\hfill$\Box$\vspace{1em}
 
\vspace{.4cm}

\noindent {\bf Acknowledgments.} The authors thank
Olivier Bernardi and the members of the ANR project ``Gato"
for very interesting and helpful discussions; they also thank the two anonymous referees for their insightful reports. 

\bibliographystyle{plain}
\bibliography{bibli}

\begin{thebibliography}{10}

\bibitem{albar2016orienting}
B.~Albar, D.~Gon{\c{c}}alves, and K.~Knauer.
\newblock Orienting triangulations.
\newblock {\em J. Graph Theory}, 83(4):392--405, 2016.

\bibitem{AlPo13}
M.~Albenque and D.~Poulalhon.
\newblock Generic method for bijections between blossoming trees and planar
  maps.
\newblock {\em Electron. J. Combin.}, 22(2):P2.38, 2015.

\bibitem{Ar86}
D.~Arqu\`es.
\newblock Les hypercartes planaires sont des arbres tr\`es bien \'etiquet\'es.
\newblock {\em Discrete Math.}, 58:11--24, 1986.

\bibitem{BeCa86}
E.~A. Bender and E.~R. Canfield.
\newblock The asymptotic number of rooted maps on a surface.
\newblock {\em J. Combin. Theory Ser. A}, 43:244--257, 1986.

\bibitem{Be05}
O.~Bernardi.
\newblock Bijective counting of tree-rooted maps and shuffles of parenthesis
  systems.
\newblock {\em Electron. J. Combin.}, 14(9), 2006.

\bibitem{BC11}
O.~Bernardi and G.~Chapuy.
\newblock A bijection for covered maps, or a shortcut between
  {H}arer-{Z}agier's and {J}ackson's formulas.
\newblock {\em J. Combin. Theory, Ser. A}, 118(6):1718--1748, 2011.

\bibitem{BF12}
O.~Bernardi and \'E. Fusy.
\newblock A bijection for triangulations, quadrangulations, pentagulations,
  etc.
\newblock {\em J. Combin. Theory Ser. A}, 119(1):218--244, 2012.

\bibitem{BF12b}
O.~Bernardi and \'E. Fusy.
\newblock Unified bijections for maps with prescribed degrees and girth.
\newblock {\em J. Combin. Theory Ser. A}, 119(6):1352--1387, 2012.

\bibitem{BeFu13}
O.~Bernardi and \'E. Fusy.
\newblock Unified bijections for planar hypermaps with general cycle-length
  constraints.
\newblock arXiv:1403.5371, 2014.

\bibitem{bernardi2015bijections}
O.~Bernardi and {\'E}.~Fusy.
\newblock Bijections for planar maps with boundaries.
\newblock {\em J. Combin. Theory, Ser. A}, 158:176--227, 2018.

\bibitem{bettinelli2015bijection}
J.~Bettinelli.
\newblock A bijection for nonorientable general maps, 2015.
\newblock arXiv:1512.02208.

\bibitem{BoLe18}
N.~Bonichon and B.~L\'ev\^eque.
\newblock A bijection for essentially 4-connected toroidal triangulations.
\newblock {\em Electron. J. Combin.}, 26:P1.13, 2019.

\bibitem{BJ05a}
M.~Bousquet-M{\'e}lou and A.~Jehanne.
\newblock Polynomial equations with one catalytic variable, algebraic series
  and map enumeration.
\newblock {\em J. Combin. Theory, Ser. B}, 96(5):623--672, 2006.

\bibitem{BoDiGu04}
J.~Bouttier, P.~Di Francesco, and E.~Guitter.
\newblock Planar maps as labeled mobiles.
\newblock {\em Electron. J. Combin.}, 11(1), 2004.

\bibitem{BG15}
J.~Bouttier and E.~Guitter.
\newblock On irreducible maps and slices.
\newblock {\em Combin. Probab. Comput.}, 23:914--972, 2014.

\bibitem{Ch09}
G.~Chapuy.
\newblock Asymptotic enumeration of constellations and related families of maps
  on orientable surfaces.
\newblock {\em Combin. Probab. Comput.}, 18(4):477--516, 2009.

\bibitem{chapuy2017bijection}
G.~Chapuy and M.~Do{\l}ega.
\newblock A bijection for rooted maps on general surfaces.
\newblock {\em J. Combin. Theory, Ser. A}, 145:252--307, 2017.

\bibitem{CF16}
G.~Chapuy and W.~Fang.
\newblock Generating functions of bipartite maps on orientable surfaces.
\newblock {\em Electron. J. Combin.}, 23(3):P3.31, 2016.

\bibitem{CMS09}
G.~Chapuy, M.~Marcus, and G.~Schaeffer.
\newblock A bijection for rooted maps on orientable surfaces.
\newblock {\em SIAM J. Discrete Math.}, 23(3):1587--1611, 2009.

\bibitem{CoriVa}
R.~Cori and B.~Vauquelin.
\newblock Planar maps are well labeled trees.
\newblock {\em Canad. J. Math.}, 33(5):1023--1042, 1981.

\bibitem{DGL15}
V.~Despr{\'e}, D.~Gon{\c{c}}alves, and B.~L{\'e}v{\^e}que.
\newblock Encoding toroidal triangulations.
\newblock {\em Discrete Comput. Geom.}, 57(3):507--544, 2017.

\bibitem{Eyn}
B.~Eynard.
\newblock {\em Counting surfaces}.
\newblock Volume 70 of {P}rogress in {M}athematical {P}hysics.
  {B}irkh\"auser/{S}pringer, {CRM} {A}isenstadt chair lectures, 2016.

\bibitem{Fe03}
S.~Felsner.
\newblock Lattice structures for planar graphs.
\newblock {\em Electron. J. Combin.}, 11(1):Research paper R15, 24p, 2004.

\bibitem{Fu07b}
\'E. Fusy.
\newblock Transversal structures on triangulations: {A} combinatorial study and
  straight-line drawings.
\newblock {\em Discrete Math.}, 309:1870--1894, 2009.

\bibitem{FuPoScL}
\'E. Fusy, D.~Poulalhon, and G.~Schaeffer.
\newblock Dissections, orientations, and trees, with applications to optimal
  mesh encoding and to random sampling.
\newblock {\em ACM Trans. Algorithms}, 4(2):Art. 19, April 2008.

\bibitem{GKL15}
D.~Gon{\c{c}}alves, K.~Knauer, and B.~L{\'e}v{\^e}que.
\newblock On the structure of {S}chnyder woods on orientable surfaces.
\newblock arXiv:1501.05475, 2015.

\bibitem{GL14}
D.~Gon{\c{c}}alves and B.~L{\'e}v{\^e}que.
\newblock Toroidal maps: {S}chnyder woods, orthogonal surfaces and
  straight-line representations.
\newblock {\em Discrete Comput. Geom.}, 51(1):67--131, 2014.

\bibitem{GoJa83}
I.~P. Goulden and D.~M. Jackson.
\newblock {\em Combinatorial Enumeration}.
\newblock John Wiley, New York, 1983.

\bibitem{Lep18}
M.~Lepoutre.
\newblock Blossoming bijection for higher-genus maps.
\newblock {\em J. Combin. Theory, Ser. A}, 165:187--224, 2019.

\bibitem{LevHDR}
B.~L\'ev\^eque.
\newblock Generalization of {S}chnyder woods to orientable surfaces and
  applications, 2017.
\newblock Habilitation manuscript, arXiv:1702.07589.

\bibitem{PoSc04}
D.~Poulalhon and G.~Schaeffer.
\newblock {\em Chapter of Lothaire: Applied Combinatorics on Words
  (Encyclopedia of Mathematics and its Applications)}.
\newblock Cambridge University Press, New York, NY, USA, 2005.

\bibitem{PS03b}
D.~Poulalhon and G.~Schaeffer.
\newblock Optimal coding and sampling of triangulations.
\newblock {\em Algorithmica}, 46(3-4):505--527, 2006.

\bibitem{Pro93}
J.~Propp.
\newblock Lattice structure for orientations of graphs, 2002.
\newblock arXiv:math/0209005.

\bibitem{Sc97}
G.~Schaeffer.
\newblock Bijective census and random generation of {E}ulerian planar maps with
  prescribed vertex degrees.
\newblock {\em Electron. J. Combin.}, 4(1):20, 1997.

\bibitem{S-these}
G.~Schaeffer.
\newblock {\em Conjugaison d'arbres et cartes combinatoires al\'eatoires}.
\newblock PhD thesis, Universit\'e Bordeaux I, 1998.

\bibitem{S90}
W.~Schnyder.
\newblock Embedding planar graphs on the grid.
\newblock In {\em Proceedings of the 11th Annual ACM-SIAM Symposium on Discrete
  Algorithms (SODA)}, pages 138--148, San Francisco, California, 1990.

\bibitem{T62a}
W.~T. Tutte.
\newblock A census of planar triangulations.
\newblock {\em Canad. J. Math.}, 14:21--38, 1962.

\bibitem{Tu63}
W.~T. Tutte.
\newblock A census of planar maps.
\newblock {\em Canad. J. Math.}, 15:249--271, 1963.

\end{thebibliography}

\end{document}